\DeclareMathAlphabet{\mathpzc}{OT1}{pzc}{m}{it}
\setlist{nosep}
\renewcommand{\int}{\operatorname{int}}
\newcommand{\im}{\operatorname{Im}}
\newcommand{\lpc}{\operatorname{lpc}}
\newcommand{\bft}{\mathbf{T}}
\newcommand{\nt}{\mathbf{nt}}
\newcommand{\ntij}{\mathbf{nt}_{\infty,j}}
\newcommand{\ntkj}{\mathbf{nt}_{k,j}}
\newcommand{\bfa}{\mathbf{a}}
\newcommand{\bfe}{\mathbf{e}}
\newcommand{\sw}{\bigcurlyvee}
\newcommand{\tY}{\wt{Y}}
\newcommand{\ty}{\wt{y}}
\newcommand{\tZ}{Z}
\newcommand{\tz}{z}
\newcommand{\ilim}{\varprojlim}
\newcommand{\hx}{\widehat{x}}
\newcommand{\hX}{\widehat{X}}
\newcommand{\ui}{I}
\newcommand{\xleqk}{X_{\leq k}}
\newcommand{\wh}{\widehat}
\newcommand{\bfx}{\mathbf{x}}
\newcommand{\tx}{\tilde{x}}
\newcommand{\tX}{\widetilde{X}}
\newcommand{\wt}{\widetilde}
\newcommand{\mca}{\wt{\mathcal{X}}}
\newcommand{\mcb}{\wt{\mathcal{Y}}}
\newcommand{\mcc}{\mathcal{C}}
\newcommand{\mcd}{\mathcal{D}}
\newcommand{\mcp}{\mathcal{P}}
\newcommand{\mcu}{\mathcal{U}}
\newcommand{\mcv}{\mathcal{V}}
\newcommand{\scru}{\mathscr{U}}
\newcommand{\scrt}{\mathscr{T}}
\newcommand{\scrv}{\mathscr{V}}
\newcommand{\tf}{\wt{f}}
\newcommand{\tg}{\wt{g}}
\newcommand{\bbe}{\mathbb{E}}
\newcommand{\bbn}{\mathbb{N}}
\newcommand{\bbr}{\mathbb{R}}
\newcommand{\bbt}{\mathbb{T}}
\newcommand{\bbz}{\mathbb{Z}}
\newcommand{\bfee}{\mathbf{E}}
\newcommand{\mct}{\mathcal{T}}
\newcommand{\ov}{\overline}
\newtheorem{theorem}{Theorem}[section]
\newtheorem{lemma}[theorem]{Lemma}
\newtheorem{proposition}[theorem]{Proposition}
\newtheorem{corollary}[theorem]{Corollary}
\theoremstyle{definition}\newtheorem{definition}[theorem]{Definition}
\newtheorem{example}[theorem]{Example}
\newtheorem{remark}[theorem]{Remark}
\newtheorem{problem}[theorem]{Problem}
\begin{document}
\title{Homotopy groups of shrinking wedges of non-simply connected CW-complexes}
\author{Jeremy Brazas}
\date{\today}

\maketitle

\begin{abstract}
In this paper, we study the homotopy groups of a shrinking wedge $X$ of a sequence $\{X_j\}$ of non-simply connected CW-complexes. Using a combination of generalized covering space theory and shape theory, we construct a canonical homomorphism $$\Theta:\pi_n(X)\to\prod_{j\in\mathbb{N}}\bigoplus_{\pi_1(X)/\pi_1(X_j)}\pi_n(X_j),$$ characterize its image, and prove that $\Theta$ is injective whenever each universal cover $\widetilde{X}_j$ is $(n-1)$-connected. These results (1) provide a characterization of the $n$-th homotopy group of the shrinking wedge of copies of $\mathbb{RP}^n$, (2) provide a characterization of $\pi_2$ of an arbitrary shrinking wedge, and (3) imply that a shrinking wedge of aspherical CW-complexes is aspherical.
\end{abstract}

\tableofcontents

\section{Introduction}

The shrinking wedge of a sequence $X_1,X_2,X_3,\dots$ of based spaces, which we will denote as $\sw_{j\in\bbn}X_j$, is the usual one-point union $\bigvee_{j\in\bbn}X_j$ but equipped with a topology coarser than the weak topology. In particular, every neighborhood of the wedgepoint $x_0$ contains $X_j$ for all but finitely many $j\in\bbn$. For example, $\bbe_m=\sw_{j\in\bbn}S^m$ is the \textit{$m$-dimensional earring space}, which embeds in $\bbr^{m+1}$. While fundamental groups of shrinking wedges of connected CW-complexes are well-understood \cite{MM,Edafreesigmaproducts}, general methods for characterizing higher homotopy groups remain elusive. It remains an open problem to establish an ``infinite Hilton-Milnor Theorem" that would provide a characterization of $\pi_n(\bbe_m)$, $n>m$.

In 1962, Barratt and Milnor proved that the rational homology groups of $\bbe_2$ are non-trivial (and even uncountable) in arbitrarily high dimension \cite{BarrattMilnor}. Compare this with the fact that the reduced homology groups (with any coefficients) of $\bigvee_{j\in\bbn}S^2$ are only non-trivial in dimension $2$. The apparently ``anomalous" behavior of $\bbe_2$ is due to the effect of natural, non-trivial, infinitary operations in the higher homotopy groups $\pi_n(\bbe_2)$, $n>2$ and the fact that standard homology groups are only ``finitely commutative." 

When local structures in a space allow one to form geometrically represented infinite products in homotopy groups, standard methods in homotopy theory fail to apply. Thus other methods, e.g. shape theory \cite{MS82}, generalizations of covering space theory \cite{Brazcat,FZ07}, and infinite word theory \cite{Edafreesigmaproducts} are often required. Since infinite products in $\pi_n$ are formed ``at a point," shrinking wedges present an important case that informs more general scenarios.

In the past two decades some progress has been made toward an understanding of the higher homotopy groups of shrinking wedges. In \cite{EK00higher}, Eda and Kawamura show that $\bbe_m$ is $(m-1)$-connected and $\pi_m(\bbe_m)\cong \bbz^{\bbn}$. In \cite{Kawamurasuspensions} it is shown that $\pi_{n}(\bbe_m)$ splits as $\pi_{n+1}((S^m)^{\bbn},\bbe_m)\oplus \pi_{n}(S^m)^{\bbn}$ for $n>m$. However, new methods will be need to characterize the elements of $\pi_{n+1}((S^m)^{\bbn},\bbe_m)$. Some ad-hoc approaches have also appeared, e.g. to show the second homotopy group of the shrinking wedge of tori (see Figure \ref{fig1}) is trivial \cite{EKRZSnake}. More recently, the results of Eda-Kawamura were extended in \cite{Braznsequential} to other kinds of attachment spaces constructed by attaching a shrinking sequence of spaces to a fixed one-dimensional ``core" space, e.g. attaching a shrinking sequence of spheres to an arc, dendrite, Sierpinski carpet, etc. In this paper, we continue the effort to better understand the higher homotopy groups of shrinking wedges. Using a combination of shape theory and generalized covering space theory and the results of \cite{Braznsequential}, we establish methods that characterize the effect of the fundamental group $\pi_1(\sw_{j\in\bbn}X_j)$ on $\pi_n(\sw_{j\in\bbn}X_j)$, $n\geq 2$.

To provide more context for the statement of our main result, we briefly recall a standard argument for the usual one-point union. If $X=\bigvee_{j\in\bbn}X_j$ is a wedge of non-simply connected CW-complexes, the universal covering space $\tX$, consists of copies of $\tX_j$ (indexed by the coset space $\pi_1(X)/\pi_1(X_j)$) that are attached to each other in a tree-like fashion that matches the reduced-word structure of the free product $\pi_1(X)=\ast_{j\in\bbn}\pi_1(X_j)$. If $T$ is a maximal tree in the 1-skeleton of $\tX$, then $\tX/T$ is homotopy equivalent to $\bigvee_{j\in\bbn}\bigvee_{\pi_1(X)/\pi_1(X_j)}\tX_j$. Since $\pi_{n}(X)\cong \pi_n(\tX)$, we have a surjective homomorphism $\Theta:\pi_n(X)\to \bigoplus_{j\in\bbn}\bigoplus_{\pi_1(X)/\pi_1(X_j)}\pi_n(X_j)$, which can be defined independent of the choice of $T$. Moreover, when each covering space $\tX_j$ is $(n-1)$-connected, $\Theta$ is an isomorphism. In the case that some $\tX_j$ are not $(n-1)$ connected, other methods for computing homotopy groups of wedges may become relevant; however, the indexing of the wedge summand fully incorporates the effect of $\pi_1$ on $\pi_n$.

There are a few places where standard methods break down for a shrinking wedge $X=\sw_{j\in\bbn}X_j$ of connected, non-simply connected CW-complexes. First and foremost, $X$ does not have a universal covering space. However, it does have a generalized universal covering space $\tX$ (and map $p:\tX\to X$) in the sense of Fischer and Zastrow \cite{FZ07}. The structure of $\tX$ is an ``infinite version" of the classical situation. In particular, $\tX$ also consists of copies of the universal covering spaces $\tX_j$ arranged in a tree-like fashion (in the sense that simple closed curves only exist in individual copies of $\tX_j$). However, these arrangements will now mimic the reduced infinite-word description of $\pi_1(X)$ \cite{Edafreesigmaproducts}. For example, an infinite product $\ell_1\ell_2\ell_3\cdots\in\pi_1(X)$ where $\ell_j\in\pi_1(X_j)$ will lift to a path in $\tX$ that proceeds (in order) through copies of $\tX_1,\tX_2,\tX_3,...$ in $\tX$. Thus, when $j\to\infty$, one should consider copies of $\tX_j$ in $\tX$ as being shrinking in size. Since infinite words in $\pi_1(X)$ may be indexed by countable, dense linear orders, there will be corresponding dense arrangements of the spaces $\tX_j$ within $\tX$ too. With this description of $\tX$, it is possible to choose a uniquely arcwise connected subspace $T\subseteq\tX$ that is analogous to a maximal tree. However, the collapsing map $\tX\to \tX/T$ will rarely be a homotopy equivalence. Moreover, care is required if one wishes to choose $T$ to be coherent with a choice of trees in the universal covers over the approximating finite wedge $\bigvee_{j=1}^{k}X_k$. Finally, while $X$ is ``wild" at only a single point, $\tX$ will be wild at uncountably many points, namely those in the wedgepoint fiber $p^{-1}(x_0)$.

To overcome the many obstacles laid out in the previous paragraph, we first attach an arc to each space $X_j$ to form $Y_j$ and take the endpoint of the added ``whisker" to be the basepoint of $Y_j$. The universal cover $\tY_j$ consists of $\tX_j$ with arcs attached to each point in the basepoint fiber of the covering map $\tX_j\to X_j$. Now the generalized universal covering $\tY$ is comprised of copies of $\tY_j$ arranged in the same way copies of $\tX_j$ are arranged in $\tX$. However, the added arcs will provide ``extra space" around which we can perform desired deformations. Most of the technical work in this paper goes toward understanding both the direct construction of $\tY$ (Definition \ref{whiskertopologydef}) and its relationship to the inverse limit of ordinary universal covering spaces. A key insight is that we must coherently choose a maximal tree in each copy of $\tX_j$ appearing within $\tY$. We also use this relationship to prove that the map collapsing each of these (uncountably many) trees to a point is a homotopy equivalence. The resulting quotient $Z$ consists of a uniquely arcwise-connected space with copies of a homotopy equivalent quotient of $\tX_j$ attached along points. This puts us precisely in a situation to apply the main result of \cite{Braznsequential}. The main result of the current paper is the following theorem.

\begin{theorem}\label{mainthm}
Let $n\geq 2$ and $X=\sw_{j\in\bbn}X_j$ be a shrinking wedge of connected CW-complexes $X_j$. Then there is a canonical homomorphism
\[\Theta:\pi_n(X)\to \prod_{j\in\bbn}\bigoplus_{\pi_1(X)/\pi_1(X_j)}\pi_n(X_j),\]
which is injective if each $X_j$ has an $(n-1)$-connected universal covering space.
\end{theorem}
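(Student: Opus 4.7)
The plan is to reduce the theorem to a setting where the main theorem of \cite{Braznsequential} applies, by working on a generalized universal covering space and collapsing a carefully chosen family of trees. First, I would replace each $X_j$ by a whiskered version $Y_j$ (attaching an arc with the new endpoint as basepoint) and form $Y = \sw_{j\in\bbn} Y_j$. The natural inclusions $X_j \hookrightarrow Y_j$ are pointed homotopy equivalences that pass to the shrinking wedge, giving canonical identifications $\pi_n(Y) \cong \pi_n(X)$ and $\pi_1(Y)/\pi_1(Y_j) = \pi_1(X)/\pi_1(X_j)$.

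Next, I would analyze the generalized universal covering $p: \tY \to Y$ supplied by Fischer--Zastrow-type theory \cite{FZ07}. As described in the introduction, $\tY$ decomposes as a dense, tree-like union of copies of $\tY_j$ indexed by $\pi_1(Y)/\pi_1(Y_j)$, and each copy of $\tY_j$ contains a copy of $\tX_j$ with whisker arcs attached at every point of the basepoint fiber. For each such copy of $\tX_j$, I would pick a maximal tree $T_{j,g}$ in its $1$-skeleton, coherently with tree choices over the finite approximations $\bigvee_{j \leq k} X_j$, and let $T$ denote their union.

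The central technical step would be to show that the quotient map $q: \tY \to Z := \tY/T$ is a homotopy equivalence. The attached whisker arcs are essential here: they provide the geometric room to deform singular simplices off $T$ in a coherent way, even at the uncountably many wild points in $p^{-1}(x_0)$. I expect this to be the main obstacle, since collapsing uncountably many trees simultaneously forces one to argue using both the direct construction of $\tY$ and its expression as an inverse limit of ordinary universal covers of the finite wedges $\bigvee_{j\leq k} X_j$, and to verify that the relevant homotopies assemble continuously across the shrinking levels.

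Granted this, $Z$ is a uniquely arcwise-connected core with shrinking copies of the CW-quotients $\tX_j/T_{j,g}$ attached at single points, arranged exactly as the copies of $\tY_j$ were in $\tY$. Since each $T_{j,g}$ is a contractible subcomplex of the CW-complex $\tX_j$, the quotient $\tX_j/T_{j,g}$ is homotopy equivalent to $\tX_j$, hence $\pi_n(\tX_j/T_{j,g}) \cong \pi_n(X_j)$ for $n \geq 2$. Applying the main theorem of \cite{Braznsequential} to $Z$ then yields a canonical homomorphism
\[\pi_n(Z) \to \prod_{j\in\bbn} \bigoplus_{\pi_1(X)/\pi_1(X_j)} \pi_n(\tX_j/T_{j,g}),\]
which, after the identifications $\pi_n(X) \cong \pi_n(\tY) \cong \pi_n(Z)$ and $\pi_n(\tX_j/T_{j,g}) \cong \pi_n(X_j)$, is the desired $\Theta$; a check that the construction does not depend on the tree choices establishes canonicality. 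When each $\tX_j$ is $(n-1)$-connected, so is each summand $\tX_j/T_{j,g}$, and the injectivity clause in the main theorem of \cite{Braznsequential} delivers the injectivity of $\Theta$.
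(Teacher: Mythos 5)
Your proposal follows essentially the same route as the paper: whisker each $X_j$, realize the generalized universal cover $\wt{Y}$ both directly and as (the lpc-coreflection of) an inverse limit of universal covers of finite wedges, choose maximal trees coherently with the bonding maps, prove that collapsing them is a homotopy equivalence, and then invoke the main theorem of \cite{Braznsequential} on the resulting space $Z$. The only point worth noting is that the cited theorem cannot be applied to $Z$ itself (its core is not a Peano continuum and there are uncountably many attachment spaces); the paper applies it to the $D$-subcomplexes carrying the image of each spheroid and needs an extra deformation argument (Lemma \ref{directsumlemma}) to land in the direct sum over each $j$, but this is a refinement of, not a departure from, your outline.
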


The injectivity of $\Theta$ in Theorem \ref{mainthm} is the isomorphism from \cite{EK00higher} if each $X_j$ is simply connected and thus $(n-1)$-connected. In the arbitrary case, we are still able to characterize the image of $\Theta$ in terms of a natural topology on $\pi_1(X)$ (see Remark \ref{thetaimageremark}, which follows from Theorem \ref{imagetheorem}). We remark on some immediate applications and cases of interest.

\begin{example}
Consider the shrinking wedge $X=\sw_{j\in\bbn}\mathbb{RP}^n$ of copies of real projective $n$-space. The universal cover $S^n$ of $\mathbb{RP}^n$ is $(n-1)$-connected and so $\pi_n\left(X\right)$ embeds as a subgroup of \[\prod_{j\in\bbn}\bigoplus_{\pi_1(X)/\pi_1(\mathbb{RP}^n)}\pi_n(\mathbb{RP}^n)\cong \prod_{j\in\bbn}\bigoplus_{\pi_1(X)/\pi_1(\mathbb{RP}^n)}\bbz\cong \prod_{j\in\bbn}\bigoplus_{\mathfrak{c}}\bbz.\]
It is possible construct the generalized universal covering space $\tX$ similar to how one might describe the universal cover of $\bigvee_{j=1}^{k}\mathbb{RP}^n$ as a tree-like arrangement of $n$-spheres. Explicitly, we could start with the generalized universal covering space $\wt{\bbe}_1$ of the $1$-dimensional earring space $\bbe_1$, which is a topological $\bbr$-tree and acts as a generalized Caley graph \cite{FZ013caley}. Every lift of a loop parameterizing the $j$-th circle of $\bbe_1$ parameterizes an ``edge" in $\wt{\bbe}_1$. Replacing each of these edges with a copy of $S^n$ (replacing endpoints with a choice of antipodal points) and topologizing in a suitable fashion yields $\tX$ (see Figure \ref{figdyad}). This is an instructive case to consider when reading the remainder of the paper as we understand $\pi_n(\sw_{j\in\bbn}\mathbb{RP}^n)$ by using inverse limits to characterize and deform the structure of $\tX$.
\end{example}

\begin{figure}[H]
\centering \includegraphics[height=2in]{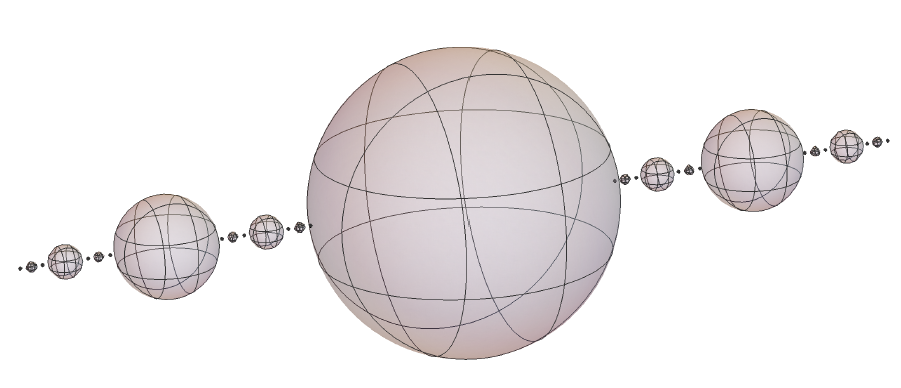}
\caption{\label{figdyad}The generalized universal cover $\tX$ of $\sw_{j\in\bbn}\mathbb{RP}^2$ seems impossible to visualize as a whole but it will contain homeomorphic copies of the space illustrated here, namely, an arc where the closure of each component of the complement of the ternary Cantor set in that arc has been replaced by a $2$-sphere and such that the diameters of the spheres approach $0$. $\tX$ will also contain arrangements of $2$-spheres indexed by every other countable linear order type. Each point in the Cantor set shown here, will be a ``branch point" of uncountable valence; every possible linear arrangement of spheres being attached at every branch point multiple times. }
\end{figure}

Theorem \ref{mainthm} also provides a characterization of $\pi_2$ for an arbitrary shrinking wedge since the universal covering spaces $\tX_j$ are always $1$-connected.

\begin{corollary}\label{secondhomotopygroupcor}
If $X=\sw_{j\in\bbn}X_j$ is a shrinking wedge of connected CW-complexes, then there is a canonical injective homomorphism
\[\Theta:\pi_2(X)\to \prod_{j\in\bbn}\bigoplus_{\pi_1(X)/\pi_1(X_j)}\pi_2(X_j).\]
\end{corollary}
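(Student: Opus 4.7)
The plan is to derive this corollary as an immediate specialization of Theorem \ref{mainthm} to the case $n=2$. Theorem \ref{mainthm} already asserts the existence of the canonical homomorphism $\Theta$ for every $n\geq 2$ without any connectivity assumption on the factors, so the existence portion of the corollary requires no additional argument and I would simply invoke it.

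For the injectivity statement, I would verify that the hypothesis of Theorem \ref{mainthm}, namely that each $\tX_j$ is $(n-1)$-connected, holds automatically in the present situation. With $n=2$ this reduces to asking that each $\tX_j$ be $1$-connected. Every connected CW-complex is locally path-connected and semilocally simply connected, so the universal covering space $\tX_j\to X_j$ exists, and by the defining property of a universal cover, $\tX_j$ is path-connected with trivial fundamental group. Hence each $\tX_j$ is $1$-connected and the injectivity clause of Theorem \ref{mainthm} applies verbatim to yield that $\Theta:\pi_2(X)\to \prod_{j\in\bbn}\bigoplus_{\pi_1(X)/\pi_1(X_j)}\pi_2(X_j)$ is injective.

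I anticipate no obstacle here: the only ``content'' of the corollary is the observation that, in the specific degree $n=2$, the higher-connectivity hypothesis of Theorem \ref{mainthm} becomes automatic because it coincides exactly with the defining property of the universal cover. This is why the corollary holds for an arbitrary shrinking wedge of connected CW-complexes without imposing any further hypothesis on the factors.
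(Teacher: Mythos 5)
Your proposal is correct and is exactly the paper's own argument: the paper deduces the corollary from Theorem \ref{mainthm} with the single remark that the universal covering spaces $\tX_j$ are always $1$-connected, which is precisely your observation that the $(n-1)$-connectedness hypothesis is automatic when $n=2$. Nothing further is needed.
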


There are many algebraic statements, which are immediate consequences of embedding statements like Theorem \ref{mainthm} and Corollary \ref{secondhomotopygroupcor}, e.g. $\pi_2(\sw_{j\in\bbn}X_j)$ is torsion-free if and only if $\pi_2(X_j)$ is torsion-free for all $j\in\bbn$. Recall that a path-connected space $Y$ is \textit{aspherical} if $\pi_n(Y)=0$ for all $n\geq 2$. Theorem \ref{mainthm} also implies the first part of the following theorem; the second part must be proved separately (see Section \ref{sectionaspherical}).

\begin{theorem}\label{asphericalcorollary}
If $X_j$ is an aspherical CW-complex for all $j\in\bbn$, then $\sw_{j\in\bbn}X_j$ is aspherical. Moreover, if each $X_j$ is locally finite, then the generalized universal covering space $\tX$ is contractible.
\end{theorem}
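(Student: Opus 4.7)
The first assertion is immediate from Theorem~\ref{mainthm}: an aspherical CW-complex has contractible universal cover, which is $(n-1)$-connected for every $n\geq 2$, so the theorem embeds $\pi_n(X)$ into $\prod_{j\in\bbn}\bigoplus_{\pi_1(X)/\pi_1(X_j)}\pi_n(X_j)$; since each $\pi_n(X_j)=0$, we conclude $\pi_n(X)=0$ for all $n\geq 2$.

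For the moreover clause, assume additionally that each $X_j$ is locally finite, so each universal cover $\tX_j$ is a contractible, locally finite CW-complex. The plan is to invoke the chain of homotopy equivalences $\tX\simeq\tY\simeq Z$ set up earlier in the paper and show that $Z$ is contractible. Choose the coherent maximal trees $T_j\subseteq \tX_j$ used in the construction of $Z$ to be connected subcomplexes of the $1$-skeleton of $\tX_j$ that contain the basepoint fiber of $\tX_j\to X_j$. Each $\tX_j/T_j$ then collapses a contractible subcomplex from the contractible $\tX_j$ and is a simply connected, contractible CW-complex admitting a strong deformation retraction to its distinguished (collapsed) point.

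I would then construct a deformation retraction $H:Z\times I\to Z$ of $Z$ onto its uniquely arcwise connected core $C$ by simultaneously applying the chosen strong deformation retractions on every attached copy of $\tX_j/T_j$ while keeping $C$ pointwise fixed. The principal obstacle is ensuring continuity of $H$ at points in the wedgepoint fiber $p^{-1}(x_0)\subseteq \tY$, where $Z$ is ``wild.'' I expect this to follow from the inverse-limit description of $\tY$ already developed in the paper: with respect to the natural metric so obtained, the diameters of the copies of $\tX_j/T_j$ in $Z$ should tend uniformly to $0$ as $j\to\infty$. Choosing each strong deformation retraction so that its image remains inside its own piece, trajectories near a wild point stay in correspondingly small neighborhoods, which yields continuity of the simultaneous $H$.

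To finish, $C$ is simply connected, since $Z\simeq \tX$ is simply connected and $H$ realizes $C$ as a deformation retract of $Z$. Together with the unique arcwise connectedness inherited from the tree-like structure of $\tY$, this identifies $C$ with an $\bbr$-tree, which is contractible via straight-line retraction along unique arcs to a chosen base point. Concatenating this contraction of $C$ with $H$ produces a contraction of $Z$, hence of $\tX$.
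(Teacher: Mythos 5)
Your proposal is correct and follows essentially the same route as the paper: the first claim via Theorem \ref{mainthm}, and contractibility by deformation retracting $Z$ onto its uniquely arcwise connected core $\bfee_{\infty}$ and then contracting that core as a topological $\bbr$-tree. The paper's Lemma \ref{zcontractiblelemma} carries out exactly your simultaneous retraction, verifying continuity through the inverse-limit projection criterion (Corollary \ref{continuitycor}) rather than metric diameter estimates, and Corollary \ref{tiscontractible} supplies the $\bbr$-tree contractibility you invoke at the end.
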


\begin{example}
Corollary \ref{secondhomotopygroupcor} implies that the shrinking wedge of tori $\sw_{j\in\bbn}\bbt$ (see Figure \ref{fig1}) is aspherical. Previously, it was only known that $\pi_2(\sw_{j\in\bbn}\bbt)=0$ \cite{EKRZSnake}. Similarly, a shrinking wedge of any sequence of orientable surfaces with positive (or infinite) genus is aspherical.
\end{example}

\begin{figure}[H]
\centering \includegraphics[height=2.8in]{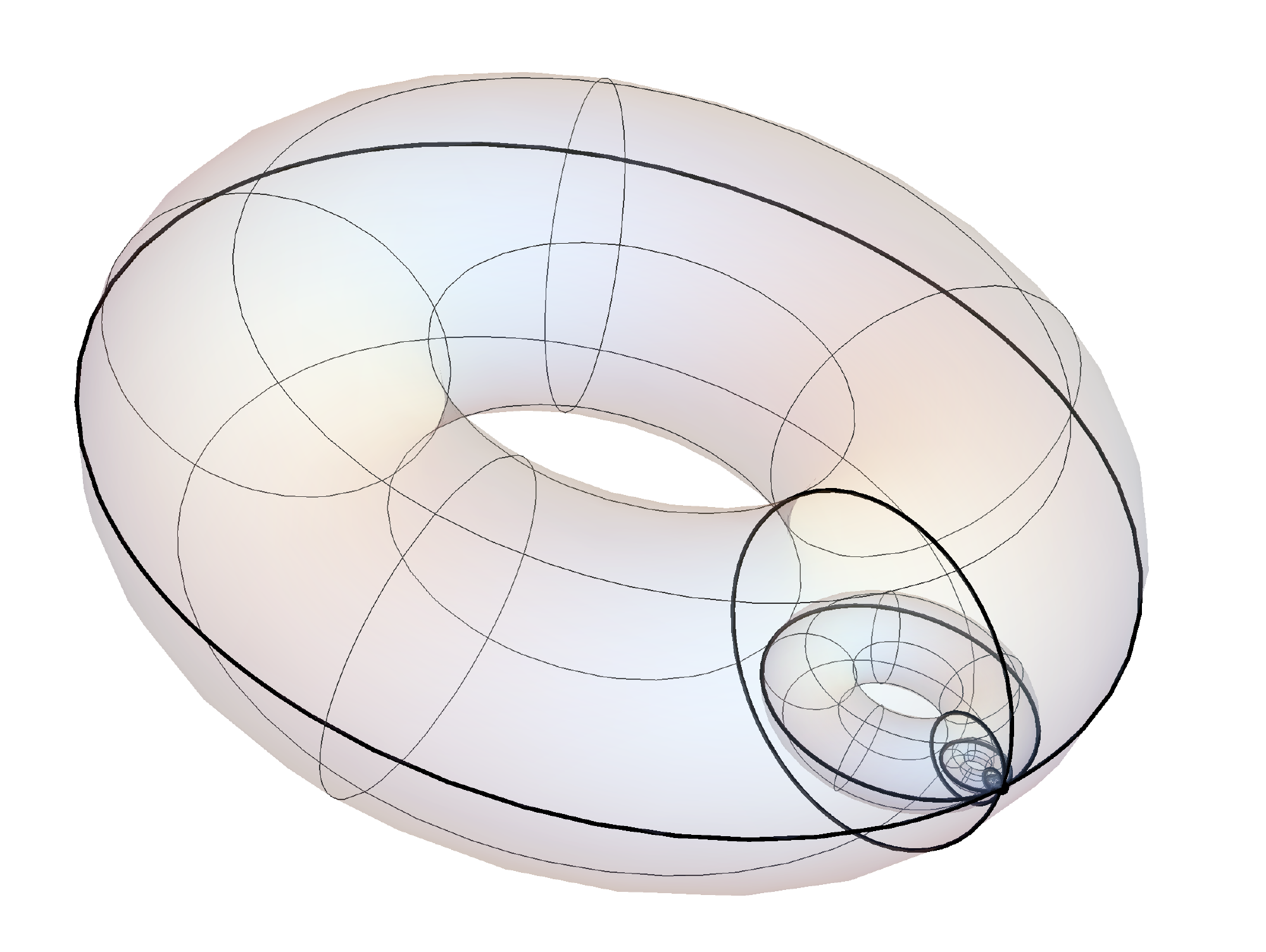}
\caption{\label{fig1}The shrinking wedge of tori is aspherical and has a contractible generalized universal covering space.}
\end{figure}

\section{Preliminaries and Notation}

All topological spaces in this paper are assumed to be Hausdorff. Throughout, $I$ denotes the unit interval $[0,1]$ and a \textit{path} is a map $\alpha:\ui \to X$. We write $\alpha\cdot\beta$ for the concatenation of paths when $\alpha(1)=\beta(0)$ and $\alpha^{-}$ for the reverse path $\alpha^{-}(t)=\alpha(1-t)$. If $[a,b]\subseteq \ui$ and $\alpha:\ui\to X$ is a path, we may simply write $[\alpha|_{[a,b]}]$ to denote the path-homotopy class $[\alpha|_{[a,b]}\circ h]$ where $h:[0,1]\to [a,b]$ is the unique increasing linear homeomorphism. 

We will generally represent elements of the $n$-th homotopy group $\pi_n(X,x)$, $n\geq 1$ by relative maps $(I^n,\partial I^n)\to (X,x)$. When the basepoint $x$ is clear from context, we will suppress it from our notation and simply write $\pi_n(X)$.

We say that a homotopy $H:X\times I\to Y$ is \textit{constant on} $A\subseteq X$ (or is \textit{relative to }$A$) if for all $\bfx\in A$, $H(\bfx,t)$ is constant as $t$ varies. If $H$ is constant on the basepoint $x_0$, then we call $H$ a based homotopy. A \textit{based homotopy equivalence} is based map $f:(X,x)\to (Y,y)$ where there is a based homotopy inverse $g:(Y,y)\to (X,x)$ and based homotopies $id_{X}\simeq g\circ f$ and $id_{Y}\simeq f\circ g$.

A \textit{Peano continuum} is a connected locally path-connected compact metric space. The Hahn-Mazurkiewicz Theorem \cite[Theorem 8.14]{Nadler} implies that a Hausdorff space is a Peano continuum if and only if there exists a continuous surjection $\ui\to X$. A Peano continuum which is uniquely arcwise connected is a \textit{dendrite}.

\subsection{Shrinking wedges and their fundamental groups}\label{sectionshrinkingwedgeintro}

Given a collection $(X_j,x_j)$, $j\in S$ of spaces, let $\bigvee_{j\in S}(X_j,x_j)$ (or $\bigvee_{j\in S}X_j$ when basepoints are clear from context) denote the usual one point union with the weak topology. We will refer to the natural basepoint $x_0$ as the \textit{wedgepoint}.

\begin{definition}
The \textit{shrinking wedge} of an infinite sequence $(X_j,x_j)$, $j\in \bbn$ of based spaces is the space $\sw_{j\in \bbn}(X_j,x_j)$ with the underlying set of $\bigvee_{j\in \bbn}X_j$ but with the following topology: $U\subseteq X$ is open if and only if $U\cap X_j$ is open in $X_j$ for all $j\in \bbn$ and if $x_0\in U$ implies $X_j\subseteq U$ for all but finitely many $j\in \bbn$.

For both standard and shrinking wedges, we will refer to each space $X_j$ as a \textit{wedge summand}.
\end{definition}

In the remainder of this section, we will assume that, for each $j\in\bbn$, the space $X_j$ is a connected CW-complex basepoint $x_j$ that serves as the basepoint of $X_j$. Let $X=\sw_{j\in\bbn}X_j$ be the shrinking wedge and for each $k\in\bbn$, let $X_{\leq k}=\bigvee_{j=1}^{k}X_j$ be finite wedge of the first $k$ spaces. Define
\begin{itemize}
\item $R_{k+1,k}:X_{\leq k+1}\to X_{\leq k}$ to be the retraction that collapses $X_{k+1}$ to $x_0$ ,
\item $R_{k}:X\to X_{\leq k}$ to be the retraction that collapses $\bigcup_{j>k}X_j$ to $x_0$.
\end{itemize}
The canonical induced map $X\to \varprojlim_{k}X_{\leq k}$, $x\mapsto (R_k(x))$ is a homeomorphism; we will sometimes identify $X$ with this inverse limit representation. 

We identify $\pi_1(X_{\leq k})$ with the free product $\ast_{j=1}^{k}\pi_1(X_j)$. If $\pi_1(X_j)=1$ for all but finitely many $j$, then we arrive at the finitely generated case $\pi_1(X)\cong\pi_1(X_{\leq k})$ for some $k$. To avoid this situation we will assume that $\pi_1(X_j)\neq 1$ for infinitely many $j$. By grouping and rearranging some of the $\pi_1(X_j)$, we may assume that $\pi_1(X_j)\neq 1$ for all $j$. In this case, $\pi_1(X)$ will be uncountable and not isomorphic to the infinite free product of the groups $\pi_1(X_j)$. We recall the two main approaches to characterizing the elements of $\pi_1(X)$: (1) the inverse limit/shape theoretic approach and (2) infinite reduced words.

The idea of the shape theoretic approach is to embed $\pi_1(X)$ into an inverse limit of the fundamental groups of the approximating projections. It is well-known that shrinking wedges of CW-complexes are $\pi_1$-shape injective in the following sense.

\begin{theorem}\cite{MM}\label{injectivetheorem}
If $X=\sw_{j\in\bbn}X_j$ is a shrinking wedge of CW-complexes, then the canonical homeomorphism $\phi_{X}:\pi_1(X)\to \varprojlim_{k}\pi_1(X_{\leq k})$, $\phi_{X}(\alpha)=((R_k)_{\#}(\alpha))$ is injective.
\end{theorem}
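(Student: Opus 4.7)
The plan is to show that $\ker\phi_X$ is trivial. I would fix a loop $\alpha:(I,\partial I)\to(X,x_0)$ with $(R_k)_{\#}[\alpha]=1$ in $\pi_1(X_{\leq k})$ for every $k\in\bbn$, and construct a based null-homotopy of $\alpha$.

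First, I would decompose $\alpha$ via its interactions with the wedge summands. Because each $X_j$ is a CW-complex, the singleton $\{x_j\}$ is closed in $X_j$, so $X_j\setminus\{x_0\}$ is open in $X$ under the shrinking-wedge topology. Consequently, the preimages $U_j=\alpha^{-1}(X_j\setminus\{x_0\})$ are pairwise disjoint open subsets of $I$ whose complement $\alpha^{-1}(x_0)$ is closed. Each connected component of $U_j$ is an open interval whose closure $\alpha$ sends to a loop in $(X_j,x_0)$, so $\alpha$ is expressed as a ``transfinite concatenation'' of loops $\alpha_i$ in summands $X_{n(i)}$, indexed linearly by the order the components occur in $I$. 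The projected loop $R_k\circ\alpha$ retains exactly the subloops $\alpha_i$ with $n(i)\leq k$ and collapses the rest. Van Kampen identifies $\pi_1(X_{\leq k})\cong\ast_{j=1}^{k}\pi_1(X_j)$, and $(R_k)_{\#}[\alpha]$ is the element of this free product obtained by reading off $[\alpha_i]\in\pi_1(X_{n(i)})$ in order and reducing via adjacent cancellations within each free factor. The hypothesis forces this reduction to yield the identity.

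The strategy to convert this algebraic vanishing into a topological null-homotopy of $\alpha$ is to construct a compatible system of null-homotopies $H_k:I\times I\to X_{\leq k}$ of $R_k\circ\alpha$ satisfying the coherence condition $R_{k+1,k}\circ H_{k+1}=H_k$ for every $k$. Using the homeomorphism $X\cong\varprojlim_k X_{\leq k}$ recalled in Section \ref{sectionshrinkingwedgeintro}, such a coherent family assembles into a single continuous map $H:I\times I\to X$ which is manifestly a based null-homotopy of $\alpha$. Continuity of $H$ at points mapped toward $x_0$ is guaranteed by the defining property of the shrinking-wedge topology: any neighborhood of $x_0$ must contain $X_j$ for all but finitely many $j$, so the tail contributions to $H_k$ arising from summands of large index are absorbed automatically by any such neighborhood.

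The main obstacle is the inductive construction of the coherent system $\{H_k\}$. An arbitrary null-homotopy of $R_{k+1}\circ\alpha$ need not descend via $R_{k+1,k}$ to a preassigned $H_k$. Instead, I would build $H_{k+1}$ by lifting $H_k$ across the collapsing retraction $R_{k+1,k}:X_{\leq k+1}\to X_{\leq k}$ while simultaneously contracting the additional subloops lying in $X_{k+1}$. The available flexibility is exactly the freedom in how cancellations among letters from $\pi_1(X_{k+1})$ interleave with the cancellation pattern already chosen at level $k$; making this inductive lift possible, continuous, and simultaneously realizing a reduction of the $\pi_1(X_{\leq k+1})$-word to the identity, is the technical heart of the argument and is where one must exploit the CW-structure on each $X_j$ together with the locally finite character of each $X_{\leq k}$.
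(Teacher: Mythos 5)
The paper does not prove this statement; it is quoted from Morgan--Morrison \cite{MM}, so there is no internal argument to compare against. Judged on its own, your proposal correctly identifies a viable target: a coherent family of based null-homotopies $H_k:I\times I\to X_{\leq k}$ of $R_k\circ\alpha$ with $R_{k+1,k}\circ H_{k+1}=H_k$ would indeed assemble, via the homeomorphism $X\cong\varprojlim_k X_{\leq k}$, into a null-homotopy of $\alpha$. The decomposition of $\alpha$ into subloops is also essentially right, modulo one point you should make explicit: for a fixed $j$ the set $U_j=\alpha^{-1}(X_j\setminus\{x_0\})$ may have infinitely many components, and one needs local contractibility of the CW-complex $X_j$ to conclude that all but finitely many of the corresponding subloops are null-homotopic in $X_j$, so that only finitely many nontrivial letters from each $\pi_1(X_j)$ appear and the ``read off and reduce'' description of $(R_k)_{\#}[\alpha]$ is legitimate.

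The genuine gap is the step you explicitly defer, and it is not a routine technicality --- it is the entire theorem. The induction ``choose $H_k$, then lift to $H_{k+1}$'' is obstructed: since $R_{k+1,k}$ is injective on $X_{\leq k+1}\setminus X_{k+1}$ and collapses $X_{k+1}$ to $x_0$, the condition $R_{k+1,k}\circ H_{k+1}=H_k$ forces $H_{k+1}$ to agree with $H_k$ on $I^2\setminus F_k$, where $F_k=H_k^{-1}(x_0)$, and to map the closed set $F_k$ into $X_{k+1}$ while extending prescribed boundary values (the $X_{k+1}$-subloops of $\alpha$ on $F_k\cap(I\times\{0\})$, the constant $x_0$ on the rest of $\partial I^2$). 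The set $F_k$ is an arbitrary compact subset of the square --- typically not locally connected --- and such an extension need not exist for an arbitrary prior choice of $H_k$: the hypothesis $(R_{k+1})_{\#}(\alpha)=1$ supplies \emph{some} null-homotopy of $R_{k+1}\circ\alpha$, but not one compatible with $H_k$. So the $H_k$ cannot be chosen greedily; either they must be constructed simultaneously with foresight, or a nontrivial extension lemma over the complementary domains of $F_k$ must be proved, and this is precisely the delicate content of \cite{MM}. A warning that the strategy cannot go through on soft grounds alone: the analogous map $\pi_1(P)\to\check{\pi}_1(P)$ fails to be injective for general Peano continua, so any correct argument must exploit the specific structure of the shrinking wedge (each $X_j\setminus\{x_0\}$ open, the summands shrinking). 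As written, your proposal is a correct plan with the decisive step acknowledged but not carried out.
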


Thus $\alpha\in \pi_1(X)$ is non-trivial if and only if there exists $k\in\bbn$ such that $(R_k)_{\#}(\alpha)\neq 1$ in the free product $\ast_{j=1}^{k}\pi_1(X_j)$.

The second approach assigns a unique infinite word to each element of $\pi_1(X)$. A \textit{word} is a function $w$ from a countable linearly ordered set $\ov{w}$ to $\bigcup_{j\in\bbn}\pi_1(X_j)$ (assuming $\pi_1(X_j)\cap \pi_1(X_{j'})=\{1\}$ when $j\neq j '$) such that $w^{-1}(\pi_1(X_j))$ is finite for all $j\in\bbn$. If $v$ is another word and there is an order isomorphism $\kappa:\ov{w}\to \ov{v}$ such that $v\circ \kappa=w$, then we consider $w$ and $v$ isomorphic (and write $w\cong v$). The collection of all isomorphism classes words $\mathscr{W}$ is a set. 

Given a word $w$ and finite set $F\subseteq \bbn$, we define the projection word $w_F:\ov{w_F}\to \bigcup_{j\in\bbn}\pi_1(X_j)$ to be the finite word obtained by deleting all letters in $\pi_1(X_j)$, $j\notin F$. More precisely, $\ov{w_F}=\{\ell\in \ov{w}\mid w(\ell)\in \bigcup_{j\in F}\pi_1(X_j)\}$ and $\ov{w_F}(\ell)=\ov{w}(\ell)$ whenever $\ell\in\ov{w_F}$. We may regard $w_F$ as an unreduced word representing an element of the free product $\ast_{j\in F}\pi_1(X_j)$.

Given $w,v\in \mathscr{W}$, we write $w\sim v$ if for every finite subset $F\subseteq \bbn$, the reduced representatives of $w_F$ and $v_F$ in $\ast_{j\in F}\pi_1(X_j)$ are equal. Since $\sim$ is an equivalence relation on $\mathscr{W}$, we let $[w]$ denote the equivalence class of $w$. The set $\varoast_{j\in\bbn}\pi_1(X_j)=\mathscr{W}/\mathord{\sim}$ becomes a group with the operation $[w][v]=[wv]$ where $wv$ is the concatenation of the reduced words with $\ov{wv}$ defined as the linear order sum $\ov{w}+\ov{v}$. The identity $e$ or ``empty word" is the equivalence class of the identity on the 1-point ordered set $\{1\}\to \{1\}$.

A word $w\in\mathscr{W}$ is \textit{reduced} if (1) whenever $w=avb$, we have $[v]\neq e$ and (2) whenever $\ell,\ell '$ are consecutive elements in $\ov{w}$, $w(\ell)$ and $w(\ell ')$ lie in distinct groups $\pi_1(X_j)$. Intuitively, $w$ is reduced if it has no trivial subwords (including $w$ itself) and if it is not possible to combine any existing consecutive letters. It is known that for every word $w\in\mathscr{W}$, there exists a reduced word $v$, unique up to isomorphism, such that $[w]=[v]$ (see \cite[Theorem 1.4]{Edafreesigmaproducts}).

The projection maps $\varoast_{j}\pi_1(X_j)\to \pi_1(X_{\leq k})$, $[w]\mapsto [w_F]$ where $F=\{1,2,\dots,k\}$ agree with the bonding maps $(R_{k+1,k})_{\#}:\pi_1(X_{\leq k+1})\to \pi_1(X_{\leq k})$ and induce a homomorphism $\psi:\varoast_{j}\pi_1(X_j)\to \varprojlim_{k}\pi_1(X_{\leq k})$ such that $\im(\psi)=\im(\phi_{X})$. 

Given a non-constant loop $\beta:\ui \to X$ based at $x_0$, let $\ov{\beta}$ be the set of connected components of $\beta^{-1}(X\backslash\{x_0\})$ with the linear ordering inherited from $\ui$. There is a well-defined word $w_{\beta}:\ov{\beta}\to \bigcup_{j\in \bbn}\pi_1(X_j)$ given by $w_{\beta}((a,b))=[\alpha|_{[a,b]}]$. Now $\chi([\beta])=[w_{\beta}]$ defines a group isomorphism satisfying $\psi\circ \chi=\phi_{X}$.
\[\xymatrix{
G \ar[dr]_-{\chi} \ar@{^{(}->}[r]^-{\phi_{X}} & \varprojlim_{k}\pi_1(X_{\leq k})\\
& \varoast_{j}\pi_1(X_j) \ar@{^{(}->}[u]_-{\psi}
}\]

\begin{definition}
We say that a loop $\beta:\ui \to X$ based at $x_0$ is \textit{reduced} if $\beta$ is constant or if $w_{\beta}$ is a reduced word in $\mathscr{W}$.
\end{definition}

Considering the above diagram, it follows that every loop $\alpha:\ui\to X$ based at $x_0$ is path-homotopic to a reduced loop $\beta$. Moreover, reduced loop representatives of homotopy classes are unique in the following sense: if $\beta$ and $\gamma$ are path-homotopic reduced loops, then there is an order-isomorphism $\kappa:\ov{\beta}\to \ov{\gamma}$, such that if $(a,b)\in \ov{\beta}$ and $\kappa((a,b))=(c,d)\in\ov{\gamma}$, then $\beta|_{[a,b]}\simeq \gamma|_{[c,d]}$ as loops in one of the spaces $X_j$. Therefore, if $\alpha\in\pi_1(X)$, we may also use the symbol $\alpha$ to denote a choice of reduced loop in $\alpha$.

\subsection{The locally path connected coreflection}

Because inverse limits of locally path connected spaces are not always locally path connected, we require the following construction.

\begin{definition}
The \textit{locally path-connected coreflection} of a space $X$ is the space $\lpc(X)$ with the same underlying set as $X$ but with topology generated by the basis consisting of all path components of the open sets in $X$.
\end{definition}

The topology of $\lpc(X)$ is finer than that of $X$ thus the identity function $id:\lpc(X)\to X$ is continuous. It is well-known that $\lpc(X)$ is locally path connected and that $\lpc(X)=X$ if and only if $X$ is already locally path connected. The construction of $\lpc(X)$ defines a functor $\lpc:\mathbf{Top}\to\mathbf{Lpc}$ from the category of topological spaces to the full subcategory of locally path connected spaces. This functor is a coreflection in the sense that $\lpc$ is right adjoint to the inclusion functor $\mathbf{Lpc}\to\mathbf{Top}$. In other words, if $Z$ is locally path connected, then a function $f:Z\to X$ is continuous if and only if $f:Z\to \lpc(X)$ is continuous. In particular, $X$ and $\lpc(X)$ share the same set of continuous functions from $I^n$. It follows that $id:\lpc(X)\to X$ is a bijective weak homotopy equivalence.

Since the direct product of locally path-connected spaces is locally path connected, $\lpc(\prod_{j}X_j)\cong \prod_{j}\lpc(X_j)$ in $\mathbf{Top}$. In particular, $\lpc(X\times \ui)=\lpc(X)\times \ui$ allows one to prove the following proposition.

\begin{proposition}\label{lpcheprop}
If $f:X\to Y$ and $g:Y\to X$ are (based or unbased) homotopy inverses, then so are $\lpc(f):\lpc(X)\to \lpc(Y)$ and $\lpc(g):\lpc(Y)\to \lpc(X)$.
\end{proposition}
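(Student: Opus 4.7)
The plan is to apply the functor $\lpc$ directly to the homotopies witnessing that $f$ and $g$ are homotopy inverses, then use the identity $\lpc(X\times \ui)=\lpc(X)\times \ui$ stated immediately before the proposition to reinterpret the resulting continuous maps as homotopies in $\mathbf{Lpc}$.

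Concretely, I would start with a homotopy $H\colon X\times \ui\to X$ satisfying $H_0=id_X$ and $H_1=g\circ f$. Functoriality of $\lpc$ produces a continuous map $\lpc(H)\colon \lpc(X\times \ui)\to \lpc(X)$, and under the identification $\lpc(X\times \ui)=\lpc(X)\times \ui$, this becomes a continuous map $\lpc(X)\times \ui\to \lpc(X)$. Because $\lpc$ acts as the identity on underlying sets and on underlying continuous functions, $\lpc(H)$ has the same values as $H$; in particular, its time-$0$ slice equals $\lpc(id_X)=id_{\lpc(X)}$, and its time-$1$ slice equals $\lpc(g\circ f)=\lpc(g)\circ \lpc(f)$ by functoriality. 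Hence $\lpc(H)$ witnesses $\lpc(g)\circ \lpc(f)\simeq id_{\lpc(X)}$, and the symmetric argument applied to a homotopy $id_Y\simeq f\circ g$ yields $\lpc(f)\circ \lpc(g)\simeq id_{\lpc(Y)}$.

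For the based case, $\lpc(H)$ and $H$ agree as set-theoretic functions, so $\lpc(H)$ is constant on the basepoint precisely when $H$ is; thus based homotopies pass through $\lpc$ unchanged, and based homotopy inverses lift to based homotopy inverses. The argument is entirely formal once the two ingredients, functoriality of $\lpc$ and the identity $\lpc(X\times \ui)=\lpc(X)\times \ui$, are in hand; there is no substantial obstacle, and the only conceptual point is the need for $\lpc$ to commute with the interval factor in order to reread the domain of $\lpc(H)$ as $\lpc(X)\times \ui$.
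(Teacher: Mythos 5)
Your argument is correct and is exactly the proof the paper intends: the paper omits the proof but points to the identity $\lpc(X\times \ui)=\lpc(X)\times \ui$ as the key ingredient, and you combine it with functoriality of $\lpc$ (which is the identity on underlying sets and functions) in precisely the expected way, including the observation that basepoint-preservation is automatic. No gaps.
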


One should be wary of limits of inverse systems in $\mathbf{Lpc}$ because inverse limits of locally path connected spaces in $\mathbf{Lpc}$ and $\mathbf{Top}$ do not always agree. If $\varprojlim_{j}X_j$ is an inverse limit in $\mathbf{Top}$ of locally path connected spaces (viewed as a subspace of $\prod_{j}X_j$), then $\lpc(\varprojlim_{j}X_j)$ is the space that gives the limit of the same inverse system in $\mathbf{Lpc}$.

\subsection{Generalized universal covering maps}

When each $X_j$ is a connected, non-simply connected CW-complex, $\sw_{j}X_j$ will not have a universal covering space. However $\sw_{j}X_j$ always admits a generalized universal covering space in the sense of Fischer-Zastrow \cite{FZ07}. The idea behind this notion of  ``generalized (universal) covering map" is to use the lifting properties of covering maps as the definition and work internal to the category of path-connected, locally path-connected spaces.

\begin{definition}\label{gencovdef}
A map $q:E\to X$ is a \textit{generalized covering map} if $E$ is non-empty, path connected, and locally path connected and if for any map $f:(Y,y)\to (X,x)$ from a path-connected, locally path-connected space $Y$ and point $e\in q^{-1}(x)$ such that $f_{\#}(\pi_1(Y,y))\leq q_{\#}(\pi_1(E,e))$, there is a unique map $\wt{f}:(Y,y)\to (E,e)$ such that $q\circ \wt{f}=f$. Moreover, if $E$ is simply connected, we call $q$ a \textit{generalized universal covering map} and $E$ a \textit{generalized universal covering space}.
\end{definition}

Unlike ordinary covering maps, based generalized covering maps are closed under composition and form a complete category \cite{Brazcat}. The following proposition follows immediately from the definition and standard covering space theory arguments.

\begin{proposition}\label{coveringisomorphism}
If $q:(E,e_0)\to (X,x_0)$ satisfies all properties of being a generalized covering map except for the assumption that $E$ is locally path connected, then the induced homomorphism $q_{\#}:\pi_n(E,e_0)\to \pi_n(X,x_0)$ is an injection for $n=1$ and an isomorphism for all $n\geq 2$.
\end{proposition}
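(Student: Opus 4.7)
The plan is to invoke the unique lifting property of $q$ only against the nice source domains $I$, $I^n$, and $I^n \times I$, each of which is simply connected and locally path-connected. Since the hypothesis $f_\#(\pi_1(Y,y))\leq q_\#(\pi_1(E,e_0))$ is automatic whenever $Y$ is simply connected, every map into $X$ from one of these cubes lifts uniquely to $E$ once a basepoint preimage is fixed, regardless of whether $E$ itself is locally path-connected.

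For injectivity of $q_\#$ on $\pi_1$, I would take a loop $\alpha$ at $e_0$ with $q\circ\alpha$ null-homotopic via $H:I\times I\to X$ and lift $H$ to $\tilde H:I\times I\to E$ based at $e_0$. Uniqueness of path lifts identifies $\tilde H(\cdot,0)$ with $\alpha$, and applying the same uniqueness in turn to the three remaining sides of the boundary square (each mapped by $H$ to $x_0$ and meeting a corner already known to lift to $e_0$) forces $\tilde H$ to be constantly $e_0$ on the rest of $\partial(I\times I)$. Hence $\tilde H$ is a null-homotopy of $\alpha$ in $E$ rel endpoints.

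For $n\geq 2$, I would handle surjectivity by lifting any representative $f:(I^n,\partial I^n)\to(X,x_0)$ to $\tilde f:I^n\to E$ with $\tilde f(\mathbf{0})=e_0$. The key step is $\tilde f(\partial I^n)=\{e_0\}$: using that $\partial I^n$ is path-connected for $n\geq 2$, for any $y\in\partial I^n$ I pick a path $\gamma$ in $\partial I^n$ from $\mathbf{0}$ to $y$; then $\tilde f\circ\gamma$ is a path in $E$ from $e_0$ projecting to the constant loop $c_{x_0}$, so uniqueness of path lifts forces $\tilde f\circ\gamma\equiv e_0$ and $\tilde f(y)=e_0$. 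Injectivity for $n\geq 2$ is analogous: a homotopy $H:I^n\times I\to X$ rel $\partial I^n$ between $q\circ\tilde f$ and $q\circ\tilde g$ lifts to $\tilde H:I^n\times I\to E$ based at $e_0$, the same path-connectedness argument applied to $\partial I^n\times I$ shows $\tilde H$ is constantly $e_0$ there, and uniqueness of lifts from $I^n$ identifies $\tilde H(\cdot,0)=\tilde f$ and $\tilde H(\cdot,1)=\tilde g$.

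The main obstacle to keep in mind is the potential failure of local path-connectedness of $E$, which normally limits when the generalized covering lifting property can be applied; the strategy dodges this by always using $E$ as a target with simply connected cubical sources. A secondary subtlety is that the hypothesis $n\geq 2$ enters essentially only through the path-connectedness of $\partial I^n$ and $\partial I^n\times I$, which is exactly what fails when $n=1$ and explains why only injectivity, not an isomorphism, is claimed in that case.
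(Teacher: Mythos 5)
Your argument is correct and is exactly the ``standard covering space theory argument'' that the paper invokes without writing out (the paper omits the proof of Proposition \ref{coveringisomorphism} entirely): all lifts are taken against the simply connected, locally path-connected cubes $I$, $I^n$, $I^n\times I$, for which the subgroup hypothesis is vacuous, and uniqueness of path lifts pins down the boundary behavior. The observation that $n\geq 2$ enters only through path-connectedness of $\partial I^n$, which is what restricts the $n=1$ case to injectivity, is also the right diagnosis.
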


Every (universal) covering map (in the usual sense) $p:E\to X$ where $E$ is path connected and $X$ is locally path connected is a generalized (universal) covering map. If $p:E\to X$ is a generalized universal covering map, then $p$ is an ordinary covering map if and only if $X$ is semilocally simply connected. We recall the following standard construction from covering space theory \cite{Spanier66}.

\begin{definition}\label{whiskertopologydef}
Whenever $X$ is a space with given basepoint $x_0\in X$, let $\wt{X}$ be the space of path-homotopy classes $[\alpha]$ of paths $\alpha:(\ui,0)\to (X,x_0)$. An open neighborhood of $[\alpha]$ is a set of the form $N([\alpha],U)=\{[\alpha\cdot\epsilon]\mid \epsilon(\ui)\subseteq U\}$ where $U$ is an open neighborhood of $\alpha(1)$ in $X$. This topology is the so-called \textit{whisker topology} on $\wt{X}$. The homotopy class of the constant path at $x_0$, which we denote as $\wt{x}_0$, is the basepoint of $\wt{X}$.
\end{definition}

The endpoint projection map $p:\tX\to X$, $p([\alpha])=\alpha(1)$ is a continuous surjection, which is open if and only if $X$ is locally path connected and provides a candidate for a generalized universal covering map. 

\begin{remark}[Standard Lifts of Paths]\label{standardliftremark}
Every path in $X$ lifts uniquely to $\wt{X}$ relative to a chosen starting point. Suppose $[\beta]\in\wt{X}$ and $\alpha:(I,0)\to (X,\beta(1))$ is a path. Define paths $\alpha_s:\ui\to X$, $s\in\ui$ by $\alpha_s(t)=\alpha(st)$. The function $\wt{\alpha}:(I,0)\to (\wt{X},[\beta])$, $\wt{\alpha}(s)=[\beta\cdot\alpha_s]$ defines a continuous lift of $\alpha$ starting at $[\beta]$ (c.f. \cite[Lemma 2.4]{FZ07}), which we refer to as a \textit{standard lift} of $\alpha$.
\end{remark}

Remark \ref{standardliftremark} ensures that $p:\tX\to X$ always has path-lifting. According to \cite[Prop. 2.14]{FZ07}, $p:\tX\to X$ is a generalized universal covering map if and only if $p$ has the unique path-lifting property, that is, if the lift described in Remark \ref{standardliftremark} is the \textit{only} lift of $\alpha$ starting at $[\beta]$. In general, this does not have to happen \cite[Example 2.7]{FZ07}. However, many sufficient conditions are known.

\begin{theorem}\cite{FZ07}
If $X$ is metrizable and path-connected, $x_0\in X$, and the canonical homomorphism $\phi:\pi_1(X,x_0)\to \check{\pi}_1(X,x_0)$ to the first shape homotopy group is injective, then $p:\tX\to X$ is a generalized universal covering map.
\end{theorem}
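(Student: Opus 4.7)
The plan is to reduce the claim to the unique path-lifting property of $p:\tX\to X$, and then to deduce unique path-lifting from the intermediate property that $X$ is \emph{homotopically Hausdorff}. First I would verify the ``easy'' structural properties of $\tX$ in the whisker topology: path-connectivity (every $[\alpha]\in\tX$ is joined to $\wt{x}_0$ by the standard lift of $\alpha$), simple-connectivity (any loop $\wt{\alpha}$ at $\wt{x}_0$ projects to a loop $\alpha$ whose endpoint is $[\alpha]=\wt{x}_0$, so $\alpha$ is nullhomotopic and hence $\wt{\alpha}$ is nullhomotopic via the canonical homotopy), and local path-connectivity (choose basic neighborhoods $N([\alpha],U)$ with $U$ path-connected, available since $X$ is metrizable). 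Existence of lifts is provided by Remark \ref{standardliftremark}, so by the characterization cited immediately before this theorem (Prop.\ 2.14 of \cite{FZ07}) the whole claim reduces to showing $p$ has unique path-lifting.

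For the homotopically Hausdorff property, I would use metrizability to represent the shape of $X$ by an inverse sequence of nerves: choose a cofinal sequence of open covers $\mcu_n$ of $X$ with mesh tending to $0$, so that $\check{\pi}_1(X,x_0)=\varprojlim_n \pi_1(N(\mcu_n),x_{0})$. Given a non-trivial $[\alpha]\in\pionex$, injectivity of $\phi$ produces an $n$ with $(p_n)_{\#}([\alpha])\neq 1$ in $\pi_1(N(\mcu_n))$, where $p_n:X\to N(\mcu_n)$ is the canonical map. Taking $V$ to be the star of $x_0$ in $\mcu_n$ yields a neighborhood of $x_0$ in which every loop is carried to a nullhomotopic loop in $N(\mcu_n)$ and therefore cannot represent $[\alpha]$. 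The same argument, applied after conjugating by a path from $x_0$ to any other point $x\in X$, shows that $X$ is homotopically Hausdorff at every basepoint; this point-to-point transfer is clean because changing the basepoint does not affect metrizability or shape-injectivity.

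The main obstacle is extracting unique path-lifting from this local property. Suppose $\wt{\alpha}_1,\wt{\alpha}_2:\ui\to \tX$ are distinct lifts of a path $\alpha:\ui\to X$ sharing the initial value. Let $s^{\ast}=\sup\{s\in\ui\mid \wt{\alpha}_1|_{[0,s]}=\wt{\alpha}_2|_{[0,s]}\}$; continuity of the lifts in the whisker topology forces $\wt{\alpha}_1(s^\ast)=\wt{\alpha}_2(s^\ast)$, so after restricting and shifting we may assume $s^\ast=0$ and the lifts disagree on every interval $[0,\delta]$. Writing $\wt{\alpha}_i(s)=[\gamma_{i,s}]$, the concatenations $\gamma_{1,s}\cdot \gamma_{2,s}^{-}$ are loops at $\alpha(s)$ whose path-homotopy classes are non-trivial for $s>0$ but can be realized inside arbitrarily small neighborhoods of $\alpha(0)$ by virtue of the whisker topology's defining neighborhood basis. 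This directly contradicts the homotopically Hausdorff property at $\alpha(0)$. The delicate part of this final argument is tracking the correct basepoint and keeping the homotopy classes coherent as $s\downarrow 0$; I would handle this by working inside a single basic whisker neighborhood $N([\gamma_{1,0}],U)\cap N([\gamma_{2,0}],U)$ with $U$ a path-connected neighborhood of $\alpha(0)$ in which the non-trivial class $[\gamma_{1,s}\cdot\gamma_{2,s}^{-}]$ is forbidden to be represented by a loop, obtaining the desired contradiction and completing the proof.
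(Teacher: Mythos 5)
This statement is quoted from \cite{FZ07}; the paper itself gives no proof, so your proposal has to be measured against the known argument of Fischer--Zastrow. Your first reduction (via their Prop.\ 2.14, to unique path lifting) and your derivation of the homotopically Hausdorff property from $\pi_1$-shape injectivity are both fine. The genuine gap is in the last step: \emph{homotopically Hausdorff does not imply unique path lifting}, and your final paragraph illustrates exactly why the implication fails. After normalizing to $s^{\ast}=0$, what you actually produce is, for each small $s>0$ and each neighborhood $U$ of $\alpha(0)$, a nontrivial class of the form $[\epsilon_{1,s}\cdot\epsilon_{2,s}^{-}]$ represented by a loop in $U$ --- but this class \emph{depends on $s$ and on $U$}. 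The homotopically Hausdorff property only forbids a single \emph{fixed} nontrivial class from being represented in arbitrarily small neighborhoods; it is perfectly consistent with the existence of ``small loops,'' i.e.\ with every neighborhood of $\alpha(0)$ carrying some nontrivial class. Spaces that are homotopically Hausdorff but fail unique path lifting exist, so no argument of the shape you sketch can close this step. (A secondary slip: $\gamma_{1,s}\cdot\gamma_{2,s}^{-}$ is a loop at the common initial point $x_0$ of the representing paths, not at $\alpha(s)$; and metrizability of $X$ does not by itself give path-connected basic sets $U$, though that issue is absorbed into the cited Prop.\ 2.14.)

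The fix is to use shape injectivity globally along the path rather than funneling it through the local homotopically Hausdorff property. Given two lifts $\wt{\alpha}_1,\wt{\alpha}_2$ with $\wt{\alpha}_i(s)=[\gamma_{i,s}]$ and $\wt{\alpha}_1(0)=\wt{\alpha}_2(0)$, push everything into the nerves: for each cover $\mcu$ in the \v{C}ech expansion, the polyhedron $N(\mcu)$ has an honest universal covering with unique path lifting, and the two maps $s\mapsto[\,p_{\mcu}\circ\gamma_{i,s}\,]$ are lifts of $p_{\mcu}\circ\alpha$ to $\wt{N(\mcu)}$ agreeing at $s=0$; hence they coincide, so $(p_{\mcu})_{\#}[\gamma_{1,s}\cdot\gamma_{2,s}^{-}]=1$ for every $\mcu$ and every $s$. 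Injectivity of $\phi$ then forces $[\gamma_{1,s}]=[\gamma_{2,s}]$ for all $s$, which is unique path lifting. This is the mechanism Fischer--Zastrow use, and it is the same mechanism the present paper replays in Remark \ref{ilimremark} for the special case of shrinking wedges, where the nerves are replaced by the finite wedges $X_{\leq k}$ and their universal covers.
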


In the case of a shrinking wedge $X=\sw_{j\in\bbn}X_j$ of CW-complexes $X_j$, the homomorphism $\phi:\pi_1(X,x_0)\to \check{\pi}_1(X,x_0)$ is precisely that from Theorem \ref{injectivetheorem}.

\begin{corollary}
Every shrinking wedge of CW-complexes admits a generalized universal covering space.
\end{corollary}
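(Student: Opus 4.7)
The plan is to apply the preceding Fischer--Zastrow theorem directly to $X=\sw_{j\in\bbn}X_j$. Two of its hypotheses come essentially for free: $X$ is path-connected because every wedge summand $X_j$ is path-connected and contains the wedgepoint, and the paragraph immediately before the corollary already identifies the canonical shape-comparison homomorphism $\phi:\pi_1(X,x_0)\to\check{\pi}_1(X,x_0)$ with the map $\phi_X$ of Theorem \ref{injectivetheorem}, which is injective. The only substantive condition left to check is that $X$ is metrizable.

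For metrizability, I would equip each wedge summand $X_j$ with a bounded metric $d_j$ generating its topology, rescaled so that $\operatorname{diam}_{d_j}(X_j)\le 1/j$, and amalgamate these into a single metric on $X$: set $d(x,y)=d_j(x,y)$ when $x,y\in X_j$, and $d(x,y)=d_j(x,x_0)+d_{j'}(x_0,y)$ when $x\in X_j$ and $y\in X_{j'}$ with $j\ne j'$. The triangle inequality reduces to a short case analysis. The topology restricted to each $X_j$ is then $d_j$ by construction, and the topology at $x_0$ matches the defining shrinking-wedge condition: a metric ball $B_\epsilon(x_0)$ of sufficiently small radius automatically contains every summand $X_j$ whose rescaled diameter is below $\epsilon$, so it contains all but finitely many of them.

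With metrizability in hand, the Fischer--Zastrow theorem applies verbatim and produces the generalized universal covering $p:\wt{X}\to X$ built from the whisker-topology construction of Definition \ref{whiskertopologydef}, completing the argument.

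The main, and admittedly mild, obstacle is the initial step of the metric construction: it requires each connected CW-complex appearing in the wedge to be metrizable with a bounded metric. This is automatic for the locally finite, countable CW-complexes that arise in practice (and which are implicit in the surrounding literature on shrinking wedges), but in a setting of full generality one would either impose this as a standing assumption on the summands or reduce to it by replacing each $X_j$ with a homotopy equivalent metrizable model before forming the shrinking wedge.
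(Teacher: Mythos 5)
Your proposal is essentially the paper's own (implicit) proof: the corollary is presented as an immediate consequence of the quoted Fischer--Zastrow theorem together with the $\pi_1$-shape injectivity supplied by Theorem \ref{injectivetheorem}, and your verification of path-connectedness and injectivity matches what the paper takes for granted. Your explicit metric construction is correct and does show that a shrinking wedge of metrizable summands is metrizable, which is the one hypothesis the paper leaves unaddressed. The caveat you raise about non-metrizable (i.e.\ non-locally-finite) CW-complexes is legitimate --- the paper itself concedes in Remark \ref{metrizableremark} that CW-complexes need not be metrizable, so as literally stated the cited theorem does not cover every case of the corollary; the standard repair is that the Fischer--Zastrow unique-path-lifting result holds more generally for $\pi_1$-shape injective paracompact Hausdorff spaces (which covers all CW-complexes and their shrinking wedges), or alternatively the reduction to a metrizable model that you suggest. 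So your argument is sound, and your flagged ``obstacle'' is a genuine gap in the paper's exposition rather than in your proof.
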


The next theorem guarantees ensures that whenever a generalized universal covering map exists, it may be constructed as in Definition \ref{whiskertopologydef}.

\begin{theorem}\cite[Section 5]{Brazcat}\label{structurethm}
If there exists a generalized universal covering map $q:(E,e_0)\to (X,x_0)$, then there exists a homeomorphism $h:(\tX,\tx_0)\to (E,e_0)$ such that $q\circ h=p$.
\end{theorem}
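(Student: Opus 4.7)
The plan is to define $h:\wt{X}\to E$ by sending $[\alpha]$ to the endpoint of the unique lift of $\alpha$ through $q$, and then to verify bijectivity and continuity in both directions. For $[\alpha]\in\wt{X}$, the path $\alpha:(I,0)\to(X,x_0)$ has a domain that is simply connected and locally path-connected, so by Definition \ref{gencovdef} there is a unique continuous lift $\wt{\alpha}:(I,0)\to(E,e_0)$ with $q\circ\wt{\alpha}=\alpha$; set $h([\alpha])=\wt{\alpha}(1)$. Well-definedness follows by lifting any path-homotopy $H:I\times I\to X$ through $q$ (since $I\times I$ is simply connected and locally path-connected) and observing that the trace $\wt{H}(1,\cdot)$ is a lift of the constant path $\alpha(1)=\beta(1)$ and hence itself constant by uniqueness, so $\wt\alpha(1)=\wt\beta(1)$. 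The identities $q\circ h=p$ and $h(\wt{x}_0)=e_0$ are immediate.

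Bijectivity follows quickly from the hypotheses on $E$. Surjectivity uses path-connectedness: for $e\in E$, choose any path $\gamma$ from $e_0$ to $e$; by uniqueness of lifts, $\gamma$ equals the lift of $q\circ\gamma$, so $h([q\circ\gamma])=e$. Injectivity uses simple-connectedness: if $h([\alpha])=h([\beta])$, then $\wt{\alpha}$ and $\wt{\beta}$ are path-homotopic in $E$, and projecting such a homotopy through $q$ yields $[\alpha]=[\beta]$ in $\wt{X}$. For continuity of $h^{-1}:E\to\wt{X}$, local path-connectedness of $E$ does the work: given a basic open $N([q\circ\gamma_e],U)$ of $h^{-1}(e)$ in $\wt{X}$, the path-component $W$ of $e$ in $q^{-1}(U)$ is open in $E$, and for any $e'\in W$ a path $\delta$ from $e$ to $e'$ inside $W$ witnesses $h^{-1}(e')=[(q\circ\gamma_e)\cdot(q\circ\delta)]\in N([q\circ\gamma_e],U)$.

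The main obstacle is continuity of $h$. Given $[\alpha]\in\wt{X}$ and an open $W\subseteq E$ containing $e:=h([\alpha])$, one must produce an open $U\subseteq X$ with $q(e)\in U$ and $h(N([\alpha],U))\subseteq W$. A direct computation, using the decomposition of the lift of $\alpha\cdot\epsilon$ as $\wt{\alpha}$ concatenated with the unique $q$-lift of $\epsilon$ starting at $e$, identifies $h(N([\alpha],U))$ with the path-component of $e$ in $q^{-1}(U)$: the inclusion $\subseteq$ is immediate, and $\supseteq$ follows from unique path-lifting, since any path from $e$ inside $q^{-1}(U)$ is itself the lift of its $q$-projection. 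The problem therefore reduces to the local property of $q$: for every open $W\ni e$ in $E$, there is an open $U\ni q(e)$ in $X$ whose preimage $q^{-1}(U)$ has its $e$-path-component contained in $W$. I expect this local step to be the most delicate; it is not automatic from continuity of $q$ alone, but may be extracted from unique path-lifting together with the local path-connectedness of $E$, by starting from a path-connected open neighborhood of $e$ inside $W$ and carefully producing a corresponding $U$ in $X$. Once this is established, $h$ is a continuous bijection with continuous inverse, hence the desired homeomorphism satisfying $q\circ h=p$ and $h(\wt{x}_0)=e_0$.
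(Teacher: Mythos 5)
The paper does not actually prove this statement --- it is imported from \cite{Brazcat}, so there is no internal proof to compare against; I will assess your argument on its own. The parts you carry out are correct and standard: the definition $h([\alpha])=\wt{\alpha}(1)$ via the unique $q$-lift, well-definedness by lifting a square homotopy and using uniqueness of lifts of constant paths, surjectivity from path connectedness of $E$, injectivity from simple connectedness of $E$, the identity $q\circ h=p$, and the continuity of $h^{-1}$ using that path components of $q^{-1}(U)$ are open in the locally path-connected space $E$.

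The genuine gap is exactly where you flag it: continuity of $h$. Your identification of $h(N([\alpha],U))$ with the path component $P_U(e)$ of $e=h([\alpha])$ in $q^{-1}(U)$ is correct, but the ``local property of $q$'' you then need --- every open $W\ni e$ contains some $P_U(e)$ --- is precisely the statement that the sets $P_U(e)$ form a neighborhood basis at $e$, which is equivalent to the continuity of $h$; the reduction therefore makes no progress, and the theorem's entire content sits in this unproved step. The suggested repair (start with a path-connected open neighborhood of $e$ inside $W$ and ``produce a corresponding $U$ in $X$'') is not viable as stated: $q$ is not assumed to be open, so neighborhoods in $E$ cannot be pushed down to neighborhoods in $X$, and a broom-type argument against unique path lifting would require a countable neighborhood basis at $q(e)$, which is unavailable since $X$ is not assumed first countable. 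The standard way to close the gap is global rather than local: one observes that $\wt{X}$ with the whisker topology is path connected and locally path connected (each $N([\alpha],U)$ is path connected via the standard lifts of Remark \ref{standardliftremark}), establishes the lemma that $p_{\#}(\pi_1(\wt{X},\wt{x}_0))\leq q_{\#}(\pi_1(E,e_0))=1$ --- itself a nontrivial point --- and then applies the defining lifting property of $q$ (Definition \ref{gencovdef}) to $Y=\wt{X}$ and $f=p$ to obtain a continuous map $\wt{X}\to E$ over $X$; restricting to standard lifts and invoking uniqueness of path lifts shows this map coincides with your pointwise $h$. Without an argument of this kind, the proof is incomplete at its central step.
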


We will also have need of the following separation axiom.

\begin{lemma}\cite[Lemmas 2.10 and 2.11]{FZ07}
If $p:\tX\to X$ is a generalized universal covering map where $X$ is Hausdorff, then $\tX$ is Hausdorff.
\end{lemma}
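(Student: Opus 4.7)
The plan is to separate two distinct points $[\alpha]\neq[\beta]$ of $\tX$ by disjoint open neighborhoods, splitting into two cases based on whether their projections $\alpha(1)=p([\alpha])$ and $\beta(1)=p([\beta])$ coincide.

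First I would handle the easy case $\alpha(1)\neq\beta(1)$. Using Hausdorffness of $X$, choose disjoint open sets $U\ni\alpha(1)$ and $V\ni\beta(1)$. The basic neighborhoods $N([\alpha],U)$ and $N([\beta],V)$ are then disjoint, since $p(N([\alpha],U))\subseteq U$ and $p(N([\beta],V))\subseteq V$. This case only uses continuity of $p$ and the Hausdorff property of $X$, and not unique path lifting.

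The main obstacle is the case $\alpha(1)=\beta(1)=x$, where I need an open $U\ni x$ with $N([\alpha],U)\cap N([\beta],U)=\emptyset$. My first step would be to translate non-separation into a concrete loop condition: $N([\alpha],U)\cap N([\beta],U)\neq\emptyset$ if and only if there exist paths $\epsilon,\delta:\ui\to U$ starting at $x$ with $[\alpha\cdot\epsilon]=[\beta\cdot\delta]$, equivalently, a loop $\eta=\epsilon\cdot\delta^{-}$ in $U$ based at $x$ with $[\eta]=[\alpha^{-}\cdot\beta]$ in $\pi_1(X,x)$. The reverse direction here is an application of Remark \ref{standardliftremark}: the standard lift of $\eta$ at $[\alpha]$ is, by unique path lifting, the unique lift, and it terminates at $[\alpha\cdot\eta]=[\beta]$, so it witnesses the non-empty intersection.

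Arguing by contradiction, I assume no separating $U$ exists, producing for every open $U\ni x$ a loop $\eta_U$ in $U$ at $x$ with $[\eta_U]=[\alpha^{-}\cdot\beta]$, and thereby a standard lift $\wt{\eta}_U$ from $[\alpha]$ to $[\beta]$ confined to $p^{-1}(U)$. To force a contradiction I would exploit unique path lifting together with simple connectedness of $\tX$: two such lifts $\wt{\eta}_U$ and $\wt{\eta}_V$ are path-homotopic rel endpoints in $\tX$, and any path-homotopy between $\eta_U$ and $\eta_V$ in $X$ lifts (using that $\ui^2$ is simply connected and applying the lifting criterion of Definition \ref{gencovdef}) to such a homotopy in $\tX$. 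The hard part is then making the ``squeezing'' step precise without first-countability of $X$: I would concentrate the argument on a single carefully chosen loop supported in an intersection of neighborhoods, so that its standard lift from $[\alpha]$ must coincide with the constant lift from $[\beta]$ on the one hand and terminate at $[\beta]\neq[\alpha]$ on the other, violating uniqueness of lifts. I expect this squeezing to be the most delicate step, and I would anticipate formulating it in net- or filter-theoretic language rather than sequentially.
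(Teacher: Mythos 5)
First, a point of reference: the paper does not prove this lemma itself --- it is imported verbatim from Fischer--Zastrow \cite{FZ07} --- so your proposal has to be measured against the standard argument there. Your Case 1 is correct as stated, and your reduction of Case 2 is also correct: when $\alpha(1)=\beta(1)=x$, the failure of every pair $N([\alpha],U)$, $N([\beta],U)$ to separate is equivalent to the nontrivial class $g=[\alpha^{-}\cdot\beta]\in\pi_1(X,x)$ having a representative loop $\eta_U$ inside every neighborhood $U$ of $x$.

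The gap is the final step, which you explicitly leave open. The ``squeezing'' you describe --- concentrating on a single loop supported in an intersection of neighborhoods, or passing to nets/filters and lifting path-homotopies --- is not a viable route: in a Hausdorff space the intersection of all neighborhoods of $x$ is $\{x\}$, so it carries no nonconstant loop, and nothing forces the various $\eta_U$ to cohere. The actual contradiction with unique path lifting is far more direct and needs none of that machinery. From $[\eta_U]=[\alpha^{-}\cdot\beta]$ one gets $[\alpha]=[\beta\cdot\eta_U^{-}]\in N([\beta],U)$ for \emph{every} neighborhood $U$ of $x$; that is, $[\alpha]$ lies in every basic neighborhood of $[\beta]$. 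Consequently the function $h:\ui\to\tX$ defined by $h(t)=[\alpha]$ for $t<1$ and $h(1)=[\beta]$ is continuous (continuity at $t=1$ is exactly the statement just proved), and it is a lift of the constant path at $x$ starting at $[\alpha]$. The standard lift of that constant path starting at $[\alpha]$ is the constant path at $[\alpha]$, so unique path lifting forces $h(1)=[\alpha]$, contradicting $[\alpha]\neq[\beta]$. No first-countability, no net- or filter-theoretic language, and no homotopy lifting is required; the detour through simple connectedness of $\tX$ and lifted path-homotopies is likewise unnecessary.
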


\begin{definition}[A topology on the fundamental group]
When $p:\tX\to X$ is a generalized universal covering map with respect to a basepoint $x_0\in X$, the fiber $p^{-1}(x_0)$ is precisely the fundamental group $\pi_1(X,x_0)$. In particular, $\pi_1(X,x_0)$ naturally inherits a topology as a subspace of $\tX$, which we also refer to as the \textit{whisker topology}. Since $\tX$ is Hausdorff by the previous lemma, $\pi_1(X,x_0)$ is Hausdorff with this topology.
\end{definition}

If one has a map $q:E\to X$, which has all of the properties of a generalized universal covering map except for $E$ being locally path connected, the locally path connected coreflection provides a ``quick fix." Indeed, for any path connected space $X$, the identity function $id:\lpc(X)\to X$ is a generalized covering map.

\begin{proposition}\label{lpcprop}
If $q:E\to X$ has all of the properties of a generalized universal covering map except for the assumption that $E$ is locally path connected, then $q:\lpc(E)\to X$ is a generalized universal covering map.
\end{proposition}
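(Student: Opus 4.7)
The plan is to verify directly that $q:\lpc(E)\to X$ satisfies each condition of Definition \ref{gencovdef}, leveraging throughout the defining adjunction of $\lpc$: a map $g:Z\to E$ from a locally path-connected space $Z$ is continuous if and only if $g:Z\to \lpc(E)$ is continuous. Since all lifting problems will be posed for domains $Y$ that are locally path-connected (including $I$ and $I^2$), this adjunction allows us to transfer all relevant maps back and forth between $E$ and $\lpc(E)$ at no cost, and no genuine new analysis is required beyond this observation.

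First I would verify the structural conditions on $\lpc(E)$. Continuity of $q:\lpc(E)\to X$ is immediate from the factorization $\lpc(E)\xrightarrow{id}E\xrightarrow{q}X$. The space $\lpc(E)$ is non-empty (same underlying set as $E$) and is locally path-connected by construction. Since $I$ is locally path-connected, continuous paths $I\to E$ and $I\to \lpc(E)$ coincide, so $\lpc(E)$ inherits path-connectedness from $E$. Similarly, $I^2$ is locally path-connected, so loops and path-homotopies of loops coincide as well, yielding $\pi_1(\lpc(E),e)\cong \pi_1(E,e)=1$.

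It remains to verify the universal lifting property. Given $f:(Y,y)\to (X,x)$ with $Y$ path-connected and locally path-connected, and $e\in q^{-1}(x)$, the subgroup condition $f_{\#}(\pi_1(Y,y))\leq q_{\#}(\pi_1(\lpc(E),e))$ reduces to $f_{\#}(\pi_1(Y,y))=1$ and agrees with the corresponding condition for $q:E\to X$. By hypothesis there is a unique lift $\tf:(Y,y)\to (E,e)$ satisfying $q\circ \tf=f$; because $Y$ is locally path-connected, the adjunction promotes $\tf$ to a continuous map $\tf:(Y,y)\to (\lpc(E),e)$, giving the desired lift. For uniqueness, any two lifts $\tg_1,\tg_2:(Y,y)\to (\lpc(E),e)$ post-compose with $id:\lpc(E)\to E$ to yield two continuous lifts $Y\to E$ of $f$, which must coincide by uniqueness downstairs; since $\lpc(E)$ and $E$ share an underlying set, $\tg_1=\tg_2$ as set functions, hence as continuous maps into $\lpc(E)$. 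There is no substantive obstacle: the entire content of the argument is that $\lpc$ does not disturb homotopy or lifting data when the test spaces involved are locally path-connected.
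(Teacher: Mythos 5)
Your proof is correct and is exactly the argument the paper intends (the proposition is stated without proof there, following the remark that $\lpc$ is right adjoint to the inclusion of locally path-connected spaces): the adjunction transfers path-connectedness, simple connectedness, and both existence and uniqueness of lifts, since all test spaces ($I$, $I^2$, and the domains $Y$) are locally path connected. No gaps.
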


Based generalized covering maps are closed under pullback using $\lpc$: If $q:(\tX,\tx_0)\to (X,x)$ is a based generalized covering map and $f:(Y,y_0)\to (X,x_0)$ is a based map, then there is a pullback generalized covering map (of $p$ over $f$) $p:(\wt{Y},\ty_0)\to (Y,y_0)$ and a map $\wt{f}:(\wt{Y},\ty_0)\to (\wt{X},\tx_0)$ such that $q\circ \wt{f}=f\circ p$. In particular, we let $C$ be the path component of $(\tx_0,y_0)$ in the ordinary topological pullback $\wt{X}\times_{X}Y=\{([\alpha],y)\in\tX\times Y\mid f(y)=\alpha(1)\}$, set $\wt{Y}=\lpc(C)$ and let $p$ and $\wt{f}:\wt{Y}\to \tX$ be the restrictions of the projection maps.

\begin{proposition}\label{pullbackprop}
If $q:(\tX,\tx_0)\to (X,x)$ is a based generalized universal covering map and $f:(Y,y_0)\to (X,x_0)$ induces an injection on fundamental groups, then the pullback $p:\wt{Y}\to Y$ of $q$ over $f$ is also a generalized universal covering.
\end{proposition}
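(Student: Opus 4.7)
My plan is to verify Definition~\ref{gencovdef} for $p:\wt{Y}\to Y$ directly, using $q$ as a tool. By construction, $\wt{Y}=\lpc(C)$ is nonempty (it contains $\ty_0=(\tx_0,y_0)$), path connected (it is $\lpc$ applied to a path component), and locally path connected. The remaining work is to (i) establish a lifting property for $p$, and (ii) use it, together with the injectivity of $f_\#$, to show that $\wt{Y}$ is simply connected.

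For (i), given a map $g:(Z,z_0)\to(Y,y_0)$ from a path-connected, locally path-connected space with $g_\#(\pi_1(Z,z_0))\leq p_\#(\pi_1(\wt{Y},\ty_0))$, I would construct the lift $\wt{g}$ by first lifting $f\circ g:Z\to X$ through $q$. The commutative square $f\circ p=q\circ\wt{f}$ together with the simple connectivity of $\tX$ forces the $\pi_1$-image of $f\circ g$ to be trivial, so the generalized universal covering property of $q$ produces a unique $h:(Z,z_0)\to(\tX,\tx_0)$ with $q\circ h=f\circ g$. The pair $(h,g)$ maps into the pullback $\tX\times_{X}Y$, lands in $C$ by path-connectedness of $Z$, and by the $\lpc$-adjunction defines a continuous map $\wt{g}:Z\to\wt{Y}$ satisfying $p\circ\wt{g}=g$ and $\wt{g}(z_0)=\ty_0$. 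Uniqueness follows by applying unique $q$-lifting to $\wt{f}\circ\wt{g}$ and combining with the set-theoretic injectivity of the forgetful map $\wt{Y}\hookrightarrow\tX\times Y$.

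For (ii), let $\gamma:(\ui,\partial\ui)\to(\wt{Y},\ty_0)$ be a loop. The composite $\wt{f}\circ\gamma$ is a loop in the simply connected space $\tX$, hence null-homotopic, and so $f\circ p\circ\gamma=q\circ\wt{f}\circ\gamma$ is null-homotopic in $X$. Injectivity of $f_\#$ then yields a null-homotopy $H:\ui^2\to Y$ of $p\circ\gamma$ that is constantly $y_0$ on the three edges of $\partial\ui^2$ other than $\ui\times\{0\}$. Since $\pi_1(\ui^2)$ is trivial, the lifting property from (i) produces a unique lift $\wt{H}:\ui^2\to\wt{Y}$ with $\wt{H}(0,0)=\ty_0$. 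The $Z=\ui$ case of the lifting property (that is, unique path-lifting) then forces $\wt{H}|_{\ui\times\{0\}}=\gamma$ and forces $\wt{H}$ to be the constant path at $\ty_0$ on the remaining three edges, so $\wt{H}$ is the desired null-homotopy of $\gamma$.

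The main subtlety I anticipate is the boundary-matching step in (ii): a priori $\wt{H}|_{\ui\times\{0\}}$ is only \emph{some} lift of $p\circ\gamma$ starting at $\ty_0$, and one needs it to coincide with $\gamma$. This is exactly what unique path-lifting (the $Z=\ui$ case of the lifting property) supplies, which is why (i) must be established first and in sufficient generality. An equivalent packaging would verify that $C$ itself satisfies every generalized universal covering condition except local path-connectedness and then invoke Proposition~\ref{lpcprop} at the end; the technical content is the same.
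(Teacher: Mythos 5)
The paper states this proposition without proof (the closure of based generalized coverings under $\lpc$-pullback is quoted from \cite{Brazcat}), and your argument correctly supplies the verification being left to the reader: lift $f\circ g$ through $q$ using the simple connectivity of $\tX$, descend to $\wt{Y}$ via the universal property of the pullback and the $\lpc$-coreflection, and then use injectivity of $f_{\#}$ together with unique path lifting to transport a null-homotopy of $p\circ\gamma$ up to $\wt{Y}$; the boundary-matching step you flag is handled correctly. The only cosmetic point is that Definition \ref{gencovdef} requires the lifting property at \emph{every} point $e\in p^{-1}(x)$, not only at $\ty_0$; your construction in (i) applies verbatim with $(\tx_0,y_0)$ replaced by an arbitrary $e=([\alpha],y_1)\in C$, since $q_{\#}(\pi_1(\tX,[\alpha]))$ is still trivial and the path component of $e$ in $\tX\times_X Y$ is still $C$.
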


The next corollary follows from straightforward lifting arguments so we omit the proof. 

\begin{corollary}\label{gencovhomequivcorollary}
If $f:(Y,y_0)\to (X,x_0)$ is a based homotopy equivalence and $q:(\tX,\tx_0)\to (X,x_0)$ is a generalized universal covering map, then there exists a generalized universal covering map $p:(\tY,\ty_0)\to (Y,y_0)$ and a based homotopy equivalence $\wt{f}:(\tY,\ty_0)\to (\tX,\tx_0)$ such that $q\circ \wt{f}=f\circ p$.
\end{corollary}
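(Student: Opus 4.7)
The plan is to apply the pullback construction of Proposition \ref{pullbackprop} to produce $p$ and $\wt{f}$, and then use the generalized covering lifting criterion to verify that $\wt{f}$ is a based homotopy equivalence. Since $f$ is a based homotopy equivalence, $f_{\#}:\pi_1(Y,y_0)\to\pi_1(X,x_0)$ is an isomorphism (in particular injective), so Proposition \ref{pullbackprop} yields a generalized universal covering map $p:(\tY,\ty_0)\to(Y,y_0)$ and a map $\wt{f}:(\tY,\ty_0)\to(\tX,\tx_0)$ with $q\circ\wt{f}=f\circ p$. Let $g:(X,x_0)\to(Y,y_0)$ be a based homotopy inverse of $f$, and fix based homotopies $K:id_X\simeq f\circ g$ and $H:id_Y\simeq g\circ f$.

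Next, I would construct a candidate based homotopy inverse $\wt{g}:(\tX,\tx_0)\to(\tY,\ty_0)$ for $\wt{f}$ by lifting. The composite $g\circ q:\tX\to Y$ is defined on a path-connected, locally path-connected space, and the image $(g\circ q)_{\#}(\pi_1(\tX,\tx_0))=1\leq p_{\#}(\pi_1(\tY,\ty_0))$ trivially (since $\tX$ is simply connected by virtue of $q$ being a generalized universal covering map). Hence Definition \ref{gencovdef} produces a unique lift $\wt{g}:(\tX,\tx_0)\to(\tY,\ty_0)$ with $p\circ\wt{g}=g\circ q$.

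To show $\wt{f}\circ\wt{g}\simeq id_{\tX}$ based, I would lift the based homotopy $K\circ(q\times id_I):\tX\times I\to X$ through $q$. The domain $\tX\times I$ is path-connected and locally path-connected, and $\pi_1(\tX\times I,(\tx_0,0))=1$, so the lifting criterion applies and yields a map $\wt{K}:\tX\times I\to\tX$ with $q\circ\wt{K}=K\circ(q\times id_I)$. Forcing the basepoint condition $\wt{K}(\tx_0,0)=\tx_0$ determines $\wt{K}$ uniquely; because $K$ is based, the path $t\mapsto\wt{K}(\tx_0,t)$ lifts the constant path at $x_0$ starting at $\tx_0$, so by unique path lifting it equals the constant path at $\tx_0$, making $\wt{K}$ a based homotopy. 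At $t=0$, uniqueness gives $\wt{K}_0=id_{\tX}$, and at $t=1$ the map $\wt{K}_1$ satisfies $q\circ\wt{K}_1=f\circ g\circ q=f\circ p\circ\wt{g}=q\circ\wt{f}\circ\wt{g}$, so uniqueness of lifts based at $\tx_0$ yields $\wt{K}_1=\wt{f}\circ\wt{g}$. The analogous argument, lifting $H\circ(p\times id_I):\tY\times I\to Y$ through $p$ and starting at $id_{\tY}$, produces a based homotopy from $id_{\tY}$ to $\wt{g}\circ\wt{f}$.

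The main obstacle I anticipate is bookkeeping the uniqueness of lifts at each stage to make sure the endpoints of the lifted homotopies really match $\wt{f}\circ\wt{g}$ and $\wt{g}\circ\wt{f}$ (rather than just covering the same map through $q$ or $p$), and to guarantee that the based condition on $K$ and $H$ transfers to the lifts. Both points reduce to the unique path lifting property enjoyed by generalized universal covering maps, together with the fact that $\tX\times I$ and $\tY\times I$ inherit path-connectedness and local path-connectedness from their factors, so Definition \ref{gencovdef} applies verbatim.
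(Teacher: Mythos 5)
Your proposal is correct and supplies precisely the ``straightforward lifting argument'' that the paper alludes to but omits. The key steps all check out: (i) using Proposition \ref{pullbackprop} is the right way to manufacture $p$ — it is not assumed in the corollary that $Y$ admits a generalized universal covering, so the pullback construction both produces $p:(\tY,\ty_0)\to(Y,y_0)$ and hands you $\wt{f}$ with $q\circ\wt{f}=f\circ p$ for free; (ii) $\tX$ and $\tY$ are path-connected, locally path-connected, and simply connected by Definition \ref{gencovdef}, so $\tX\times I$ and $\tY\times I$ are as well, and the lifting criterion applies to produce $\wt{g}$, $\wt{K}$, and $\wt{H}$; (iii) the uniqueness of lifts agreeing at a point, together with the observation that basepoint tracks lift constant paths to constant paths by unique path lifting, pins down the endpoints of $\wt{K}$ and $\wt{H}$ as $id_{\tX},\,\wt{f}\circ\wt{g}$ and $id_{\tY},\,\wt{g}\circ\wt{f}$ respectively. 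You correctly flagged that the only subtlety is matching endpoint lifts, and you resolved it cleanly by the uniqueness clause of Definition \ref{gencovdef}. Since the paper gives no proof, there is no competing argument to compare against; your write-up is a complete and careful instance of what the author had in mind.
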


%\begin{proof}
%Take $q:\tY\to Y$ to be the pullback of $p$ by $f$ and $\wt{f}$ to be the other projection. It suffices to check that $\wt{f}$ is a based homotopy equivalence. Let $g:(X,x_0)\to (Y,y_0)$ be a homotopy inverse for $f$, $H$ be a based homotopy $id_{Y}\simeq g\circ f$ and $G$ be a based homotopy $id_{X}\simeq f\circ g$. Since $\wt{Y}$ is simply connected $g\circ p$ meets the lifting criteria for $p$ and lifts to a map $\wt{g}:(\tX,\tx_0)\to (\tY,\ty_0)$ such that $q\circ \wt{g}=g\circ p$. Using a similar argument $H$ and $G$ lifts to maps $\wt{H}:\wt{Y}\times I\to \wt{Y}$ and $\wt{G}:\wt{X}\times I\to \wt{X}$ respectively. It is straightforward to check that $\wt{H}$ and $\wt{G}$ are based homotopies $id_{\wt{Y}}\simeq \wt{g}\circ \wt{f}$ and $id_{\wt{X}}\simeq \wt{f}\circ \wt{g}$.
%\end{proof}

Because Theorem \ref{injectivetheorem} holds for shrinking wedges, there is another way to construct their generalized universal covering maps, which we detail in the next remark.

\begin{remark}[Inverse Limits of Coverings]\label{ilimremark}
Let $(X,x_0)=\sw_{j\in\bbn}(X_j,x_j)$ be a shrinking wedge of connected CW-complexes with inverse limit presentation $\varprojlim_{k}(X_{\leq k},R_{k+1,k})$. If $q_{\leq k}:(\tX_{\leq k},\tx_0)\to (X_{\leq k},x_0)$, $k\in\bbn$ are the universal covering maps, we have the following situation.
\[\xymatrix{
&  \cdots  & (\wt{X}_{\leq 3},\tx_0) \ar[d]_-{q_{\leq 3}}  & (\wt{X}_{\leq 2},\wt{x}_0) \ar[d]_-{q_{\leq 2}}  & (\wt{X}_{\leq 1},\wt{x}_0) \ar[d]_-{q_{\leq 1}} \\
X & \cdots \ar[r] & (X_{\leq 3},x_0) \ar[r]_-{R_{3,2}} & (X_{\leq 2},x_0) \ar[r]_-{R_{2,1}} & (X_{\leq 1},x_0)
}\]
Since $\tX_{\leq k}$ is locally path connected and simply connected, the lifting property of the maps $q_{\leq k}$ ensures that we have maps $\wt{R}_{k+1,k}:(\tX_{\leq k+1},\tx_{0})\to (\tX_{\leq k},\tx_0)$ making the diagram below commute. The result is an inverse sequence of based generalized universal covering maps. Let $ \wh{X}=\varprojlim_{k}(\tX_{\leq k},\wt{R}_{k+1,k})$ and $\wh{q}=\ilim_{k}q_{\leq k}$ denote the respective inverse limits. Let $\wt{R}_k:\wh{X}\to \tX_{\leq k}$ be the projection map for $k\in\bbn$.
\[\xymatrix{
 \wh{X}=\varprojlim_{k}(\tX_{\leq k},\wt{x}_0) \ar[d]_-{\wh{q}} & \cdots \ar[r] & (\wt{X}_{\leq 3},\wt{x}_0) \ar[d]_-{q_{\leq 3}} \ar[r]^-{\wt{R}_{3,2}} & (\wt{X}_{\leq 2},\wt{x}_0) \ar[d]_-{q_{\leq 2}} \ar[r]^-{\wt{R}_{2,1}} & (\wt{X}_{\leq 1},\wt{x}_0) \ar[d]_-{q_{\leq 1}} \\
  X &\cdots \ar[r] & (X_{\leq 3},x_0) \ar[r]_-{R_{3,2}} & (X_{\leq 2},x_0) \ar[r]_-{R_{2,1}} & (X_{\leq 1},x_0)
}\]
Since $\wh{X}$ need not be path-connected, we let $\wh{X}_0$ be the path component of $\hx_0=(\tx_0,\tx_0,\tx_0,\dots)\in\wh{X}$. Taking $\wh{q}_0:(\wh{X}_0,\wh{x}_0)\to (X,x_0)$ to be the restriction of $\wh{q}$, a direct argument shows that $\wh{q}_0$ has all the properties of a generalized covering map except that $\wh{X}_0$ need not be locally path connected. In particular, $\wh{q}_{0\#}:\pi_1(\wh{X}_0,\wh{x}_0)\to \pi_1(X,x_0)$ is injective. By Proposition \ref{lpcprop} and Theorem \ref{structurethm}, $\wh{q}_0:\lpc(\wh{X}_0)\to X$ is a generalized covering map. 

Moreover, $\wh{X}_0$ is simply connected: Given a loop $\wt{\alpha}:(\ui,\{0,1\})\to (\hX_0,\hx_0)$, $\wt{R}_k\circ \wt{\alpha}$ is null-homotopic since $\wt{X}_{\leq k}$ is simply connected. Therefore, $R_{k}\circ (\wh{q}\circ \wt{\alpha})=q_{\leq k}\circ \wt{R}_k\circ \wt{\alpha}$ is null-homotopic for all $k$. Since $\phi_{X}:\pi_1(X,x_0)\to \varprojlim_{k}\pi_1(X_{\leq k},x_0)$ is injective (Theorem \ref{injectivetheorem}) and $\phi_{X}([\wh{q}\circ \wt{\alpha}])=1$, we have $[\wh{q}_0\circ \wt{\alpha}]=1$. Since $\wh{q}_0$ is $\pi_1$-injective (Proposition \ref{coveringisomorphism}), $\wt{\alpha}$ is null-homotopic.

Since $\wh{X}_0$ is simply connected and $id:\lpc(\wh{X}_0)\to \wh{X}_0$ is a weak homotopy equivalence, $\lpc(\wh{X}_0)$ is simply connected. According to Proposition \ref{lpcprop}, $\wh{q}_0:\lpc(\wh{X}_0)\to X$ satisfies all criteria to be a generalized \textit{universal} covering map. Theorem \ref{structurethm} now implies that there is a unique homeomorphism $\phi_{X}:(\tX,\tx_0)\to (\lpc(\wh{X}_0),\wh{x}_0)$ such that $\wh{q}_0\circ \phi=q$. We use ``$\phi_{X}$" to denote this map because its restriction to the basepoint-fiber: $p^{-1}(x_0)\to \varprojlim_{k}q_{k}^{-1}(x_0)$ is precisely the homomorphism $\phi_{X}$ from Theorem \ref{injectivetheorem}.
\[\xymatrix{
\tX \ar[r]^-{\phi_{X}} \ar[dr]_-{q} & \lpc(\wh{X}_0) \ar[d]^-{\wh{q}_0} \ar[r]^-{id}  & \wh{X}_0 \ar[dl]^-{\wh{q}_0}\\
& X
}\]
\end{remark}

\begin{corollary}\label{continuitycor1}
Let $X=\sw_{j\in\bbn}X_j$ be a shrinking wedge and $\phi_{X}:\tX\to \wh{X}_0$ and $\wt{R}_k:\wh{X}\to \wt{X}_{\leq k}$ be defined as in Remark \ref{ilimremark}. If $W$ is locally path connected and $f:W\to \tX$ is a function, then the following are equivalent:
\begin{enumerate}
\item $f:W\to \tX$ is continuous,
\item $\phi_{X}\circ f:W\to \wh{X}_0$ is continuous,
\item $\wt{R}_k\circ\phi_{X}\circ f:W\to \tX_{\leq k}$ is continuous for all $k\in\bbn$.
\end{enumerate}
\end{corollary}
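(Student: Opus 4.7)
The plan is to chase the diagram at the end of Remark \ref{ilimremark} and invoke just two universal properties: the adjunction defining $\lpc$ (which will use the hypothesis that $W$ is locally path connected) and the universal property of the inverse limit defining $\wh{X}$. Since $\phi_{X}:\tX\to \lpc(\wh{X}_0)$ is a homeomorphism and $\lpc(\wh{X}_0)$ and $\wh{X}_0$ share the same underlying set and continuous probes from locally path-connected spaces, continuity of $f$ to $\tX$ will transfer to continuity of $\phi_{X}\circ f$ into $\wh{X}_0$, and vice versa. Then continuity into $\wh{X}_0\subseteq \wh{X}$ will be tested coordinate-wise through the projections $\wt{R}_k$.

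For (1) $\Leftrightarrow$ (2), I would first note that $\phi_{X}$ is a homeomorphism onto $\lpc(\wh{X}_0)$, so (1) is equivalent to continuity of $\phi_{X}\circ f:W\to \lpc(\wh{X}_0)$. Composing with the continuous identity $\lpc(\wh{X}_0)\to \wh{X}_0$ gives the forward direction (1) $\Rightarrow$ (2). For the reverse direction, this is exactly the point at which local path-connectedness of $W$ enters: the adjunction between $\lpc$ and the inclusion $\mathbf{Lpc}\hookrightarrow \mathbf{Top}$ guarantees that any continuous map from a locally path-connected space into $\wh{X}_0$ is automatically continuous into $\lpc(\wh{X}_0)$. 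Applying $\phi_{X}^{-1}$ then recovers continuity of $f$.

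For (2) $\Leftrightarrow$ (3), I would use that $\wh{X}_0$ is by definition a subspace of $\wh{X}=\varprojlim_k \tX_{\leq k}$, so $\phi_{X}\circ f:W\to \wh{X}_0$ is continuous if and only if the same function, viewed as going into $\wh{X}$, is continuous. The universal property of the inverse limit (in $\mathbf{Top}$) says that a function into $\wh{X}$ is continuous if and only if each of its projections $\wt{R}_k\circ(\phi_{X}\circ f):W\to \tX_{\leq k}$ is continuous, which is precisely (3). Note that here we do not need to worry about taking $\lpc$ of the inverse limit, because we are testing continuity of a function \emph{into} $\wh{X}$, not asking $\wh{X}$ itself to be locally path connected.

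There is no serious obstacle here; the only subtle point is that (2) $\Rightarrow$ (1) genuinely needs the hypothesis on $W$, since without local path-connectedness one cannot promote a continuous map into $\wh{X}_0$ to one into $\lpc(\wh{X}_0)\cong \tX$. I would flag this in the write-up and otherwise present the equivalences in the order (1) $\Leftrightarrow$ (2) $\Leftrightarrow$ (3) as two short applications of these universal properties.
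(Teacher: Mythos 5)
Your proof is correct and follows exactly the argument the paper intends (the corollary is stated without proof as an immediate consequence of Remark \ref{ilimremark}): the equivalence (1) $\Leftrightarrow$ (2) is the $\lpc$ adjunction applied through the homeomorphism $\phi_X:\tX\to\lpc(\wh{X}_0)$, and (2) $\Leftrightarrow$ (3) is the universal property of the subspace/inverse-limit topology on $\wh{X}_0\subseteq\wh{X}\subseteq\prod_k\tX_{\leq k}$. You also correctly identify where the hypothesis on $W$ is used, so nothing is missing.
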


\section{Attaching whiskers and choosing maximal trees}

At this point, we being to fix spaces and establish notation that will be used throughout the remainder of the paper. We assume that, for each $j\in\bbn$, the space $X_j$ is a fixed connected, non-simply connected, CW-complex with a single $0$-cell $x_j$ that serves as the basepoint of $X_j$. Let $q_j:\tX_j\to X_j$ be the universal cover of the individual wedge summands. We use the notation consistent with that in Section \ref{sectionshrinkingwedgeintro}, namely, $X=\sw_{j\in\bbn}X_j$ is the shrinking wedge with wedgepoint $x_0$ and for each $k\in\bbn$, let $X_{\leq k}=\bigvee_{j=1}^{k}X_j\subseteq X$. Additionally, $R_{k+1,k}:X_{\leq k+1}\to X_{\leq k}$ and $R_{k}:X\to X_{\leq k}$ are the canonical retractions and $q_{\leq k}:\wt{X}_{\leq k}\to X_{\leq k}$ is the universal covering map of the finite wedge. We apply the construction in Remark \ref{ilimremark} to the retractions $R_{k+1,k}$ and covering maps $q_{\leq k}$ and fix the notation used there. Since the canonical homomorphism $\phi_{X}:\pi_1(X)\to \varprojlim_{k}\pi_1(\xleqk)$ is injective (Theorem \ref{injectivetheorem}) $X$ admits a generalized universal covering map $q:\tX\to X$ and there is a homeomorphism $\phi_{X}:\tX\to\lpc(\wh{X}_0)$ such that $\wh{q}_0\circ \phi_{X}=q$.

\subsection{Attaching whiskers: replacing $X$ with $Y$} \label{sectionwellpointed}

Let $Y_j=X_j\times\{0\}\cup \{x_j\}\times I$ be the subspace of $X_{j}\times \ui$ with basepoint $y_j=(x_j,1)$. By identifying $X_j$ with $X_j\times\{0\}$, we may treat $Y_j$ as the CW-complex consisting of $X_j$ and a ``whisker" $\bfe_j=\{x_j\}\times I$ attached at $x_j$. Let $p_j:\tY_j\to Y_j$ be the universal covering map. The homotopy extension property of the pair $(X_j,x_j)$ allows us to choose a retraction $\mu_j:X_j\times I\to Y_j$ so that $\mu_{j,1}(x)=\mu_j(x,1)$ is a based homotopy inverse of the quotient map $\zeta_j:Y_j\to X_j$ that collapses the whisker $\{x_j\}\times I$ to $x_j$ (see Figure \ref{fig2}). In particular, $\zeta_j\circ \mu_j$ is a based homotopy from $id_{X_j}$ to $\zeta_j\circ \mu_{j,1}$. A based homotopy from $id_{Y_j}$ to $\mu_{j,1}\circ \zeta_j$ is illustrated in Figure \ref{fig4}.

\begin{figure}[H]
\centering \includegraphics[height=1in]{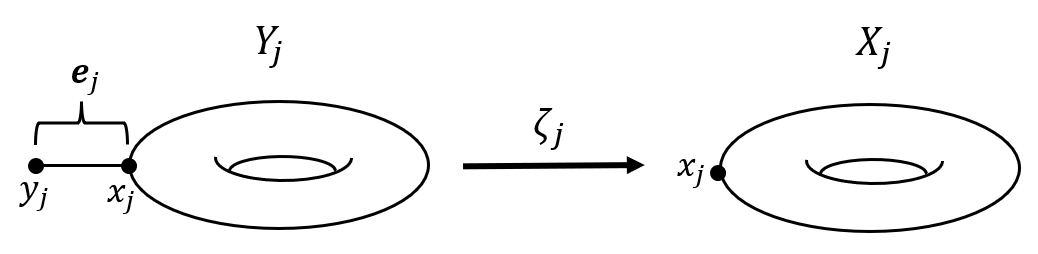}
\caption{\label{fig2} The based homotopy equivalence $\zeta_j:Y_j\to X_j$, which collapses the arc $\mathbf{e}_j$.}
\end{figure}

\begin{figure}[H]
\centering \includegraphics[height=.7in]{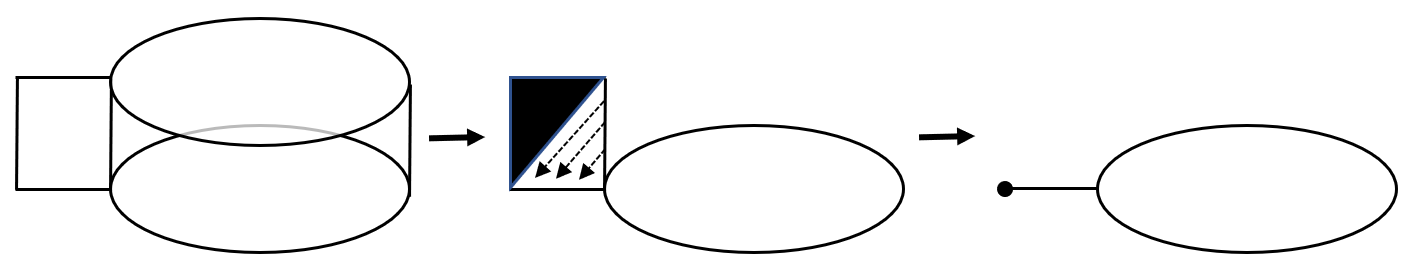}
\caption{\label{fig4} The based homotopy $Y_j\times \ui\to Y_j$ from $id_{Y_j}$ to $\mu_{j,1}\circ \zeta_j$ is a composition which first applies $\mu_{j}$ to the subspace $X_j\times \ui$, which is illustrated as a cylinder. The square $\mathbf{e}_j\times\ui$ is mapped to $\mathbf{e}_j$ so that the upper left triangle maps to $y_j$ and the lower right triangle is projected linearly.}
\end{figure}

Let $Y=\sw_{j\in\bbn}(Y_j,y_j)$ be the shrinking wedge with wedgepoint $y_0$ and for each $k\in\bbn$, let $Y_{\leq k}=\bigvee_{j=1}^{k}Y_j$ be the finite wedge viewed as retractions of $Y$. The respective canonical retractions will be denoted by $r_{k+1,k}:Y_{\leq k+1}\to Y_{\leq k}$ and $r_{k}:Y\to Y_{\leq k}$ and the universal covering maps by $p_j:\tY_j\to Y_j$ and $p_{\leq k}:\tY_{\leq k}\to Y_{\leq k}$.

\begin{lemma}\label{zetalemma}
The quotient map $\zeta:Y\to X$ that collapses $\bigcup_{j}\bfe_j$ to $y_0$ is a based-homotopy equivalence 
\end{lemma}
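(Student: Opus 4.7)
The plan is to construct a based homotopy inverse $\mu: X \to Y$ by assembling the maps $\mu_{j,1}: X_j \to Y_j$ across summands, and to assemble the based homotopies of Figures \ref{fig2} and \ref{fig4} into the required based homotopies on $X$ and $Y$. Since each $\mu_{j,1}$, $\zeta_j$, $\zeta_j\circ\mu_j$, and the homotopy in Figure \ref{fig4} is based, they can be glued along the wedgepoint to give well-defined set-theoretic maps on the one-point unions; the entire content of the lemma lies in verifying that these assembled maps are continuous with respect to the (strictly finer) shrinking-wedge topology.

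First, define $\mu: X \to Y$ by $\mu|_{X_j} = \mu_{j,1}$, which is well-defined on the wedgepoint because $\mu_{j,1}(x_j) = y_j$. For continuity, I would use the homeomorphism $X \cong \varprojlim_k \xleqk$ from Section \ref{sectionshrinkingwedgeintro} together with the analogous presentation $Y \cong \varprojlim_k \yleqk$. The finite wedges $\mu_{\leq k} = \bigvee_{j=1}^{k}\mu_{j,1}: \xleqk \to \yleqk$ are continuous between finite CW-wedges, and they form a compatible family with respect to the retractions $R_{k+1,k}$ and $r_{k+1,k}$ because each $\mu_{j,1}$ is based, so they induce a continuous map $\varprojlim_k \mu_{\leq k}: X \to Y$ which agrees pointwise with $\mu$.

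Next, define $H: X \times I \to X$ by $H|_{X_j \times I} = \zeta_j \circ \mu_j$, and $K: Y \times I \to Y$ by gluing the based homotopies of Figure \ref{fig4}. Both are based at the wedgepoint: for $H$, one has $\mu_j(x_j,t) = (x_j,t) \in \bfe_j$ since $\mu_j$ is a retraction onto $Y_j$, and $\zeta_j$ collapses $\bfe_j$ to $x_j = x_0$, so $H(x_0,t) = x_0$; the corresponding verification for $K$ is immediate from the description in Figure \ref{fig4}. Continuity of $H$ is verified by the same inverse-limit argument as above: for each $k$, the composite $R_k \circ H$ sends $X_j \times I$ into $X_j$ for $j \leq k$ and into $\{x_0\}$ for $j > k$, so it factors through $R_k \times \operatorname{id}_I: X \times I \to \xleqk \times I$ followed by the continuous finite-wedge homotopy $\bigvee_{j=1}^{k}(\zeta_j \circ \mu_j)$. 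Continuity of $K$ is checked identically using $Y \cong \varprojlim_k \yleqk$. One checks directly that $H_0 = \operatorname{id}_X$, $H_1 = \zeta\circ\mu$, $K_0 = \operatorname{id}_Y$, and $K_1 = \mu\circ\zeta$.

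The main and essentially only obstacle is this continuity check on the shrinking wedge: based continuous maps and based homotopies on each summand do not automatically assemble to a based continuous map on the shrinking wedge (whose topology is finer than the coproduct topology), and the inverse-limit presentation is the natural tool for verifying it, reducing everything to the finite-wedge case, where the wedges of individual based homotopy equivalences are routine.
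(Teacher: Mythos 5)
Your proposal is correct and follows essentially the same route as the paper: assemble $\mu$ from the $\mu_{j,1}$, glue the summand homotopies along the wedgepoint (where they are constant), and verify continuity by checking that each projection $R_k\circ H$ (resp. $r_k\circ K$) is the corresponding finite-wedge homotopy, using the inverse limit presentation of the shrinking wedge. No gaps.
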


\begin{proof}
Let $\mu:X\to Y$ be the map whose restriction to $X_j$ is $\mu_{j,1}$. Let $K_j=\zeta_j\circ \mu_j:X_j\times \ui\to X_j$ and define $K:X\times \ui\to X$ so the restriction to $X_j\times \ui$ is $K_j$. Since each $K_j$ is the constant homotopy at the basepoint, $K$ is well-defined. Since the projection $R_k\circ K=\bigvee_{j=1}^{k}K_j:X\times \ui\to X_{\leq k}$ is continuous for every $k\in\bbn$, $K$ is continuous. By construction, $K$ is a homotopy from $id_{X}$ to $\zeta\circ \mu$. Next, let $L_j:Y_j\times \ui\to Y_j$ be the based homotopy from $id_{Y_j}$ to $\mu_{j,1}\circ \zeta_j$ illustrated in Figure \ref{fig4}. The analogous construction shows that one can construct a homotopy $L$ from $id_{Y_j}$ to $\mu_{j,1}\circ\zeta$ using the maps $L_j$.
\end{proof}

\begin{figure}[H]
\centering \includegraphics[height=1in]{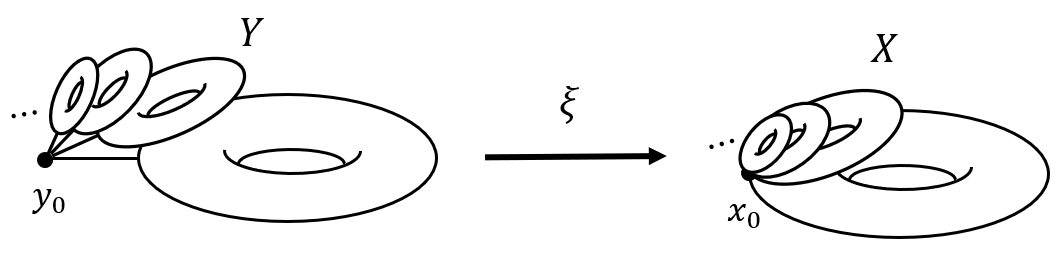}
\caption{\label{fig3} The based homotopy equivalence $\zeta:Y\to X$, which collapses the attached arcs.}
\end{figure}

\begin{remark}
The construction of the homotopy equivalence in the proof of Lemma \ref{zetalemma} shows that a sequence of based homotopy equivalences $(A_j,a_j)\simeq (B_j,b_j)$, $j\in\bbn$ induced a based homotopy equivalence $\sw_{j\in\bbn}(A_j,a_j)\simeq \sw_{j\in\bbn}(B_j,b_j)$ of the shrinking wedges. Thus, if desired one may replace each $X_j$ with any representative of its homotopy type.
\end{remark}

The space $Y$ is a shrinking wedge of CW-complexes and so the content of Remark \ref{ilimremark} applies. In particular, there are lifted maps $\wt{r}_{k+1,k}:\wt{Y}_{\leq k+1}\to \wt{Y}_{\leq k}$ satisfying $\wt{r}_{k+1,k}\circ p_{\leq k+1}=p_{\leq k}\circ \wt{r}_{k+1,k}$.
\[\xymatrix{
\tY_{\leq k+1} \ar@{-->}[r]^-{\wt{r}_{k+1,k}} \ar[d]_-{p_{k+1}} & \tY_{\leq k} \ar[d]^-{p_k}\\
Y_{\leq k+1} \ar[r]_-{r_{k+1,k}} & Y_{\leq k}
}\]
We have $\wh{Y}=\varprojlim_{k}(\wt{Y}_{\leq k},\wt{r}_{k+1,k})$ with projection maps $\wt{r}_k:\wh{Y}\to \wt{Y}_{\leq k}$ and $\wh{Y}_0$ is the path component of $\wh{y}_0=(\wt{y}_0)$ in $\wh{Y}$. Set $\wh{p}=\varprojlim_{k}p_{\leq k}$ and let $\wh{p}_0:\wh{Y}_0\to Y$ be the restriction of $\wh{p}$ to $\wh{Y}_0$. The canonical homomorphism $\phi_{Y}:\pi_1(Y)\to \varprojlim_{k}\pi_1(Y_{\leq k})$, $\phi(\alpha)=((r_{k})_{\#}(\alpha))$ is injective.

There also exists a generalized universal covering map $p:\tY\to Y$ where $\wt{Y}$ has the standard construction (Definition \ref{whiskertopologydef}). The lifting property of $p_{\leq k}$ gives an induced map $\varrho_k:\tY\to \tY_{\leq k}$, $\varrho_k([\ell])=[r_k\circ \ell]$. The canonical map $\phi_{Y}:\tY\to \wh{Y}_0$, $\phi_{Y}(\alpha)=(\varrho_k(\alpha))$ is a continuous bijection, which satisfies $\wt{r}_k\circ \phi_{Y}=\varrho_k$. The coreflection $\phi_{Y}:\tY\to \lpc(\wh{Y}_0)$ is a homeomorphism. 
\[\xymatrix{
\tY \ar@/^2pc/[rr]^-{\varrho_k} \ar[r]^{\phi_{Y}} & \wh{Y}_0 \ar[r]^-{\wt{r}_k} & \tY_{\leq k}
}\]
By Corollary \ref{gencovhomequivcorollary}, the based homotopy equivalence $\zeta:Y\to X$ lifts to a based homotopy equivalence $\wt{\zeta}:\wt{Y}\to \wt{X}$ satisfying $q\circ \wt{\zeta}=\zeta\circ p$. Similarly, the homotopy equivalence $\zeta_{\leq k}=\bigvee_{j=1}^{k}\zeta_j:Y_{\leq k}\to X_{\leq k}$ lifts to based homotopy equivalence $\wt{\zeta}_{\leq k}:\wt{Y}_{\leq k}\to \wt{X}_{\leq k}$ such that $q_{\leq k}\circ \wt{\zeta}_{\leq k}=p_{\leq k}\circ \zeta_{\leq k}$. Since, $\wt{R}_{k+1,k}\circ \wt{\zeta}_{\leq k+1}=\wt{\zeta}_{\leq k}\circ \wt{r}_{k+1,k}$, we may take the limit $\wh{\zeta}=\varprojlim_{k}\wt{\zeta}_{\leq k}:\wh{Y}\to \wh{X}$. By lifting the homotopy inverse for $\zeta_{\leq k}$ and the relevant homotopies, it is straightforward from taking inverse limits that $\wh{\zeta}$ is a homotopy equivalence (even though the domain and codomain are neither path connected nor locally path connected). Moreover, the restriction $\wh{\zeta}_0:\wh{Y}_0\to \wh{X}_0$ to the path component of the basepoint is also a homotopy equivalence. In summary, we have the following commutative diagram where the vertical maps are homotopy equivalences.
\[\xymatrix{
Y \ar[d]_-{\zeta}  & \wt{Y} \ar[d]_-{\wt{\zeta}} \ar[l]_-{p}  \ar[r]^-{\phi_Y} & \wh{Y}_0 \ar[d]^-{\wh{\zeta}_0}\\
X   & \wt{X} \ar[l]^-{q}  \ar[r]_-{\phi_X} & \wh{X}_0 
}\]

The above shows that we may replace $X$ with $Y$, $\tX$ with $\tY$, and $\hX_0$ with $\wh{Y}_0$ without any loss of homotopical or shape-theoretic information. The arcs in $\tY$ will provide ``extra space" for performing suitable deformations of $\wt{Y}$ that are not possible in $\tX$. 

\begin{remark}\label{metrizableremark}[Metrizability]
In general, CW-complexes are not metrizable. Consequently, $\tX$ and $\tY$ will not always be metrizable. However, when each $X_j$ is locally finite, each $X_j$ is metrizable. Consequently, $Y_j$ and the universal covers $\tX_j$ and $\tY_j$ are metrizable. Since limits of inverse sequences of metrizable spaces are metrizable, $\hX$, $\wh{Y}$ and the subspaces $\hX_0$, $\wh{Y}_0$ will be metrizable. Finally, it is known that $\lpc$ preserves metrizability. Therefore, $\tX$ and $\tY$ are metrizable whenever each $X_j$ is locally finite.

Even if some $X_j$ are not metrizable, $\tX$ and $\tY$ are still highly structured. Indeed, every compact subset of a CW-complex is metrizable. Combining this with the arguments used in the previous paragraph, it follows that all compact subspaces of $\tX$, $\tY$, $\wh{X}$, and $\wh{Y}$ are metrizable.
\end{remark}

\subsection{Collapsing maximal trees $T_j\subseteq \tX_j$}\label{sectionfixinghe}

Since $\tX_j$ is a CW-complex, we may fix a maximal tree $T_j$ in the $1$-skeleton of $\tX_j$. Since $(\tX_j,T_j)$ has the homotopy extension property, the map $\tX_j\to C_j=\tX_j/T_j$ that collapses $T_j$ to a point is a homotopy equivalence. Let $c_j$ be the image of $T_j$ in $C_j$. Now, $C_j$ is a simply connected CW-complex with a single 0-cell $c_j$ and $\pi_n(C_j)\cong \pi_n(X_j)$ for $n\geq 2$.

The inclusion $X_j\to Y_j$ induces an embedding $\tX_j\to \tY_j$ in the following way. Recall that $\tY_j$ is defined to be the space of path-homotopy classes of paths in $Y_j$ starting at $y_j$. Let $\tau_j:\ui\to Y_j$, $\tau_j(t)=(x_j,1-t)$ be the path from $y_j$ to $x_j$ that parameterizes the arc $\bf{e}_j$ and define $\tau_{j,s}:\ui\to Y_j$ by $\tau_{j,s}(t)=\tau_j(st)$ for each $s\in\ui$. We will also use the symbols $\tau_j$ and $\tau_{j,s}$ to denote the path-homotopy classes so that $\tau_j=[\tau_j]$ and $\tau_{j,s}=[\tau_{j,s}]$.
\begin{itemize}
\item We identify $\tX_j$ with the subspace $\{\tau_j\delta\in\tY_j\mid \delta\in\tX_j\}$ of $\tY_j$ by the closed embedding $\tX_j\to \tY_j$, $\delta\mapsto \tau_j\delta$. Under this identification, $\wt{x}_j=\tau_j$ is the basepoint of $\tX_j$. 
\item for each $\beta\in \pi_1(Y_j)$, $\bfe_{j,\beta}=\{\beta \tau_{j,s}\in\tY_j\mid s\in\ui\}$ is an arc.
\item for $\beta\in\pi_1(Y_j)$, $\wt{x}_{j,\beta}=\beta\tau_{j}$ is the point where the arc $\bfe_{j,\beta}$ meets $\tX_j$ and $\wt{y}_{j,\beta}=\beta$ is the free endpoint of $\bfe_{j,\beta}$. Note that $\wt{y}_j=\wt{y}_{j,1}$ is the basepoint of $\tY_j$. We refer to the points $\wt{y}_{j,\beta}$ as the \textit{arc-endpoints} of $\wt{Y}_j$.
\item $\bft_{j}=T_j\cup \bigcup_{\beta\in\pi_1(Y_j)}\bfe_{j,\beta}$ is a maximal tree in $\tY_j$.
\end{itemize}
In summary, the space $\tY_j$ consists of the subspace $\tX_j$ with an arc $\bfe_{j,\beta}$ attached at $\beta\tau_j\in p_{j}^{-1}(x_j)$ for each $\beta\in\pi_1(Y_j)$.

\begin{figure}[H]
\centering \includegraphics[height=1.9in]{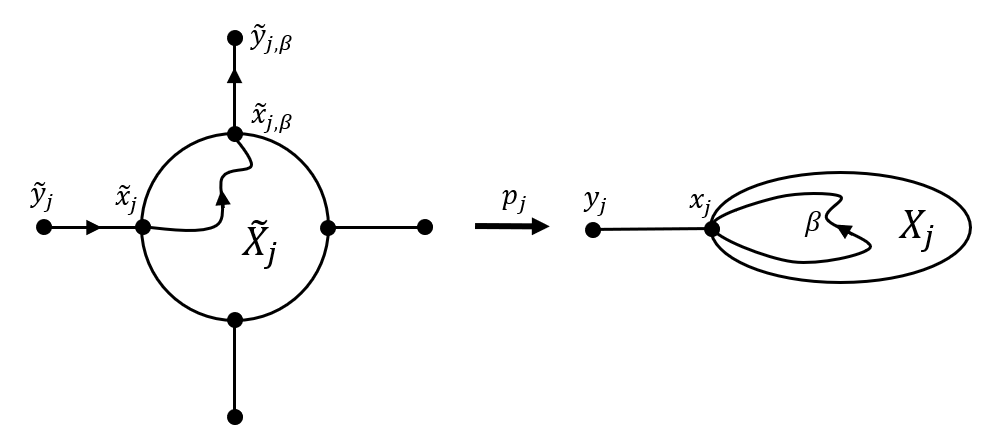}
\caption{\label{fig5} The structure of the universal covering map $p_j:\tY_j\to Y_j$ where the subspace $\tX_j$ is illustrated as a disk. The arc-endpoints form the fiber $p_{j}^{-1}(y_j)$ and the attachment points form the fiber $p_{j}^{-1}(x_j)$. The path illustrated in $\tY_j$ is the lift of a given $\beta\in\pi_1(Y_j)$, which can be factored as $\tau_j\delta\tau_{j}^{-1}$ for $\delta\in\pi_1(X_j)$.
}
\end{figure}

We may identify the quotient space $D_j=\tY_j/T_j$ with the one-point union $D_j=(C_j,c_j)\vee (E_j,c_j)$ where $E_j=f_j(\bft_j)$ is a wedge of arcs with the weak topology. The quotient map $f_j:\tY_j\to D_j$ is also a homotopy equivalence (see Figure \ref{fig7}). We will also write $\bfe_{j,\beta}$ to denote the arc $f_j(\bfe_{j,\beta})$ in $E_j$ and $\wt{y}_{j,\beta}$ to denote its endpoint. We give a specific construction of a homotopy inverse $g_j:\tY_j/T_j\to \tY_j$ of $f_j$ since we will need for it to have special features.
\begin{figure}[H]
\centering \includegraphics[height=1.3in]{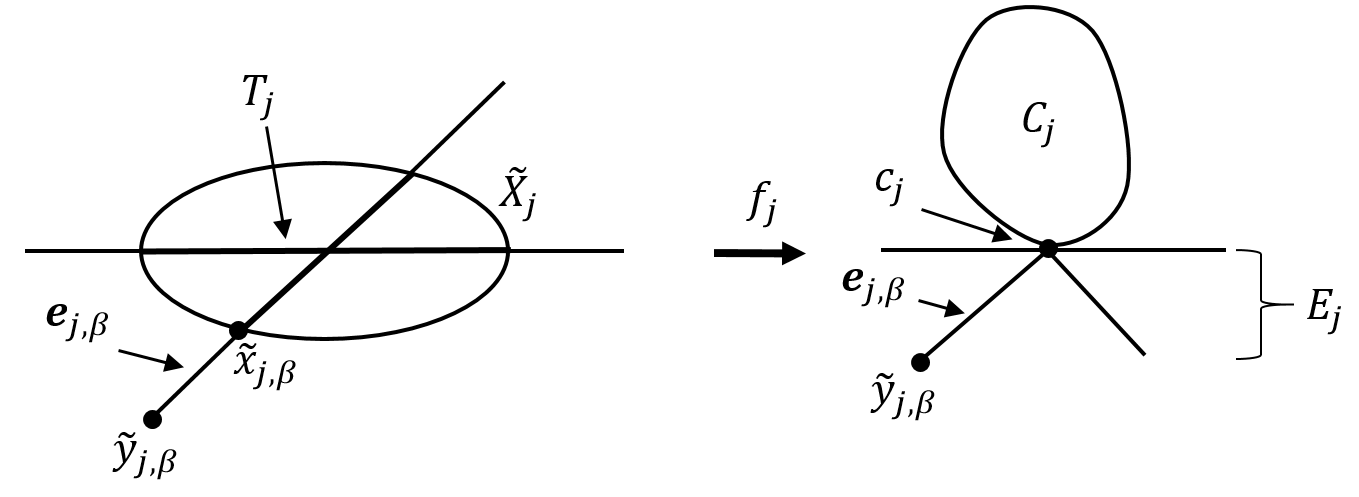}
\caption{\label{fig7} The quotient map $f_j:\wt{Y}_j\to D_j$ where $\wt{X}_j$ is illustrated as a disk in the domain.}
\end{figure}
\begin{itemize}
\item Since $(\tY_j,\tX_j)$ has the homotopy extension property, there is a retraction $h_1:\tY_j\times\ui\to \tY_j\times\{0\}\cup \tX_j\times \ui$. Instead of using an arbitrary retraction, we choose $h_1$ so that
\begin{itemize}
\item if $\bfa_{j,\beta}=\{\gamma\tau_{j,s}\mid s\in [0,1/3]\}$ is the third of $\bfe_{j,\beta}$ containing the arc-endpoint $\ty_{j,\beta}$, then $h_1$ projects $\bfa_{j,\beta}\times\ui$ vertically onto $\bfa_{j,\beta}\times \{0\}$,
\item $h_1(\bfe_{j,\beta}\times \ui)\subseteq (\bfe_{j,\beta}\times \{0\})\cup (\wt{x}_{j,\beta}\times \ui)$.
\end{itemize}
\item Since $(\tX_j,T_j)$ has the homotopy extension property, there is a retraction
$h_2:\tY_j\times\{0\}\cup \tX_j\times \ui\to \tY_j\times\{0\}\cup T_j\times \ui$ such that $h_2(\tX_j\times \ui)\subseteq \tX_j\times \{0\}\cup T_j\times \ui$.
\item Define $h_3:\tY_j\times\{0\}\cup T_j\times \ui\to \tY_j$ to be the identity on $\tY_j\times\{0\}$ and, on $T_j\times \ui$, to be a choice of contraction $T_j\times \ui\to T_j$ for $T_j$ (there will be no benefit to choosing this to be a based contraction of $T_j$).
\end{itemize}
Let $H_j=h_3\circ h_2\circ h_1:\tY_j\times \ui\to\tY_j$ (see Figure \ref{fig6}). The map $H_j(y,1):\tY_j\to \tY_j$ is constant on $T_j$ and thus induces a unique map $g_j:D_j\to \tY_j$ satisfying $g_j\circ f_j(y)=H_j(y,1)$. By construction, $H_j$ is a homotopy from $id_{\tY_j}$ to $g_j\circ f_j$. Because we require $H_j(T_j\times I)\subseteq T_j$, $H_j$ cannot be constructed as a lift of the homotopy $Y_j\times \ui\to Y_j$ used in Section \ref{sectionwellpointed} (recall Figure \ref{fig4}).

\begin{figure}[H]
\centering \includegraphics[height=2.8in]{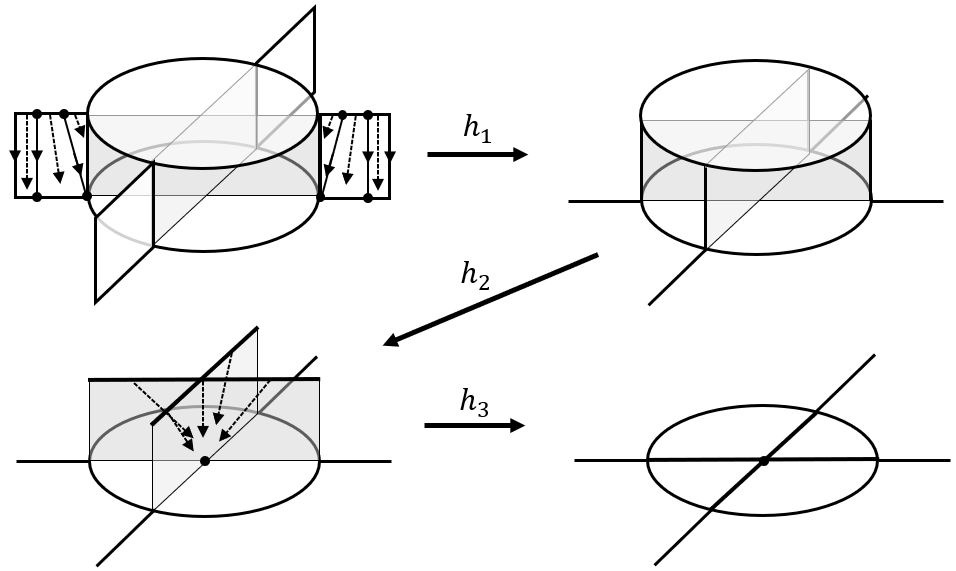}
\caption{\label{fig6}The homotopy $H_j:\tY_j\times\ui\to\tY_j$ constructed as a composition. There is a square attached to $\wt{X}_j\times \ui$ (represented by the cylinder), at each vertex of $T_j$; however, this is not reflected in this illustration for the sake of clarity. $T_j\times \ui$ is represented by the shaded gray surface.}
\end{figure}

The map $f_j\circ H_j:\tY_j\times \ui\to D_j$ sends $T_j\times\ui$ to the point $c_j$ and so there is a unique map $G_j:D_j \times \ui\to D_j$ making the following diagram commute.
\[\xymatrix{
\tY_j\times \ui \ar[d]_-{f_j\times id} \ar[r]^-{H_j} & \tY_j \ar[d]^-{f_j} \\
D_j \times \ui \ar[r]_-{G_j} & D_j
}\]
Since $G_j(f_j(y),1)=f_j\circ H_j(y,1)=f_j\circ g_j\circ f_j(y)$ where $f_j$ is surjective, it follows that $G_j(d,1)=f_j\circ g_j(d)$. Thus $G_j$ is a homotopy from $id_{D_j}$ to $f_j\circ g_j$.

While the construction of $f_j$, $g_j$, $H_j$, and $G_j$ is mostly standard (\cite[Prop. 0.17]{Hatcher}), our choice of $h_1$ and $h_2$ ensure the following important features.
\begin{enumerate}
\item $H_j$ and $G_j$ are the constant homotopies on a uniform neighborhood of every arc-endpoint,
\item $f_{j}(\tX_j)\subseteq C_{j}$ and $g_{j}(C_{j})\subseteq \tX_j$,
\item $H_j(\tX_j\times\ui)\subseteq \tX_j$ and $G_j(C_j\times \ui)\subseteq C_j$,
\item $f_{j}(\bft_j)\subseteq E_{j}$ and $g_{j}(E_{j})\subseteq \bft_j$,
\item $H_j(\bft_j\times\ui)\subseteq \bft_j$ and $G_j(E_j\times\ui)\subseteq E_j$.
\end{enumerate}
Note that (2) and (3) imply that the restricted maps $(f_j)|_{\tX_j}:\tX_j\to C_j$ and $(g_j)|_{C_j}:C_j\to \tX_j$ are homotopy inverses. Similarly, (4) and (5) imply that $(f_j)|_{\bft_j}:\bft_j\to E_j$ and $(g_j)|_{E_j}:E_j\to \bft_j$ are homotopy inverses.

\subsection{Quotients of $\tY_j$ by translates of $T_j$}\label{sectiontranslatesoftj}

The fundamental group $\pi_1(Y_j)$ acts on $\tY_j$ by deck transformation: $\Delta_{\beta}:\tY_j\to \tY_j$, $\Delta_{\beta}(\alpha)=\beta\alpha$, $\beta\in\pi_1(Y_j)$. Note that $\Delta_{\beta}$ maps $m$-cells to $m$-cells, $\Delta_{\beta}(\tX_j)=\tX_j$, and that $\Delta_{\beta}$ permutes the discrete set of arc-endpoints $\{\wt{y}_{j,\gamma}\mid \gamma\in \pi_1(Y_j)\}$. In particular, the translated trees $\beta T_{j}=\Delta_{\beta}(T_j)$ and $\beta \bft_j=\Delta_{\beta}(\bft_j)$ are maximal trees in the 1-skeleton of $\tX_j$ and $\tY_j$ respectively. Later on, we will need to consider quotients of $\tY_j$ by arbitrary translates of the tree $\beta T_j$. For this purpose, we establish notation for the corresponding homotopy equivalences. 

Let $D_{j,\beta}= \tY_j/\beta T_{j}$ be the quotient of $\tY_j$ obtained by collapsing $\beta T_{j}$ to a point. If $f_{j,\beta}:\tY_j\to D_{j,\beta}$ is the quotient map, then there is a unique homomorphism $\delta_{\beta}:D_j\to D_{j,\beta}$ such that the left square in the diagram below commutes. Define $g_{j,\beta}:D_{j,\beta}\to \tY_j$ by $g_{j,\beta}=\Delta_{\beta}\circ g_j\circ \delta_{\beta}^{-1}$ so the square on the right commutes. 
\[\xymatrix{
\tY_j \ar[r]^-{f_j} \ar[d]_-{\Delta_{\beta}} & D_j \ar[r]^-{g_j} \ar[d]_-{\delta_{\beta}} & \tY_j \ar[d]^-{\Delta_{\beta}} \\
\tY_j \ar[r]_-{f_{j,\beta}} & D_{j,\beta} \ar[r]_-{g_{j,\beta}} & \tY_j
}\]
Since the vertical maps are homeomorphisms, $f_{j,\beta}$ and $g_{j,\beta}$ are homotopy equivalences. To verify that these are homotopy inverses of each other, we construct the $\beta$-translates of $H_j$ and $G_j$. Set
\begin{enumerate}
\item $H_{j,\beta}=\Delta_{\beta}\circ H_j\circ(\Delta_{\beta}^{-1}\times id_{\ui})$,
\item $G_{j,\beta}=\delta_{\beta}\circ G_j\circ(\delta_{\beta}^{-1}\times id_{\ui})$.
\end{enumerate}
We now have maps $H_{j,\beta}:\tY_j\times \ui\to \tY_j$ and $G_{j,\beta}:D_j\times \ui\to D_j$ that make the left and right faces of the following cube commute. The top face commutes by the definition of $G_j$. The front and back faces commute by the definition of $\wt{f}_{j,\beta}$. Since the vertical maps are homeomorphisms, the bottom face commutes.
\begin{equation}\label{C1}
\begin{tikzcd}[row sep=2.5em]
\tY_j\times I \arrow[rr,"{f_j\times id}"] \arrow[dr,swap,"{H_{j}}"] \arrow[dd,swap,"{\Delta_{\beta}\times id}"] &&
  D_{j}\times I \arrow[dd,swap,"{\delta_{\beta}\times id}" near start] \arrow[dr,"G_j"] \\
& \tY_j \arrow[rr,swap,crossing over,"f_j" near start] &&
  D_j \arrow[dd,"{\delta_{\beta}}"] \\
\tY_j\times I \arrow[rr,"{f_{j,\beta}\times id}" near end] \arrow[dr,swap,"{H_{j,\beta}}"] && D_{j,\beta}\times I \arrow[dr,swap,"{G_{j,\beta}}"] \\
& \tY_j \arrow[rr,swap,"f_{j,\beta}"] \arrow[uu,<-,crossing over,"\Delta_{\beta}" near end]&& D_{j,\beta}
\end{tikzcd}\tag{C1}
\end{equation}
A straightforward check shows that $H_{j,\beta}$ is a homotopy from $id_{\tY_j}$ to $g_{j,\beta}\circ f_{j,\beta}$ and $G_{j,\beta}$ is a homotopy from $id_{D_j}$ to $f_{j,\beta}\circ g_{j,\beta}$. 
%The translations do permute the arc-endpoints and so we still write $\bf{e}_{j,\beta\gamma}$ for the arc $\beta \bfe_{j,\gamma}\subseteq \tY_j$ and its image $\wt{f}_{j,\beta}(\beta \bf{e}_{j,\gamma})$ in $E_{j,\beta}$. Hence, our notation identifies $E_j=E_{j,\beta}$ as sets but $C_{j}$ and $C_{j,\beta}$ remain distinct when $\beta \neq 1$.

Note that we have an analogous wedge point $c_{j,\beta}=f_{j,\beta}(\beta T_j)=\delta_{\beta}(c_j)$ and subspaces $C_{j,\beta}=\delta_{\beta}(C_j)$ and $E_{j,\beta}=\delta_{\beta}(E_j)$ from which we have the decomposition $D_{j,\beta}=(C_{j,\beta},c_{j,\beta})\vee (E_{j,\beta},c_{j,\beta})$. Finally, we point out that the $\beta$-translated homotopies $H_{j,\beta}$ and $G_{j,\beta}$ enjoy the following properties just like the original maps $H_j$ and $G_j$.
\begin{enumerate}
\item $H_{j,\beta}$ and $G_{j,\beta}$ are the constant homotopies on a uniform neighborhood of every arc-endpoint,
\item $f_{j,\beta}(\tX_j)\subseteq C_{j,\beta}$ and $g_{j,\beta}(C_{j,\beta})\subseteq \tX_j$,
\item $H_{j,\beta}(\tX_j\times\ui)\subseteq \tX_j$ and $G_{j,\beta}(C_{j,\beta}\times \ui)\subseteq C_{j,\beta}$,
\item $f_{j,\beta}(\beta\bft_j)\subseteq E_{j,\beta}$ and $g_{j,\beta}(E_{j,\beta})\subseteq \beta\bft_j$,
\item $H_{j,\beta}(\beta\bft_j\times\ui)\subseteq \beta\bft_j$ and $G_{j,\beta}(E_{j,\beta}\times\ui)\subseteq E_{j,\beta}$.
\end{enumerate}
Note that (2) and (3) imply that the restricted maps $(f_{j,\beta})|_{\tX_j}:\tX_j\to C_{j,\beta}$ and $(g_{j,\beta})|_{C_{j,\beta}}:C_{j,\beta}\to \tX_j$ are homotopy inverses. Similarly, (4) and (5) imply that $(f_{j,\beta})|_{\beta\bft_j}:\beta\bft_j\to E_{j,\beta}$ and $(g_{j,\beta})|_{E_{j,\beta}}:E_{j,\beta}\to \beta\bft_j$ are homotopy inverses.

\begin{remark}\label{deltagammaremark}
For $\beta,\gamma\in\pi_1(Y_j)$, we also have a canonical homeomorphism $D_{j,\beta}\to D_{j,\gamma\beta}$. Formally, this map is $\delta_{\gamma\beta}\circ\delta_{\beta}^{-1}$, however, since it is determined by left multiplication by $\gamma$, we will also denote it by $\delta_{\gamma}$. With this definition, the following diagram commutes for all $j\in\bbn$ and $\beta,\gamma\in\pi_1(Y_j)$.
\[\xymatrix{
\tY_j \ar[r]^-{f_{j,\beta}} \ar[d]_-{\Delta_{\gamma}} & D_{j,\beta} \ar[r]^-{g_{j,\beta}} \ar[d]_-{\delta_{\gamma}} & \tY_j \ar[d]^-{\Delta_{\gamma}} \\
\tY_j \ar[r]_-{f_{j,\gamma\beta}} & D_{j,\gamma\beta} \ar[r]_-{g_{j,\gamma\beta}} & \tY_j
}\]
\end{remark}

\section{The inverse limits $\wh{Y}$ and $\wh{Z}$}

Since $\pi_n(X)\cong \pi_n(\tX)\cong\pi_n(\tY)$ for all $n\geq 2$, we wish to analyze the homotopical structure of $\tY$. However, directly verifying the continuity of the desired deformations of $\tY$ appears to be exceptionally tedious. Thus we seek a detailed description of the inverse system $(\tY_{\leq k},\wt{r}_{k+1,k})$. 

\subsection{The behavior of $\wt{r}_{k+1,k}:\tY_{\leq k+1}\to \tY_{\leq k}$}\label{sectionstructureofyk}

For each $k\in\bbn$, the universal covering space $\wt{Y}_{\leq k}$ is a CW-complex which is the union of homeomorphic copies of $\tY_j$ attached to each other in a tree-like fashion. We establish the following notation to keep track of the exact location of such subspaces. Recall that $\wt{Y}_{\leq k}$ is the set of path-homotopy classes of paths in $\wt{Y}_{\leq k}$ starting at $\wt{y}_0$ and the covering map $p_{\leq k}:\tY_{\leq k}\to Y_{\leq k}$ is the endpoint projection so if $\alpha=[a]$, then $p_k(\alpha)=a(1)$.

\begin{definition}
Fix $1\leq j\leq k$ and let $\alpha:\ui\to Y_{\leq k}$ be a reduced loop based at $y_0$. We say that $\alpha$ is \textit{non-$Y_{j}$-terminal} if either $\alpha$ is constant or if for the maximal element $(a,b)\in \ov{\alpha}$, the loop $\alpha|_{[a,b]}$ has image in $\bigcup_{i\neq j}Y_i$. Let $\ntkj\subseteq \pi_1(Y_{\leq k})$ denote the subset of homotopy classes of non-$Y_{j}$-terminal reduced loops.
\end{definition}

An element $\alpha\in\ntkj$ corresponds to a uniquely to a reduced word $w_{\alpha}$ in the free product $\pi_1(Y_{\leq k})=\ast_{j=1}^{k}\pi_1(Y_j)$, which does not terminate in a letter from $\pi_1(Y_j)$. Since for every $\alpha\in \pi_1(Y_{\leq k})$, there exists some $j\in\bbn$ for which $w_{\alpha}$ does not end in a letter from $\pi_1(Y_j)$, we have $p_{\leq k}^{-1}(y_0)=\pi_1(Y_{\leq k})=\bigcup_{j=1}^{k}\ntkj$.

When $j\leq k$, the coset projection $\pi_1(Y_{\leq k})\to \pi_1(Y_{\leq k})/\pi_1(Y_{j})$ restricts to a bijection $\ntkj\to \pi_1(Y_{\leq k})/\pi_1(Y_j)$, so the elements of $\ntkj$ are simply a canonical choice of representatives of the elements of $\pi_1(Y_{\leq k})/\pi_1(Y_{j})$. Moreover, $\ntkj$ indexes the set of connected components of $p_{\leq k}^{-1}(Y_j)$ in the following way: each connected component of $p_{\leq k}^{-1}(Y_j)$ is a set of the form $\mcb_{k,j,\alpha}=\{\alpha \beta\in\tY_{\leq k}\mid \beta\in\tY_j\}$ for some unique $\alpha\in \ntkj$ and is homeomorphic to $\tY_j$. In particular, there is a canonical homeomorphism $\Lambda_{k,j,\alpha}:\mcb_{k,j,\alpha}\to \tY_j$ given by $\Lambda_{k,j,\alpha}(\alpha\beta)=\beta$, which makes the following triangle commute.
\[\xymatrix{
\mcb_{k,j,\alpha} \ar[rr]^-{\Lambda_{k,j,\alpha}} \ar[dr]_-{p_{\leq k}} && \tY_j \ar[dl]^-{p_{j}}\\
& Y_j
}\]

\begin{proposition}
$\tY_{\leq k}$ is a CW complex, which has the weak topology with respect to the subcomplexes $\mcb_{k,j,\alpha}$, $1\leq j\leq k$, $\alpha\in \ntkj$.
\end{proposition}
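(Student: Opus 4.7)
The plan is to transfer the CW structure from $Y_{\leq k}$ to $\tY_{\leq k}$ via the covering map $p_{\leq k}$ and then verify that the subcomplexes $\mcb_{k,j,\alpha}$ cover every cell. First I would invoke the standard fact that if $p:\wt{Z}\to Z$ is a covering map onto a CW complex $Z$, then $\wt{Z}$ inherits a unique CW structure making $p$ cellular: for each open cell $e\subseteq Z$ with characteristic map $\Phi:D^m\to Z$, the preimage $p^{-1}(e)$ decomposes as a disjoint union of open cells whose characteristic maps are obtained by lifting $\Phi$ (possible because $D^m$ is simply connected). Applying this with $Z=Y_{\leq k}$ yields a CW structure on $\tY_{\leq k}$ whose $0$-cells are the points of $p_{\leq k}^{-1}(y_0)=\pi_1(Y_{\leq k})$ together with the lifts of the attaching points $x_1,\dots,x_k$, and whose positive-dimensional cells sit above cells of a unique summand $Y_j$.

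Next I would identify each $\mcb_{k,j,\alpha}$ as a subcomplex of this CW structure. Since $Y_j$ is a subcomplex of $Y_{\leq k}$ (indeed a retract), the preimage $p_{\leq k}^{-1}(Y_j)$ is a subcomplex of $\tY_{\leq k}$, and so are its connected components (a connected component of a CW complex is a clopen union of full cells). The inclusion $Y_j\hookrightarrow Y_{\leq k}$ is $\pi_1$-injective since $\pi_1(Y_{\leq k})=\ast_{i=1}^{k}\pi_1(Y_i)$ contains each $\pi_1(Y_i)$ as a free factor; by standard covering-space theory, the restriction of $p_{\leq k}$ to each connected component of $p_{\leq k}^{-1}(Y_j)$ is therefore a universal covering of $Y_j$. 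The components are indexed by the cosets $\pi_1(Y_{\leq k})/\pi_1(Y_j)$, and the non-$Y_j$-terminal representatives in $\ntkj$ give a canonical choice. The already-defined homeomorphism $\Lambda_{k,j,\alpha}:\mcb_{k,j,\alpha}\to\tY_j$ identifies each component with $\tY_j$ cellularly.

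For the weak-topology claim, I would apply the general CW principle: if a CW complex $W$ is covered by a family $\{W_\lambda\}$ of subcomplexes such that every cell of $W$ lies in at least one $W_\lambda$, then $W$ carries the weak topology determined by $\{W_\lambda\}$. To verify the hypothesis, each positive-dimensional cell of $\tY_{\leq k}$ lies above an open cell of a unique summand $Y_j\setminus\{y_0\}$, hence in exactly one $\mcb_{k,j,\alpha}$; each $0$-cell in $p_{\leq k}^{-1}(y_0)=\bigcup_{j=1}^{k}\ntkj$ lies in at least one such subcomplex by the very definition of $\ntkj$; and each $0$-cell above $x_j$ lies in the unique $\mcb_{k,j,\alpha}$ determined by its coset in $\pi_1(Y_{\leq k})/\pi_1(Y_j)$. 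This completes the plan. I do not anticipate any genuine obstacle; the argument is essentially bookkeeping combining the CW lifting lemma, the free-product structure of $\pi_1(Y_{\leq k})$, and the general weak-topology criterion for a subcomplex cover.
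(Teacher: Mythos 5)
Your proposal is correct, and it is exactly the standard argument the paper is relying on: the paper states this proposition without proof, having just set up the identification of the $\mcb_{k,j,\alpha}$ with the components of $p_{\leq k}^{-1}(Y_j)$, so your argument (lifted CW structure, components of the preimage of a $\pi_1$-injective subcomplex are universal covers indexed by cosets with $\ntkj$ as canonical representatives, and the weak-topology criterion for a covering family of subcomplexes) simply fills in the omitted bookkeeping. No gaps.
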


\begin{remark}
Generally we use ``tilde" notation $\tX$ to indicate that Definition \ref{whiskertopologydef} is being applied and so we create a slight inconsistency with our notation for $\mcb_{k,j,\alpha}$ and $\mca_{k,j,\alpha}$. However, this notation most suitably reflects the fact that these two spaces are homeomorphic copies of $\tY_j$ and $\tX_j$ respectively.
\end{remark}

Allowing $k$ to vary, note that the map $\wt{r}_{k+1,k}$ collapses each subspace $\mcb_{k+1,k+1,\alpha}$ of $\tY_{\leq k+1}$ to a point. Additionally, $\wt{r}_{k+1,k}$ folds the subspaces $\mcb_{k+1,j,\alpha}$ (for fixed $j\leq k$) onto each other homeomorphically in a way that reflects word reduction in $\pi_1(Y_{\leq k})$. This folding is non-trivial because the homomorphism $(r_{k+1,k})_{\#}:\pi_1(Y_{\leq k+1})\to \pi_1(Y_{\leq k})$, which deletes letters from $\pi_1(Y_{\leq k+1})$ need not map $\nt_{k+1,j}$ into $\nt_{k,j}$. Indeed, if $1\neq \beta_{k+1}\in \pi_1(Y_{\leq k+1})$ and $1\neq \beta_j\in \pi_1(Y_{j})$ for $j<k+1$, then $\beta_j\beta_{k+1}\in \nt_{k+1,j}$ but $(r_{k+1,k})_{\#}(\beta_j\beta_{k+1})=\beta_j\notin  \ntkj$. We formalize this in the next remark.

\begin{remark}[Behavior of $\wt{r}_{k+1,k}$]
Fix $1\leq j\leq k+1$ and $\alpha\in\nt_{k+1,j}$.
\begin{itemize}
\item If $j=k+1$, then $\wt{r}_{k+1,k}$ maps $\mcb_{k+1,j,\alpha}$ to the single point $\wt{r}_{k+1,k}(\alpha)$.
\item If $1\leq j\leq k$, we write $\wt{r}_{k+1,k}(\alpha)=\alpha '\gamma$ for unique $\alpha '\in \ntkj$ and $\gamma\in \pi_1(Y_j)$. In this case, $\wt{r}_{k+1,k}$ maps $\mcb_{k+1,j,\alpha}$ homeomorphically onto $\mcb_{k,j,\alpha '}$ by $\alpha\beta\mapsto \alpha '\gamma \beta$ for $\beta\in\tY_j$. In other words, if $\Delta_{\gamma}:\tY_j\to \tY_j$ is the deck transformation $\Delta_{\gamma}(\beta)=\gamma\beta$, then the following diagram of homeomorphisms commutes.
\[\xymatrix{
\mcb_{k+1,j,\alpha} \ar[d]_-{\Lambda_{k+1,j,\alpha}} \ar[r]^-{\wt{r}_{k+1,k}} & \mcb_{k,j,\alpha '} \ar[d]^-{\Lambda_{k,j,\alpha '}}\\
\tY_j \ar[r]_-{\Delta_{\gamma}} & \tY_j
}\]
Indeed, because $\alpha ' \in\ntkj$ and $\gamma\beta\in\pi_1(Y_j)$, we have $\Lambda_{k,j,\alpha '}(\alpha '\gamma\beta)=\gamma\beta$.
Note that $\wt{r}_{k+1,k}(\alpha)\in\ntkj$ $\Leftrightarrow$ $\gamma=1$ $\Leftrightarrow$ $\Delta_{\gamma}=id_{\tY_j}$.
\end{itemize}
\end{remark}

\subsection{The quotient maps $\wt{f}_k:\tY_{\leq k}\to\tZ_k$}\label{sectionspacesqk}

For $1\leq j\leq k$ and $\alpha\in\ntkj$, let \[\mca_{k,j,\alpha}=\Lambda_{k,j,\alpha}^{-1}(\tX_j)=\{\alpha\tau_{j}\delta\in\mcb_{k,j,\alpha}\mid \delta\in\wt{X}_j\}\]be the copy of $\wt{X}_j$ in $\mcb_{k,j,\alpha}$. We will use the fixed tree $T_j\subseteq \tY_j$ and its translates $\beta T_j$, $\beta\in \pi_1(Y_j)$ to define a tree $\mct_{k,j,\alpha}$ in the 1-skeleton of $\mca_{k,j,\alpha}$. Naively, one could attempt to define such a tree as $\Lambda_{k,j,\alpha}^{-1}(T_j)$. However, we wish for these trees to be coherent with bonding maps $\wt{r}_{k+1,k}$ and the folding behavior of this map encountered in the previous section suggests that such a choice would ultimately fail. Although our choices will be entirely determined by our initial choices of $T_j$ in $\tX_j$, we must construct them by induction on $k$.

For $k=j=1$, we have $\tY_1=\mcb_{1,1,1}$ and $\Lambda_{1,1,1}=id_{\tY_1}$. Thus, we define $\mct_{1,1,1}=T_1$ and set $\scrt_1=\{\mct_{1,1,1}\}$.

Suppose we have defined every element of $\scrt_k=\{\mct_{k,j,\alpha}\mid 1\leq j\leq k,\alpha\in\ntkj\}$ so that each $\mct_{k,j,\alpha}$ is a maximal tree in the 1-skeleton of $\mca_{k,j,\alpha}$.
\begin{itemize}
\item[]\textbf{Case I:} If $j=k+1$, define $\mct_{k+1,j,\alpha}=\Lambda_{k+1,j,\alpha}^{-1}(T_{j})$. 
\item[] \textbf{Case II:} If $1\leq j\leq k$, write $(r_{k+1,k})_{\#}(\alpha)=\alpha '\gamma$ for $\alpha '\in \ntkj$ and $\gamma\in \pi_1(Y_j)$. The tree $\mct_{k,j,\alpha '}$ is defined in $\mcb_{k,j,\alpha '}$ by our induction hypothesis. Therefore, we set 
\[\mct_{k+1,j,\alpha}=\Lambda_{k+1,j,\alpha}^{-1}\circ\Delta_{\gamma}^{-1}\circ\Lambda_{k,j,\alpha '}(\mct_{k,j,\alpha '})\]
\end{itemize}
Set $\scrt_{k+1}=\{\mct_{k+1,j,\alpha}\mid 1\leq j\leq k+1,\alpha\in\nt_{k+1,j}\}$. This completes the induction.

\begin{remark}[Coherence of trees]\label{stabilizationremark}
The inductive construction of $\mct_{k+1,j,\alpha}$ was given precisely to match with the bonding maps $\wt{r}_{k+1,k}$.
\begin{enumerate}
\item[] \textbf{Case I:} If $j=k+1$, then $\wt{r}_{k+1,k}(\mct_{k+1,j,\alpha})=\wt{r}_{k+1,k}(\alpha)$ and $\Lambda_{k+1,j,\alpha}(\mct_{k+1,j,\alpha})=T_j$.
\item[] \textbf{Case II:} If $1\leq j\leq k$ and $\alpha '$ and $\gamma$ are as above then $\wt{r}_{k+1,k}$ maps $\mct_{k+1,j,\alpha}$ homeomorphically onto $\mct_{k,j,\alpha '}$. Moreover, if $\Lambda_{k,j,\alpha '}(\mct_{k,j,\alpha '})=\beta T_j$, then the above definition ensures \[\Lambda_{k+1,j,\alpha}(\mct_{k+1,j,\alpha})=\gamma^{-1}\beta T_j.\]
\end{enumerate}
\end{remark}

Our inductive construction of $\mct_{k,j,\alpha}$ implies that this tree will always correspond to some translation $\beta T_j$ under the ``bookkeeping" homeomorphism $\Lambda_{k,j,\alpha}$ (this is stated formally in the next proposition). We will see later on that the homomorphisms $\Lambda_{k,j,\alpha}$ are precisely the bridge required to witness the eventual stabilization of certain sequences of trees $\{\mct_{k,j,\alpha_{k}}\}$ where $j$ is fixed and $k\to\infty$.

\begin{proposition}\label{transprop}
For every $1\leq j\leq k<\infty$ and $\alpha\in\ntkj$, $\Lambda_{k,j,\alpha}(\mct_{k,j,\alpha})=\beta  T_j$ in $\tY_j$ for some $\beta\in \pi_1(Y_j)$.
\end{proposition}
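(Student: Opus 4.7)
The plan is to prove this by induction on $k \geq j$, directly following the inductive construction of the trees $\mct_{k,j,\alpha}$ in Section 4.2. The claim is essentially the ``propagation'' of the form $\beta T_j$ under the bookkeeping homeomorphisms, and almost all of the work has already been anticipated in Remark \ref{stabilizationremark}; I expect no genuine obstacle.

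For the base case $k = j = 1$, we have $\mcb_{1,1,1} = \tY_1$, $\Lambda_{1,1,1} = id_{\tY_1}$, and $\mct_{1,1,1} = T_1$ by definition, so $\Lambda_{1,1,1}(\mct_{1,1,1}) = T_1 = 1 \cdot T_1$ and we take $\beta = 1$.

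For the inductive step, suppose the statement holds at level $k$, and fix $1 \leq j \leq k+1$ and $\alpha \in \nt_{k+1,j}$. Case I is immediate: if $j = k+1$, the defining equation $\mct_{k+1,j,\alpha} = \Lambda_{k+1,j,\alpha}^{-1}(T_j)$ yields $\Lambda_{k+1,j,\alpha}(\mct_{k+1,j,\alpha}) = T_j = 1 \cdot T_j$. In Case II, where $1 \leq j \leq k$, write $(r_{k+1,k})_{\#}(\alpha) = \alpha'\gamma$ with $\alpha' \in \ntkj$ and $\gamma \in \pi_1(Y_j)$. By the inductive hypothesis applied to $\mct_{k,j,\alpha'}$, there exists $\beta' \in \pi_1(Y_j)$ with $\Lambda_{k,j,\alpha'}(\mct_{k,j,\alpha'}) = \beta' T_j$. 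By the definition of $\mct_{k+1,j,\alpha}$,
\[
\Lambda_{k+1,j,\alpha}(\mct_{k+1,j,\alpha}) = \Delta_{\gamma}^{-1}\circ\Lambda_{k,j,\alpha'}(\mct_{k,j,\alpha'}) = \Delta_{\gamma}^{-1}(\beta' T_j) = \gamma^{-1}\beta' T_j,
\]
so setting $\beta = \gamma^{-1}\beta' \in \pi_1(Y_j)$ completes the inductive step.

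In particular, this gives the explicit formula $\beta = \gamma^{-1}\beta'$ recorded in Remark \ref{stabilizationremark}, confirming that the recursive construction really does preserve the form ``translate of $T_j$'' across all levels $k$. The only place any structural content is used is the elementary fact that $\Delta_\gamma^{-1}$ acts on $\tY_j$ by left multiplication by $\gamma^{-1}$ and that $\pi_1(Y_j)$ is closed under products and inverses; both are built into the definitions.
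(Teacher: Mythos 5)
Your proof is correct and is exactly the argument the paper intends: the proposition is an immediate consequence of the inductive construction of the trees $\mct_{k,j,\alpha}$ together with the translation formula $\beta \mapsto \gamma^{-1}\beta$ already recorded in Remark \ref{stabilizationremark}. The induction on $k$ with base case $k=j$ (Case I giving $\beta=1$) and the propagation step (Case II giving $\beta=\gamma^{-1}\beta'$) is precisely what the paper leaves implicit.
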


Let $\tZ_k$ be the quotient space of $\wt{Y}_{\leq k}$ obtained by identifying each tree $\mct_{k,j,\alpha}\in\scrt_{k}$ to a point and $\wt{f}_{k}:\wt{Y}_{\leq k}\to \tZ_k$ be the quotient map. Let $z_k=\wt{f}_k(\wt{y}_0)$ be the basepoint of $Z_k$. Since $\scrt_{k}$ consists of a collection of disjoint, contractible subcomplexes in $\wt{Y}_{\leq k}$, it is clear that quotient map $\wt{f}_{k}$ is a homotopy equivalence of CW-complexes. However, we wish to choose homotopy inverses for $\wt{f}_k$ in a coherent way. Toward this end, we first show that the spaces $\tZ_k$ are part of a uniquely determined inverse system.

\begin{lemma}\label{smaplemma}
For each $k\in\bbn$, there is a unique map $\wt{s}_{k+1,k}:\tZ_{k+1}\to \tZ_{k}$ such that the following diagram commutes.
\[\xymatrix{
\wt{Y}_{\leq k+1} \ar[r]^-{\wt{r}_{k+1}} \ar[d]_-{\wt{f}_{k+1}} & \wt{Y}_{\leq k} \ar[d]^-{\wt{f}_k} \\ \tZ_{k+1} \ar@{-->}[r]_-{\wt{s}_{k+1}} & \tZ_k
}\]
\end{lemma}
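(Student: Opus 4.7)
The plan is to apply the universal property of the quotient map $\wt{f}_{k+1}:\tY_{\leq k+1}\to \tZ_{k+1}$. Since $\wt{f}_{k+1}$ is a quotient map and is surjective, continuous maps out of $\tZ_{k+1}$ correspond bijectively to continuous maps $\tY_{\leq k+1}\to \tZ_k$ that are constant on each tree $\mct_{k+1,j,\alpha}\in\scrt_{k+1}$. Therefore it suffices to show that the composition $\wt{f}_k\circ \wt{r}_{k+1,k}:\tY_{\leq k+1}\to \tZ_k$ sends each such tree to a single point. Uniqueness then follows immediately from surjectivity of $\wt{f}_{k+1}$, and continuity of $\wt{s}_{k+1,k}$ follows from the fact that $\tZ_{k+1}$ carries the quotient topology induced by $\wt{f}_{k+1}$.

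To check the collapsing condition, I would fix $1\leq j\leq k+1$ and $\alpha\in\nt_{k+1,j}$ and split into the two cases described in Remark \ref{stabilizationremark}. In Case I, when $j=k+1$, the map $\wt{r}_{k+1,k}$ already sends the entire subspace $\mcb_{k+1,k+1,\alpha}$ (and in particular the tree $\mct_{k+1,k+1,\alpha}$) to the single point $\wt{r}_{k+1,k}(\alpha)$, so $\wt{f}_k\circ \wt{r}_{k+1,k}$ is trivially constant on this tree. In Case II, when $1\leq j\leq k$, Remark \ref{stabilizationremark} gives $\wt{r}_{k+1,k}(\mct_{k+1,j,\alpha})=\mct_{k,j,\alpha'}$ for the corresponding $\alpha'\in\ntkj$ (where $(r_{k+1,k})_{\#}(\alpha)=\alpha'\gamma$). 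Since $\mct_{k,j,\alpha'}\in\scrt_k$ is one of the trees that $\wt{f}_k$ collapses to a point, we conclude $\wt{f}_k\circ \wt{r}_{k+1,k}(\mct_{k+1,j,\alpha})$ is the single point $\wt{f}_k(\mct_{k,j,\alpha'})$.

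The main content of the lemma is precisely the coherence built into the definition of the trees $\mct_{k,j,\alpha}$ in Section \ref{sectionspacesqk}, which was engineered so that Case II of Remark \ref{stabilizationremark} holds. The only step with any subtlety is therefore the verification in Case II, which we get for free from the inductive construction and the behavior of $\wt{r}_{k+1,k}$ recorded in the previous subsection. Everything else is a formal consequence of the universal property of the quotient topology, so I do not expect any genuine obstacles.
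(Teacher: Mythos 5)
Your proposal is correct and follows essentially the same route as the paper: reduce to showing $\wt{f}_k\circ\wt{r}_{k+1,k}$ collapses each tree $\mct_{k+1,j,\alpha}$, then split into the case $j=k+1$ (where $\wt{r}_{k+1,k}$ already collapses the whole subcomplex) and the case $j\leq k$ (where the inductive coherence of the trees gives $\wt{r}_{k+1,k}(\mct_{k+1,j,\alpha})=\mct_{k,j,\alpha'}$). The appeal to the universal property of the quotient for existence, continuity, and uniqueness is exactly what the paper's proof uses implicitly.
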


\begin{proof}
Since $\wt{f}_{k+1}$ collapses each tree $\mct_{k+1,j,\alpha}$ to a point, it suffices to show that $\wt{f}_k\circ \wt{r}_{k+1,k}$ also collapses $\mct_{k+1,j,\alpha}$ to a point. If $j=k+1$, then $\wt{r}_{k+1,k}$ maps $\mct_{k+1,j,\alpha}$ to $\wt{r}_{k+1,k}(\alpha)$ and the conclusion is clear. Suppose $1\leq j\leq k$ and $(r_{k+1,k})_{\#}( \alpha)=\alpha ' \gamma$ for $\alpha '\in\ntkj$ and $\gamma\in\pi_1(Y_j)$. By construction, $\wt{r}_{k+1,k}$ maps $\mct_{k+1,j,\alpha}$ homeomorphically onto $\mct_{k,j,\alpha '}$. Since $\wt{f}_k$ maps $\mct_{k,j,\alpha '}$ to a point, the conclusion follows.
\end{proof}

\begin{definition}
Let $\wh{Z}=\varprojlim_{k}(Z_k,\wt{s}_{k+1,k})$ be the inverse limit with basepoint $\wh{z}_0=(z_k)$ and projection maps $\wt{s}_k:\wh{Z}\to Z_k$. Additionally, let $\wh{f}=\ilim_{k}\wt{f}_k:(\wh{Y},\wh{y}_0)\to (\wh{Z},\wh{z}_0)$ be the inverse limit map. 
\end{definition}

Working toward the construction of a homotopy inverse of $\wh{f}$, we now construct a specific, coherent system of homotopy inverses $\{\wt{g}_k\}$ for the sequence $\{\wt{f}_k\}$. Fix $1\leq j\leq k$, $\alpha\in\ntkj$, and set $\mcd_{k,j,\alpha}=\wt{f}_k(\mcb_{k,j,\alpha})$. Notice that $\tZ_k$ is a CW-complex with the weak topology with respect the set of subcomplexes of the form $\mcd_{k,j,\alpha}$. Let $\wt{f}_{k,j,\alpha}:\mcb_{k,j,\alpha}\to \mcd_{k,j,\alpha}$ be the quotient map, which is the restriction of $\wt{f}_k$ to $\mcb_{k,j,\alpha}$. Recall that $\wt{f}_{k,j,\alpha}$ collapses $\mct_{k,j,\alpha}$ to a point and, by Proposition \ref{transprop}, $\mct_{k,j,\alpha}$ corresponds to some translation $\beta T_j$ in $\tY_j$, i.e. $\Lambda_{k,j,\alpha}(\mct_{k,j,\alpha})=\beta T_j$. In Section \ref{sectionfixinghe}, we let $\wt{f}_{j,\beta }:\tY_j\to D_{j,\beta}$ be the quotient map collapsing $\beta T_j$ to a point. These observations make the next proposition immediate.

\begin{proposition}\label{littlelambdaprop}
Suppose $\Lambda_{k,j,\alpha}(\mct_{k,j,\alpha})=\beta T_j$ for $\beta\in\pi_1(Y_j)$. Then there is a canonical homeomorphism $\lambda_{k,j,\alpha}:\mcd_{k,j,\alpha}\to D_{j,\beta }$ that makes the following square commute.
\[\xymatrix{
\mcb_{k,j,\alpha} \ar[r]^-{\Lambda_{k,j,\alpha}} \ar[d]_-{\wt{f}_{k,j,\alpha}} & \tY_j \ar[d]^-{f_{j,\beta }} \\\mcd_{k,j,\alpha} \ar@{-->}[r]_-{\lambda_{k,j,\alpha}} & D_{j,\beta }
}\]
\end{proposition}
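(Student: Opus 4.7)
The plan is to invoke the universal property of quotient maps, treating this as the standard fact that a homeomorphism of pairs descends to a homeomorphism on quotients. By hypothesis, $\Lambda_{k,j,\alpha}$ is a homeomorphism of $\mcb_{k,j,\alpha}$ onto $\tY_j$ that restricts to a homeomorphism of $\mct_{k,j,\alpha}$ onto $\beta T_j$. The composition $f_{j,\beta}\circ \Lambda_{k,j,\alpha}:\mcb_{k,j,\alpha}\to D_{j,\beta}$ is then continuous and constant on $\mct_{k,j,\alpha}$, taking the whole tree to the wedgepoint $c_{j,\beta}$. Since $\wt{f}_{k,j,\alpha}:\mcb_{k,j,\alpha}\to \mcd_{k,j,\alpha}$ is a quotient map collapsing precisely $\mct_{k,j,\alpha}$, the universal property yields a unique continuous map $\lambda_{k,j,\alpha}:\mcd_{k,j,\alpha}\to D_{j,\beta}$ with $\lambda_{k,j,\alpha}\circ \wt{f}_{k,j,\alpha}=f_{j,\beta}\circ \Lambda_{k,j,\alpha}$, giving the commuting square.

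To check that $\lambda_{k,j,\alpha}$ is a homeomorphism, I would run the same construction in reverse. The composition $\wt{f}_{k,j,\alpha}\circ \Lambda_{k,j,\alpha}^{-1}:\tY_j\to \mcd_{k,j,\alpha}$ is continuous and constant on $\beta T_j$, so factors through $f_{j,\beta}$ to produce a unique continuous $\mu:D_{j,\beta}\to \mcd_{k,j,\alpha}$. Chasing the two compositions, we find that both $\mu\circ\lambda_{k,j,\alpha}\circ \wt{f}_{k,j,\alpha}=\wt{f}_{k,j,\alpha}$ and $\lambda_{k,j,\alpha}\circ\mu\circ f_{j,\beta}=f_{j,\beta}$; since $\wt{f}_{k,j,\alpha}$ and $f_{j,\beta}$ are surjective (and uniqueness holds in the universal property), we conclude $\mu$ is a two-sided continuous inverse.

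The only subtlety I would want to verify carefully is that the restriction $\wt{f}_{k,j,\alpha}$ of $\wt{f}_k$ to $\mcb_{k,j,\alpha}$ is genuinely a quotient map onto $\mcd_{k,j,\alpha}$ by collapsing $\mct_{k,j,\alpha}$ alone. This requires that no other tree $\mct_{k,j',\alpha'}\in \scrt_k$ contributes identifications that touch $\mcb_{k,j,\alpha}$. By construction each $\mct_{k,j',\alpha'}$ lies inside $\mca_{k,j',\alpha'}$, which is the ``$\tX_{j'}$-part'' of $\mcb_{k,j',\alpha'}$, while distinct blocks $\mcb_{k,j,\alpha}$ and $\mcb_{k,j',\alpha'}$ meet only at arc-endpoints — points that lie in the whiskers, not in any $\mca_{k,j',\alpha'}$, and hence in no tree of $\scrt_k$. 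Since $\tY_{\leq k}$ carries the weak topology with respect to the family $\{\mcb_{k,j,\alpha}\}$, $\wt{f}_{k,j,\alpha}$ is a quotient map, and the universal-property argument above applies verbatim. I do not expect a serious obstacle here; the content of the proposition is essentially the functoriality of quotienting by a subspace under homeomorphisms, with the inductive choice of trees in Section \ref{sectionstructureofyk} serving only to guarantee that the hypothesis $\Lambda_{k,j,\alpha}(\mct_{k,j,\alpha})=\beta T_j$ is satisfied.
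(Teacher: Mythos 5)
Your proposal is correct and matches the paper's intent: the paper treats this proposition as immediate from the observation that $\wt{f}_{k,j,\alpha}$ collapses exactly $\mct_{k,j,\alpha}$ while $f_{j,\beta}$ collapses exactly $\beta T_j=\Lambda_{k,j,\alpha}(\mct_{k,j,\alpha})$, and your universal-property argument is just the standard way of making that precise. Your check that $\mcb_{k,j,\alpha}$ is saturated for $\wt{f}_k$ (distinct blocks meet only at arc-endpoints, which lie in no tree of $\scrt_k$) correctly handles the one point the paper leaves implicit.
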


We use the homeomorphisms $\lambda_{k,j,\alpha}$ to maintain track of what the maps $\wt{s}_{k+1,k}$ do to the spaces $\mcd_{k+1,j,\alpha}$ when $j\leq k$.
\begin{lemma}\label{cubelemma}
Fix $1\leq j< k+1$ and $\alpha\in\nt_{k+1,j}$. Suppose
\begin{itemize}
\item $(r_{k+1,k})_{\#}(\alpha)=\alpha ' \gamma  $ for $\alpha '\in  \ntkj$ and $\gamma \in \pi_1(Y_j)$
\item and $\Lambda_{k,j,\alpha '}(\mct_{k,j,\alpha '})=\beta T_j$ for $\beta \in \pi_1(Y_j)$.
\end{itemize}
Then the following cube commutes.
\begin{equation}\label{C2}
\begin{tikzcd}[row sep=2.5em]
\mcb_{k+1,j,\alpha} \arrow[rr,"{\wt{r}_{k+1,k}}"] \arrow[dr,swap,"{\Lambda_{k+1,j,\alpha}}"] \arrow[dd,swap,"{\wt{f}_{k+1,j,\alpha}}"] &&
  \mcb_{k,j,\alpha '} \arrow[dd,swap,"{\wt{f}_{k+1,j,\alpha}}" near start] \arrow[dr,"\Lambda_{k,j,\alpha '}"] \\
& \tY_j \arrow[rr,swap,crossing over,"\Delta_{\gamma}" near start] &&
  \tY_j \arrow[dd,"{f_{j,\beta}}"] \\
\mcd_{k+1,j,\alpha} \arrow[rr,"{\wt{s}_{k+1,k}}" near end] \arrow[dr,swap,"{\lambda_{k+1,j,\alpha}}"] && \mcd_{k,j,\alpha '} \arrow[dr,swap,"{\lambda_{k,j,\alpha '}}"] \\
& D_{j,\gamma^{-1}\beta} \arrow[rr,swap,"\delta_{\gamma}"] \arrow[uu,<-,crossing over,"f_{j,\gamma^{-1}\beta}" near end]&& D_{j,\beta}
\end{tikzcd}\tag{C2}
\end{equation}
\end{lemma}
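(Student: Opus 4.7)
The plan is to verify commutativity of five of the six faces of cube~\eqref{C2} directly from previously established results, and then to derive commutativity of the remaining (bottom) face by a diagram chase that exploits the surjectivity of the quotient map $\wt{f}_{k+1,j,\alpha}$.

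First I would identify each face with a previously verified square. The \emph{top face}, involving $\wt{r}_{k+1,k}$ and $\Delta_\gamma$, is exactly the square describing the behavior of $\wt{r}_{k+1,k}$ in Case II of the ``Behavior of $\wt{r}_{k+1,k}$'' remark, i.e.\ $\Lambda_{k,j,\alpha'}\circ \wt{r}_{k+1,k} = \Delta_{\gamma}\circ \Lambda_{k+1,j,\alpha}$. The \emph{back face}, involving $\wt{r}_{k+1,k}$ and $\wt{s}_{k+1,k}$, is obtained by restricting the defining square of $\wt{s}_{k+1,k}$ from Lemma~\ref{smaplemma} to the subcomplex $\mcb_{k+1,j,\alpha}$; this restriction is legitimate because $\wt{r}_{k+1,k}$ maps $\mcb_{k+1,j,\alpha}$ into $\mcb_{k,j,\alpha'}$. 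The \emph{right face} is a direct application of Proposition~\ref{littlelambdaprop} with parameters $(k,j,\alpha')$ using the standing hypothesis $\Lambda_{k,j,\alpha'}(\mct_{k,j,\alpha'}) = \beta T_j$.

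For the \emph{left face} I would first invoke Remark~\ref{stabilizationremark}(Case II), which, from the inductive definition $\mct_{k+1,j,\alpha}=\Lambda_{k+1,j,\alpha}^{-1}\circ\Delta_\gamma^{-1}\circ\Lambda_{k,j,\alpha'}(\mct_{k,j,\alpha'})$, yields $\Lambda_{k+1,j,\alpha}(\mct_{k+1,j,\alpha}) = \gamma^{-1}\beta\, T_j$. This is the crucial identification: it allows Proposition~\ref{littlelambdaprop} to produce $\lambda_{k+1,j,\alpha}\circ \wt{f}_{k+1,j,\alpha} = f_{j,\gamma^{-1}\beta}\circ \Lambda_{k+1,j,\alpha}$, which is exactly the left face. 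The \emph{front face}, involving $\Delta_\gamma$ and $\delta_\gamma$, is then the square from Remark~\ref{deltagammaremark} applied with the substitutions $\beta\leftarrow \gamma^{-1}\beta$ and $\gamma\leftarrow \gamma$, giving $f_{j,\beta}\circ \Delta_\gamma = \delta_\gamma \circ f_{j,\gamma^{-1}\beta}$.

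Once these five faces commute, I would deduce the bottom face as follows. Given $x \in \mcd_{k+1,j,\alpha}$, choose $y \in \mcb_{k+1,j,\alpha}$ with $\wt{f}_{k+1,j,\alpha}(y) = x$ (possible since $\wt{f}_{k+1,j,\alpha}$ is a quotient map, hence surjective). A chase using back, right, top, front, left in turn shows that both $\lambda_{k,j,\alpha'}\circ \wt{s}_{k+1,k}(x)$ and $\delta_\gamma \circ \lambda_{k+1,j,\alpha}(x)$ reduce to the common value $f_{j,\beta}\circ \Delta_\gamma\circ \Lambda_{k+1,j,\alpha}(y)$; since $y$ is arbitrary in the fiber, the bottom face commutes. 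The main (and rather modest) obstacle is purely bookkeeping --- correctly tracking which translate $\beta' T_j$ corresponds to each tree $\mct_{k+1,j,\alpha}$ under $\Lambda_{k+1,j,\alpha}$ so that Proposition~\ref{littlelambdaprop} applies coherently at both levels $k$ and $k+1$. The inductive construction of the trees in Section~\ref{sectionspacesqk} (recorded in Remark~\ref{stabilizationremark}(Case II)) was tailored precisely for this compatibility, so once that identification is in hand the remainder is essentially formal.
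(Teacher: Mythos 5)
Your proof is correct and follows essentially the same approach as the paper: verify the top, back, left, right, and front faces by citing the structure remark on $\wt{r}_{k+1,k}$, Lemma~\ref{smaplemma}, Proposition~\ref{littlelambdaprop} (with the key identification $\Lambda_{k+1,j,\alpha}(\mct_{k+1,j,\alpha})=\gamma^{-1}\beta\,T_j$ from Remark~\ref{stabilizationremark}), and Remark~\ref{deltagammaremark}, then deduce the bottom face from the surjectivity of the vertical quotient maps. Your explicit diagram chase and the substitution $\beta\leftarrow\gamma^{-1}\beta$ in the front face are exactly the steps the paper leaves implicit.
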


\begin{proof}
Recall from Remark \ref{stabilizationremark} that $\Lambda_{k+1,j,\alpha }(\mct_{k+1,j,\alpha })=\gamma^{-1}\beta T_j$. Commutativity of the top face was verified in Section \ref{sectionstructureofyk}. The left and right faces commute by Proposition \ref{littlelambdaprop}. The front face commutes is a special case of the left square in Remark \ref{deltagammaremark}. The setup of the lemma is precisely the situation where $\wt{r}_{k+1,k}$ maps $\mcb_{k+1,j,\alpha}$ homeomorphically to $\mcb_{k,j,\alpha '}$. Therefore, the back face commutes by the definition of $\wt{s}_{k+1,k}$ (recall Lemma \ref{smaplemma}). Since the vertical maps are surjective and all other faces commute, the bottom face commutes.
\end{proof}

\subsection{A homotopy inverse $\wt{g}_k$ for $\wt{f}_k$}

In Section \ref{sectiontranslatesoftj}, we fixed a homotopy inverse $g_{j,\beta}:D_{j,\beta}\to \wt{Y}_j$ of $f_{j,\beta}$ and homotopies $H_{j,\beta}$ and $G_{j,\beta}$. Using these pre-defined structures, we now construct maps $\wt{g}_k:\tZ_k\to \wt{Y}_{\leq k}$ inductively as follows. 

When $k=1$, we have $\tY_1=\tY_j$, $\tZ_1=D_1$ and $\wt{f}_{1}=f_1$. Thus we define $\wt{g}_1=g_1$. For our induction hypothesis, we suppose that $\wt{g}_k$ has been defined so that for all $1\leq j\leq k$ and $\alpha\in\ntkj$, $\wt{g}_k(\mcd_{k,j,\alpha})\subseteq \mcb_{k,j,\alpha}$ and, in particular, $\wt{g}_k$ maps the arc-endpoints of $\mcd_{k,j,\alpha}$ bijectively to the arc-endpoints of $\mcb_{k,j,\alpha}$. Let $\wt{g}_{k,j,\alpha}:\mcd_{k,j,\alpha}\to \mcb_{k,j,\alpha}$ be the corresponding restriction of $\wt{g}_k$.

Fix $1\leq j\leq k+1$. We will determine $\wt{g}_{k+1}$ by defining its restriction to each subcomplex $\mcd_{k+1,j,\alpha}$ as a map $\wt{g}_{k+1,j,\alpha}:\mcd_{k+1,j,\alpha}\to \mcb_{k+1,j,\alpha}$.\\\\
\noindent \textbf{Case I:} If $j=k+1$, then $\mct_{k+1,j,\alpha}$ was constructed so that $\Lambda_{k+1,j,\alpha}(\mct_{k+1,j,\alpha})=T_j$. By Lemma \ref{littlelambdaprop} (in the case $\beta=1$), the left square below commutes. We define $\wt{g}_{k+1,j,\alpha}=\Lambda_{k+1,j,\alpha}^{-1}\circ g_{j} \circ \lambda_{k+1,j,\alpha}$  so that the diagram on the right commutes.
\[\xymatrix{
\mcb_{k+1,j,\alpha} \ar[r]^-{\wt{f}_{k+1}} \ar[d]_-{\Lambda_{k+1,j,\alpha}} & \mcd_{k+1,j,\alpha} \ar[r]^-{\wt{g}_{k+1}} \ar[d]^-{\lambda_{k+1,j,\alpha}} & \mcb_{k+1,j,\alpha} \ar[d]^-{\Lambda_{k+1,j,\alpha}}\\
\tY_j \ar[r]_-{f_j} & D_j \ar[r]_-{g_j} & \tY_j 
}\]
\noindent \textbf{Case II:} Suppose $1\leq j\leq k$ and $\alpha\in\nt_{k+1,j}$. Write
\begin{itemize}
\item $(r_{k+1,k})_{\#}(\alpha)=\alpha '\gamma$ for $\alpha '\in\ntkj$ and $\gamma \in \pi_1(Y_j)$
\item and $\Lambda_{k,j,\alpha '}(\mct_{k,j,\alpha '})=\beta T_j$ for $\beta \in \pi_1(Y_j)$.
\end{itemize}
The restricted maps $\wt{r}_{k+1,k}:\mcb_{k+1,j,\alpha}\mapsto  \mcb_{k,j,\alpha '}$ and $\wt{s}_{k+1,k}:\mcd_{k+1,j,\alpha}\mapsto  \mcd_{k,j,\alpha '}$ are homeomorphisms. Since $\wt{g}_{k,j,\alpha '}$ is defined by hypothesis, we set 
$\wt{g}_{k+1,j,\alpha}=\wt{r}_{k+1,k}|_{\mcb_{k+1,j,\alpha}}^{-1}\circ \wt{g}_{k,j,\alpha '}\circ (\wt{s}_{k+1,k})|_{\mcd_{k+1,j,\alpha}}$ so the the following diagram commutes.
\[\xymatrix{
\mcb_{k+1,j,\alpha} \ar[r]^-{\wt{r}_{k+1,k}} & \mcb_{k,j,\alpha '} \\
\mcd_{k+1,j,\alpha} \ar[u]^-{\wt{g}_{k+1,j,\alpha}}\ar[r]_-{\wt{s}_{k+1,k}} & \mcd_{k,j,\alpha '} \ar[u]_-{\wt{g}_{k,j,\alpha '}}
}\]
In both cases, $\wt{g}_{k+1,j,\alpha}$ is continuous and maps arc-endpoints bijectively to arc-endpoints. This completes the definition of all $\wt{g}_{k,j,\alpha}$. We will use the next lemma to prove that $\wt{g}_k$ is well-defined.

\begin{lemma}\label{gdeflemma}
If $1\leq j\leq k<\infty$, $\alpha\in\ntkj$, and $\Lambda_{k,j,\alpha}(\mct_{k,j,\alpha})=\beta T_j$ for $\beta\in\pi_1(Y_j)$, then the following square commutes.
\[\xymatrix{
\mcb_{k,j,\alpha} \ar[r]^-{\Lambda_{k,j,\alpha}}  & \tY_j  \\
\mcd_{k,j,\alpha} \ar[r]_-{\lambda_{k,j,\alpha}} \ar[u]^-{\wt{g}_{k,j,\alpha}} & D_{j,\beta} \ar[u]_-{g_{j,\beta}}
}\]
\end{lemma}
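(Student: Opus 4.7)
The plan is to induct on $k$, following the two-case recursive construction of $\wt{g}_k$. For the base case $k = 1$, we have $\tY_1 = \mcb_{1,1,1}$, $\mct_{1,1,1} = T_1$ (so $\beta = 1$), and $\Lambda_{1,1,1}$, $\lambda_{1,1,1}$ are both identity maps, while $\wt{g}_{1,1,1} = g_1 = g_{1,1}$; the square commutes trivially.

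For the inductive step, I would assume the result at level $k$ and verify it separately in each of the two cases defining $\wt{g}_{k+1,j,\alpha}$. In Case I ($j = k+1$), the tree $\mct_{k+1,j,\alpha}$ was constructed so that $\Lambda_{k+1,j,\alpha}(\mct_{k+1,j,\alpha}) = T_j$, forcing $\beta = 1$ and $g_{j,\beta} = g_j$; the defining equation $\wt{g}_{k+1,j,\alpha} = \Lambda_{k+1,j,\alpha}^{-1} \circ g_j \circ \lambda_{k+1,j,\alpha}$ then immediately closes the square. In Case II ($1 \leq j \leq k$), write $(r_{k+1,k})_{\#}(\alpha) = \alpha'\gamma$ and $\Lambda_{k,j,\alpha'}(\mct_{k,j,\alpha'}) = \beta' T_j$ as in Lemma \ref{cubelemma}; then Remark \ref{stabilizationremark} identifies $\beta = \gamma^{-1}\beta'$. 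Starting from $\wt{g}_{k+1,j,\alpha} = \wt{r}_{k+1,k}^{-1} \circ \wt{g}_{k,j,\alpha'} \circ \wt{s}_{k+1,k}$ (on the relevant restrictions), I would compose on the left with $\Lambda_{k+1,j,\alpha}$, use the top face of cube (C2) to replace $\Lambda_{k+1,j,\alpha} \circ \wt{r}_{k+1,k}^{-1}$ with $\Delta_\gamma^{-1} \circ \Lambda_{k,j,\alpha'}$, invoke the inductive hypothesis to rewrite $\Lambda_{k,j,\alpha'} \circ \wt{g}_{k,j,\alpha'}$ as $g_{j,\beta'} \circ \lambda_{k,j,\alpha'}$, and then use the bottom face of (C2) to replace $\lambda_{k,j,\alpha'} \circ \wt{s}_{k+1,k}$ with $\delta_\gamma \circ \lambda_{k+1,j,\alpha}$. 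This leaves the composition $\Delta_\gamma^{-1} \circ g_{j,\beta'} \circ \delta_\gamma$ in front of $\lambda_{k+1,j,\alpha}$, which Remark \ref{deltagammaremark} (applied with $\gamma^{-1}$ in place of $\gamma$) identifies with $g_{j,\gamma^{-1}\beta'} = g_{j,\beta}$, closing the square.

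The main obstacle I expect is the bookkeeping of the translate $\beta$: the lemma is stated at level $k+1$ with translate $\beta$, while the induction supplies the square at level $k$ with a different translate $\beta'$ connected to $\beta$ only through the leftover letter $\gamma$ produced by word-reduction $(r_{k+1,k})_{\#}(\alpha) = \alpha'\gamma$. Verifying that this letter is absorbed consistently by the deck transformation $\Delta_\gamma$ on $\tY_j$ and the induced map $\delta_\gamma$ on the quotients, and that the conjugation $\Delta_\gamma^{-1}(-)\delta_\gamma$ precisely witnesses the translation law $g_{j,\beta'} \mapsto g_{j,\gamma^{-1}\beta'}$ encoded in Remark \ref{deltagammaremark}, is the only conceptually subtle step; once these identifications are in place, the conclusion reduces to a direct diagram chase around cube (C2).
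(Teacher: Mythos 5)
Your proposal is correct and follows essentially the same route as the paper: induction on $k$ with Case I handled directly from the definition of $\wt{g}_{k+1,j,\alpha}$ (the paper phrases this as the base case $k=j$ for fixed $j$) and Case II resolved by chasing around Cube (C2)/(C3), using the translation law of Remark \ref{deltagammaremark} to convert $\Delta_{\gamma}^{-1}\circ g_{j,\beta'}\circ\delta_{\gamma}$ into $g_{j,\gamma^{-1}\beta'}=g_{j,\beta}$. The paper presents this final step as the commutativity of the left face of a cube whose other five faces are known to commute, whereas you carry out the equivalent equational computation explicitly; the content is identical.
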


\begin{proof}
Fix $j\in\bbn$. We proceed by induction on $k$ for $k\geq j$. In the case $k=j$, $\wt{g}_{k,j,\alpha}$ is constructed according to Case I. In particular, $\beta=1$ and the commuting diagram is precisely the definition of $\wt{g}_{k,j,\alpha}$. Suppose that $k\geq j$ and that the diagram commutes for all $\alpha '\in\ntkj$. Since $k+1>j$, the map $\wt{g}_{k+1,j,\alpha}$ is constructed according to Case II. Using the notation from Case II (for $\alpha '\in\ntkj$, and $\gamma,\beta\in\pi_1(Y_j)$), consider the following cube, which is the ``inverse" of that in Lemma \ref{cubelemma}.
\begin{equation}\label{C3}
\begin{tikzcd}[row sep=2.5em]
\mcb_{k+1,j,\alpha} \arrow[rr,"{\wt{r}_{k+1,k}}"] \arrow[dr,swap,"{\Lambda_{k+1,j,\alpha}}"] \arrow[dd,<-,swap,"{\wt{g}_{k+1,j,\alpha}}"] &&
  \mcb_{k,j,\alpha '} \arrow[dd,<-,swap,"{\wt{g}_{k,j,\alpha '}}" near start] \arrow[dr,"\Lambda_{k,j,\alpha '}"] \\
& \tY_j \arrow[rr,swap,crossing over,"\Delta_{\gamma}" near start] &&
  \tY_j \arrow[dd,<-,"{g_{j,\beta}}"] \\
\mcd_{k+1,j,\alpha} \arrow[rr,"{\wt{s}_{k+1,k}}" near end] \arrow[dr,swap,"{\lambda_{k+1,j,\alpha}}"] && \mcd_{k,j,\alpha '} \arrow[dr,swap,"{\lambda_{k,j,\alpha '}}"] \\
& D_{j,\gamma^{-1}\beta} \arrow[rr,swap,"\delta_{\gamma}"] \arrow[uu,crossing over,"g_{j,\gamma^{-1}\beta}" near end]&& D_{j,\beta}
\end{tikzcd}\tag{C3}
\end{equation}
Recall that $\Lambda_{k+1,j,\alpha}(\mct_{k+1,j,\alpha})=\gamma^{-1}\beta T_j$ and therefore, it suffices to show that the left face commutes. The top and bottom faces are the same as in Lemma \ref{cubelemma} and still commute. The back face commutes by the definition of $\wt{g}_{k+1,j,\alpha}$. The commutativity of the front face is a case of the right square in Remark \ref{deltagammaremark}. The right face commutes by our induction hypothesis. Since all of the horizontal maps are homeomorphisms, we conclude that the left face commutes.
\end{proof}

\begin{theorem}
For every $k\in\bbn$, $\wt{g}_{k}:\tZ_k\to\tY_{\leq k}$ is well-defined and continuous. Moreover,
\begin{enumerate}
\item $\wt{g}_k\circ \wt{f}_k:\wt{Y}_{\leq k}\to \wt{Y}_{\leq k}$ is the identity on $p_{\leq k}^{-1}(y_0)$,
\item $\wt{f}_k\circ \wt{g}_k:\tZ_k\to \tZ_k$ is the identity on $\wt{f}_k(p_{\leq k}^{-1}(y_0))$,
\item and $\wt{g}_k\circ \wt{s}_{k+1,k}=\wt{r}_{k+1,k}\circ \wt{g}_{k+1}$.
\end{enumerate}
\end{theorem}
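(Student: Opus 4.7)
The plan is to prove well-definedness, continuity, and all three numbered assertions simultaneously by induction on $k$. The subtlety in well-definedness is that a single arc-endpoint $\delta\in p_{\leq k}^{-1}(y_0)=\pi_1(Y_{\leq k})$ typically lies in $k$ distinct subcomplexes $\mcb_{k,j,\alpha_j}$, one for each $1\leq j\leq k$ (with $\alpha_j\in\ntkj$ and $\delta=\alpha_j\gamma_j'$ for some $\gamma_j'\in\pi_1(Y_j)$). Since $\wt{g}_k$ is constructed piecewise on the subcomplexes $\mcd_{k,j,\alpha}$ of $\tZ_k$, I must verify that the pieces agree on the overlaps, which are exactly the images $\wt{f}_k(p_{\leq k}^{-1}(y_0))$. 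The strategy is to use property~(1)---that $\wt{g}_k\circ\wt{f}_k$ fixes each such $\delta$---as the compatibility condition; once this is established, well-definedness follows because $\tZ_k$ carries the weak topology with respect to the $\mcd_{k,j,\alpha}$, and continuity of each restriction $\wt{g}_{k,j,\alpha}$ is automatic from its definition as a composition of continuous maps.

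For the base case $k=1$, property~(1) reduces to the fact that $g_1\circ f_1=H_1(\cdot,1)$ is the identity on the arc-endpoints of $\wt{Y}_1$, which is built into $H_1$ via its uniform constancy near each arc-endpoint (feature~(1) of Section~\ref{sectionfixinghe}). For the inductive step, I proceed by the two cases used to define $\wt{g}_{k+1,j,\alpha}$. In Case~I ($j=k+1$), the identity on arc-endpoints follows from Lemma~\ref{gdeflemma} with $\beta=1$, combined with the analogous property of $g_j$. In Case~II ($j\leq k$), I use Lemma~\ref{smaplemma} to rewrite $\wt{s}_{k+1,k}\circ\wt{f}_{k+1}=\wt{f}_k\circ\wt{r}_{k+1,k}$, apply the induction hypothesis for~(1) at level $k$ to the point $\wt{r}_{k+1,k}(\delta)\in p_{\leq k}^{-1}(y_0)$, and then invert the homeomorphism $\wt{r}_{k+1,k}|_{\mcb_{k+1,j,\alpha_j}}$ to return to $\delta$. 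After checking that both cases yield the value $\delta$ independent of $j$, well-definedness, continuity, and~(1) all follow at level $k+1$.

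Property~(2) is then immediate: for $\delta\in p_{\leq k+1}^{-1}(y_0)$, one has $\wt{f}_{k+1}\circ\wt{g}_{k+1}\circ\wt{f}_{k+1}(\delta)=\wt{f}_{k+1}(\delta)$ by~(1), and every point of $\wt{f}_{k+1}(p_{\leq k+1}^{-1}(y_0))$ is of this form. For~(3), I verify $\wt{g}_k\circ\wt{s}_{k+1,k}=\wt{r}_{k+1,k}\circ\wt{g}_{k+1}$ restriction-by-restriction on each $\mcd_{k+1,j,\alpha}$. In Case~I both sides are constant on $\mcd_{k+1,k+1,\alpha}$ with value $\wt{r}_{k+1,k}(\alpha)$: on the $\wt{g}_k\circ\wt{s}_{k+1,k}$ side this uses Lemma~\ref{smaplemma} together with property~(1) at level $k$, and on the $\wt{r}_{k+1,k}\circ\wt{g}_{k+1}$ side it follows from $\wt{g}_{k+1}(\mcd_{k+1,k+1,\alpha})\subseteq\mcb_{k+1,k+1,\alpha}$ and the fact that $\wt{r}_{k+1,k}$ collapses $\mcb_{k+1,k+1,\alpha}$. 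In Case~II the identity is essentially tautological, as the defining equation $\wt{g}_{k+1,j,\alpha}=\wt{r}_{k+1,k}|_{\mcb_{k+1,j,\alpha}}^{-1}\circ\wt{g}_{k,j,\alpha'}\circ\wt{s}_{k+1,k}|_{\mcd_{k+1,j,\alpha}}$ was designed to make exactly this square commute.

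The main obstacle is bookkeeping rather than conceptual difficulty. One must track the interplay of the letter $\gamma$ arising from word reduction $(r_{k+1,k})_{\#}(\alpha)=\alpha'\gamma$, the translate $\beta$ identifying $\Lambda_{k,j,\alpha'}(\mct_{k,j,\alpha'})=\beta T_j$ from Proposition~\ref{transprop}, and the shifted translate $\Lambda_{k+1,j,\alpha}(\mct_{k+1,j,\alpha})=\gamma^{-1}\beta T_j$ from Remark~\ref{stabilizationremark}. The commuting cubes~\eqref{C2} and~\eqref{C3} are precisely the tools that keep this data coherent, so the real labor lies in unpacking those diagrams carefully at each inductive step rather than in devising new constructions.
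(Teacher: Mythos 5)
Your proposal is correct, and the substance matches the paper's: both proofs reduce well-definedness to the fact that the composites $\wt{g}_{k,j,\alpha}\circ\wt{f}_{k,j,\alpha}$ fix arc-endpoints (since distinct subcomplexes $\mcd_{k,j,\alpha}$ can only overlap at images of points of $p_{\leq k}^{-1}(y_0)$), deduce (1) and (2) from that same fact, and prove (3) by the identical case split (tautological in Case II, a short computation via Lemma \ref{smaplemma} and (1) in Case I). The one organizational difference is where the induction lives. The paper does \emph{not} induct inside this theorem: it invokes Lemma \ref{gdeflemma} uniformly for every $(k,j,\alpha)$ to transport the arc-endpoint-fixing property of $g_{j,\beta}\circ f_{j,\beta}$ (built into $H_j$) across the homeomorphisms $\Lambda_{k,j,\alpha}$ and $\lambda_{k,j,\alpha}$, so the compatibility check is a single diagram chase valid for all $k$ at once. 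You instead re-run an induction on $k$, using Lemma \ref{gdeflemma} only in Case I and propagating the property through the bonding homeomorphisms $\wt{r}_{k+1,k}|_{\mcb_{k+1,j,\alpha}}$ and $\wt{s}_{k+1,k}|_{\mcd_{k+1,j,\alpha}}$ in Case II; this is valid because those maps carry arc-endpoints to arc-endpoints, and it has the minor virtue of leaning directly on the recursive definition of $\wt{g}_{k+1,j,\alpha}$ rather than on the full strength of Lemma \ref{gdeflemma}. The paper's route buys a cleaner, non-inductive proof of the theorem at the cost of having front-loaded the induction into Lemma \ref{gdeflemma}; yours duplicates a little of that inductive bookkeeping but is equally sound.
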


\begin{proof}
Fix $k\in\bbn$. Since $\tZ_k$ has the weak topology with respect to the subcomplexes $\mcd_{k,j,\alpha}$ and each $\wt{g}_{k,j,\alpha}$ is clearly continuous, it suffices to check that $\wt{g}_{k}$ is well defined. First, we make an observation: for any given $j$ and $\alpha$ recall that we have verified the commutativity of the diagram when $\Lambda_{k,j,\alpha}(\mct_{k,j,\alpha})=\beta T_j$.
\[\xymatrix{
\mcb_{k,j,\alpha} \ar[d]_-{\Lambda_{k,j,\alpha}} \ar[r]^-{\wt{f}_{k,j,\alpha}} & \mcd_{k,j,\alpha} \ar[d]_-{\lambda_{k,j,\alpha}} \ar[r]^-{\wt{g}_{k,j,\alpha}}  & \mcb_{k,j,\alpha} \ar[d]_-{\Lambda_{k,j,\alpha}} \ar[r]^-{\wt{f}_{k,j,\alpha}} & \mcd_{k,j,\alpha} \ar[d]_-{\lambda_{k,j,\alpha}} \\
\tY_j \ar[r]_-{f_{j,\beta}}  & D_{j,\beta} \ar[r]_-{g_{j,\beta}} &   \tY_j \ar[r]_-{f_{j,\beta}}  & D_{j,\beta}
}\]
In Section \ref{sectiontranslatesoftj}, we constructed $g_{j,\beta}$ and $f_{j,\beta}$ so that $g_{j,\beta}\circ f_{j,\beta}$ is the identity map on the arc-endpoints of $\tY_j$ and $f_{j,\beta}\circ g_{j,\beta}$ is the identity on the arc-endpoints of $D_{j,\beta}$. It follows from the diagram that $\wt{g}_{k,j,\beta}\circ \wt{f}_{k,j,\beta}$ and $\wt{f}_{k,j,\beta}\circ \wt{g}_{k,j,\beta}$ are the identities on the respective sets of arc-endpoints.

Two distinct subcomplexes of the form $\mcd_{k,j,\alpha}$ (respectively $\mcb_{k,j,\alpha}$) either meet at a single point or do not meet at all. Suppose $\{x\}=\mcd_{k,j,\alpha}\cap \mcd_{k,j',\alpha '}$ where $\mcd_{k,j,\alpha}\neq \mcd_{k,j',\alpha '}$. We check that $\wt{g}_{k,j,\alpha}(x)=\wt{g}_{k,j',\alpha '}(x)$. By the definition of $\wt{f}_k$, we have $\wt{f}_k(y)=x$ for a unique point $y$. In particular, $\{y\}=\mcb_{k,j,\alpha}\cap \mcb_{k,j',\alpha '}$. Thus $\wt{f}_{k,j,\alpha}(y)=\wt{f}_{k,j',\alpha '}(y)=x$. Since $\wt{g}_{k,j,\alpha}\circ \wt{f}_{k,j,\alpha}$ and $\wt{g}_{k,j,\alpha '}\circ \wt{f}_{k,j,\alpha '}$ are the identity on the arc-endpoints, we have \[\wt{g}_{k,j,\alpha}(x)=\wt{g}_{k,j,\alpha}(\wt{f}_{k,j,\alpha}(y))=y=\wt{g}_{k,j',\alpha '}(\wt{f}_{k,j',\alpha '}(y))=\wt{g}_{k,j',\alpha '}(x).\]
This proves that $\wt{g}_k$ is well-defined.

Now that well-definedness of $\wt{g}_k$ is established, both (1) and (2) follow immediately from the fact in the first paragraph of the proof that for all $j$ and $\alpha$, the compositions $\wt{g}_{k,j,\alpha}\circ \wt{f}_{k,j,\alpha}$ and $\wt{g}_{k,j,\alpha}\circ \wt{f}_{k,j,\alpha}$ fix their respecive arc-endpoint sets.

For (3), we verify that the following square commutes by checking that the compositions agree on $\mcd_{k+1,j,\alpha}$. 
\[\xymatrix{
\wt{Y}_{\leq k+1}  \ar[r]^-{\wt{r}_{k+1,k}} & \wt{Y}_{\leq k} \\
Z_{k+1} \ar[u]^-{\wt{g}_{k+1}} \ar[r]_-{\wt{s}_{k+1,k}} & Z_k \ar[u]_-{\wt{g}_{k}}
}\]
When $1\leq j\leq k$, $\wt{r}_{k+1,k}\circ \wt{g}_{k+1}$ and $\wt{g}_{k}\circ \wt{s}_{k+1,k}$ agree on $\mcd_{k+1,j,\alpha}$ by definition of $\wt{g}_{k+1,j,\alpha}$. When $j=k+1$, we have $\wt{r}_{k+1,k}\circ\wt{g}_{k+1,j,\alpha}(\mcd_{k+1,j,\alpha})\subseteq \wt{r}_{k+1,k}(\mcb_{k+1,j,\alpha})=\wt{r}_{k+1,k}(\alpha)$. For the other composition, we have the following, 
\begin{eqnarray*}
\wt{g}_k\circ \wt{s}_{k+1,k}(\mcd_{k+1,j,\alpha}) &=& \wt{g}_k\circ \wt{s}_{k+1,k}\circ \wt{f}_{k+1}(\mcb_{k+1,j,\alpha})\\
&=& \wt{g}_k\circ \wt{f}_k\circ \wt{r}_{k+1,k}(\mcb_{k+1,j,\alpha})\\
&=& \wt{g}_k\circ \wt{f}_k(\wt{r}_{k+1,k}(\alpha))\\
&=& \wt{r}_{k+1,k}(\alpha)
\end{eqnarray*}
where the last equality follows from (1).
\end{proof}

Since the maps $\wt{g}_k$ agree with the bonding maps $\wt{r}_{k+1,k}$ and $\wt{s}_{k+1,k}$, we define $\wh{g}=\varprojlim_{k}\wt{g}_k:\wh{Z}\to\wh{Y}$ to be the inverse limit map. Note that $\wh{g}(\wh{z}_0)=\wh{y}_0$.

\subsection{Coherence of $\wt{f}_k$ and $\wt{g}_k$}

To show the limit maps $\wh{g}:\wh{Z}\to\wh{Y}$ and $\wh{f}:\wh{Y}\to\wh{Z}$ are homotopy inverses, we construct homotopies $\wt{H}_k$ from $id_{\tY_{\leq k}}$ to $\wt{g}_k\circ \wt{f}_k$ and $\wt{G}_k$ from $id_{\tZ_k}$ to $\wt{f}_k\circ \wt{g}_k$, which are coherent with the respective bonding maps.

\begin{theorem}\label{homotopytheorem}
For each $k\in\bbn$, there exist based homotopies $\wt{H}_k:\wt{Y}_{\leq k}\times I\to \wt{Y}_{\leq k}$ from $id_{\wt{Y}_{\leq k}}$ to $\wt{g}_k\circ \wt{f}_{k}$ and $\wt{G}_k:\tZ_k\times I\to \tZ_k$ from $id_{\tZ_k}$ to $\wt{f}_k\circ \wt{g}_{k}$ such that the following squares commute for all $k\in\bbn$.
\[\xymatrix{
\wt{Y}_{\leq k+1}\times I \ar[d]_-{\wt{H}_{k+1}} \ar[rr]^-{\wt{r}_{k+1,k}\times id} && \wt{Y}_{\leq k}\times I \ar[d]^-{\wt{H}_k} & \tZ_{k+1}\times I \ar[d]_-{\wt{G}_{k+1}} \ar[rr]^-{\wt{s}_{k+1,k}\times id} && \tZ_{k}\times I \ar[d]^-{\wt{G}_k} \\
\wt{Y}_{\leq k+1} \ar[rr]_-{\wt{r}_{k+1,k}} && \wt{Y}_{\leq k} & \tZ_{k+1} \ar[rr]_-{\wt{s}_{k+1,k}} && \tZ_k
}\]
\end{theorem}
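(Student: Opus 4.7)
The plan is to define $\wt{H}_k$ and $\wt{G}_k$ piecewise on the subcomplexes $\mcb_{k,j,\alpha}\subseteq\tY_{\leq k}$ and $\mcd_{k,j,\alpha}\subseteq \tZ_k$, by transporting the pre-constructed homotopies $H_{j,\beta}$ and $G_{j,\beta}$ from Section \ref{sectiontranslatesoftj} along the bookkeeping homeomorphisms $\Lambda_{k,j,\alpha}$ and $\lambda_{k,j,\alpha}$. Proposition \ref{transprop} furnishes a unique $\beta\in\pi_1(Y_j)$ with $\Lambda_{k,j,\alpha}(\mct_{k,j,\alpha})=\beta T_j$ for each $1\leq j\leq k$ and $\alpha\in\ntkj$, and I would set
\[\wt{H}_k|_{\mcb_{k,j,\alpha}\times I}=\Lambda_{k,j,\alpha}^{-1}\circ H_{j,\beta}\circ(\Lambda_{k,j,\alpha}\times id_I),\]
with the analogous definition $\wt{G}_k|_{\mcd_{k,j,\alpha}\times I}=\lambda_{k,j,\alpha}^{-1}\circ G_{j,\beta}\circ(\lambda_{k,j,\alpha}\times id_I)$.

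First I would verify that these piecewise formulas assemble into well-defined, continuous, based homotopies with the correct endpoint values. Well-definedness across overlaps reduces to property (1) of Section \ref{sectiontranslatesoftj}: distinct subcomplexes $\mcb_{k,j,\alpha}$ meet only at arc-endpoints, and each $H_{j,\beta}$ is the constant homotopy on a uniform neighborhood of every arc-endpoint, so both candidate formulas collapse to the same constant at any such intersection; the same holds for $\wt{G}_k$. Continuity then follows from the weak topology on $\tY_{\leq k}\times I$ (resp.\ $\tZ_k\times I$) with respect to its product subcomplexes. The endpoint identities $\wt{H}_k(\cdot,0)=id$ and $\wt{H}_k(\cdot,1)=\wt{g}_k\circ\wt{f}_k$ follow from the corresponding endpoints of $H_{j,\beta}$ together with Lemma \ref{gdeflemma}, which identifies $\wt{g}_{k,j,\alpha}\circ\wt{f}_{k,j,\alpha}$ (via the $\Lambda$'s) with $g_{j,\beta}\circ f_{j,\beta}$. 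Based-ness holds because $\wt{y}_0$ is an arc-endpoint of $\mcb_{k,1,1}$ at which $H_{1,1}$ is locally constant.

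The main obstacle is verifying the bonding-map coherence, which I would handle in two cases mirroring the construction of $\wt{g}_{k+1}$. When $j=k+1$, the map $\wt{r}_{k+1,k}$ sends $\mcb_{k+1,k+1,\alpha}$ to the single arc-endpoint $\wt{r}_{k+1,k}(\alpha)$, at which $\wt{H}_k$ is locally constant by the argument above, so both $\wt{r}_{k+1,k}\circ\wt{H}_{k+1}$ and $\wt{H}_k\circ(\wt{r}_{k+1,k}\times id)$ are the constant map at $\wt{r}_{k+1,k}(\alpha)$ on $\mcb_{k+1,k+1,\alpha}\times I$. When $1\leq j\leq k$, writing $(r_{k+1,k})_\#(\alpha)=\alpha'\gamma$ and $\Lambda_{k,j,\alpha'}(\mct_{k,j,\alpha'})=\beta T_j$, Remark \ref{stabilizationremark} gives $\Lambda_{k+1,j,\alpha}(\mct_{k+1,j,\alpha})=\gamma^{-1}\beta T_j$, and unwinding the defining formulas using Lemma \ref{cubelemma} reduces the required equality to the identity
\[\Delta_\gamma\circ H_{j,\gamma^{-1}\beta}=H_{j,\beta}\circ(\Delta_\gamma\times id_I),\]
which is immediate from the formula $H_{j,\beta}=\Delta_\beta\circ H_j\circ(\Delta_\beta^{-1}\times id_I)$ and the cancellation $\Delta_\gamma\circ\Delta_{\gamma^{-1}\beta}=\Delta_\beta$; it is precisely the left face of cube (C1) with $\gamma^{-1}\beta$ substituted for $\beta$. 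The right face of (C1) yields the analogous identity for $G_{j,\beta}$, giving the coherence for $\wt{G}_k$. Since every choice is forced by the earlier coherence data in $\scrt_k$, $\lambda_{k,j,\alpha}$, and $\wt{g}_k$, no new combinatorial structure is introduced in this step.
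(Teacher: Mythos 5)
Your proposal is correct and follows essentially the same route as the paper's proof: the same piecewise definition of $\wt{H}_k$ and $\wt{G}_k$ via the conjugation formulas through $\Lambda_{k,j,\alpha}$ and $\lambda_{k,j,\alpha}$, the same well-definedness/continuity argument from constancy on arc-endpoints and the weak topology, and the same two-case coherence check reducing Case II to the identity $\Delta_\gamma\circ H_{j,\gamma^{-1}\beta}=H_{j,\beta}\circ(\Delta_\gamma\times id)$. The only cosmetic difference is that the paper organizes Case II as the cube (C4) and verifies the bottom face by the explicit formula $H_{j,\nu}(\eta,t)=\nu H_j(\nu^{-1}\eta,t)$, which is exactly the computation you invoke.
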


\begin{remark}\label{squaresremark}
The key to proving Theorem \ref{homotopytheorem} is the relationship between the maps $\wt{f}_k$, $\wt{g}_k$ and the previously defined maps $f_{j,\beta}$ and $g_{j,\beta}$. In particular, whenever $1\leq j\leq k<\infty$, $\alpha\in\ntkj$, and $\Lambda_{k,j,\alpha}(\mct_{k,j,\alpha})=\beta T_j$, the following squares commute:
\[\xymatrix{
\mcb_{k,j,\alpha} \ar[d]_-{\wt{f}_{k,j,\alpha}} \ar[r]^-{\Lambda_{k,j,\alpha}} & \wt{Y}_j \ar[d]^-{f_{j,\beta}} & \mcb_{k,j,\alpha}  \ar[r]^-{\Lambda_{k,j,\alpha}} & \wt{Y}_j  \\
\mcd_{k,j,\alpha} \ar[r]_-{\lambda_{k,j,\alpha}} & D_{j,\beta}  & \mcd_{k,j,\alpha} \ar[r]_-{\lambda_{k,j,\alpha}} \ar[u]^-{\wt{g}_{k,j,\alpha}} & D_{j,\beta} \ar[u]_-{g_{j,\beta}}
}\]
\end{remark}

\begin{proof}[Proof of Theorem \ref{homotopytheorem}]
Let $k\in\bbn$. We define $\wt{H}_k$ and $\wt{G}_k$ piecewise by defining their values on the subcomplexes $\mcb_{k,j,\alpha}\times \ui$ and $\mcd_{k,j,\alpha}\times \ui$ of the respective domains. Fix $1\leq j\leq k$ and $\alpha\in\ntkj$ and suppose $\Lambda_{k,j,\alpha}(\mct_{k,j,\alpha})=\beta T_j$ for $\beta\in\pi_1(Y_j)$. Define maps $\wt{H}_{k,j,\alpha}:\mcb_{k,j,\alpha}\times \ui\to \mcb_{k,j,\alpha}$ and $\wt{G}_{k,j,\alpha}:\mcd_{k,j,\alpha}\times \ui\to \mcd_{k,j,\alpha}$ so that the following squares commute:
\[\xymatrix{
\mcb_{k,j,\alpha}\times\ui \ar[r]^-{\wt{H}_{k,j,\alpha}} \ar[d]_-{\Lambda_{k,j,\alpha}\times id} & \mcb_{k,j,\alpha} \ar[d]^-{\Lambda_{k,j,\alpha}} && \mcd_{k,j,\alpha}\times\ui \ar[r]^-{\wt{G}_{k,j,\alpha}} \ar[d]_-{\lambda_{k,j,\alpha}\times id} & \mcd_{k,j,\alpha} \ar[d]^-{\lambda_{k,j,\alpha}} \\
\tY_j\times\ui \ar[r]_-{H_{j,\beta}}  & \tY_j  && D_{j,\beta}\times\ui \ar[r]_-{G_{j,\beta}}  & D_{j,\beta}
}\]
When the left diagram is restricted to $t=0$, $H_{j,\beta}$ becomes the identity map. Therefore, $\wt{H}_{k,j,\alpha}$ does too. Recall that $H_{j,\beta}$ is a homotopy from $id_{\tY_j}$ to $g_{j,\beta}\circ f_{j,\beta}$. Therefore, $\wt{H}_{k,j,\alpha}$ is a homotopy from
\[\Lambda_{k,j,\alpha}^{-1}\circ id_{\tY_j}\circ \Lambda_{k,j,\alpha}=id_{\mcb_{k,j,\alpha}}\]
to
\begin{eqnarray*}
\Lambda_{k,j,\alpha}^{-1}\circ (g_{j,\beta}\circ f_{j,\beta})\circ \Lambda_{k,j,\alpha} &=& (\Lambda_{k,j,\alpha}^{-1}\circ g_{j,\beta}\circ \lambda_{k,j,\alpha})\circ (\lambda_{k,j,\alpha}^{-1}\circ f_{j,\beta}\circ \Lambda_{k,j,\alpha})\\
&=& \wt{g}_{k,j,\alpha}\circ \wt{f}_{k,j,\alpha}
\end{eqnarray*}
where the second equality comes from the left square in Remark \ref{squaresremark}. Because $G_{j,\beta}$ is a homotopy from $id_{D_{j,\beta}}$ to $f_{j,\beta}\circ g_{j,\beta}$, the same argument using the right square in Remark \ref{squaresremark} shows that $\wt{G}_{k,j,\alpha}$ is a homotopy from $id_{\mcd_{k,j,\alpha}}$ to $\wt{f}_{k,j,\alpha}\circ \wt{g}_{k,j,\alpha}$. Additionally, since $H_{j,\beta}$ and $G_{j,\beta}$ are the constant homotopy on the respective sets of arc-endpoints, the same holds for $\wt{H}_{k,j,\alpha}$ and $\wt{G}_{k,j,\alpha}$. This last observation ensures that $\wt{H}_k$ and $\wt{G}_k$ will be well-defined functions if we define $\wt{H}_k$ to agree with $\wt{H}_{k,j,\alpha}$ on $\mcb_{k,j,\alpha}$ and $\wt{G}_k$ to agree with $\wt{G}_{k,j,\alpha}$ on $\mcd_{k,j,\alpha}$ (a detailed proof follows the same elementary line of argument used to prove $\wt{g}_k$ is well-defined). Since $\wt{Y}_{\leq k}$ has the weak topology with respect to the subcomplexes $\mcb_{k,j,\alpha}$, $\wt{H}_k$ is continuous. Similarly, $\wt{G}_k$ is continuous.

With the definition and continuity of $\wt{H}_k$ and $\wt{G}_k$ established for all $k$, we fix $k$ and work toward proving that the two squares in the statement of the theorem commute. Let $1\leq j\leq k+1$ and $\alpha\in\nt_{k+1,j}$. We will show that $\wt{H}_k\circ (\wt{r}_{k+1,k}\times id)$ and $\wt{r}_{k+1,k}\circ \wt{H}_{k+1}$ agree on each subcomplex $\mcb_{k+1,j,\alpha}\times \ui$.\\

\noindent \textbf{Case I:} If $j=k+1$, then $\wt{r}_{k+1,k}(\mcb_{k+1,k,\alpha})=\wt{r}_{k+1,k}(\alpha)$ and so $\wt{r}_{k+1,k}(\wt{H}_{k+1}(\mcb_{k+1,k,\alpha}\times \ui))=\wt{r}_{k+1,k}(\mcb_{k+1,k,\alpha})=\wt{r}_{k+1,k}(\alpha)$. Moreover,
\[\wt{H}_{k}(\wt{r}_{k+1,k}(\mcb_{k+1,k,\alpha}\times \ui)) =\wt{H}_k(\{\wt{r}_{k+1,k}(\alpha)\}\times I)= \wt{r}_{k+1,k}(\alpha)
\]since $\wt{r}_{k+1,k}(\alpha)\in p_{\leq k}^{-1}(y_0)$ and $\wt{H}_k$ is the constant homotopy on $p_{\leq k}^{-1}(y_0)$.\\

\noindent \textbf{Case II:} Suppose $1\leq j\leq k$. Write $\wt{r}_{k+1,k}(\alpha)=\alpha '\gamma$ for $\alpha '\in\ntkj$ and $\gamma\in\pi_1(Y_j)$. If $\Lambda_{k,j,\alpha '}(\mct_{k,j,\alpha '})=\beta T_j$, then $\Lambda_{k+1,j,\alpha }(\mct_{k+1,j,\alpha })=\gamma^{-1}\beta T_j$. Since $\wt{r}_{k+1,k}$ maps $\mcb_{k+1,j,\alpha}$ homeomorphically onto $\mcb_{j,k,\alpha '}$, it suffices to show that $\wt{H}_{k,j,\alpha '}\circ (\wt{r}_{k+1,k}\times id)=\wt{r}_{k+1,k}\circ \wt{H}_{k+1,j,\alpha}$ agree on $\mcb_{k+1,j,\alpha}\times \ui$, i.e. that the top face of the following cube commutes.
\begin{equation}\label{C4}
\begin{tikzcd}[row sep=2.7em]
\mcb_{k+1,j,\alpha }\times \ui \arrow[rr,"{\wt{H}_{k+1,j,\alpha}}"] \arrow[dr,swap,"{\wt{r}_{k+1,k}\times id}"] \arrow[dd,swap,"{\Lambda_{k+1,j,\alpha}\times id}"] &&
  \mcb_{k+1,j,\alpha } \arrow[dd,swap,"{\Lambda_{k+1,j,\alpha }}" near start] \arrow[dr,"\wt{r}_{k+1,k}"] \\
& \mcb_{k,j,\alpha '}\times \ui \arrow[rr,swap,crossing over,"\wt{H}_{k,j,\alpha '}" near start] &&
  \mcb_{k,j,\alpha '} \arrow[dd,"{\Lambda_{k,j,\alpha '}}"] \\
\tY_j\times I \arrow[rr,swap,"{H_{j,\gamma^{-1}\beta}}" near end] \arrow[dr,swap,"{\Delta_{\gamma}\times id}"] && \tY_j \arrow[dr,"{\Delta_{\gamma}}"] \\
& \tY_j\times I \arrow[rr,swap,"H_{j,\beta}"] \arrow[uu,<-,crossing over,"\Lambda_{k,j,\alpha '}\times id" near end]&& \tY_j
\end{tikzcd}\tag{C4}
\end{equation}
The front and back faces commute by the definition of $\wt{H}_{k+1,j,\alpha}$ and $\wt{H}_{k,j,\alpha '}$. Commutativity of the right face was given in Section \ref{sectionstructureofyk} and the left face follows immediately. It suffices to check the bottom face. Recall that for any $\nu\in \pi_1(Y_j)$ and $\eta\in\wt{Y}_j$, the formula for $H_{j,\nu}$ is $H_{j,\nu}(\eta,t)=\nu H_{j}(\nu^{-1}\eta ,t)$. Therefore, if $(\eta,t)\in \tY_j\times \ui$, then
\[
H_{j,\beta}\circ (\Delta_{\gamma}\times id)(\eta,t) =H_{j,\beta}(\gamma\eta,t)= \beta H_j(\beta^{-1}\gamma\eta,t)\]
The other direction is
\[\Delta_{\gamma}\circ H_{j,\gamma^{-1}\beta}(\eta,t) = \Delta_{\gamma}(\gamma^{-1}\beta H_{j,\beta}(\beta^{-1}\gamma\eta,t))= \beta H_{j,\beta}(\beta^{-1}\gamma\eta,t)\]
This proves the bottom square commutes. We conclude that the top square commutes, completing the proof for Case II.

The proof for the coherence of the maps $\wt{G}_k$ with $\wt{s}_{k+1,k}$ is nearly identical, replacing the homomorphisms $\Lambda_{-,-,-}$ with $\lambda_{-,-,-}$ and $\Delta_-$ with $\delta_-$ so we omit the details.
\end{proof}

\begin{theorem}
The maps $\wh{f}:\wh{Y}\to \wh{Z}$ and $\wh{g}:\wh{Z}\to\wh{Y}$ are homotopy inverses.
\end{theorem}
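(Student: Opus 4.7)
The plan is to obtain the required homotopies as inverse limits of the coherent homotopies $\wt{H}_k$ and $\wt{G}_k$ provided by Theorem \ref{homotopytheorem}. The essential observation is that inverse limits in $\mathbf{Top}$ commute with finite products, so the canonical maps
\[\wh{Y}\times I \to \varprojlim_{k}(\wt{Y}_{\leq k}\times I) \quad\text{and}\quad \wh{Z}\times I \to \varprojlim_{k}(\tZ_k\times I)\]
are homeomorphisms, with bonding maps $\wt{r}_{k+1,k}\times id_I$ and $\wt{s}_{k+1,k}\times id_I$ respectively. These are precisely the bonding maps appearing in the coherence squares of Theorem \ref{homotopytheorem}.

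Because the system $\{\wt{H}_k\}$ satisfies $\wt{H}_k\circ(\wt{r}_{k+1,k}\times id)=\wt{r}_{k+1,k}\circ \wt{H}_{k+1}$ for every $k$, the universal property of the inverse limit produces a unique continuous map
\[\wh{H}:\wh{Y}\times I\cong \varprojlim_{k}(\wt{Y}_{\leq k}\times I)\longrightarrow \varprojlim_{k}\wt{Y}_{\leq k}=\wh{Y}.\]
Analogously, the coherent system $\{\wt{G}_k\}$ determines a continuous map $\wh{G}:\wh{Z}\times I\to \wh{Z}$.

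It remains to check that $\wh{H}$ is a homotopy from $id_{\wh{Y}}$ to $\wh{g}\circ \wh{f}$, and $\wh{G}$ is a homotopy from $id_{\wh{Z}}$ to $\wh{f}\circ \wh{g}$. Since a map into an inverse limit is determined by its compositions with the projection maps $\wt{r}_k:\wh{Y}\to \wt{Y}_{\leq k}$ (resp. $\wt{s}_k$), it suffices to verify the required identities after projecting to each level $k$. At level $k$, $\wt{r}_k\circ \wh{H}(\cdot,0)=\wt{H}_k(\cdot,0)\circ \wt{r}_k=id_{\wt{Y}_{\leq k}}\circ \wt{r}_k=\wt{r}_k$ and $\wt{r}_k\circ \wh{H}(\cdot,1)=\wt{H}_k(\cdot,1)\circ \wt{r}_k=(\wt{g}_k\circ \wt{f}_k)\circ \wt{r}_k=\wt{r}_k\circ(\wh{g}\circ \wh{f})$, using the definitions $\wh{f}=\varprojlim \wt{f}_k$ and $\wh{g}=\varprojlim \wt{g}_k$. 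The same argument, with $\wt{G}_k$ in place of $\wt{H}_k$, gives that $\wh{G}$ is a homotopy from $id_{\wh{Z}}$ to $\wh{f}\circ \wh{g}$. Since all maps involved fix the basepoints, the homotopies are based. There is no real obstacle here beyond the bookkeeping: the entire difficulty has been absorbed into the coherence statement of Theorem \ref{homotopytheorem}, and once coherence is in hand, passing to the limit is purely formal.
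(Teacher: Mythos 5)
Your proof is correct and follows essentially the same route as the paper: both form the inverse limits $\wh{H}=\varprojlim_k\wt{H}_k$ and $\wh{G}=\varprojlim_k\wt{G}_k$ using the coherence squares of Theorem \ref{homotopytheorem} and the identification $\wh{Y}\times I\cong\varprojlim_k(\wt{Y}_{\leq k}\times I)$. The only difference is that you spell out the endpoint verification levelwise, which the paper leaves as ``by construction.''
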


\begin{proof}
The maps $\wt{H}_k$, $k\in\bbn$ form an inverse system of based homotopies.
\[\xymatrix{
 \wh{Y}\times \ui \ar[d]_-{\wh{H}} & \cdots \ar[r] & \wt{Y}_3\times \ui  \ar[d]_-{\wt{H}_3} \ar[r]^-{\wt{r}_{3,2}\times id} & \wt{Y}_2\times \ui \ar[d]_-{\wt{H}_2} \ar[r]^-{\wt{r}_{2,1}\times id} & \wt{Y}_1\times \ui \ar[d]_-{\wt{H}_1} \\
   \wh{Y} &\cdots \ar[r] & \wt{Y}_3 \ar[r]_-{\wt{r}_{3,2}} & \wt{Y}_2 \ar[r]_-{\wt{r}_{2,1}} & \wt{Y}_1
}\]
The limit of this first system is a map $\wh{H}:\wh{Y}\times \ui\to \wh{Y}$, which, by construction, is a based homotopy from $id_{\wh{Y}}$ to $\wh{g}\circ \wh{f}$. Here we are implicitly using the fact that inverse limits commute with finite products to identify the limit with $\varprojlim_{k}(\wt{Y}_{\leq k},\wt{r}_{k+1,k})\times \varprojlim_{k}(\ui, id)=\wh{Y}\times \ui$. 

Similarly, the homotopies $\wt{G}_k$, $k\in\bbn$, form an inverse system of based homotopies.
\[\xymatrix{
 \wh{Z}\times \ui \ar[d]_-{\wh{G}} & \cdots \ar[r] & \tZ_3\times \ui  \ar[d]_-{\wt{G}_3} \ar[r]^-{\wt{s}_{3,2}\times id} & \tZ_2\times \ui \ar[d]_-{\wt{G}_2} \ar[r]^-{\wt{s}_{2,1}\times id} & \tZ_1\times \ui \ar[d]_-{\wt{G}_1} \\
   \wh{Z} &\cdots \ar[r] & \tZ_3 \ar[r]_-{\wt{s}_{3,2}} & \tZ_2 \ar[r]_-{\wt{s}_{2,1}} & \tZ_1
}\]
The limit of this second system is a map $\wh{G}:\wh{Z}\times \ui\to \wh{Z}$, which, by construction, is a based homotopy from $id_{\wh{Z}}$ to $\wh{f}\circ \wh{g}$. 
\end{proof}

\begin{proposition}
For the maps $\wh{f}$, $\wh{H}$, and $\wh{G}$ defined above, the following square commutes.
\[\xymatrix{
\wh{Y} \times \ui \ar[r]^-{\wh{H}} \ar[d]_-{\wh{f}\times id} & \wh{Y} \ar[d]^-{\wh{f}} \\
\wh{Z}\times \ui \ar[r]_-{\wh{G}} & \wh{Z}
}\]
\end{proposition}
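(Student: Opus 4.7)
The plan is to verify the asserted commutativity at each finite level $k$ and then pass to the inverse limit. Specifically, I will show that for every $k\in\bbn$,
\[\wt{f}_k\circ \wt{H}_k \;=\; \wt{G}_k\circ(\wt{f}_k\times id) \colon \wt{Y}_{\leq k}\times I\to \tZ_k.\]
Once this is established, the limit square commutes automatically: finite products commute with inverse limits, so $\wh{Y}\times I=\varprojlim_k(\wt{Y}_{\leq k}\times I)$, and since $\wh{H}$, $\wh{G}$, and $\wh{f}$ are by construction the inverse limits of $\wt{H}_k$, $\wt{G}_k$, and $\wt{f}_k$, the universal property of the limit forces $\wh{f}\circ \wh{H}=\wh{G}\circ(\wh{f}\times id)$.

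To verify the equality at level $k$, I will check it separately on each subcomplex $\mcb_{k,j,\alpha}\times I$. Since $\wt{Y}_{\leq k}$ carries the weak topology with respect to the subcomplexes $\mcb_{k,j,\alpha}$, agreement on each such piece suffices. Fix $1\leq j\leq k$, $\alpha\in\ntkj$, and let $\beta\in\pi_1(Y_j)$ be determined by $\Lambda_{k,j,\alpha}(\mct_{k,j,\alpha})=\beta T_j$. Using the defining squares for $\wt{f}_{k,j,\alpha}$ (Proposition \ref{littlelambdaprop}) and for $\wt{H}_{k,j,\alpha}$, $\wt{G}_{k,j,\alpha}$ (introduced in the proof of Theorem \ref{homotopytheorem}), direct substitution of $\wt{f}_{k,j,\alpha}=\lambda_{k,j,\alpha}^{-1}\circ f_{j,\beta}\circ\Lambda_{k,j,\alpha}$, $\wt{H}_{k,j,\alpha}=\Lambda_{k,j,\alpha}^{-1}\circ H_{j,\beta}\circ(\Lambda_{k,j,\alpha}\times id)$, and $\wt{G}_{k,j,\alpha}=\lambda_{k,j,\alpha}^{-1}\circ G_{j,\beta}\circ(\lambda_{k,j,\alpha}\times id)$ shows that the two compositions on $\mcb_{k,j,\alpha}\times I$ equal
\[\lambda_{k,j,\alpha}^{-1}\circ f_{j,\beta}\circ H_{j,\beta}\circ(\Lambda_{k,j,\alpha}\times id) \quad\text{and}\quad \lambda_{k,j,\alpha}^{-1}\circ G_{j,\beta}\circ(f_{j,\beta}\times id)\circ(\Lambda_{k,j,\alpha}\times id),\]
respectively. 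The equality at level $k$ thus reduces to the identity $f_{j,\beta}\circ H_{j,\beta}=G_{j,\beta}\circ(f_{j,\beta}\times id)$.

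This last identity is precisely the bottom face of cube (C1). Its top face is the defining equation $f_j\circ H_j=G_j\circ(f_j\times id)$ used to introduce $G_j$, the front and back faces express how $f_{j,\beta}$ is conjugated from $f_j$ by $\Delta_\beta$ and $\delta_\beta$, and the left and right faces are the definitions of $H_{j,\beta}$ and $G_{j,\beta}$ as $\beta$-translates; since the vertical maps are homeomorphisms, commutativity of all other faces propagates commutativity from the top to the bottom face. The main obstacle here is not conceptual but bookkeeping: the entire construction of $f_{j,\beta}$, $g_{j,\beta}$, $H_{j,\beta}$, $G_{j,\beta}$, $\lambda_{k,j,\alpha}$, $\wt{f}_k$, $\wt{H}_k$, and $\wt{G}_k$ in Sections \ref{sectiontranslatesoftj}--\ref{sectionspacesqk} was engineered so that the present compatibility reduces, by substitution, to the defining cube (C1).
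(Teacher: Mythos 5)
Your proposal is correct and follows essentially the same route as the paper: reduce to each finite level $k$, check the identity on each subcomplex $\mcb_{k,j,\alpha}\times I$ by conjugating through $\Lambda_{k,j,\alpha}$ and $\lambda_{k,j,\alpha}$, and reduce to the bottom face of Cube (C1); the paper packages the substitution step as its Cube (C5), but the content is identical. The final passage to the inverse limit is also the paper's argument.
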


\begin{proof}
It suffices to check that for all $k\in\bbn$, the following square commutes. Once this is established, the result follows from taking the inverse limit over $k$ with the appropriate bonding maps in each position.
\[\xymatrix{
 \wt{Y}_{\leq k} \times \ui \ar[r]^-{\wt{H}_k} \ar[d]_-{\wt{f}_k\times id} & \wt{Y}_{\leq k} \ar[d]^-{\wt{f}_k} \\
 \tZ_k \times \ui \ar[r]_-{\wt{G}_k} & \tZ_k 
}\]
Fix $1\leq j\leq k<\infty$ and $\alpha\in\ntkj$. Suppose $\Lambda_{k,j,\alpha}(\mct_{k,j,\alpha})=\beta T_j$ for $\beta\in\pi_1(Y_j)$. We check that the two compositions agree on $\mcb_{k,j,\alpha}\times\ui$, i.e. that the top face of the following cube commutes.
\begin{equation}\label{C5}
\begin{tikzcd}[row sep=2.7em]
\mcb_{k,j,\alpha }\times \ui \arrow[rr,"{\wt{f}_{k,j,\alpha}\times id}"] \arrow[dr,swap,"{\wt{H}_{k,j,\alpha}}"] \arrow[dd,swap,"{\Lambda_{k,j,\alpha}\times id}"] &&
  \mcd_{k,j,\alpha }\times I \arrow[dd,swap,"{\lambda_{k,j,\alpha }\times id}" near start] \arrow[dr,"\wt{G}_{k,j,\alpha}"] \\
& \mcb_{k,j,\alpha } \arrow[rr,swap,crossing over,"\wt{f}_{k,j,\alpha}" near start] &&
  \mcd_{k,j,\alpha } \arrow[dd,"{\lambda_{k,j,\alpha}}"] \\
\tY_j\times I \arrow[rr,swap,"{f_{j,\beta}\times id}" near end] \arrow[dr,swap,"{H_{j,\beta}}"] && D_{j,\beta} \times I \arrow[dr,"{G_{j,\beta}}"] \\
& \tY_j \arrow[rr,swap,"f_{j,\beta}"] \arrow[uu,<-,crossing over,"\Lambda_{k,j,\alpha }" near end]&& D_{j,\beta}
\end{tikzcd}\tag{C5}
\end{equation}
The front and back faces commute by the definition of $\wt{f}_{k,j,\alpha}$. The left and right faces commute by the definitions of $\wt{H}_{k,j,\alpha}$ and $\wt{G}_{k,j,\alpha}$ respectively. The bottom face is the bottom face of Cube \ref{C1} in Section \ref{sectiontranslatesoftj}. Since the vertical maps are homeomorphisms, the top face commutes.
\end{proof}

Recall that $\wh{Y}_0$ is the path component of $\wh{y}_0$ in $\wh{Y}$. Let $\wh{Z}_0=\wh{f}(\wh{Y}_0)$. Since $\wh{Z}_0$ is path connected and $\wh{g}(\wh{z}_0)\in \wh{Y}_0$, we have $\wh{g}(\wh{Z}_0)\subseteq \wh{Y}_0$. Thus the restrictions $\wh{f}_0:\wh{Y}_0\to \wh{Z}_0$ of $\wh{f}$ and $\wh{g}_0:\wh{Z}_0\to \wh{Y}_0$ of $\wh{g}$ are well-defined maps. A similar argument gives restricted homotopies $\wh{H}_0:\wh{Y}_0\times\ui\to\wh{Y}_0$ and $\wh{G}_0:\wh{Z}_0\times\ui\to\wh{Z}_0$. Hence, we have the following corollary.

\begin{corollary} \label{zeromapscorollary}
The restricted maps $\wh{f}_0:\wh{Y}_0\to \wh{Z}_0$ and $\wh{g}_0:\wh{Z}_0\to \wh{Y}_0$ are based homotopy inverses. In particular, $\wh{H}_0$ is a based homotopy from $id_{\wh{Y}_0}$ to $\wh{g}_0\circ \wh{f}_0$ and $\wh{G}_0$ is a based homotopy from $id_{\wh{Z}_0}$ to $\wh{f}_0\circ \wh{g}_0$. Moreover, these homotopies make the following square commute.
\[\xymatrix{
\wh{Y}_0 \times \ui \ar[r]^-{\wh{H}_0} \ar[d]_-{\wh{f}_0\times id} & \wh{Y}_0 \ar[d]^-{\wh{f}_0} \\
\wh{Z}_0\times \ui \ar[r]_-{\wh{G}_0} & \wh{Z}_0
}\]
\end{corollary}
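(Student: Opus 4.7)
The plan is to argue that Corollary \ref{zeromapscorollary} is essentially a straightforward restriction of the inverse-limit-level statements established in the preceding theorem and proposition; the task is simply to verify that all of the relevant maps and homotopies preserve the path components of the basepoints.

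First I would verify that $\wh{f}_0$ and $\wh{g}_0$ are well-defined maps. By definition $\wh{Z}_0=\wh{f}(\wh{Y}_0)$, so $\wh{f}$ restricts to a map $\wh{f}_0:\wh{Y}_0\to \wh{Z}_0$. For $\wh{g}_0$, note that $\wh{g}(\wh{z}_0)=\wh{y}_0\in \wh{Y}_0$ (this follows from $\wt{g}_k(z_k)=\wt{y}_0$, which in turn follows from the fact that each $\wt{g}_{k,j,\alpha}$ was constructed to map arc-endpoints to arc-endpoints in the appropriate way). Since $\wh{Z}_0$ is path connected and $\wh{g}$ is continuous, $\wh{g}(\wh{Z}_0)$ is a path-connected subset of $\wh{Y}$ containing $\wh{y}_0$, hence contained in $\wh{Y}_0$. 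Thus the restriction $\wh{g}_0:\wh{Z}_0\to \wh{Y}_0$ is well defined.

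Next I would check that the homotopies restrict. The based homotopy $\wh{H}:\wh{Y}\times \ui\to \wh{Y}$ satisfies $\wh{H}(\wh{y}_0,t)=\wh{y}_0$ for all $t\in \ui$. For any $\wh{y}\in \wh{Y}_0$, choose a path $\sigma:\ui\to \wh{Y}_0$ from $\wh{y}_0$ to $\wh{y}$; then $s\mapsto \wh{H}(\sigma(s),t)$ for varying $s$ and fixed $t$, combined with the based homotopy constancy at the basepoint, shows that the image $\wh{H}(\wh{Y}_0\times \ui)$ is path-connected and contains $\wh{y}_0$, hence lies in $\wh{Y}_0$. Therefore $\wh{H}$ restricts to a based homotopy $\wh{H}_0:\wh{Y}_0\times \ui\to \wh{Y}_0$ from $id_{\wh{Y}_0}$ to $\wh{g}_0\circ \wh{f}_0$. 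An identical argument applied to $\wh{G}$ using path-connectedness of $\wh{Z}_0$ produces $\wh{G}_0:\wh{Z}_0\times \ui\to \wh{Z}_0$ as a based homotopy from $id_{\wh{Z}_0}$ to $\wh{f}_0\circ \wh{g}_0$.

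Finally, the commuting square is inherited by restriction. The preceding proposition established that $\wh{f}\circ \wh{H}=\wh{G}\circ (\wh{f}\times id)$ on $\wh{Y}\times \ui$. Restricting both sides to $\wh{Y}_0\times \ui$, and noting that on this subspace the maps $\wh{f}$, $\wh{H}$ and $\wh{G}$ coincide with $\wh{f}_0$, $\wh{H}_0$ and $\wh{G}_0$ respectively (with codomains restricted as above), the desired identity $\wh{f}_0\circ \wh{H}_0=\wh{G}_0\circ (\wh{f}_0\times id)$ follows immediately. No step here requires any new geometric construction; the only mild point to be careful about is the path-component bookkeeping for $\wh{g}$ and the homotopies, which is handled uniformly by using that $\wh{Y}_0$ and $\wh{Z}_0$ are path components of the respective basepoints and that all maps in sight are based.
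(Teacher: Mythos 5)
Your approach is the same as the paper's (which disposes of this corollary in the paragraph immediately preceding its statement, via exactly the restriction-and-path-component bookkeeping you describe). One point needs fixing, though: $\wh{Z}_0$ is \emph{defined} as the image $\wh{f}(\wh{Y}_0)$, not as the path component of $\wh{z}_0$ in $\wh{Z}$, so your closing claim that ``$\wh{Y}_0$ and $\wh{Z}_0$ are path components of the respective basepoints'' is not available, and the ``identical argument'' for $\wh{G}_0$ only shows that $\wh{G}(\wh{Z}_0\times \ui)$ lands in the path component of $\wh{z}_0$, which a priori could be strictly larger than $\wh{Z}_0$. The clean fix is to use the already-established identity $\wh{f}\circ\wh{H}=\wh{G}\circ(\wh{f}\times id)$: for $(z,t)\in\wh{Z}_0\times\ui$ write $z=\wh{f}(y)$ with $y\in\wh{Y}_0$, so that $\wh{G}(z,t)=\wh{f}(\wh{H}(y,t))\in\wh{f}(\wh{Y}_0)=\wh{Z}_0$, using that $\wh{H}(\wh{Y}_0\times\ui)\subseteq\wh{Y}_0$ (which your path-component argument does establish, since $\wh{Y}_0$ genuinely is the path component of $\wh{y}_0$). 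Everything else, including the well-definedness of $\wh{g}_0$ and the restriction of the commuting square, is correct as written.
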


Just as with $\wh{Y}_0$, the space $\wh{Z}_0$ need not be locally path connected. In the next section we give a detailed account of the structure of $\wt{Y}$ so that we may construct a locally path connected counterpart $\tZ$ for $\wh{Z}_0$

\section{The spaces $\tY$ and $\tZ$}

\subsection{The topological structure of $\tY$}

In this section, we provide a description of $\tY$ similar to that of $\tY_j$, namely a tree-like decomposition into copies of $\tY_j$. Just as the copies of $\tY_j$ appear in $\tY_{\leq k}$ according to the reduced words in $\pi_1(\tY_{\leq k})$, the copies of $\tY_j$ in $\tY$ will be arranged according to the infinite word structure of $\varoast_{j}\pi_1(Y_j)$. The main difference between these two situations is that words in $\pi_1(Y)$ may be indexed by an infinite linear order and thus copies of $\tY_j$ will appear in a corresponding manner.

\begin{definition}
Fix $j\in \bbn$ and let $\alpha:\ui\to Y$ be a reduced loop based at $y_0$. We say that $\alpha$ is \textit{non-$Y_{j}$-terminal} if either the linear order $\ov{\alpha}$ does not have a maximal element or if $(a,b)\in \ov{\alpha}$ is maximal and $\alpha|_{[a,b]}$ is a loop in $\bigcup_{i\neq j}Y_i$. For each $j\in \bbn$, let $\ntij\subseteq \pi_1(Y)$ denote the subset of homotopy classes of non-$Y_{j}$-terminal reduced loops.
\end{definition}

\begin{remark}
Just like the finite case, the set $\ntij\subseteq \pi_1(Y)$ provides a canonical choice of representatives for the coset space $\pi_1(Y)/\pi_1(Y_{j})$. Indeed, the projection $\pi_1(Y)\to \pi_1(Y)/\pi_1(Y_{j})$ restricts to a bijection $\ntij\to \pi_1(Y)/\pi_1(Y_{j})$.

Moreover, $p^{-1}(y_0)=\pi_1(Y)=\bigcup_{j\in\bbn}\ntij$ since for every $\alpha\in\pi_1(Y)$, the corresponding reduced word $w_{\alpha}$ either has no terminal letter or does not terminate in a letter of $\pi_1(Y_j)$ for all but one $j$.
\end{remark}

Define the following subsets of $\wt{Y}$ for each $\alpha\in\ntij$. 
\begin{itemize}
\item $\mcb_{\infty,j,\alpha}=\{\alpha\beta\in \tY\mid \beta\in\tY_j\}$,
\item $\mcu_{\infty,j,\alpha}=\{\alpha\beta\in\tY\mid \beta\in\tY_j\backslash\pi_1(Y_j)\}$,
\item $\mca_{\infty,j,\alpha}=\{\alpha\beta\tau_j\in \tY\mid \beta\in\tY_j\}$.
\end{itemize}

By definition, we have $\mca_{\infty,j,\alpha}\subseteq \mcu_{\infty,j,\alpha}\subseteq \mcb_{\infty,j,\alpha}$.

\begin{proposition}\label{disjointunion}
Fix $j\in\bbn$. Then
\begin{enumerate}
\item $p^{-1}(Y_j)$ is the disjoint union of the sets $\mcb_{\infty,j,\alpha}$, $\alpha\in \ntij$.
\item $p^{-1}(Y_j\backslash\{y_0\})$ is the disjoint union of the sets $\mcu_{\infty,j,\alpha}$, $\alpha\in \ntij$,
\item $p^{-1}(X_j)$ is the disjoint union of the sets $\mca_{\infty,j,\alpha}$, $\alpha\in \ntij$.
\end{enumerate}
\end{proposition}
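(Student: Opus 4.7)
My plan is to reduce all three parts to a single factorization principle: every $[\ell]\in p^{-1}(Y_j)$ can be written uniquely as $\alpha\beta$ with $\alpha\in\ntij$ and $\beta\in\tY_j$. Given this, part (1) is a direct restatement, and parts (2) and (3) follow by restricting the second factor: for (2), the endpoint condition $\ell(1)\neq y_0$ is equivalent to $\beta\notin\pi_1(Y_j)\subseteq\tY_j$; for (3), the condition $\ell(1)\in X_j$ is equivalent to $\beta\in p_j^{-1}(X_j)$, which coincides with the closed embedded copy $\tX_j\hookrightarrow\tY_j$, $\delta\mapsto\tau_j\delta$, and yields the stated description of $\mca_{\infty,j,\alpha}$.

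For existence of the factorization, I pick a representative $\ell:I\to Y$ with $\ell(1)\in Y_j$ and isolate the last time $\ell$ enters $Y_j$ for good. Since each summand $Y_j$ is closed in the shrinking wedge $Y$, the set $A=\{s\in I\mid \ell([s,1])\subseteq Y_j\}$ is closed and nonempty (it contains $1$), so $t_0=\inf A$ lies in $A$. A continuity argument rules out the possibility $t_0>0$ with $\ell(t_0)\neq y_0$: such a point would lie in the open subset $Y_j\setminus\{y_0\}$ of $Y$, forcing $\ell$ to stay in $Y_j$ on a small left-neighborhood of $t_0$ and contradicting minimality. Hence either $t_0=0$ (so $\ell$ lies entirely in $Y_j$) or $\ell(t_0)=y_0$. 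In either case, splitting at $t_0$ yields $\ell\simeq \ell_1\cdot\ell_2$, with $\ell_1$ a loop in $Y$ at $y_0$ and $\ell_2$ a path in $Y_j$ from $y_j$ to $\ell(1)$. The bijection $\ntij\to\pi_1(Y)/\pi_1(Y_j)$ recorded in the remark preceding the proposition lets me write $[\ell_1]=\alpha\gamma$ with $\alpha\in\ntij$ and $\gamma\in\pi_1(Y_j)$; setting $\beta=\gamma[\ell_2]\in\tY_j$ via the deck action of $\pi_1(Y_j)$ on $\tY_j$ gives $[\ell]=[\ell_1][\ell_2]=(\alpha\gamma)[\ell_2]=\alpha\beta$.

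Uniqueness is a bookkeeping step. If $\alpha\beta=\alpha'\beta'$ in $\tY$ with $\alpha,\alpha'\in\ntij$ and $\beta,\beta'\in\tY_j$, then both sides share a common endpoint, so $[\beta'\cdot\beta^{-}]\in\pi_1(Y_j)$ and this class represents $(\alpha')^{-1}\alpha$ in $\pi_1(Y)$. Thus $\alpha$ and $\alpha'$ lie in the same coset of $\pi_1(Y_j)$, and injectivity of $\ntij\to\pi_1(Y)/\pi_1(Y_j)$ forces $\alpha=\alpha'$; cancellation then gives $\beta=\beta'$.

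I expect the main obstacle to be the topological step isolating $t_0$ and confirming $\ell(t_0)=y_0$. It needs careful use of closedness of $Y_j$ and openness of $Y_j\setminus\{y_0\}$ in the shrinking wedge topology, and some care that the resulting splitting $\ell\simeq\ell_1\cdot\ell_2$ is compatible with the concatenation conventions used to define $\mcb_{\infty,j,\alpha}$. Everything else amounts to translating between the reduced-word description of $\pi_1(Y)$ and the fiber combinatorics of the generalized universal covering $p:\tY\to Y$.
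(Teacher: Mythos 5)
Your proof is correct, and it follows the same overall strategy as the paper's --- establish that every element of $p^{-1}(Y_j)$ factors uniquely as $\alpha\beta$ with $\alpha\in\ntij$ and $\beta\in\tY_j$, then read off (2) and (3) by restricting $\beta$ --- but the two key steps are discharged differently. For existence, the paper simply asserts ``we may write $\gamma=\alpha\beta$'' (implicitly leaning on the reduced infinite-word description of $\pi_1(Y)$), whereas you supply an honest topological argument via the last entry time $t_0=\inf\{s\mid \ell([s,1])\subseteq Y_j\}$; that argument is sound (closedness of $Y_j$ gives $t_0\in A$, openness of $Y_j\setminus\{y_0\}$ forces $\ell(t_0)=y_0$ when $t_0>0$), and it is a nice self-contained substitute. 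For disjointness, the paper works directly with reduced words in $\varoast_j\pi_1(Y_j)$: from $\alpha\beta''(\alpha')^{-1}=1$ it argues that the concatenated word can only reduce to the empty word if $\beta''=1$, since $w_\alpha$ and $w_{\alpha'}$ do not terminate in $\pi_1(Y_j)$-letters. You instead observe that $(\alpha')^{-1}\alpha=[\beta'\cdot\beta^-]\in\pi_1(Y_j)$ and invoke the injectivity of $\ntij\to\pi_1(Y)/\pi_1(Y_j)$ recorded in the remark preceding the proposition. This is legitimate --- that remark is stated as established background and its content is pure word combinatorics independent of the covering space --- but be aware that the word argument the paper writes out in its disjointness proof is essentially the proof of that injectivity, so you have relocated rather than eliminated the combinatorial work. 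Your handling of (3), identifying $p_j^{-1}(X_j)$ with the embedded copy $\{\tau_j\delta\mid\delta\in\tX_j\}$ of $\tX_j$ in $\tY_j$ (using that $X_j$ is a deformation retract of $Y_j$ rel $X_j$), is also fine and matches the intended reading of $\mca_{\infty,j,\alpha}$.
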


\begin{proof}
$\bigcup_{\alpha\in\ntij}\mcb_{\infty,j,\alpha}\subseteq p^{-1}(Y_j)$ is clear. Suppose $\gamma\in p^{-1}(Y_j)$. If $\gamma\in\ntij$, then $\gamma\in\mcb_{\infty,j,\gamma}$. If $\gamma\notin\ntij$, then we may write $\gamma=\alpha\beta$ where $\alpha\in \ntij$ and $\beta\in\wt{Y}_j$, giving $\gamma\in\mcb_{\infty,j,\alpha}$. This proves $p^{-1}(Y_j)=\bigcup\{\mcb_{\infty,j,\alpha}\mid \alpha\in\ntij\}$. 

To show the sets $\mcb_{\infty,j,\alpha}$, $\alpha\in\ntij$ are disjoint, suppose $\gamma\in\mcb_{\infty,j,\alpha}\cap\mcb_{\infty,j,\alpha '}$ for $\alpha,\alpha '\in \ntij$. Write $\gamma=\alpha\beta=\alpha '\beta '$ for paths $\beta,\beta '\in\tY_j\backslash \pi_1(Y_j)$. Now $\beta ''=\beta(\beta ')^{-1}\in \pi_1(Y_j)$ corresponds to a letter in the group of reduced words $\varoast_{j}\pi_1(Y_j)$. Now $\delta=\alpha\beta '' (\alpha ')^{-1}=1$ in $\pi_1(Y)$ and so $[w_{\delta}]=e$ in $\varoast_{j}\pi_1(Y_j)$. However, this means that the unreduced word $w_{\alpha}w_{\beta ''}w_{\alpha '}^{-1}$ is equivalent to the empty word. Because $w_{\alpha}$ and $w_{\alpha '}^{-1}$ are already reduced, this is only possible if $[w_{\beta ''}]=e$, i.e. if $\beta ''=1$ in $\pi_1(Y_j)$. It follows that $\alpha =\alpha '$.

(1) provides the non-trivial parts of the arguments for (2) and (3). The remainder of these proofs are straightforward.
\end{proof}

Because $Y$ is locally contractible at each point of $Y\backslash\{y_0\}$, a straightforward argument gives the next proposition.

\begin{proposition}\label{uprop}
For each $\alpha\in\ntij$,
\begin{enumerate}
\item $\mcu_{\infty,j,\alpha}$ is open in $\tY$,
\item $\mcu_{\infty,j,\alpha}$ is a path component of $p^{-1}(Y_j\backslash\{y_j\})$,
\item $p|_{\mcu_{\infty,j,\alpha}}:\mcu_{\infty,j,\alpha}\to Y_j\backslash\{y_j\}$ is a surjective local homeomorphism.
\end{enumerate}
\end{proposition}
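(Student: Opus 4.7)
The plan is to use three inputs: the local contractibility of $Y$ away from $y_0$, the disjoint-union decompositions in Proposition \ref{disjointunion}, and the basic form of the whisker topology from Definition \ref{whiskertopologydef} (neighborhoods $N([\alpha\beta],V)$). Throughout, I will fix $\alpha\beta\in\mcu_{\infty,j,\alpha}$, so $\beta(1)\in Y_j\setminus\{y_0\}$, and exploit that $Y_j\setminus\{y_0\}$ is open in $Y$ and that every point of it has a simply connected, path-connected open neighborhood $V$ contained in $Y_j\setminus\{y_0\}$ (using that $Y_j\setminus\{y_0\}$ is an open subcomplex of the CW-complex $Y_j$).

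For (1), I would choose such a $V$ around $\beta(1)$ and show $N([\alpha\beta],V)\subseteq \mcu_{\infty,j,\alpha}$. Given $[\alpha\beta\cdot\epsilon]\in N([\alpha\beta],V)$, the path $\epsilon$ has image in $V\subseteq Y_j$, so $\beta\cdot\epsilon$ is a path in $Y_j$ starting at $y_j$ whose endpoint $\epsilon(1)$ lies in $V$ and is therefore distinct from $y_j=y_0$; hence $[\beta\cdot\epsilon]\in \tY_j\setminus\pi_1(Y_j)$, and $[\alpha\beta\cdot\epsilon]=\alpha\cdot[\beta\cdot\epsilon]\in\mcu_{\infty,j,\alpha}$. (The decomposition is unambiguous by Proposition \ref{disjointunion}.)

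For (2), Proposition \ref{disjointunion}(2) already gives the disjoint-union decomposition $p^{-1}(Y_j\setminus\{y_0\})=\bigsqcup_{\alpha\in\ntij}\mcu_{\infty,j,\alpha}$, and each summand is open by (1). It therefore suffices to prove each $\mcu_{\infty,j,\alpha}$ is path-connected. Note that $\tY_j\setminus\pi_1(Y_j)$ is path-connected since $\pi_1(Y_j)=p_j^{-1}(y_j)$ is the discrete set of free arc-endpoints of the whiskers attached to $\tX_j$, so removing them leaves a space that deformation retracts onto the connected CW-complex $\tX_j$. Given any two points $\alpha\beta_0,\alpha\beta_1\in\mcu_{\infty,j,\alpha}$, a path $s\mapsto\beta_s$ in $\tY_j\setminus\pi_1(Y_j)$ from $\beta_0$ to $\beta_1$ induces $s\mapsto\alpha\cdot\beta_s$ in $\mcu_{\infty,j,\alpha}$; continuity follows from Remark \ref{standardliftremark} applied to the chosen path representatives and the compatibility of concatenation with the whisker topology.

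For (3), surjectivity is immediate: given $y\in Y_j\setminus\{y_0\}$, pick any path in $Y_j$ from $y_j$ to $y$ to produce a class $\beta\in\tY_j\setminus\pi_1(Y_j)$ with $\beta(1)=y$, then $\alpha\beta\in\mcu_{\infty,j,\alpha}$ projects to $y$. For the local homeomorphism claim, I would show that if $V$ is the simply connected neighborhood chosen in (1), then $p$ restricts to a bijection $N([\alpha\beta],V)\to V$: surjectivity comes from the standard lift of Remark \ref{standardliftremark} starting at $[\alpha\beta]$, while injectivity follows from the fact that two paths $\epsilon_1,\epsilon_2$ in the simply connected $V$ with the same endpoints are path-homotopic rel $\{0,1\}$ so $[\alpha\beta\cdot\epsilon_1]=[\alpha\beta\cdot\epsilon_2]$. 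Since $p$ is continuous and carries each basic whisker neighborhood $N([\alpha\beta'],V')$ with $V'\subseteq V$ open in $Y_j\setminus\{y_0\}$ onto $V'$, this restriction is open, hence a homeomorphism; varying the base point gives a local homeomorphism on all of $\mcu_{\infty,j,\alpha}$.

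The main subtlety I expect is verifying the continuity assertion in (2), i.e., that ``prepending $\alpha$'' to a continuously varying family in $\tY_j$ yields a continuous map into $\tY$; this should follow from Corollary \ref{continuitycor1} applied to the lifted family (or directly from the whisker topology's basic neighborhoods being preserved under concatenation with a fixed path class), but it requires unraveling the definitions carefully.
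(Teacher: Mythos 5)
Your argument is correct and is precisely the ``straightforward argument'' the paper omits: the text only remarks that the proposition follows from local contractibility of $Y$ at points of $Y\backslash\{y_0\}$, and your combination of basic whisker neighborhoods $N([\alpha\beta],V)$ with $V$ a contractible open subset of $Y_j\backslash\{y_0\}$ (which is open in $Y$), the disjoint decomposition of Proposition \ref{disjointunion}, and standard lifts for path-connectedness is exactly the intended route, paralleling the written proof of the analogous statements for $\mcb_{\infty,j,\alpha}$ in Proposition \ref{bprop}. One cosmetic slip: $Y_j\backslash\{y_0\}$ is an open subset of $Y_j$ but not a subcomplex; it is local contractibility of CW-complexes that supplies the contractible $V$.
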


\begin{proposition}
For each $\alpha\in\ntij$, $\mcb_{\infty,j,\alpha}$ is the closure of $\mcu_{\infty,j,\alpha}$ in $\tY$.
\end{proposition}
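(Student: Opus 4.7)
The plan is to establish both inclusions. Since $\mcu_{\infty,j,\alpha}\subseteq\mcb_{\infty,j,\alpha}$ is immediate from the definitions, the forward inclusion $\mcb_{\infty,j,\alpha}\subseteq\overline{\mcu_{\infty,j,\alpha}}$ only requires showing that each $\alpha\beta$ with $\beta\in\pi_1(Y_j)$ is a limit of points of $\mcu_{\infty,j,\alpha}$, which is a direct computation in the whisker topology exploiting the arc $\bfe_j$. The reverse inclusion $\overline{\mcu_{\infty,j,\alpha}}\subseteq\mcb_{\infty,j,\alpha}$ amounts to showing $\mcb_{\infty,j,\alpha}$ is closed; the main obstacle is the sub-case where $\gamma\in p^{-1}(y_0)$ lies in $\mcb_{\infty,j,\alpha''}$ for some $\alpha''\neq\alpha$, because whisker-topology neighborhoods of a basepoint-fiber point spill into almost all wedge summands at once, so no purely local separation within $\tY$ is possible. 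The remedy is to push everything down via the continuous projection $\varrho_k:\tY\to\tY_{\leq k}$ and exploit the CW-structure of the finite approximations together with the injectivity of $\phi_Y$.

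For the forward inclusion, fix $\beta\in\pi_1(Y_j)$ and a basic whisker-neighborhood $N(\alpha\beta,U)$ with $U\subseteq Y$ open and $y_0\in U$. Since $y_j=y_0$ is the free endpoint of the whisker $\bfe_j\subseteq Y_j$ and $U\cap Y_j$ is an open neighborhood of $y_j$ in $Y_j$, choose a non-constant path $\epsilon:I\to U\cap\bfe_j$ with $\epsilon(0)=y_0$ and $\epsilon(1)\neq y_0$. Then $\beta\cdot\epsilon$ represents an element of $\wt{Y}_j\setminus\pi_1(Y_j)$ (it is not a loop), so $[\alpha\beta\cdot\epsilon]=\alpha\cdot(\beta\cdot\epsilon)$ witnesses a point of $\mcu_{\infty,j,\alpha}\cap N(\alpha\beta,U)$.

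For the reverse inclusion, suppose $\gamma\notin\mcb_{\infty,j,\alpha}$. If $p(\gamma)\notin Y_j$, any open $U\subseteq Y\setminus Y_j$ around $p(\gamma)$ gives $N(\gamma,U)\cap p^{-1}(Y_j)=\emptyset$. Otherwise, by Proposition \ref{disjointunion}(1), $\gamma\in\mcb_{\infty,j,\alpha''}$ for a unique $\alpha''\neq\alpha$ in $\ntij$; if $p(\gamma)\neq y_0$, the open set $\mcu_{\infty,j,\alpha''}$ itself works by Proposition \ref{disjointunion}(2). The delicate remaining case is $\gamma=\alpha''\beta\in\pi_1(Y)$ with $\beta\in\pi_1(Y_j)$. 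Since $\alpha$ and $\alpha''$ represent distinct cosets of $\pi_1(Y_j)$ in $\pi_1(Y)$, the reduced infinite word for $\alpha^{-1}\alpha''$ must contain a letter from some $\pi_1(Y_i)$ with $i\neq j$; by injectivity of $\phi_Y$ there is a $k\geq j$ with $(r_k)_\#(\alpha^{-1}\alpha'')\notin\pi_1(Y_j)$. Writing $(r_k)_\#(\alpha)=\hat{\alpha}\sigma$ and $(r_k)_\#(\alpha'')=\hat{\alpha}''\sigma''$ with $\hat{\alpha},\hat{\alpha}''\in\ntkj$, this forces $\hat{\alpha}\neq\hat{\alpha}''$. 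The subcomplexes $\mcb_{k,j,\hat{\alpha}}$ and $\mcb_{k,j,\hat{\alpha}''}$ of the CW-complex $\tY_{\leq k}$ are then disjoint (any common point would give $\hat{\alpha}^{-1}\hat{\alpha}''\in\pi_1(Y_j)$, contradicting that they are distinct coset representatives), so $V:=\tY_{\leq k}\setminus\mcb_{k,j,\hat{\alpha}}$ is open and contains $\varrho_k(\gamma)$. Since $\varrho_k(\mcu_{\infty,j,\alpha})\subseteq\mcb_{k,j,\hat{\alpha}}$, the preimage $\varrho_k^{-1}(V)$ is the required open neighborhood of $\gamma$ in $\tY$ disjoint from $\mcu_{\infty,j,\alpha}$.
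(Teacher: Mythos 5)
Your proof is correct, and for the one genuinely delicate case it takes a different route from the paper. For the forward inclusion both arguments are essentially the same (you use the whisker arc $\bfe_j$ to produce a non-constant path into $U\cap Y_j$; the paper uses $\pi_1(Y_j)\neq 1$ for the same purpose), and the easy parts of the reverse inclusion agree. The divergence is in handling a point $\gamma=\alpha''\beta\in p^{-1}(y_0)$ with $\alpha''\neq\alpha$: the paper works entirely inside $\tY$, choosing (via Hausdorffness) a neighborhood $V$ of $y_0$ with $\alpha\notin N(\alpha''\beta,V)$ and $V\cap Y_j$ a half-open arc, and then runs a careful reduced-infinite-word argument to show $N(\alpha''\beta,V)$ misses $\mcb_{\infty,j,\alpha}$. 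You instead push down to a finite stage: injectivity of $\phi_Y$ (equivalently, the eventual stabilization of the coordinates $(r_k)_{\#}(\alpha^{-1}\alpha'')$ in $\pi_1(Y_j)$) produces a $k$ at which the coset representatives $\hat\alpha\neq\hat\alpha''$ separate, and you pull back the complement of the closed subcomplex $\mcb_{k,j,\hat\alpha}$ along the continuous projection $\varrho_k$. This trades the paper's word-combinatorics for the CW-structure of $\tY_{\leq k}$ and shape-injectivity; it is arguably cleaner, and it even yields the slightly stronger fact that $\varrho_k^{-1}(\mcb_{k,j,\hat\alpha})$ is a closed set containing $\mcb_{\infty,j,\alpha}$ and missing $\gamma$. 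The only spot worth tightening is the sentence deducing the existence of such a $k$: the cleanest justification is the inverse-limit one (if $(r_k)_{\#}(\alpha^{-1}\alpha'')\in\pi_1(Y_j)$ for all $k$, these values stabilize to a single $h\in\pi_1(Y_j)$ and injectivity of $\phi_Y$ forces $\alpha^{-1}\alpha''=h$), since projecting a reduced word need not preserve any particular letter.
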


\begin{proof}
Note that $\mcb_{\infty,j,\alpha}$ consists of $\mcu_{\infty,j,\alpha}$ and the elements $\alpha\beta$, $\beta\in \pi_1(Y_j)$. Fixing $\beta\in \pi_1(Y_j)$, let $V$ be a path-connected neighborhood of $y_0$ in $Y$ so that $N(\alpha\beta,V)$ is a basic and path-connected neighborhood of $\alpha\beta$ in $\tY$. Since we have assumed from the start that $\pi_1(Y_j)\neq 1$, $Y_j$ consists of more than a point and so there exists a path $\eta:(\ui,0)\to (V\cap Y_j,y_j)$ with $\eta(1)\neq y_0$. Writing $\gamma=[\eta]$, we have $\alpha\beta\gamma\in N(\alpha\beta,V)\cap \mcu_{\infty,j,\alpha}$. This proves $\mcb_{\infty,j,\alpha}\subseteq \ov{\mcu_{\infty,j,\alpha}}$. Since $\mcu_{\infty,j,\alpha}\subseteq \mcb_{\infty,j,\alpha}$, it suffices to prove that $\mcb_{\infty,j,\alpha}$ is closed in $\tY$. Since $Y_j$ is closed in $Y$, $p^{-1}(Y_j)$ is closed. Hence, it is enough to show that $\mcb_{\infty,j,\alpha}$ is closed in $p^{-1}(Y_j)$.

Let $\gamma\in p^{-1}(Y_j)\backslash \mcb_{\infty,j,\alpha}$. Proposition \ref{disjointunion} gives that $\gamma\in \mcb_{\infty,j,\alpha '}$ where $\alpha\neq \alpha '\in\ntij$. Write $\gamma=\alpha ' \beta '$ for $\beta '\in\tY_j$.

 \textbf{Case I:} If $\beta '$ does not represent a loop, i.e. $\beta '\notin\pi_1(Y_j)$, then by (1) of the previous proposition, $\mcu_{\infty,j,\alpha '}$ is an open neighborhood of $\gamma$ disjoint from $\mcb_{\infty,j,\alpha }$.

\textbf{Case II:} If $\beta '\in \pi_1(Y_j)$, we may use the fact that $\tY$ is Hausdorff to find a path-connected neighborhood $V$ of $y_0$ in $Y$ such that $\alpha \notin N(\alpha '\beta ',V)$. Moreover, since $j$ is fixed, we may choose $V$ small enough so that $V\cap Y_j$ is homeomorphic to a half-open arc in $Y_j\backslash X_j$. We claim that $N(\alpha '\beta ',V)\cap \mcb_{\infty,j,\alpha}=\emptyset$. Suppose, to obtain a contradiction, that $\alpha\beta=\alpha ' \beta ' \delta $ for $\beta\in\tY_j$ and $\delta\in\wt{V}$ (where $V$ has basepoint $y_0$). Note that $\delta (1)$ lies in the half-open arc $Y_j\backslash X_j$. Let $\epsilon$ be the homotopy class of the path $\ui\to Y_j$, which parameterize the arc from $\beta(1)=\delta (1)$ to $y_j=y_0$. By our choice of $V$, we must have $\delta\epsilon\in \pi_1(\bigcup_{i\neq j}Y_i,y_0)$, that is, $w_{\delta\epsilon}$ contains no letters from $\pi_1(Y_j)$. 

Recall that $\alpha,\alpha '$ are being represented by reduced loops such that the corresponding reduced words $w_{\alpha}$ and $w_{\alpha '}$ do not end in a letter from $\pi_1(Y_j)$. If $\beta \epsilon=1$ in $\pi_1(Y_j)$, then $\alpha=\alpha '\beta '\delta  \epsilon$, however, this contradicts $\alpha \notin N(\alpha '\beta ',V)$. If $1\neq \beta \epsilon\in  \pi_1(Y_j)$, then we have $\alpha\beta \epsilon=\alpha '\beta '\delta  \epsilon$ in $\pi_1(Y)$ where $\beta \epsilon\in \pi_1(Y_j)$. Now, $w_{\alpha \beta\epsilon}$ consists of the reduced word $w_{\alpha}$, which does not end in a letter from $\pi_1(Y_j)$, followed by the non-trivial letter $(\beta \epsilon)\in\pi_1(Y_j)$. Thus $w_{\alpha\beta\epsilon}$ is a reduced representative of $w_{\alpha '}\beta ' w_{\delta \epsilon}$. However, $w_{\alpha '}\beta '$ is reduced and $w_{\delta \epsilon} $ has no letters from $\pi_1(Y_j)$. The only way for $w_{\alpha '}\beta ' w_{\delta \epsilon}$ to reduce to $w_{\alpha}(\beta\epsilon)$ is if $[w_{\delta \epsilon}]=e$, that is, if $\delta\epsilon=1$. However, this would give $\alpha(\beta \epsilon)=\alpha '\beta '$. Since $w_{\alpha}(\beta \epsilon)=w_{\alpha '}\beta '$ as reduced words, the terminal letters must be equal, i.e. $\beta\epsilon=\beta '$. This implies $\alpha=\alpha '$; a contradiction.
\end{proof}

\begin{proposition}\label{bprop}
For each $\alpha\in\ntij$, $\mcb_{\infty,j,\alpha}$ is a path component of $p^{-1}(Y_j)$. Moreover, $\mcb_{\infty,j,\alpha}$ is locally path connected and simply connected.
\end{proposition}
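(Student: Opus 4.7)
The plan is to construct the concatenation map
\[\Psi_\alpha:\tY_j\to \tY,\qquad \Psi_\alpha([\beta])=[\alpha\cdot\beta],\]
and prove it is a homeomorphism onto $\mcb_{\infty,j,\alpha}$; local path connectedness and simple connectedness of $\mcb_{\infty,j,\alpha}$ then transfer from the CW universal cover $\tY_j$. Continuity of $\Psi_\alpha$ follows immediately from the whisker topology, since a basic neighborhood $N(\alpha\beta,V)$ of $\alpha\beta$ in $\tY$ contains $\Psi_\alpha(N(\beta,V\cap Y_j))$. The image of $\Psi_\alpha$ is $\mcb_{\infty,j,\alpha}$ by definition, and injectivity follows from the same reduced-word observation used in the proof of Proposition \ref{disjointunion}.

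The main technical step---and the principal obstacle---is showing that $\Psi_\alpha$ is open. At a point $\beta\in\tY_j$ with $\beta(1)\ne y_0$ this is immediate: $Y_j\setminus\{y_0\}$ is open in $Y$, so choosing $U\subseteq Y_j\setminus\{y_0\}$ gives $N(\alpha\beta,U)\cap\mcb_{\infty,j,\alpha}=\Psi_\alpha(N(\beta,U))$. The delicate case is at a wedge-type point $\beta\in\pi_1(Y_j)$, where $\beta(1)=y_j=y_0$: the shrinking-wedge topology forces every neighborhood of $y_0$ in $Y$ to contain entire $Y_i$'s for all but finitely many $i$, so the basic open sets $N(\alpha\beta,V)$ are much larger than one would like. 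Given a basic $N(\beta,U)$ in $\tY_j$ with $y_j\in U$, I would pick $n\ge j$ and construct $V\subseteq Y$ open, containing $y_0$, with $V\cap Y_j=U$, each $V\cap Y_i$ ($i\le n$, $i\ne j$) chosen to be a small contractible arc near $y_i$ on the whisker $\bfe_i$, and $V\supseteq Y_i$ for all $i>n$. Then I would establish
\[N(\alpha\beta,V)\cap \mcb_{\infty,j,\alpha}=\Psi_\alpha(N(\beta,U)).\]
The ``$\supseteq$'' inclusion is immediate from $U\subseteq V$. For ``$\subseteq$'', take $[\alpha\beta\cdot\epsilon]$ with $\epsilon\subseteq V$ and $[\beta\cdot\epsilon]\in\tY_j$, and decompose $\epsilon$ into its loop excursions through the various $Y_i$'s together with a terminal path in $V\cap Y_j=U$. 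Loops lying in the contractible pieces $V\cap Y_i$ ($i\le n$) are automatically null in $\pi_1(Y_i)$, so only excursions into $Y_i$ with $i>n$, $i\ne j$, can contribute non-trivial letters when we project $\beta\cdot\epsilon\cdot\epsilon_j^{-1}$ to each finite approximation $\pi_1(Y_{\le k})$, where $\epsilon_j$ is the unique path in the contractible set $U$ from $y_0$ to $\epsilon(1)$. The membership $[\beta\cdot\epsilon]\in\tY_j$ combined with $\pi_1$-shape injectivity (Theorem \ref{injectivetheorem}) forces each such projection to reduce to a word in $\pi_1(Y_j)$ alone; hence all non-$Y_j$ letters cancel, $\epsilon$ is $Y$-homotopic to a path in $U$, and $[\alpha\beta\cdot\epsilon]\in\Psi_\alpha(N(\beta,U))$.

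Once $\Psi_\alpha:\tY_j\to \mcb_{\infty,j,\alpha}$ is established as a homeomorphism, local path connectedness and simple connectedness of $\mcb_{\infty,j,\alpha}$ follow directly from those of $\tY_j$. For the path-component claim, I would invoke unique path lifting for $p:\tY\to Y$: given any path $\gamma:I\to p^{-1}(Y_j)$ starting at some $\alpha\beta_0\in \mcb_{\infty,j,\alpha}$, lift $p\circ\gamma:I\to Y_j$ through the ordinary covering $p_j:\tY_j\to Y_j$ starting at $\beta_0$ to obtain $\wt\gamma_j:I\to\tY_j$; then $\Psi_\alpha\circ\wt\gamma_j:I\to \mcb_{\infty,j,\alpha}$ is a continuous lift of $p\circ\gamma$ to $\tY$ starting at $\gamma(0)$, so uniqueness of path lifts gives $\gamma=\Psi_\alpha\circ\wt\gamma_j$ and $\gamma(1)\in \mcb_{\infty,j,\alpha}$.
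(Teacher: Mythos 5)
Your proposal is correct, but it takes a genuinely different route from the paper. You prove up front that $\beta\mapsto\alpha\beta$ is a homeomorphism of $\tY_j$ onto $\mcb_{\infty,j,\alpha}$ and then read off local path connectedness and simple connectedness from the CW-complex $\tY_j$; the hard content is concentrated in the openness of this map at the points $\alpha\beta$ with $\beta\in\pi_1(Y_j)$, which you settle by the word-calculus/shape-injectivity argument (a loop $\epsilon\cdot\epsilon_j^{-}$ in your carefully chosen $V$ reduces to a word with letters only from $\pi_1(Y_i)$, $i>n$, while membership in $\mcb_{\infty,j,\alpha}$ forces it into $\pi_1(Y_j)$, so it is trivial); that argument is sound, and your use of the whiskers to make $V\cap Y_i$ contractible for $i\leq n$ is exactly what makes it go through. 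The paper deliberately avoids proving openness here: it establishes only (i) that basic neighborhoods $N(\alpha\beta,V)\cap\mcb_{\infty,j,\alpha}$ at arc-endpoints are \emph{path connected} (by lifting the parameterization of the arc $V\cap Y_j$), which is strictly weaker than your set equality, and (ii) simple connectedness by lifting a null-homotopy of $p\circ\ell$ in $Y_j$ (using that $Y_j$ is a retract of $Y$); the homeomorphism $\Lambda_{\infty,j,\alpha}$ is then obtained only \emph{afterwards}, as a corollary of the fact that $p|_{\mcb_{\infty,j,\alpha}}$ is a universal covering map over the semilocally simply connected space $Y_j$ -- a fact whose proof uses the present proposition. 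So your approach buys the subsequent theorem and corollary essentially for free at the cost of a more delicate direct point-set argument, while the paper's approach trades that delicacy for an appeal to the abstract machinery of generalized covering maps. Two small points of care in a write-up: the ``decomposition of $\epsilon$ into loop excursions'' should be phrased via the components of $(\epsilon\cdot\epsilon_j^{-})^{-1}(Y\setminus\{y_0\})$ (there may be infinitely many, ordered by a countable linear order), and you should note that it suffices to treat basic $N(\beta,U)$ with $U$ a small contractible arc in the whisker, since these form a neighborhood basis at $y_j$ and your map is injective.
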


\begin{proof}
Let $\Gamma:\ui\to p^{-1}(Y_j)$ be a path. By Proposition \ref{disjointunion}, we may assume $\Gamma(0)=\alpha \beta$ for $\alpha\in \ntij$ and $\beta\in\tY_j$. Now $\gamma=p\circ \Gamma:\ui\to Y_j$ starts at $\beta(1)$. Let $\wt{\gamma}_{s}(t)=\alpha\beta[\gamma_t]$ be the standard lift as described in Remark \ref{standardliftremark}. Now it is clear that $\wt{\gamma}_{s}$ is a path in $\mcb_{\infty,j,\alpha}$ and by unique path-lifting, we have $\Gamma=\wt{\gamma}_s$. Moreover, since $Y_j$ is path-connected, standard lifts may be used to show each $\mcb_{\infty,j,\alpha}$ is path-connected. We conclude that the sets $\mcb_{\infty,j,\alpha}$, $\alpha\in\ntij$ are the path components of $p^{-1}(Y_j)$.

Next, we fix $\alpha\in\ntij$ and show $\mcb_{\infty,j,\alpha}$ is locally path connected. It is clear from (3) of Proposition \ref{uprop} that $\mcb_{\infty,j,\alpha}$ is locally path connected at the points of $\mcu_{\infty,j,\alpha}$. Fix $\alpha\beta\in \mcb_{\infty,j,\alpha}$ for $\beta\in \pi_1(Y_j)$. Let $V$ be a neighborhood of $y_0$ in $Y$ such that $V\cap Y_j$ is a half-open arc in $Y_j\backslash X_j$. We will show that $N(\alpha\beta,V)\cap \mcb_{\infty,j,\alpha}$ is path connected. Consider $\alpha\beta\gamma=\alpha \beta '$ for $\gamma\in\wt{V}$ and $\beta '\in\tY_j$. From this, we have $\gamma=\beta^{-1}\beta '$. Therefore, $\gamma$ represents a path in $Y_j\cap V$. In particular, if $\epsilon:\ui\to Y_j\cap V$ parameterizes the arc from $y_0$ to $\gamma(1)$, then the standard lift $\wt{\epsilon}_s(t)=\alpha\beta[ \epsilon_t]$ gives a path in $N(\alpha\beta,V)\cap \mcb_{\infty,j,\alpha}$ from $\alpha\beta$ to $\alpha\beta\gamma$.

Finally, suppose $\ell:\ui\to \mcb_{\infty,j,\alpha}$ is a loop based at $\alpha$. Since $\tY$ is simply connected, $\ell$ is null-homotopic in $\tY$. Since $p\circ \ell$ has image in $Y_j$ and $Y_j$ is a retract of $Y$, $p\circ \ell$ is null-homotopic loop in $Y_j$. Let $K:(\ui^2,\{0,1\}\times I\cup I\times \{1\})\to (Y_j,y_j)$ be a null-homotopy of $p\circ \ell$. Consider the lift $\wt{K}: (\ui^2,\{0,1\}\times I\cup I\times \{1\})\to (\tY,\alpha)$ of $K$. Since $\im(\wt{K})$ is path-connected and contains $\alpha$, $\im(\wt{K})$ must lie in the path component $\mcb_{\infty,j,\alpha}$ of $p^{-1}(Y_j)$ (recall Proposition \ref{uprop}). Unique lifting ensures that $\wt{K}$ is a null-homotopy of $\ell$. We conclude that $\mcb_{\infty,j,\alpha}$ is simply connected.
\end{proof}

\begin{theorem}
For each $\alpha\in\ntij$, the restriction $p|_{\mcb_{\infty,j,\alpha}}:\mcb_{\infty,j,\alpha}\to Y_j$ is a universal covering map.
\end{theorem}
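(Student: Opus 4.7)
The plan is to construct a homeomorphism $\Lambda_{\infty,j,\alpha}:\mcb_{\infty,j,\alpha}\to \tY_j$ by $\alpha\beta\mapsto\beta$. This is a well-defined bijection by Proposition \ref{disjointunion}, and by construction satisfies $p_j\circ\Lambda_{\infty,j,\alpha}=p|_{\mcb_{\infty,j,\alpha}}$. Once $\Lambda_{\infty,j,\alpha}$ is known to be a homeomorphism, the theorem follows at once: $p_j:\tY_j\to Y_j$ is the ordinary universal covering map of the CW-complex $Y_j$, and Proposition \ref{bprop} has already established that $\mcb_{\infty,j,\alpha}$ is path-connected, locally path-connected, and simply connected.

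Continuity of the inverse $\beta\mapsto\alpha\beta$ is routine: given a basic neighborhood $N(\alpha\beta,W)\cap \mcb_{\infty,j,\alpha}$ with $W$ open in $Y$ around $\beta(1)$, concatenating $\beta$ with any path $\eta$ in $W\cap Y_j$ produces an element of $N(\alpha\beta,W)$, so $N(\beta,W\cap Y_j)$ is a preimage neighborhood. Continuity of $\Lambda_{\infty,j,\alpha}$ itself splits into two cases. If $\beta(1)\neq y_0$, any sufficiently small neighborhood $V\subseteq Y_j$ of $\beta(1)$ lies in $Y_j\setminus\{y_0\}$, which is open in $Y$ (since $\bigcup_{i\neq j}Y_i$ is closed in the shrinking wedge), so $N(\alpha\beta,V)\cap\mcb_{\infty,j,\alpha}$ provides the needed preimage neighborhood. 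The harder case is $\beta\in\pi_1(Y_j)$, so that $\beta(1)=y_0$: given a basic open set $N(\beta,V)$ with $V$ a neighborhood of $y_0$ in $Y_j$, define $W=V\cup\bigcup_{i\neq j}Y_i$, which is open in $Y$ by the shrinking wedge axioms. For any $\alpha\beta\epsilon\in N(\alpha\beta,W)\cap \mcb_{\infty,j,\alpha}$, uniqueness of representation in $\mcb_{\infty,j,\alpha}$ yields a unique $\beta'\in\tY_j$ with $\alpha\beta'=\alpha\beta\epsilon$, hence $\epsilon\simeq \beta^{-1}\beta'$ as paths in $Y$. Applying the continuous retraction $\rho_j:Y\to Y_j$ that collapses $\bigcup_{i\neq j}Y_i$ to $y_0$ yields a path $\rho_j\circ\epsilon$ whose image lies in $\rho_j(W)=V$ and which is path-homotopic in $Y_j$ to $\rho_j\circ(\beta^{-1}\beta')=\beta^{-1}\beta'$; therefore $\beta'=\beta\cdot[\rho_j\circ\epsilon]\in N(\beta,V)$.

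The main obstacle is exactly this basepoint case: basic neighborhoods of $\alpha\beta$ in $\tY$ with $\beta(1)=y_0$ are forced by the shrinking wedge topology to come from neighborhoods of $y_0$ in $Y$, which must contain cofinitely many summands $Y_i$, whereas basic neighborhoods in $\tY_j$ only see the topology of $Y_j$. The retraction $\rho_j$ is the bridge that projects excursions of $\epsilon$ into other summands away, without affecting the $Y_j$-homotopy class of $\beta^{-1}\beta'$ relative to endpoints.
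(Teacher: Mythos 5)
Your proof is correct, but it runs in the opposite logical direction from the paper's. The paper's argument is abstract: the restriction $p|_{\mcb_{\infty,j,\alpha}}$ inherits the unique lifting property from $p$, Proposition \ref{bprop} supplies path-connectedness, local path-connectedness and simple connectivity, so the restriction is a generalized universal covering map; since $Y_j$ is a CW-complex and hence semilocally simply connected, a generalized covering over it is an honest covering. The homeomorphism $\Lambda_{\infty,j,\alpha}$ is then deduced afterwards, as a corollary, from uniqueness of universal covers. You instead construct $\Lambda_{\infty,j,\alpha}$ by hand and identify $p|_{\mcb_{\infty,j,\alpha}}$ with $p_j$ outright, which proves the theorem and the paper's subsequent corollary in one stroke. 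Your point-set work is sound: the inverse is continuous because left-concatenation by $\alpha$ sends $N(\beta, W\cap Y_j)$ into $N(\alpha\beta,W)$; away from the fiber over $y_0$ you correctly use that $Y_j\setminus\{y_0\}$ is open in the shrinking wedge; and at the fiber over $y_0$ the retraction $\rho_j:Y\to Y_j$ does exactly the right job, since $\epsilon\simeq\beta^{-}\cdot\beta'$ in $Y$ pushes down to $\rho_j\circ\epsilon\simeq\beta^{-}\cdot\beta'$ in $Y_j$ with image in $\rho_j(W)=V$. Two small remarks. First, Proposition \ref{disjointunion} as stated gives disjointness of the sets $\mcb_{\infty,j,\alpha}$ over distinct $\alpha$; the injectivity of $\beta\mapsto\alpha\beta$ for a \emph{fixed} $\alpha$ needs the additional fact that $\pi_1(Y_j)\to\pi_1(Y)$ is injective (immediate from the retraction, and implicit in that proposition's proof), so you should say that explicitly. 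Second, once $\Lambda_{\infty,j,\alpha}$ is a homeomorphism commuting with the projections, the appeal to Proposition \ref{bprop} is redundant, since all of those properties transfer from $\tY_j$. Your approach costs more explicit whisker-topology bookkeeping but is more self-contained; the paper's is shorter given the generalized covering machinery already in place.
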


\begin{proof}
The restriction $p|_{\mcb_{\infty,j,\alpha}}$ inherits the required lifting property of a generalized covering map from that of $p$. Proposition \ref{bprop} ensures that $\mcb_{\infty,j,\alpha}$ is path connected, locally path connected, and simply connected. This fulfills all requirements for $p|_{\mcb_{\infty,j,\alpha}}$ to be a generalized universal covering map. It follows from standard covering space theory \cite{Spanier66} that every generalized covering map where the codomain is path connected, locally path connected, and semilocally simply connected is a covering map in the usual sense. Since $Y_j$ meets all of these conditions, $p|_{\mcb_{\infty,j,\alpha}}$ is a universal covering map.
\end{proof}

Recall that $p_j:(\tY_j,\wt{y}_j)\to (Y_j,y_j)$ denotes the universal covering map over $Y_j$ (also with the whisker topology construction).

\begin{corollary}
For each $\alpha\in\ntij$, the map $\Lambda_{\infty,j,\alpha}:\mcb_{\infty,j,\alpha}\to\tY_j$, $\Lambda_{\infty,j,\alpha}(\alpha\beta)=\beta$ is a homeomorphism satisfying $p_j\circ\Lambda_{\infty,j,\alpha}=p|_{\mcb_{\infty,j,\alpha}}$
\end{corollary}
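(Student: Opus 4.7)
The plan is to identify $\Lambda_{\infty,j,\alpha}$ with the unique based isomorphism between two universal coverings of $Y_j$. First I would check that $\Lambda_{\infty,j,\alpha}$ is well-defined and bijective as a set map: if $\alpha\beta_1=\alpha\beta_2$ in $\tY$ for $\beta_1,\beta_2\in\tY_j$, then cancelling $\alpha$ in $\pi_1(Y,y_0)$ gives $\beta_1\simeq\beta_2$ as paths in $Y$, and since the canonical retraction $Y\to Y_j$ restricts to the identity on paths in $Y_j$, this homotopy descends to a path-homotopy in $Y_j$, forcing $\beta_1=\beta_2$ in $\tY_j$. Surjectivity is immediate from the definition of $\mcb_{\infty,j,\alpha}$, and the relation $p_j\circ\Lambda_{\infty,j,\alpha}=p|_{\mcb_{\infty,j,\alpha}}$ holds by inspection.

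Next I would invoke the preceding theorem together with classical covering space theory. The maps $p|_{\mcb_{\infty,j,\alpha}}:(\mcb_{\infty,j,\alpha},\alpha)\to(Y_j,y_j)$ and $p_j:(\tY_j,\wt{y}_j)\to(Y_j,y_j)$ are based universal covering maps whose total spaces are path-connected, locally path-connected, and simply connected, and $Y_j$ is a CW complex; the lifting criterion therefore produces unique based continuous lifts $\phi:\mcb_{\infty,j,\alpha}\to\tY_j$ of $p|_{\mcb_{\infty,j,\alpha}}$ through $p_j$ and $\psi:\tY_j\to\mcb_{\infty,j,\alpha}$ in the reverse direction. Uniqueness of based lifts then forces $\phi\circ\psi$ and $\psi\circ\phi$ to be the respective identity maps, so $\phi$ is a homeomorphism.

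Finally, I would verify $\phi=\Lambda_{\infty,j,\alpha}$ by a standard-lift computation. Given $\alpha\beta\in\mcb_{\infty,j,\alpha}$, pick a path $b:(I,0)\to(Y_j,y_j)$ with $[b]=\beta$. By Remark \ref{standardliftremark}, the standard lift of $b$ starting at $\alpha\in\mcb_{\infty,j,\alpha}$ is the path $s\mapsto\alpha[b_s]$ and terminates at $\alpha\beta$, while the standard lift of $b$ starting at $\wt{y}_j\in\tY_j$ is $s\mapsto[b_s]$ and terminates at $\beta$; unique path-lifting through $p_j$ then forces $\phi$ to carry the former lift to the latter, so $\phi(\alpha\beta)=\beta=\Lambda_{\infty,j,\alpha}(\alpha\beta)$. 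The only mildly subtle step is the descent inside well-definedness, which relies on the existence of a continuous retraction $Y\to Y_j$ from the shrinking wedge onto a single summand.
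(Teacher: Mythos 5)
Your proof is correct and follows essentially the route the paper intends: the corollary is left as an immediate consequence of the preceding theorem (that $p|_{\mcb_{\infty,j,\alpha}}$ is a universal covering of $Y_j$) via uniqueness of based universal coverings, and your standard-lift computation identifying the abstract isomorphism with $\Lambda_{\infty,j,\alpha}$, together with the retraction argument for well-definedness, fills in exactly the details the paper omits.
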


Using the previous results for $\mcb_{\infty,j,\alpha}$ and the fact that $\Lambda_{\infty,j,\alpha}(\mca_{\infty,j,\alpha})$ may be identified with the copy of $\tX_j$ in $\tY$, we also have the following.

\begin{proposition}
For each $\alpha\in\ntij$,
\begin{itemize}
\item $\mca_{\infty,j,\alpha}$ is closed in $\tY$,
\item $\mca_{\infty,j,\alpha}$ is a path component of $p^{-1}(X_j)$,
\item The restriction $p|_{\mca_{\infty,j,\alpha}}:\mca_{\infty,j,\alpha}\to X_j$ is a universal covering map.
\end{itemize}
\end{proposition}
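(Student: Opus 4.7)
The plan is to leverage the homeomorphism $\Lambda_{\infty,j,\alpha}:\mcb_{\infty,j,\alpha}\to\wt{Y}_j$ from the preceding corollary, under which $\mca_{\infty,j,\alpha}$ corresponds to the closed subspace $\tX_j\subseteq\wt{Y}_j$. The first step is to establish the identification $\mca_{\infty,j,\alpha}=\mcb_{\infty,j,\alpha}\cap p^{-1}(X_j)$. The inclusion $\subseteq$ is immediate from the definitions. For the reverse inclusion, I would use that $Y_j$ deformation retracts onto $X_j$ by collapsing the whisker $\bfe_j$: any path class $\gamma\in p_j^{-1}(X_j)$ inside $\wt{Y}_j$ factors as $\tau_j\cdot(\tau_j^{-1}\gamma)$, and $\tau_j^{-1}\gamma$ is a path class in $Y_j$ from $x_j$ into $X_j$ which is homotopic rel endpoints to a path lying entirely in $X_j$. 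Hence $\tau_j^{-1}\gamma$ represents an element of $\tX_j$, giving $\gamma\in\tX_j\subseteq\wt{Y}_j$, so that $p_j^{-1}(X_j)=\tX_j$ inside $\wt{Y}_j$, and the identification follows.

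Closedness is then immediate: $X_j$ is closed in $Y$, so $p^{-1}(X_j)$ is closed in $\wt{Y}$, and $\mcb_{\infty,j,\alpha}$ is closed by the previous proposition; their intersection is therefore closed. For the path-component statement, $\mca_{\infty,j,\alpha}\cong\tX_j$ is path-connected, Proposition \ref{disjointunion}(3) supplies the disjoint union decomposition, and any path $\Gamma:\ui\to p^{-1}(X_j)$ also maps into $p^{-1}(Y_j)$, so Proposition \ref{bprop} forces $\Gamma$ to lie in a single $\mcb_{\infty,j,\alpha}$, and therefore in $\mcb_{\infty,j,\alpha}\cap p^{-1}(X_j)=\mca_{\infty,j,\alpha}$.

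For the universal covering assertion, the argument parallels the one just given for $\mcb_{\infty,j,\alpha}$. The restriction $p|_{\mca_{\infty,j,\alpha}}$ inherits unique path lifting from $p$. Via $\Lambda_{\infty,j,\alpha}$, the space $\mca_{\infty,j,\alpha}$ inherits from $\tX_j$ path-connectedness, local path-connectedness (from the CW-complex structure), and simple-connectedness (as the universal cover of $X_j$). These are precisely the conditions needed to conclude that $p|_{\mca_{\infty,j,\alpha}}$ is a generalized universal covering map; since $X_j$ is a CW-complex and therefore semilocally simply connected, this upgrades to a classical universal covering map by the same appeal to standard covering space theory that was used in the proof of the preceding theorem.

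The main obstacle is the identification $p_j^{-1}(X_j)=\tX_j$ inside $\wt{Y}_j$, which depends on the observation that the whisker contributes no new path-homotopy classes of paths into $X_j$. Once this is in hand, every remaining step amounts to transporting established properties of $\tX_j$ and $\mcb_{\infty,j,\alpha}$ through the bookkeeping homeomorphism $\Lambda_{\infty,j,\alpha}$ and combining them with Propositions \ref{disjointunion} and \ref{bprop}.
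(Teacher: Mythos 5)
Your proposal is correct and follows the same route the paper intends: the paper omits the proof entirely, asserting that the three claims follow from the corresponding results for $\mcb_{\infty,j,\alpha}$ together with the identification of $\Lambda_{\infty,j,\alpha}(\mca_{\infty,j,\alpha})$ with the copy of $\tX_j$ in $\tY_j$, which is precisely the transport-through-$\Lambda_{\infty,j,\alpha}$ argument you carry out. Your explicit verification that $p_j^{-1}(X_j)=\tX_j$ inside $\tY_j$ (via the deformation retraction collapsing the whisker) is a detail the paper leaves implicit in its description of $\tY_j$ as $\tX_j$ with arcs attached, and it is correct.
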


%\begin{proof}
%(1) We have already shown that $\mcb_{\infty,j,\alpha}$ is closed in $\tY$ so it suffices to show $\mca_{\infty,j,\alpha}$ is closed in $\mcb_{\infty,j,\alpha}$. Let $\alpha\beta\in \mcb_{\infty,j,\alpha}\backslash \mca_{\infty,j,\alpha}$. Then $\beta(1)$ is an element of $Y_j\backslash X_j$, which is a disjoint union of half-open arcs. Let $V$ be an open arc containing $\beta(1)$ which is a subset of the half-open arc containing $V$. It is straightforward to see that $N(\alpha\beta,V)\cap  \mca_{\infty,j,\alpha}=\emptyset$. Thus $\mca_{\infty,j,\alpha}$ is closed in $\mcb_{\infty,j,\alpha}$.
%
%(2) Since $X_j$ is path-connected, it straightforward to check that $\mca_{\infty,j,\alpha}$ is path-connected. Since $\mca_{\infty,j,\alpha}\subseteq \mcb_{\infty,j,\alpha}$ and the sets $\mcb_{\infty,j,\alpha}$ are the path-components of $p^{-1}(Y_j)$, it follows that the sets $\mca_{\infty,j,\alpha}$ are the path components of $p^{-1}(X_j)$.
%
%(3) Recall that $p|_{\mcb_{\infty,j,\alpha}}:\mcb_{\infty,j,\alpha}\to Y_j$ is a universal covering map. Since $p^{-1}(X_j)\cap \mcb_{\infty,j,\alpha}=\mca_{\infty,j,\alpha}$, it follows that $p|_{\mca_{\infty,j,\alpha}}:\mca_{\infty,j,\alpha}\to X_j$ is a covering map.
%\end{proof}

\begin{definition}
For each $\alpha\in\ntij$ and $\gamma\in \pi_1(Y_j)$, define 
\begin{itemize}
\item $\bfe_{\infty,j,\alpha,\gamma}=\{\alpha\gamma \tau_{j,s}\mid s\in \ui\}$,
\item $\wt{x}_{\infty,j,\alpha,\gamma}=\alpha\gamma\tau$ 
\item $\wt{y}_{\infty,j,\alpha,\gamma}=\alpha\gamma$ 
\end{itemize}
We refer to the points $\wt{y}_{\infty,j,\alpha,\gamma}$ as the \textit{arc-endpoints} of $\mcb_{\infty,j,\alpha}$.
\end{definition}

\begin{remark}
Under the identification of $\Lambda_{\infty,j,\alpha}$, $\mcb_{\infty,j,\alpha}$ consists of $\mca_{\infty,j,\alpha}$ with an arc $\bfe_{\infty,j,\alpha,\gamma}$ attached at $\wt{x}_{\infty,j,\alpha,\gamma}$ for each $\gamma\in\pi_1(Y_j)$. Moreover, the subspace of arc-endpoints $\mcb_{\infty,j,\alpha}\backslash \mcu_{\infty,j,\alpha}=\{\wt{y}_{\infty,j,\alpha,\gamma}\mid \gamma\in\pi_1(Y_j)\}$ is discrete and closed in $\tY$.
\end{remark}

Recall that $\phi_Y:\tY\to \wh{Y}_0$, $\phi_Y(\alpha)=(\varrho_k(\alpha))$ is a continuous bijection, which need not be a homeomorphism. We end this section by showing that $\phi_Y$ is a homeomorphism on the individual subspaces $\mcb_{\infty,j,\alpha}$ of $\tY$.

\begin{theorem}\label{littleimagetheorem}
Fix $j\in\bbn$ and $\alpha\in\ntij$. Write $\varrho_k(\alpha)=\alpha_{k}'\gamma_k$ for $\alpha_{k}'\in\ntkj$ and $\gamma_k\in\pi_1(Y_j)$. The continuous injection $\phi_{Y}:\tY\to \wh{Y}_0$ maps $\mcb_{\infty,j,\alpha}$ homeomorphically onto $\wh{Y}_0\cap \left( \prod_{k\geq j}\mcb_{k,j,\alpha_{k}'}\times \prod_{k=1}^{j-1} \{\alpha_{k}'\}\right)$.
\end{theorem}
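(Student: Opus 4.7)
The plan is to use the bookkeeping homeomorphisms $\Lambda_{\infty,j,\alpha}:\mcb_{\infty,j,\alpha}\to\tY_j$ and $\Lambda_{k,j,\alpha_{k}'}:\mcb_{k,j,\alpha_{k}'}\to\tY_j$ (for $k\geq j$) to transport the statement to the single space $\tY_j$. Write $M:=\wh{Y}_0\cap\left(\prod_{k\geq j}\mcb_{k,j,\alpha_{k}'}\times\prod_{k<j}\{\alpha_{k}'\}\right)$ for the claimed image. Since $\phi_{Y}|_{\mcb_{\infty,j,\alpha}}$ is already a continuous injection into $\wh{Y}_0$, it suffices to verify that $\phi_{Y}(\mcb_{\infty,j,\alpha})=M$ and then that the inverse of this restricted map is continuous.

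To identify the image, I would first compute, for $\beta\in\tY_j$, the coordinate $\varrho_k(\alpha\beta)=[r_k\circ(\alpha\cdot\beta)]$. When $k<j$ the retraction $r_k$ collapses $Y_j$ to $y_0$, so $\varrho_k(\alpha\beta)=\varrho_k(\alpha)=\alpha_{k}'$. When $k\geq j$, $r_k|_{Y_j}=id_{Y_j}$, so $\varrho_k(\alpha\beta)=\alpha_{k}'\gamma_k\beta\in\mcb_{k,j,\alpha_{k}'}$, which under $\Lambda_{k,j,\alpha_{k}'}$ corresponds to $\gamma_k\beta=\Delta_{\gamma_k}(\beta)$. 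This shows $\phi_{Y}(\mcb_{\infty,j,\alpha})\subseteq M$. For the reverse inclusion, take any $(\beta_k)\in M$ and set $\eta_k:=\Lambda_{k,j,\alpha_{k}'}(\beta_k)\in\tY_j$ for $k\geq j$. Combining $(r_{k+1,k})_{\#}(\varrho_{k+1}(\alpha))=\varrho_k(\alpha)$ with the fact that $(r_{k+1,k})_{\#}|_{\pi_1(Y_j)}=id_{\pi_1(Y_j)}$ whenever $k\geq j$ yields $(r_{k+1,k})_{\#}(\alpha_{k+1}')=\alpha_{k}'\gamma_k\gamma_{k+1}^{-1}$, and the deck-transformation description of $\wt{r}_{k+1,k}$ from Section \ref{sectionstructureofyk} then rewrites the coherence identity $\wt{r}_{k+1,k}(\beta_{k+1})=\beta_k$ as $\gamma_k^{-1}\eta_k=\gamma_{k+1}^{-1}\eta_{k+1}$. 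Hence $\gamma_k^{-1}\eta_k$ is constant in $k\geq j$, equal to some $\beta\in\tY_j$, and a direct check gives $(\beta_k)=\phi_{Y}(\alpha\beta)$. Compatibility between the $k\geq j$ data and the fixed values $\alpha_{k}'$ for $k<j$ is automatic because $\wt{r}_{j,j-1}$ collapses $\mcb_{j,j,\alpha_{j}'}$ to the single point $\alpha_{j-1}'$.

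Finally, to see the inverse is continuous, observe that it sends $(\beta_k)\mapsto\alpha\beta$ with $\beta=\gamma_j^{-1}\Lambda_{j,j,\alpha_{j}'}(\beta_j)$; equivalently, it factors as the composition
\[
M\xrightarrow{\ \wt{r}_j\ }\mcb_{j,j,\alpha_{j}'}\xrightarrow{\ \Lambda_{j,j,\alpha_{j}'}\ }\tY_j\xrightarrow{\ \Delta_{\gamma_j^{-1}}\ }\tY_j\xrightarrow{\ \Lambda_{\infty,j,\alpha}^{-1}\ }\mcb_{\infty,j,\alpha}.
\]
Each factor is continuous: $\wt{r}_j|_M$ is the restriction of the canonical projection from the inverse limit $\wh{Y}$, while the remaining three are homeomorphisms. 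Hence the inverse is continuous and $\phi_{Y}|_{\mcb_{\infty,j,\alpha}}$ is a homeomorphism onto $M$. The substantive content of the argument is the rigidity observation that the single level-$j$ coordinate already determines the entire compatible sequence; beyond that, I expect no real obstacle beyond keeping the symbols $\alpha_{k}'$, $\gamma_k$, $\Lambda_{-,-,-}$, and $\Delta_{-}$ straight.
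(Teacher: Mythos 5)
Your argument is correct and is essentially the paper's own proof: both rest on the commuting square $\Lambda_{k,j,\alpha_k'}\circ\varrho_k|_{\mcb_{\infty,j,\alpha}}=\Delta_{\gamma_k}\circ\Lambda_{\infty,j,\alpha}$ and on the observation that the bonding maps of the sub-inverse system $(\mcb_{k,j,\alpha_k'})_{k\geq j}$ are homeomorphisms, so a single level-$k$ coordinate determines the whole coherent sequence. The paper merely packages your separate surjectivity (rigidity) and inverse-continuity steps into the one identity $\phi_Y|_{\mcb_{\infty,j,\alpha}}=(\wt{r}_k)|_{\mathbf{A}}^{-1}\circ(\varrho_k)|_{\mcb_{\infty,j,\alpha}}$.
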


\begin{proof}
To simplify notation, let $A_{\infty}=\mcb_{\infty,j,\alpha}$, $A_k=\{\alpha_{k}'\}$ for $1\leq j\leq k-1$ and $A_k=\mcb_{k,j,\alpha_{k}'}$ for $k\geq j$. We wish to show that $\phi_{Y}$ maps $A_{\infty}$ onto $\mathbf{A}=\wh{Y}_0\cap  \left( \prod_{k\geq 1}A_k\right)$. We first check that $\phi_Y(\mcb_{\infty,j,\alpha})\subseteq \mathbf{A}$.

Suppose $\alpha\beta\in A_{\infty}$ for $\beta\in\tY_j$. When $k\geq j$, $\varrho_k(\alpha\beta)=\alpha_{k}'(\gamma_k\beta)$ where $\gamma_k\beta\in\tY_j$. Thus $\varrho_k(\alpha\beta)\in A_k$. When $1\leq k\leq j-1$, we have $\varrho_k(\alpha\beta)=\alpha_{k}'$. Thus $\phi_{Y}(\alpha\beta)\in \mathbf{A}$, proving the desired inclusion.

When $k\geq j$, the restricted map $\varrho_k: A_{\infty}\to A_k$ is given by $\varrho_k(\alpha\beta)=\alpha_{k}'(\gamma_k\beta)$ and thus the following diagram commutes.
\[\xymatrix{
A_{\infty} \ar[r]^-{(\varrho_k)|_{A_{\infty}}} \ar[d]_-{\Lambda_{\infty,j,\alpha}} & A_k\ar[d]^-{\Lambda_{k,j,\alpha_{k}'}} \\
\tY_j \ar[r]_-{\Delta_{\gamma_k}}  &\tY_j
}\]
Since the veritical and bottom maps are homeomorphisms, so is the top map.  Then we have a sub-inverse system $(A_k,(\wt{r}_{k+1,k})|_{A_{k+1}})$ of $(\tY_{\leq k},\wt{r}_{k+1,k})$ with limit $\mathbf{A}=\varprojlim_{k}A_k$. Since the maps $(\wt{r}_{k+1,k})|_{A_{k+1}}$ are homeomorphisms for $k\geq j$, the projection maps $(\wt{r}_k)|_{\mathbf{A}}:\mathbf{A}\to A_k$ are homeomorphisms for $k\geq j$. Thus for all $k\geq j$, the maps $(\wt{r}_k)|_{\mathbf{A}}^{-1}\circ(\varrho_k)|_{A_{\infty}}:A_{\infty}\to \mathbf{A}$ are homeomorphisms. Since $\wt{r}_k\circ\phi_Y=\varrho_{k}$, we have $\phi_{Y}|_{A_{\infty}}=(\wt{r}_k)|_{\mathbf{A}}^{-1}\circ(\varrho_k)|_{A_{\infty}}$.
\end{proof}

\subsection{Stabilization of trees and the quotient map $\wt{f}:\tY\to\tZ$} 

In the proof of Theorem \ref{littleimagetheorem}, we observed that the ``bookkeeping" homeomorphisms $\Lambda_{\infty,j,\alpha}$ allowed us to show that the maps $\varrho_k$, $k\geq j$ send each $\mcb_{\infty,j,\alpha}$ homeomorphically to $\mcb_{k,j,\alpha_k'}$ for some $\alpha_k'\in\ntkj$ (when $k\geq j$). The trees $\mct_{k,j,\alpha_k'}$ then correspond to a sequence of trees $\Lambda_{k,j,\alpha_k'}(\mct_{k,j,\alpha_k'})=\beta_kT_j$ in $\tY_j$. In order to make a unique choice of tree $\mct_{\infty,j,\alpha}$ in $\mcb_{\infty,j,\alpha}$ that is coherent with the trees $\mct_{k,j,\alpha_k'}$, we need for the sequence $\{\beta_k\}$ in $\pi_1(Y_j)$ to stabilize. This stabilization is established in the next lemma.

\begin{lemma}[Stabilization]\label{stabilizationlemma}
Let $1\leq j<\infty$, $\alpha\in\ntij$, and set $\alpha_k=\varrho_k(\alpha)=\alpha_k'\gamma_k$ for $\alpha_k'\in\ntkj$ and $\gamma_k\in\pi_1(Y_j)$. If $\Lambda_{k,j,\alpha_k}(\mct_{k,j,\alpha_k'})=\beta_kT_j$ for $\beta_k\in\pi_1(Y_j)$, then sequence $\{\beta_k\}_{k\geq j}$ is eventually constant.
\end{lemma}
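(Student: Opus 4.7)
The plan is to derive a simple recursion for $\beta_{k+1}$ in terms of $\beta_k$, and then to use the non-$Y_j$-terminality of $\alpha$ together with shape injectivity to show that the correction factor eventually becomes trivial. Applying Remark~\ref{stabilizationremark}~Case~II to $\alpha_{k+1}'\in\nt_{k+1,j}$ and writing $(r_{k+1,k})_\#(\alpha_{k+1}')=\alpha_k'\gamma'$, that remark gives $\Lambda_{k+1,j,\alpha_{k+1}'}(\mct_{k+1,j,\alpha_{k+1}'})=(\gamma')^{-1}\beta_k T_j$, so $\beta_{k+1}=(\gamma')^{-1}\beta_k$. The identity $(r_{k+1,k})_\#\circ\varrho_{k+1}=\varrho_k$, combined with the fact that $(r_{k+1,k})_\#$ fixes $\pi_1(Y_j)$ pointwise (since $j\leq k$), identifies $\gamma'=\gamma_k\gamma_{k+1}^{-1}$, yielding the recurrence $\beta_{k+1}=\gamma_{k+1}\gamma_k^{-1}\beta_k$. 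This telescopes to $\beta_k=\gamma_k\gamma_j^{-1}\beta_j$ for all $k\geq j$, so it suffices to prove the stronger claim that $\gamma_k=1$ for all sufficiently large $k$.

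To prove that, I would use the reduced word structure of $\alpha$. Listing the finitely many $\pi_1(Y_j)$-letters of $w_\alpha$ in order of position as $g_1,\dots,g_s$, I decompose $w_\alpha=w_0 g_1 w_1 g_2\cdots g_s w_s$, where each $w_r$ has no $\pi_1(Y_j)$-letter. (If $s=0$ then $\gamma_k=1$ for all $k$ trivially; assume $s\geq 1$.) Because $w_\alpha$ is reduced no two $g_r$'s can be adjacent, so each internal $w_r$ with $1\leq r\leq s-1$ is non-empty, and because $\alpha$ is non-$Y_j$-terminal the position of $g_s$ is not maximal in $\ov{w_\alpha}$, so $w_s$ is non-empty as well. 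Each non-empty reduced subword $w_r$ therefore represents a non-trivial element $[w_r]\in\pi_1(Y)$ for every $r=1,\dots,s$.

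The decisive input is shape injectivity (Theorem~\ref{injectivetheorem}): for each fixed $r$, the set $\{k:\varrho_k([w_r])=1\}$ is downward closed in $\bbn$ because the bonding maps send the identity to the identity, so if it were cofinal it would equal all of $\bbn$, forcing $[w_r]=1$ and contradicting $w_r$ being non-empty and reduced. Consequently there exists $k_0\geq j$ with $\varrho_k([w_r])\neq 1$ for all $k\geq k_0$ and all $1\leq r\leq s$ simultaneously. For such $k$, the reduction of the projection $w_{\alpha,F_k}$ can be carried out in two stages: first each projection $w_r^{(k)}$ reduces to a non-trivial element $\beta_r^{(k)}\in\ast_{i\leq k,\,i\neq j}\pi_1(Y_i)$, and then the alternating word $\beta_0^{(k)} g_1 \beta_1^{(k)}\cdots g_s \beta_s^{(k)}$ is already reduced, because every internal $\beta_r^{(k)}$ keeps the $g$-letters separated and because no boundary cancellation occurs between $g_s\in\pi_1(Y_j)$ and the first letter of $\beta_s^{(k)}$. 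Its last letter lies in $\beta_s^{(k)}$, hence outside $\pi_1(Y_j)$, so $\gamma_k=1$, and the telescoped formula yields $\beta_k=\gamma_j^{-1}\beta_j$ stably for $k\geq k_0$.

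The most delicate aspect is ruling out the possibility of ``cascading cancellations'': if some internal $\beta_r^{(k)}$ were trivial for large $k$, then $g_r g_{r+1}$ could fuse and potentially trigger further cancellations propagating all the way to the right end and leaving a stray $\pi_1(Y_j)$-letter as the terminal letter. It is precisely the combination of $w_\alpha$ being reduced (forcing every internal $w_r$ non-empty) with shape injectivity applied individually to each $[w_r]$ that simultaneously blocks all such cascades for $k\geq k_0$.
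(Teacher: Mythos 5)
Your proof is correct, and it follows the same overall strategy as the paper's (reduce eventual constancy of $\{\beta_k\}$ to eventual triviality of the terminal factor $\gamma_k$, then exploit the finiteness of the $\pi_1(Y_j)$-letters of $w_\alpha$, non-$Y_j$-terminality, and $\pi_1$-shape injectivity), but the execution differs in two worthwhile ways. First, your recursion $\beta_{k+1}=\gamma_{k+1}\gamma_k^{-1}\beta_k$ is the correct consequence of Remark \ref{stabilizationremark} applied to $\alpha_{k+1}'$: since $(r_{k+1,k})_{\#}$ fixes $\pi_1(Y_j)$, one gets $(r_{k+1,k})_{\#}(\alpha_{k+1}')=\alpha_k'(\gamma_k\gamma_{k+1}^{-1})$, and uniqueness of the $\ntkj$-decomposition pins down the correction factor. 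The paper instead states $\beta_{k+1}=\gamma_k^{-1}\beta_k$, which agrees with yours only when $\gamma_{k+1}=1$ (one can check on $\alpha=g_1h_1g_2h_2$ with $g_i\in\pi_1(Y_j)$, $h_i\in\pi_1(Y_{j+i})$ that the paper's formula gives the wrong $\beta_{j+1}$); this slip is harmless for the paper's contradiction argument, since non-constancy of $\{\beta_k\}$ still forces $\gamma_k\neq 1$ infinitely often, but your version is the one that actually telescopes. Second, the paper argues by contradiction and isolates only the \emph{last} $\pi_1(Y_j)$-letter $\ell$ and the tail $w_\eta$ after it, controlling things via letter counts and $\varrho_K(\eta)\neq 1$; you instead decompose $w_\alpha$ around \emph{all} of its $\pi_1(Y_j)$-letters and use shape injectivity on each inter-letter block $[w_r]$ simultaneously. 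Your version makes the normal-form claim for $\varrho_k(\alpha)$ immediate (every junction syllable is nontrivial and alternates out of $\pi_1(Y_j)$), and it addresses head-on the cascading-cancellation worry that the paper's shorter argument leaves somewhat implicit. Both proofs buy the same strengthened conclusion needed later (namely $\gamma_k=1$ for large $k$, not merely eventual constancy of $\beta_k$), which is what Theorem \ref{psitheorem} actually uses.
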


\begin{proof} Fix $j\in\bbn$ and $\alpha\in\ntij$. Using the notation established in the statement of the lemma, recall that $\Lambda_{j,j,\alpha_j}(\mct_{j,j,\alpha_j})=T_j$ and thus $\beta_j=1$. Moreover, according to the inductive definition of the trees $\mct_{-,-,-}$ and the summary in Remark \ref{stabilizationremark}, we have $\beta_{k+1}=\gamma_{k}^{-1}\beta_{k}$ for all $k\geq j$. Suppose, to obtain a contradiction, that there exists $j\leq k_1<k_2<k_3<\cdots$ such that $\beta_{k_{i+1}}\neq \beta_{k_i}$. Then $1\neq \gamma_{k_i}\in \pi_1(Y_j)$ for all $i\in\bbn$. It follows that the reduced word $w_{\alpha_{k_i}}$ corresponding to $\alpha_{k_i}=\alpha_{k_i}'\gamma_{k_i}$ in the free product $\pi_1(Y_{\leq k_i})$ terminates in a non-trivial letter from $\pi_1(Y_j)$. %It follows that $w_{\alpha_k}$ must terminate in a non-trivial letter from $\pi_1(Y_j)$ for all $k$.

However, the word $w_{\alpha}\in \varoast_{j}\pi_1(Y_j)$ corresponding to $\alpha\in \ntkj$ does not terminate in a letter from $\pi_1(Y_j)$. Since some of the projection words $w_{\alpha_{k}'}$ contain elements of $\pi_1(Y_j)$, $w_{\alpha}$ must contain some letters from $\pi_1(Y_j)$. However, $w_{\alpha}$ only contains finitely many letters from $\pi_1(Y_j)$ and so we may write $w_{\alpha}=w_{\alpha '}\ell w_{\eta}$ where $\ell\in \pi_1(Y_j)$ is the last appearance of a letter from $\pi_1(Y_j)$ in $w_{\alpha}$ and $w_{\eta}\neq 1$ has no letters from $\pi_1(Y_j)$. Find $K$ large enough so that $\alpha$ and $\alpha_K=\varrho_{K}(\alpha)$ have the same number of letters from $\pi_1(Y_j)$ and such that $\varrho_{K}(\eta)\neq 1$. Find $i\in\bbn$ with $k_i>K$. Then $\varrho_{k_i}(\alpha)$ has corresponding reduced word $\varrho_{k_i}(\alpha ')\ell \varrho_{k_i}(\eta)$ where $\varrho_{k_i}(\eta)$ is a non-trivial word in $\pi_1(Y_{\leq k_i})$ with no letters from $\pi_1(Y_j)$. This contradicts the fact that $\alpha_{k_i}$ ends in a letter from $\pi_1(Y_j)$.
\end{proof}

\begin{definition}
Fix $1\leq j<\infty$, $\alpha\in\ntij$, $\alpha_k$, and $\beta_k$ as in Lemma \ref{stabilizationlemma}. We define the subspace $\mct_{\infty,j,\alpha}$ of $\mcb_{\infty,j,\alpha}$ to be the tree $\Lambda_{\infty,j,\alpha}^{-1}(\beta_{\alpha} T_j)$ where $\beta_{\alpha}\in\pi_1(Y_j)$ is the eventual value of the sequence $\{\beta_k\}$.
\end{definition}

Let $\tZ$ be the quotient space of $\tY$ where each subspace $\mct_{\infty,j,\alpha}$ is collapsed to a point. Let $\tf:\tY\to \tZ$ be the quotient map and $z_0=\wt{f}(\wt{y}_0)$. Since $\tY$ is path connected and locally path connected so is $\tZ$. We characterize $\wt{f}$ on the subspaces of $\tY$ in the same way that we did for $\wt{f}_k$. Many of the proofs are diagram chases similar to those earlier and so we will omit some details.

\begin{definition}
For each $j\in\bbn$ and $\alpha\in \ntij$, set $\mcd_{\infty,j,\alpha}=\wt{f}(\mcb_{\infty,j,\alpha})$ and let $\wt{f}_{\infty,j,\alpha}:\mcb_{\infty,j,\alpha}\to \mcd_{\infty,j,\alpha}$ denote the restricted quotient map of $\wt{f}$.
\end{definition}

\begin{proposition}\label{littlelambdaprop2}
Fix $j\in\bbn$ and $\alpha\in \ntij$ and suppose $\Lambda_{\infty,j,\alpha}(\mct_{\infty,j,\alpha})=\beta_{\alpha} T_j$ for $\beta_{\alpha}\in\pi_1(Y_j)$. Then there is a canonical homeomorphism $\lambda_{\infty,j,\alpha}:\mcd_{\infty,j,\alpha}\to D_{j,\beta_{\alpha} }$ that makes the following square commute.
\[\xymatrix{
\mcb_{\infty,j,\alpha} \ar[r]^-{\Lambda_{\infty,j,\alpha}} \ar[d]_-{\wt{f}_{\infty,j,\alpha}} & \tY_j \ar[d]^-{f_{j,\beta_{\alpha}}} \\ \mcd_{\infty,j,\alpha} \ar@{-->}[r]_-{\lambda_{\infty,j,\alpha}} & D_{j,\beta_{\alpha} }
}\]
\end{proposition}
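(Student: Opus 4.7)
The plan is to exhibit $\lambda_{\infty,j,\alpha}$ as the homeomorphism induced on quotients by the ``bookkeeping'' homeomorphism $\Lambda_{\infty,j,\alpha}$. To make this work, I need to verify that the restricted map $\wt{f}_{\infty,j,\alpha}:\mcb_{\infty,j,\alpha}\to \mcd_{\infty,j,\alpha}$ is itself a quotient map; once this is in place, uniqueness of the quotient topology forces the identification with $D_{j,\beta_{\alpha}}$.

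The key technical step is to show that $\mcb_{\infty,j,\alpha}$ is saturated with respect to the equivalence relation defining $\wt{f}$; equivalently, that $\mct_{\infty,j',\alpha'}\cap \mcb_{\infty,j,\alpha}=\emptyset$ whenever $(j',\alpha')\neq (j,\alpha)$. By construction, $\mct_{\infty,j',\alpha'}=\Lambda_{\infty,j',\alpha'}^{-1}(\beta_{\alpha'}T_{j'})\subseteq \mca_{\infty,j',\alpha'}\subseteq p^{-1}(X_{j'})$. When $j'=j$ and $\alpha'\neq \alpha$, Proposition \ref{disjointunion}(3) gives $\mca_{\infty,j,\alpha}\cap \mca_{\infty,j,\alpha'}=\emptyset$, and combined with $\mcb_{\infty,j,\alpha}\cap p^{-1}(X_j)=\mca_{\infty,j,\alpha}$ we obtain the desired disjointness. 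When $j\neq j'$, the fact that $X_{j'}\cap Y_j=\emptyset$ in $Y$ (the summands meet only at $y_0$, which $X_{j'}$ omits) yields $p^{-1}(X_{j'})\cap p^{-1}(Y_j)=\emptyset$, which subsumes the needed intersection. Since $\mcb_{\infty,j,\alpha}$ was already shown to be closed in $\tY$ (as the closure of $\mcu_{\infty,j,\alpha}$), the standard result that the restriction of a quotient map to a closed saturated subset is again a quotient map gives that $\wt{f}_{\infty,j,\alpha}$ is a quotient map whose unique non-singleton fiber is $\mct_{\infty,j,\alpha}$.

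To finish, note that $f_{j,\beta_{\alpha}}\circ \Lambda_{\infty,j,\alpha}:\mcb_{\infty,j,\alpha}\to D_{j,\beta_{\alpha}}$ is also a quotient map (composition of a homeomorphism and a quotient map), and its unique non-singleton fiber is $\Lambda_{\infty,j,\alpha}^{-1}(\beta_{\alpha}T_j)=\mct_{\infty,j,\alpha}$. Two quotient maps out of the same space with the same fibers induce the same quotient topology on the common codomain-set, so the unique set-theoretic bijection $\lambda_{\infty,j,\alpha}:\mcd_{\infty,j,\alpha}\to D_{j,\beta_{\alpha}}$ making the square commute is automatically a homeomorphism.

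The only genuine obstacle is the saturation verification, which rests entirely on the inductive choice (from Section \ref{sectionfixinghe}) to place each tree inside the copy of $\wt{X}_j$ rather than letting it touch any arc-endpoint; once that structural fact is in hand, the rest is a formal application of universal properties of quotient maps.
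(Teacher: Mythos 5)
Your argument is correct and is precisely the one the paper has in mind but omits (the proposition is stated without proof, as one of the ``diagram chases similar to those earlier''): identify both $\wt{f}_{\infty,j,\alpha}$ and $f_{j,\beta_{\alpha}}\circ\Lambda_{\infty,j,\alpha}$ as quotient maps out of $\mcb_{\infty,j,\alpha}$ with the single non-trivial fiber $\mct_{\infty,j,\alpha}$. Your saturation check — that $\mct_{\infty,j',\alpha'}$ misses $\mcb_{\infty,j,\alpha}$ for $(j',\alpha')\neq(j,\alpha)$, using $\mct_{\infty,j',\alpha'}\subseteq\mca_{\infty,j',\alpha'}\subseteq p^{-1}(X_{j'})$ together with the disjointness statements of Proposition \ref{disjointunion} and the fact that $y_0\notin X_{j'}$ — is exactly the detail needed to justify that the restriction of $\wt{f}$ to the closed saturated set $\mcb_{\infty,j,\alpha}$ is again a quotient map, so the proposal is a complete and faithful filling-in of the omitted proof.
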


\begin{proposition}\label{rksquareprop}
Fix $j\in\bbn$, $\alpha\in\ntij$, and $k\geq j$. If $\varrho_{k}(\alpha)=\alpha_{k} '\gamma_{k}$ for $\alpha_{k}'\in \ntkj$ and $\gamma_{k}\in\pi_1(Y_j)$, then $\varrho_k(\mct_{\infty,j,\alpha})=\mct_{\infty,j,\alpha_k'}$.
\end{proposition}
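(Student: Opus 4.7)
The plan is to first use the commutative diagram from the proof of Theorem \ref{littleimagetheorem} to translate the claim into an identity about translates of $T_j$ inside $\tY_j$, then settle a base case for large $k$, and finally propagate the equality downward using the coherence already built into the trees $\mct_{k,j,\alpha_k'}$ under the bonding maps. (I interpret the conclusion as $\varrho_k(\mct_{\infty,j,\alpha})=\mct_{k,j,\alpha_k'}$, which is what the indices require.)

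First, from the proof of Theorem \ref{littleimagetheorem} I have the square
\[
\xymatrix{
\mcb_{\infty,j,\alpha} \ar[r]^-{\varrho_k|_{\mcb_{\infty,j,\alpha}}} \ar[d]_-{\Lambda_{\infty,j,\alpha}} & \mcb_{k,j,\alpha_k'} \ar[d]^-{\Lambda_{k,j,\alpha_k'}} \\
\tY_j \ar[r]_-{\Delta_{\gamma_k}} & \tY_j
}
\]
Since $\Lambda_{\infty,j,\alpha}(\mct_{\infty,j,\alpha})=\beta_\alpha T_j$ (by the definition of $\mct_{\infty,j,\alpha}$, where $\beta_\alpha$ is the eventually-constant value from Lemma \ref{stabilizationlemma}), the square gives $\varrho_k(\mct_{\infty,j,\alpha})=\Lambda_{k,j,\alpha_k'}^{-1}(\gamma_k\beta_\alpha T_j)$. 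On the other hand, writing $\Lambda_{k,j,\alpha_k'}(\mct_{k,j,\alpha_k'})=\beta_k T_j$, the claim is reduced to showing that $\gamma_k\beta_\alpha T_j=\beta_k T_j$ as subspaces of $\tY_j$.

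For the base case, choose $K\geq j$ such that both $\beta_k=\beta_\alpha$ (from Lemma \ref{stabilizationlemma}) and $\gamma_k=1$ hold for every $k\geq K$. The second condition is the step that needs justification: because $\alpha\in\ntij$, the reduced word $w_\alpha$ contains only finitely many letters from $\pi_1(Y_j)$ and its terminal letter, if one exists, is not from $\pi_1(Y_j)$; so for $k$ large enough the terminal letter of the projection $w_{\alpha_k}$ is either empty or lies outside $\pi_1(Y_j)$, which forces $\alpha_k\in\ntkj$ and hence $\gamma_k=1$, $\alpha_k'=\alpha_k$. For such $k$, the square collapses to $\Lambda_{k,j,\alpha_k'}\circ\varrho_k|_{\mcb_{\infty,j,\alpha}}=\Lambda_{\infty,j,\alpha}$, and the desired identity $\varrho_k(\mct_{\infty,j,\alpha})=\mct_{k,j,\alpha_k'}$ is immediate.

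Finally, to descend to arbitrary $k\geq j$, I will do downward induction using $\varrho_k=\wt{r}_{k+1,k}\circ\varrho_{k+1}$. Applying Remark \ref{stabilizationremark} (Case II) to $\alpha_{k+1}'\in\nt_{k+1,j}$ with $(r_{k+1,k})_\#(\alpha_{k+1}')$ having non-$Y_j$-terminal part $\alpha_k'$ yields $\wt{r}_{k+1,k}(\mct_{k+1,j,\alpha_{k+1}'})=\mct_{k,j,\alpha_k'}$, so the inductive step is $\varrho_k(\mct_{\infty,j,\alpha})=\wt{r}_{k+1,k}(\varrho_{k+1}(\mct_{\infty,j,\alpha}))=\wt{r}_{k+1,k}(\mct_{k+1,j,\alpha_{k+1}'})=\mct_{k,j,\alpha_k'}$. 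The principal point of care is the base case: verifying that the non-$Y_j$-terminality of $\alpha$ really does force $\gamma_k=1$ for all $k$ sufficiently large, which is a careful bookkeeping argument about the finitely many $\pi_1(Y_j)$-letters of $w_\alpha$ and how they appear in the finite projections $w_{\alpha_k}$.
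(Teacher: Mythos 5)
Your proposal is correct and follows essentially the same route as the paper: the same commutative square from Theorem \ref{littleimagetheorem} reduces the claim to an identity about translates of $T_j$, the large-$k$ case is settled via the stabilization $\beta_k=\beta_\alpha$, $\gamma_k=1$ from Lemma \ref{stabilizationlemma}, and the remaining cases descend through $\wt{r}_{k+1,k}\circ\varrho_{k+1}=\varrho_k$ together with $\wt{r}_{k+1,k}(\mct_{k+1,j,\alpha_{k+1}'})=\mct_{k,j,\alpha_k'}$. Your word-theoretic justification that $\gamma_k=1$ eventually is sound but redundant, since it re-derives the content of Lemma \ref{stabilizationlemma} (via the recursion $\beta_{k+1}=\gamma_k^{-1}\beta_k$); you are also right that the statement's conclusion should read $\mct_{k,j,\alpha_k'}$.
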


\begin{proof}
Recall from the proof of Theorem \ref{littleimagetheorem} that the following diagram of homeomorphisms commutes.
\[\xymatrix{
\tY_j \ar[r]^-{\Delta_{\gamma_k}}  & \tY_j\\
\mcb_{\infty,j,\alpha} \ar[u]^-{\Lambda_{\infty,j,\alpha}} \ar[r]_-{\varrho_{k}} & \mcb_{k,j,\alpha_{k}'} \ar[u]_-{\Lambda_{k,j,\alpha_{k}'}}
}\]Suppose $\Lambda_{k,j,\alpha_k'}(\mct_{k,j,\alpha_k'})=\beta_kT_j$ for $\beta_k\in\pi_1(Y_j)$. By Lemma \ref{stabilizationlemma}, there exists $\beta_{\alpha}\in\pi_1(Y_j)$ and $K\in\bbn$ such that $\beta_k=\beta_{\alpha}$ and $\gamma_k=1$ for all $k\geq K$. Moreover, we defined $\mct_{\infty,j,\alpha}$ so that $\Lambda_{\infty,j,\alpha}(\mct_{\infty,j,\alpha})=\beta_{\alpha}T_j$. When $k\geq K$, we have 
\begin{eqnarray*}
\varrho_k(\mct_{\infty,j,\alpha}) &=& \Lambda_{k,j,\alpha_{k}'}^{-1}(\Lambda_{\infty,j,\alpha}(\mct_{\infty,j,\alpha}))\\
&=& \Lambda_{k,j,\alpha_{k}'}^{-1}(\beta_{\alpha}T_j)\\
&=& \Lambda_{k,j,\alpha_{k}'}^{-1}(\beta_{k}T_j)\\
&=& \mct_{k,j,\alpha_k'}
\end{eqnarray*}
The cases $1\leq k<K$ follow directly from the case $k=K$ and the equalities $\wt{r}_{k+1,k}(\mct_{k+1,j,\alpha_{k+1}'})=\mct_{k,j,\alpha_k'}$ and $\wt{r}_{k+1,k}\circ \varrho_{k+1}=\varrho_k$ .
\end{proof}
\begin{lemma}\label{unionlemma}
For each $k\in\bbn$, we have \[\bigcup_{j\in\bbn}\bigcup_{\alpha\in \ntij}\mct_{\infty,j,\alpha}=\varrho_{k}^{-1}\left(\bigcup_{1\leq j\leq k}\bigcup_{\beta\in \ntkj}\mct_{\infty,j,\beta}\right).\]
\end{lemma}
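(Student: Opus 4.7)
The plan is to verify both inclusions separately, using Proposition \ref{rksquareprop} as the main input and the homeomorphism property of $\varrho_k$ on individual components $\mcb_{\infty,j,\alpha}$ (from the proof of Theorem \ref{littleimagetheorem}) to move between the infinite and finite levels.

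For the forward inclusion, I will fix $x\in\mct_{\infty,j,\alpha}$ with $\alpha\in\ntij$ and split into cases. When $j\leq k$, Proposition \ref{rksquareprop} yields $\varrho_k(x)\in\mct_{k,j,\alpha_k'}$ directly, where $\alpha_k'\in\ntkj$ is the coset representative of $\varrho_k(\alpha)$; this lands in the union on the right. When $j>k$, the retraction $r_k$ collapses the $Y_j$-portion of the representative path $\alpha\cdot\mu$ to a constant, so $\varrho_k(x)=\varrho_k(\alpha)$ is a single point of $\tY_{\leq k}$ depending only on $\alpha$; the task is then to locate this single point inside the relevant union, by tracing it through the bookkeeping homeomorphisms $\Lambda_{k,j',\cdot}$ to the appropriate terminal component of $\varrho_k(\alpha)$.

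For the reverse inclusion, suppose $x\in\tY$ satisfies $\varrho_k(x)$ lies in some $\mct_{k,j,\beta}$ with $1\leq j\leq k$ and $\beta\in\ntkj$. Since $p_{\leq k}(\mct_{k,j,\beta})\subseteq X_j$ and $r_k$ restricts to the identity on $Y_{\leq k}$, the projection $p(x)$ must lie in $X_j$. Then Proposition \ref{disjointunion} locates $x$ in a unique $\mca_{\infty,j,\alpha}$ with $\alpha\in\ntij$. Using the fact (extracted from the proof of Theorem \ref{littleimagetheorem}) that $\varrho_k$ restricts to a homeomorphism $\mcb_{\infty,j,\alpha}\to\mcb_{k,j,\alpha_k'}$ when $k\geq j$, together with Proposition \ref{rksquareprop}'s identification $\varrho_k(\mct_{\infty,j,\alpha})=\mct_{k,j,\alpha_k'}$, we deduce $\beta=\alpha_k'$ (both index the $\mcb_{k,j,-}$ component containing $\varrho_k(x)$) and then $x\in\mct_{\infty,j,\alpha}$ by pulling $\varrho_k(x)$ back through the homeomorphism.

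The main technical obstacle is the coherent bookkeeping of the indices $\alpha_k'$, $\gamma_k$, and $\beta_k$ across levels, especially handling the $j>k$ case of the forward inclusion where $\varrho_k$ dramatically collapses the source. The Stabilization Lemma (Lemma \ref{stabilizationlemma}), combined with the relation $\beta_\alpha=\gamma_k^{-1}\beta_k$ that is implicit in the commuting $\Lambda$-diagrams, is what makes Proposition \ref{rksquareprop}'s identification valid and guarantees that the correct trees line up under the homeomorphism in the reverse inclusion.
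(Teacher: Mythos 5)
Your treatment of the reverse inclusion is sound and is in fact a cleaner, direct version of what the paper does (the paper argues by contrapositive: a point outside every $\mct_{\infty,j,\alpha}$ lies in some $\mcb_{\infty,j,\alpha}\setminus\mct_{\infty,j,\alpha}$ and is carried by $\varrho_k$ into $\mcb_{k,j,\alpha_k'}\setminus\mct_{k,j,\alpha_k'}$). Your route through $r_k(p(x))=p_{\leq k}(\varrho_k(x))\in X_j$, hence $p(x)\in X_j$, then Proposition \ref{disjointunion}(3) and the restricted homeomorphism $\varrho_k|_{\mcb_{\infty,j,\alpha}}$ together with Proposition \ref{rksquareprop}, is correct.

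The forward inclusion in the case $j>k$ is where your plan breaks down, and the step you leave as a ``task'' --- locating the single point $\varrho_k(\alpha)$ inside the union of trees at level $k$ --- cannot be carried out. For $j>k$ the retraction $r_k$ collapses $Y_j$ to $y_0$, so $\varrho_k(\mct_{\infty,j,\alpha})=\{\varrho_k(\alpha)\}\subseteq p_{\leq k}^{-1}(y_0)$, an arc-endpoint of $\tY_{\leq k}$. But every tree $\mct_{k,j',\beta}$ with $j'\leq k$ lies inside $\mca_{k,j',\beta}\subseteq p_{\leq k}^{-1}(X_{j'})$, which is disjoint from $p_{\leq k}^{-1}(y_0)$: the arc-endpoints sit at the free ends of the whiskers, outside every copy of $\tX_{j'}$. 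Hence $\mct_{\infty,j,\alpha}$ is actually \emph{disjoint} from $\varrho_k^{-1}\bigl(\bigcup_{j'\leq k}\bigcup_{\beta}\mct_{k,j',\beta}\bigr)$ whenever $j>k$. This is not a defect you could have repaired by better bookkeeping: the stated equality is too strong, and the paper's own one-line justification of $\subseteq$ via Proposition \ref{rksquareprop} silently covers only $j\leq k$. What is true, and all that is used later (in the injectivity argument of Theorem \ref{psitheorem}), is the containment $\supseteq$, equivalently the identity $\varrho_{k}^{-1}\bigl(\bigcup_{1\leq j\leq k}\bigcup_{\beta\in\ntkj}\mct_{k,j,\beta}\bigr)=\bigcup_{1\leq j\leq k}\bigcup_{\alpha\in\ntij}\mct_{\infty,j,\alpha}$, with the union truncated at $j\leq k$. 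You should prove that truncated statement (your two arguments for $j\leq k$ do exactly this) and drop the $j>k$ case of the forward direction rather than trying to complete it.
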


\begin{proof}
The inclusion $\subseteq $ follows from Proposition \ref{rksquareprop}. Suppose $a\in \tY\backslash  \bigcup_{j\in\bbn}\bigcup_{\alpha\in \ntij}\mct_{\infty,j,\alpha}$. Since $\tY$ is the union of the subspaces $\mcb_{\infty,j,\alpha}$, we have $a\in\mcb_{\infty,j,\alpha}\backslash \mct_{\infty,j,\alpha}$ for some $j\in\bbn$ and $\alpha\in\ntij$. Write $\varrho_k(\alpha)=\alpha_{k}'\gamma_{k}$ for $\alpha_{k}'\in\ntkj$ and $\gamma_{k}\in\tY_j$. By Proposition \ref{rksquareprop}, $\varrho_k$ maps $\mcb_{\infty,j,\alpha}$ homeomorphically onto $\mcb_{k,j,\alpha_{k}'}$ and $\varrho_k(\mct_{\infty,j,\alpha})= \mct_{k,j,\alpha_{k}'}$ Thus $\varrho_k(a)\in \mcb_{k,j,\alpha_{k}'}\backslash \mct_{k,j,\alpha_{k}'}$. It follows that $a\notin \varrho_{k}^{-1}(\mct_{\infty,j,\beta})$ for any $1\leq j\leq k$ and $\beta\in\ntkj$.
\end{proof}

\begin{theorem}\label{psitheorem}
There is a canonical, continuous bijection $\psi:\tZ\to \wh{Z}_{0}$ such that the following square commutes.
\[\xymatrix{
\tY \ar[d]_-{\wt{f}} \ar[r]^-{\phi_Y} & \wh{Y}_{0} \ar[d]^-{\wh{f}_{0}}\\
\tZ \ar[r]_-{\psi} & \wh{Z}_0
}\]
\end{theorem}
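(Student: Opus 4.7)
The plan is to show that $\wh{f}_0\circ\phi_Y:\tY\to\wh{Z}_0$ is constant on every tree $\mct_{\infty,j,\alpha}$, so that it descends through the quotient map $\wt{f}:\tY\to\tZ$ to a unique continuous map $\psi$ making the square commute, and then to verify that $\psi$ is bijective.

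For the factorization, a point $a\in\tY$ is sent by $\wh{f}_0\circ\phi_Y$ to the sequence $(\wt{f}_k(\varrho_k(a)))_k$. For $k\geq j$, Proposition \ref{rksquareprop} gives $\varrho_k(\mct_{\infty,j,\alpha})=\mct_{k,j,\alpha_k'}$, which $\wt{f}_k$ collapses to a point; for $k<j$, the same conclusion follows recursively from $\varrho_k=\wt{r}_{k+1,k}\circ\varrho_{k+1}$ together with the fact that $\wt{r}_{k+1,k}$ sends trees to trees or points (Remark \ref{stabilizationremark}). Hence $\wh{f}_0\circ\phi_Y$ is constant on each $\mct_{\infty,j,\alpha}$, and the universal property of the quotient $\wt{f}$ yields continuous $\psi$ with $\psi\circ\wt{f}=\wh{f}_0\circ\phi_Y$, which is exactly the required commutativity. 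Surjectivity of $\psi$ is then immediate: $\phi_Y$ maps $\tY$ bijectively onto $\wh{Y}_0$ and $\wh{Z}_0=\wh{f}_0(\wh{Y}_0)$ by definition, so $\psi\circ\wt{f}$ is surjective onto $\wh{Z}_0$, which forces $\psi$ to be surjective since $\wt{f}$ is.

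Injectivity is the heart of the argument. Suppose $\psi(\wt{f}(a))=\psi(\wt{f}(b))$, so that $\wt{f}_k(\varrho_k(a))=\wt{f}_k(\varrho_k(b))$ for every $k$. Since the trees $\mct_{k,j,\beta}$ are pairwise disjoint subcomplexes of $\tY_{\leq k}$ and each nontrivial fiber of $\wt{f}_k$ is exactly one such tree, for each $k$ either $\varrho_k(a)=\varrho_k(b)$ or both points lie in a unique common tree $\mct_{k,j_k,\beta_k}$. If equality holds for every $k$, then $\phi_Y(a)=\phi_Y(b)$ forces $a=b$. Otherwise some first index $k_0$ satisfies $\varrho_{k_0}(a)\neq\varrho_{k_0}(b)$, and iterating the bonding relation $\varrho_{k_0}=\wt{r}_{k,k_0}\circ\varrho_k$ shows $\varrho_k(a)\neq\varrho_k(b)$ for all $k\geq k_0$.

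The main obstacle will be promoting this sequence of trees $\mct_{k,j_k,\beta_k}$ to a single tree $\mct_{\infty,j,\alpha}$ containing both $a$ and $b$. Using the Case I/II analysis of $\wt{r}_{k+1,k}$ in Remark \ref{stabilizationremark}, Case I (in which $\mct_{k+1,k+1,\beta_{k+1}}$ collapses to a point) is ruled out by $\varrho_k(a)\neq\varrho_k(b)$, so Case II applies for every $k\geq k_0$; thus $j_k=:j$ is constant and $\wt{r}_{k+1,k}$ maps $\mct_{k+1,j,\beta_{k+1}}$ homeomorphically onto $\mct_{k,j,\beta_k}$. Consequently $\varrho_k(a),\varrho_k(b)\in\mcb_{k,j,\beta_k}$ for all sufficiently large $k$. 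A short argument using the reduced-word description of $\pi_1(Y)$ shows that distinct $\alpha_1,\alpha_2\in\ntij$ yield distinct cosets $\varrho_k(\alpha_1)\pi_1(Y_j)\neq\varrho_k(\alpha_2)\pi_1(Y_j)$ for all large $k$, since any element of $\pi_1(Y)\setminus\pi_1(Y_j)$ has a reduced infinite word containing some letter from $\pi_1(Y_i)$ with $i\neq j$; hence the components $\mcb_{\infty,j,\alpha'}$ are eventually distinguished by their images under $\varrho_k$, and $a,b$ must lie in a common $\mcb_{\infty,j,\alpha}$. Finally, Proposition \ref{rksquareprop} gives $\varrho_k(\mct_{\infty,j,\alpha})=\mct_{k,j,\beta_k}$ for $k$ large, and since $\varrho_k|_{\mcb_{\infty,j,\alpha}}$ is a homeomorphism by Theorem \ref{littleimagetheorem}, we conclude $a,b\in\mct_{\infty,j,\alpha}$, yielding $\wt{f}(a)=\wt{f}(b)$ and hence injectivity of $\psi$.
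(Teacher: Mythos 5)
Your proposal is correct and follows the paper's overall skeleton (factor $\wh{f}_0\circ\phi_Y$ through the quotient, get continuity and surjectivity for free, then fight for injectivity), but your injectivity argument takes a genuinely different route. The paper first proves a separate lemma identifying $\bigcup_{j,\alpha}\mct_{\infty,j,\alpha}$ with $\varrho_k^{-1}\bigl(\bigcup_{j,\beta}\mct_{k,j,\beta}\bigr)$, which immediately places $a$ and $b$ in infinite-level trees $\mct_{\infty,j,\alpha}$ and $\mct_{\infty,j',\beta}$, and then uses the Stabilization Lemma to force $\alpha=\beta$. You instead run the Case I/II analysis on the sequence of finite-level trees to pin down a constant $j$, place $a$ and $b$ in a common $\mcb_{\infty,j,\alpha}$ by a coset argument, and pull back tree-membership through the homeomorphism $\varrho_k|_{\mcb_{\infty,j,\alpha}}$ together with $\varrho_k(\mct_{\infty,j,\alpha})=\mct_{k,j,\alpha_k'}$; this in effect reproves the special case of the paper's union lemma that you actually need, and it is a perfectly valid substitute. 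One step is under-justified as stated: the claim that distinct $\alpha_1,\alpha_2\in\ntij$ have $\varrho_k(\alpha_1)\pi_1(Y_j)\neq\varrho_k(\alpha_2)\pi_1(Y_j)$ for large $k$ does not follow merely from $\alpha_2^{-1}\alpha_1$ having a reduced-word letter outside $\pi_1(Y_j)$, since finite projections of reduced words can cancel that letter entirely (e.g.\ $u\ell u^{-1}$ with $\ell\in\pi_1(Y_j)$ projects to the identity once the letters of $u$ are deleted). The correct fix is short: if $(r_{k})_{\#}(\alpha_2^{-1}\alpha_1)\in\pi_1(Y_j)$ for arbitrarily large $k$, then (since the bonding maps restrict to the identity on $\pi_1(Y_j)$) it equals a fixed $c\in\pi_1(Y_j)$ for all $k$, and $\pi_1$-shape injectivity (Theorem \ref{injectivetheorem}) forces $\alpha_2^{-1}\alpha_1=c\in\pi_1(Y_j)$, contradicting $\alpha_1\neq\alpha_2$ in $\ntij$. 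You should also record the (routine) observation that $a$ and $b$ do lie in components indexed by this same $j$, which follows from $r_k(p(a))\in X_j$ and $r_k^{-1}(X_j)=X_j$. With those two points supplied, the argument is complete.
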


\begin{proof}
To show that $\psi$ is well-defined, we must show that $\wh{f}_{0}\circ \phi_Y$ is constant on each tree $\mct_{\infty,j,\alpha}$, $j\in\bbn$, $\alpha\in\ntij$. Fixing such $j$ and $\alpha$, define $\alpha_k=\varrho_k(\alpha)$, and $\beta_k$, $k\geq j$ as in Lemma \ref{stabilizationlemma}. As in the proof of that lemma, set $\alpha_k=\alpha_{k} '\gamma_{k}$ for $\alpha_{k}'\in\ntkj$ and $\gamma_{k}\in\pi_1(Y_j)$ so that $\wt{r}_{k+1,k}(\mct_{k+1,k,\alpha_{k+1}'})=\mct_{k,k,\alpha_{k}'}$. Recall that $\beta_{k+1}=\gamma_{k}^{-1}\beta_{k}$ for all $k\geq j$. The conclusion of Lemma \ref{stabilizationlemma} is that there exists $K\geq j$ such that $\beta_k=\beta_{\alpha}$ for all $k\geq K$. Thus, for all $k\geq K$, we have $\gamma_{k}=1$ and thus $\alpha_k=\alpha_{k}'\in \ntkj$. It follows that $\wt{r}_{k+1,k}$ maps $\mct_{k+1,j,\alpha_{k+1}}$ homeomorphically onto $\mct_{k,j,\alpha_k}$ whenever $k\geq K$.

Recall that $\varrho_k$ maps $\mct_{\infty,j,\alpha}$ homeomorphically onto $\mct_{\infty,j,\alpha_k '}$ for all $k$. Hence, for all $k\geq K$, $\varrho_k$ maps $\mct_{\infty,j,\alpha}$ homeomorphically onto $\mct_{k,j,\alpha_k}$. Additionally, we have $\wt{f}_k(\mct_{k,j,\alpha_k'})=c_{k,j,\alpha_k'}\in \tZ_k$. Thus, whenever $k+1\geq k\geq K$, we have 
\begin{eqnarray*}
\wt{s}_{k+1,k}(c_{k+1,j,\alpha_{k+1}'}) &=& \wt{s}_{k+1,k}(c_{k+1,j,\alpha_{k+1}})\\
&=& \wt{s}_{k+1,k}(\wt{f}_{k+1}(\mct_{k+1,j,\alpha_{k+1}'}))\\
&=& \wt{f}_{k}(\wt{r}_{k+1,k}(\mct_{k+1,j,\alpha_{k+1}'}))\\
&=& \wt{f}_{k}(\mct_{k,j,\alpha_k'})\\
&=& c_{k,j,\alpha_k'}\\
&=& c_{k,j,\alpha_k}
\end{eqnarray*}
Representing $\wh{Y}$ as $\varprojlim_{k\geq K}\wt{Y}_{\leq k}$ and $\wh{f}=\varprojlim_{k\geq K}\wt{f}_k$, we have 
\[
\wh{f}\circ \phi_{Y}(\mct_{\infty,j,\alpha})= \wh{f}\left(\prod_{k\geq K}\mct_{k,j,\alpha_k}\right)=\{(c_{k,j,\alpha_k})_{k\geq K}\},\]
which is a coherent sequence and thus represents a point in $\wh{Z}$. This verifies that $\psi$ is well-defined. Since $\wt{f}$ is a quotient map and $\wh{f}\circ \phi_Y$ is continuous, $\psi$ is continuous. Since $\wh{Z}_0=\wh{f}(\wh{Y}_0)$ by definition and $\phi_{Y}$ is bijection, $\wh{f}\circ \phi_Y$ is surjective. It follows that $\psi$ is surjective.

Finally, we check that $\psi$ is injective. It is enough to check that $\wt{f}$ is constant on the fibers of $\wh{f}_0\circ \phi_{Y}$. Suppose $a \neq b$ in $\tY$ and $\wh{f}_0\circ \phi_{Y}(a)=\wh{f}_0\circ \phi_{Y}(b)$. Write $a_k=\varrho_k(a)=a_{k}'c_{k}$ and $b_k=\varrho_k(b)=b_{k}'d_{k}$ for $a_{k}',b_{k}'\in\ntkj$ and $c_k,d_k\in \wt{Y}_j$. Since $a\neq b$ and $\phi_{Y}$ is injective, there exists $K\in\bbn$ such that $a_k\neq b_k$ for all $k\geq K$. However, $\wt{f}_k( \varrho_{k}(a))=\wt{f}_k( \varrho_{k}(b))$ for all $k\geq 1$ and $\wt{f}_k$ only identifies points in trees of the form $\mct_{k,j,\alpha}$. Thus, for all $k\geq K$, we have $\{\varrho_k(a),\varrho_{k}(b)\}\subseteq \mct_{k,j_k,\alpha_k'}$ for some $j_k\in\bbn$ and $\alpha_k'\in\ntkj$. Specifically, we must have $a_{k}'=b_{k}'=\alpha_{k}'$ for $k\geq K$. 

By Lemma \ref{unionlemma}, we have $a\in \mct_{\infty,j,\alpha}$ and $b\in \mct_{\infty,j',\beta}$ for some $j,j'\in\bbn$ and $\alpha,\beta\in\ntij$. For $k\geq K$, $\varrho_k$ maps $\mct_{\infty,j,\alpha}$ and $\mct_{\infty,j',\beta}$ to $\mct_{k,j_k,\alpha_k'}$. Thus $j=j'=j_k$ for all $k\geq K$. Additionally, we must have $\varrho_k(\alpha)=\alpha_{k}'\gamma_{k}$ and $\varrho_k(\beta)=\alpha_{k}'\delta_{k}$ for $\gamma_k,\delta_k\in\pi_1(Y_j)$. However, Lemma \ref{stabilizationlemma} ensures that there exists $M\geq K$ such that $\gamma_k=\delta_k=1$ for all $k\geq M$. Thus $\varrho_k(\alpha)=\varrho_k(\beta)$ for all $k\geq M$. The injectivity of $\phi_{Y}$ then gives $\alpha=\beta$. Since $a,b$ both lie in $\mct_{\infty,j,\alpha}$, we have $\wt{f}(a)=\wt{f}(b)$.
\end{proof}

\begin{definition}
For each $k\in\bbn$, let $\sigma_{k}:Z\to Z_k$ be the composition $\sigma_k=\wt{s}_k\circ \psi$ so that the following that the following diagram commutes for all $k\in\bbn$. \[\xymatrix{
\tY  \ar[d]_-{\wt{f}} \ar@/^1pc/[rr]^-{\varrho_k} \ar[r]_-{\phi_{Y}} & \wh{Y} \ar[d]^-{\wh{f}} \ar[r]_-{\wt{r}_k} & \wt{Y}_{\leq k} \ar[d]^-{\wt{f}_k}\\
Z \ar@/_1pc/[rr]_-{\sigma_k} \ar[r]^-{\psi} & \wh{Z} \ar[r]^-{\wt{s}_k} & Z_k
}\]
\end{definition}

The proof of the next proposition follows directly from established constructions.

\begin{proposition}\label{projectiondiagramprop}
Fix $j\in\bbn$, $\alpha\in\ntij$, and $k\geq j$. If $\varrho_{k}(\alpha)=\alpha_{k} '\gamma_{k}$ for $\alpha_{k}'\in \ntkj$, and $\gamma_{k}\in\pi_1(Y_j)$, then $\sigma_k$ maps $\mcd_{\infty,j,\alpha}$ homeomorphically onto $\mcd_{k,j,\alpha_{k}'}$ and the following square commutes.
\[\xymatrix{
\mcb_{\infty,j,\alpha} \ar[r]^-{\varrho_k} \ar[d]_-{\wt{f}_{\infty,j,\alpha}} & \mcb_{k,j,\alpha_{k}'} \ar[d]^-{f_{k,j,\alpha_{k}'}}    \\
 \mcd_{\infty,j,\alpha} \ar[r]_-{\sigma_k} & \mcd_{k,j,\alpha_{k}'} 
}\]
\end{proposition}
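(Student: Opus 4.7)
The statement packages together the compatibilities already assembled over the previous several pages, so my strategy is to first prove a single global identity and then restrict it to the indicated subspaces.

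\emph{Step 1: a global identity.} I would first show that $\sigma_k\circ \wt{f}=\wt{f}_k\circ \varrho_k$ on all of $\tY$. By the definition $\sigma_k=\wt{s}_k\circ \psi$ together with the commutative square of Theorem \ref{psitheorem},
\[\sigma_k\circ \wt{f}\;=\;\wt{s}_k\circ \psi\circ \wt{f}\;=\;\wt{s}_k\circ \wh{f}_0\circ \phi_Y.\]
Taking the inverse limit of the squares in Lemma \ref{smaplemma} gives $\wt{s}_k\circ \wh{f}_0=\wt{f}_k\circ \wt{r}_k$, and by the definition $\wt{r}_k\circ \phi_Y=\varrho_k$. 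Combining these yields $\sigma_k\circ \wt{f}=\wt{f}_k\circ \varrho_k$.

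\emph{Step 2: restrict to $\mcb_{\infty,j,\alpha}$ and obtain the square.} Theorem \ref{littleimagetheorem} (and the observation in its proof that, via the $\Lambda$-maps, the restricted $\varrho_k$ is a composition of homeomorphisms once $k\geq j$) asserts that $\varrho_k$ carries $\mcb_{\infty,j,\alpha}$ homeomorphically onto $\mcb_{k,j,\alpha_{k}'}$. Moreover $\wt{f}|_{\mcb_{\infty,j,\alpha}}=\wt{f}_{\infty,j,\alpha}$ and $\wt{f}_k|_{\mcb_{k,j,\alpha_{k}'}}=\wt{f}_{k,j,\alpha_{k}'}$ by construction, so restricting the identity of Step 1 to $\mcb_{\infty,j,\alpha}$ gives
\[\sigma_k\circ \wt{f}_{\infty,j,\alpha}\;=\;\wt{f}_{k,j,\alpha_{k}'}\circ \bigl(\varrho_k|_{\mcb_{\infty,j,\alpha}}\bigr).\]
Since $\wt{f}_{\infty,j,\alpha}$ is surjective onto $\mcd_{\infty,j,\alpha}$, this simultaneously shows that $\sigma_k(\mcd_{\infty,j,\alpha})\subseteq \mcd_{k,j,\alpha_{k}'}$ and that the displayed square in the proposition commutes.

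\emph{Step 3: the restriction is a homeomorphism.} Proposition \ref{rksquareprop} says $\varrho_k$ sends $\mct_{\infty,j,\alpha}$ bijectively (hence homeomorphically, being a restriction of a homeomorphism) onto $\mct_{k,j,\alpha_{k}'}$. Because $\wt{f}_{\infty,j,\alpha}$ and $\wt{f}_{k,j,\alpha_{k}'}$ are quotient maps that collapse precisely these trees to points, the universal property of quotient spaces produces a unique induced homeomorphism $\mcd_{\infty,j,\alpha}\to\mcd_{k,j,\alpha_{k}'}$. The commutativity from Step 2, combined with the surjectivity of $\wt{f}_{\infty,j,\alpha}$, forces this induced homeomorphism to agree with $\sigma_k|_{\mcd_{\infty,j,\alpha}}$.

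\emph{Expected difficulty.} There is essentially no obstacle: all of the heavy lifting has been done in Theorem \ref{littleimagetheorem}, Proposition \ref{rksquareprop}, Lemma \ref{smaplemma}, and Theorem \ref{psitheorem}. The only place meriting a moment of care is that the restriction $\wt{f}|_{\mcb_{\infty,j,\alpha}}$ really does equal $\wt{f}_{\infty,j,\alpha}$, i.e.\ that no point of $\tY\setminus \mcb_{\infty,j,\alpha}$ is identified by $\wt{f}$ with a point inside $\mcb_{\infty,j,\alpha}$; this follows from Proposition \ref{disjointunion} together with the fact that each collapsing tree $\mct_{\infty,j',\alpha'}$ is contained in the single subspace $\mcb_{\infty,j',\alpha'}$.
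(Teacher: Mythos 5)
Your argument is correct and is precisely the ``direct from established constructions'' verification that the paper omits: the global identity $\sigma_k\circ\wt{f}=\wt{f}_k\circ\varrho_k$ is already displayed in the commutative diagram defining $\sigma_k$, and restricting it via Theorem \ref{littleimagetheorem} and Proposition \ref{rksquareprop} yields both the square and the induced homeomorphism on the quotients. Your closing remark about $\wt{f}|_{\mcb_{\infty,j,\alpha}}=\wt{f}_{\infty,j,\alpha}$ is exactly the right point of care, and it is settled as you say since distinct trees lie in distinct (closed) subspaces $\mcb_{\infty,j',\alpha'}$ meeting $\mcb_{\infty,j,\alpha}$ in at most an arc-endpoint.
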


%\begin{proof}
%Recall from Theorem \ref{littleimagetheorem} that $\varrho_k$ maps $\mcb_{\infty,j,\alpha}$ homeomorphically onto $\mcb_{k,j,\alpha_{k}'}$ when $k\geq j$. We have 
%\[
%\sigma_k(\tZ_{\infty,j,\alpha})= \sigma_k(\wt{f}(\mcb_{\infty,j,\alpha}))= %\wt{f}_k(\varrho_k(\mcb_{\infty,j,\alpha}))= \wt{f}_k(\mcb_{k,j,\alpha_{k}'})=\mcd_{k,j,\alpha_{k}'}.
%\]
%Since $\wt{f}_{\infty,j\alpha}$ and $\wt{f}_{k,j,\alpha_{k}'}$ are the restrictions of $\wt{f}$ and $\wt{f}_k$ respectively, we conclude that the diagram commutes. Recall from Proposition \ref{rksquareprop} that $\varrho_k(\mct_{\infty,j,\alpha})=\mct_{\infty,j,\alpha_k'}$. Since $\wt{f}_{\infty,j\alpha}$ collapses $\mct_{\infty,j,\alpha}$ and $\wt{f}_{k,j,\alpha_{k}'}$ collapses $\mct_{k,j,\alpha_{k}'}$, the quotient maps $\wt{f}_{k,j,\alpha_{k}'}\circ \varrho_k|_{\mcb_{\infty,j,\alpha}}$ and $\wt{f}_{\infty,j,\alpha}$ make the same identifications. Therefore the restriction of $\sigma_k$ appearing in the diagram is a homeomorphism.
%\end{proof}

\begin{lemma}\label{zimagelemma}
Fix $j\in\bbn$, $\alpha\in\ntij$ and suppose that for all $k\geq j$, we have $\varrho_{k}(\alpha)=\alpha_{k} '\gamma_{k}$ for $\alpha_{k}'\in \ntkj$ and $\gamma_{k}\in\pi_1(Y_j)$. Then $\psi$ maps $\mcd_{\infty,j,\alpha}$ homeomorphically onto $\wh{Z}\cap \left(\prod_{k\geq j}\mcd_{k,j,\alpha_{k}'}\times \prod_{k=1}^{j-1}\{\wt{f}_k(\alpha_{k}')\}\right)$.
\end{lemma}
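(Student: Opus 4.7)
The plan is to follow the same pattern as the proof of Theorem \ref{littleimagetheorem}, replacing $\mcb$ with $\mcd$, $\Lambda$ with $\lambda$, and $\varrho_k$ with $\sigma_k$. To simplify notation, set $\mathbf{D}_\infty=\mcd_{\infty,j,\alpha}$, $D_k=\{\wt{f}_k(\alpha_k')\}$ for $1\leq k<j$, and $D_k=\mcd_{k,j,\alpha_k'}$ for $k\geq j$, so that the target set is $\mathbf{D}=\wh{Z}\cap\prod_{k\geq 1}D_k$.

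First I would verify the inclusion $\psi(\mathbf{D}_\infty)\subseteq\mathbf{D}$. Since $\sigma_k=\wt{s}_k\circ\psi$, it is enough to show $\sigma_k(\mathbf{D}_\infty)\subseteq D_k$ for every $k$. For $k\geq j$, this is exactly the content of Proposition \ref{projectiondiagramprop}, which moreover tells us that $\sigma_k|_{\mathbf{D}_\infty}:\mathbf{D}_\infty\to D_k$ is a homeomorphism. For $1\leq k<j$, an element of $\mcb_{\infty,j,\alpha}$ has the form $\alpha\beta$ with $\beta\in\tY_j$, and the projection $\varrho_k$ deletes all letters of $\pi_1(Y_i)$ with $i>k$, including the entire $\tY_j$-factor, so $\varrho_k(\alpha\beta)=\alpha_k'$; the commuting square $\sigma_k\circ\wt{f}_{\infty,j,\alpha}=\wt{f}_k\circ\varrho_k$ then yields $\sigma_k(\mathbf{D}_\infty)=\{\wt{f}_k(\alpha_k')\}=D_k$.

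Next I would show that the bonding maps of the sub-inverse system $(D_k,\wt{s}_{k+1,k}|_{D_{k+1}})$ are homeomorphisms for $k\geq j$. For such $k$, cube (C2) of Lemma \ref{cubelemma} identifies $\wt{s}_{k+1,k}|_{\mcd_{k+1,j,\alpha_{k+1}'}}:D_{k+1}\to D_k$, via the homeomorphisms $\lambda_{k+1,j,\alpha_{k+1}'}$ and $\lambda_{k,j,\alpha_k'}$, with the left-translation homeomorphism $\delta_{\gamma_{k+1}}:D_{j,\gamma_{k+1}^{-1}\beta_k}\to D_{j,\beta_k}$ (where $\Lambda_{k,j,\alpha_k'}(\mct_{k,j,\alpha_k'})=\beta_kT_j$). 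Consequently $\wt{s}_{k+1,k}|_{D_{k+1}}$ is a homeomorphism for all $k\geq j$. Since the product $\prod_{k<j}D_k$ is a single point, the projection $\wt{s}_k|_{\mathbf{D}}:\mathbf{D}\to D_k$ is a homeomorphism for every $k\geq j$.

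Finally, combining the two previous steps, for any $k\geq j$ both $\sigma_k|_{\mathbf{D}_\infty}:\mathbf{D}_\infty\to D_k$ and $\wt{s}_k|_{\mathbf{D}}:\mathbf{D}\to D_k$ are homeomorphisms, and the relation $\wt{s}_k\circ\psi=\sigma_k$ shows that $\psi|_{\mathbf{D}_\infty}=(\wt{s}_k|_{\mathbf{D}})^{-1}\circ\sigma_k|_{\mathbf{D}_\infty}$ is a homeomorphism onto $\mathbf{D}$. The one subtle point, and the main obstacle, is bookkeeping the finite prefix $k<j$: one must check that the coordinates forced to be singletons are indeed consistent with the inverse-limit bonding maps, i.e.\ that $\wt{s}_{k+1,k}(\wt{f}_{k+1}(\alpha_{k+1}'))=\wt{f}_k(\alpha_k')$ for $k<j-1$ and that the transition at $k=j-1$ is also compatible; both follow from $\wt{r}_{k+1,k}(\alpha_{k+1})=\alpha_k$ together with Lemma \ref{smaplemma}.
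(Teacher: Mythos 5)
Your argument is correct, but it takes a different route from the paper's. The paper proves this lemma by descent from Theorem \ref{littleimagetheorem}: it places the restriction of $\psi$ at the bottom of the commuting square $\psi\circ\wt{f}_{\infty,j,\alpha}=\wh{f}_0\circ\phi_Y$, observes that the top map is a homeomorphism (Theorem \ref{littleimagetheorem}), that the left map is a quotient map, and that the right map is a quotient map because it is an inverse limit $\varprojlim_{k\geq j}\wt{f}_{k,j,\alpha_k'}$ of quotient maps along homeomorphism bonding maps; bijectivity of $\psi$ (Theorem \ref{psitheorem}) then forces the bottom map to be a homeomorphism. You instead re-run the direct inverse-limit argument of Theorem \ref{littleimagetheorem} one level down, leaning on Proposition \ref{projectiondiagramprop} for the fact that each $\sigma_k$, $k\geq j$, restricts to a homeomorphism $\mcd_{\infty,j,\alpha}\to\mcd_{k,j,\alpha_k'}$, and on the cube of Lemma \ref{cubelemma} to see that the bonding maps $\wt{s}_{k+1,k}|_{D_{k+1}}$ are homeomorphisms. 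What your route buys is independence from the global bijectivity of $\psi$ and from the quotient-map bookkeeping; what it costs is that the injectivity content is now carried entirely by Proposition \ref{projectiondiagramprop}, which the paper states without proof, so your argument is only as self-contained as that proposition. One small indexing slip: applying Lemma \ref{cubelemma} to $\alpha_{k+1}'\in\nt_{k+1,j}$, the relevant translation is $\delta_{\gamma}$ with $(r_{k+1,k})_{\#}(\alpha_{k+1}')=\alpha_k'(\gamma_k\gamma_{k+1}^{-1})$, i.e.\ $\gamma=\gamma_k\gamma_{k+1}^{-1}$ rather than $\gamma_{k+1}$; this does not affect the conclusion, since any $\delta_{\gamma}$ is a homeomorphism. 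Your flagged check of the coordinates $k<j$ and the transition at $k=j-1$ is indeed needed and goes through exactly as you indicate.
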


\begin{proof}
Consider the following commutative diagram where the right map is the restriction of $\wh{f}_0$.
\[\xymatrix{
\mcb_{\infty,j,\alpha} \ar[rr]^-{\phi_{Y}} \ar[d]_-{\wt{f}_{\infty,j,\alpha}} && \wh{Y}_0\cap \left(\prod_{k\geq j}\mcb_{k,j,\alpha_{k}'}\times \prod_{k=1}^{j-1}\{\alpha_{k}'\}\right) \ar[d]^-{\wh{f}_{0}}\\
\mcd_{\infty,j,\alpha} \ar[rr]_-{\psi} & & \wh{Z}_{0}\cap \left(\prod_{k\geq j}\mcd_{k,j,\alpha_{k}'}\times \prod_{k=1}^{j-1}\{\wt{f}_k(\alpha_{k}')\}\right)
}\]
The left map is quotient and the top map is a homeomorphism by Theorem \ref{littleimagetheorem}. The right map in the diagram may be represented as the inverse limit $\wh{f}_0=\varprojlim_{k\geq j}\tf_{k,j,\alpha_{k}'}$ of quotient maps. Since both sets of bonding map for this inverse system are homeomorphisms, it follows that $\wh{f}_0$ is a quotient map. Since $\psi$ is a bijection by Theorem \ref{psitheorem}, we conclude that the restriction of $\psi$ in the diagram is a homeomorphism.
\end{proof}

Recall that the open sets of $\lpc(\wh{Z}_0)$ are the path components of open sets in $\wh{Z}_0$. We will use the fact that $\phi_{Y}:\tY\to \lpc(\wh{Y}_0)$ is a homeomorphism, to prove the analgous fact for $\tZ$. 

\begin{theorem}\label{lpctheorem}
$\psi:\tZ\to \lpc(\wh{Z}_0)$ is a homeomorphism.
\end{theorem}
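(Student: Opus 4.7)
The plan is to verify that $\psi$ is both continuous and open. Continuity as a map into $\lpc(\wh{Z}_0)$ is a soft consequence of earlier results: since $\tf:\tY\to\tZ$ is a quotient map from the locally path connected space $\tY$, $\tZ$ is locally path connected; combined with continuity of $\psi:\tZ\to\wh{Z}_0$ (Theorem \ref{psitheorem}) and the universal property of the lpc coreflection, we get $\psi:\tZ\to\lpc(\wh{Z}_0)$ continuous. Since $\psi$ is bijective by Theorem \ref{psitheorem}, everything reduces to showing that $\psi$ is an open map.

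To prove openness, I would fix an open set $U\subseteq\tZ$ and a point $z\in U$. Using local path connectivity of $\tZ$, I replace $U$ by the path component of $z$ in $U$, which is again open. The goal is then to produce an open set $W\subseteq\wh{Z}_0$ whose path component at $\psi(z)$ lies in $\psi(U)$. The construction splits according to where $z$ sits in the tree-like decomposition of $\tZ$. When $z$ lies in the interior of a single piece $\mcd_{\infty,j,\alpha}$ (i.e., $z$ is not in $\tf(p^{-1}(y_0))$), Lemma \ref{zimagelemma} identifies a neighborhood of $z$ in $\mcd_{\infty,j,\alpha}$ with a neighborhood of $\psi(z)$ in the subspace $\wh{Z}_0\cap\bigl(\prod_{k\geq j}\mcd_{k,j,\alpha_k'}\times\prod_{k<j}\{\tf_k(\alpha_k')\}\bigr)$, and openness of $\psi(U)$ at $\psi(z)$ follows immediately from the fact that $\psi$ is a homeomorphism there.

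The substantive work is the remaining case, where $z$ is an arc-endpoint image, and in particular when $z$ is the image of a point in the wedgepoint fibre $p^{-1}(y_0)$, at which uncountably many pieces $\mcd_{\infty,j,\alpha}$ meet. For such a $z$, I would lift to $y=\tf^{-1}(z)\in p^{-1}(y_0)\subseteq\tY$ and use the homeomorphism $\phi_Y:\tY\to\lpc(\wh{Y}_0)$ to find an index $k$ and an open path-connected neighborhood $V_k$ of $\wt{r}_k(\phi_Y(y))$ in $\tY_{\leq k}$ whose pullback $\varrho_k^{-1}(V_k)\subseteq\tY$ sits inside $\tf^{-1}(U)$. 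Applying $\tf_k$ to $V_k$ produces an open set in $\tZ_k$, whose preimage under $\wt{s}_k$ is open in $\wh{Z}_0$. Using the commuting relation $\wt{s}_k\circ\psi=\tf_k\circ\varrho_k$ together with Proposition \ref{projectiondiagramprop} and the stabilization from Lemma \ref{stabilizationlemma}, I would argue that the path component of $\psi(z)$ in this open set lies entirely in $\psi(U)$.

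The main obstacle I anticipate is ensuring that the path component in $\wh{Z}_0$ does not escape $\psi(U)$: the inverse limit topology on $\wh{Z}_0$ admits long paths that could potentially wander across many pieces $\mcd_{\infty,j',\alpha'}$, and one must verify that after restricting to a sufficiently deep level $k$ in the inverse system, every such path is contained in the image of $\tf^{-1}(U)$. This is handled by combining the coherence of the collapsing data $\{\mct_{\infty,j,\alpha}\}$ (Lemma \ref{stabilizationlemma} and Proposition \ref{rksquareprop}) with the shrinking-wedge description of neighborhoods at wedgepoint fibre points, ensuring that only pieces $\mcd_{\infty,j,\alpha}$ with $j\leq k$ carry non-trivial local structure relative to the chosen level. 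Once this is done, $\psi$ is an open continuous bijection between locally path connected spaces, hence a homeomorphism.
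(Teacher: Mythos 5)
Your overall architecture matches the paper's: continuity into $\lpc(\wh{Z}_0)$ is the soft coreflection argument, bijectivity comes from Theorem \ref{psitheorem}, and the real content is openness, split into the two cases (points off the wedgepoint fibre image versus points $z=\wt{f}(\alpha)$ with $\alpha\in p^{-1}(y_0)$). You also correctly identify the central danger: a path in an open subset of $\wh{Z}_0$ could connect $\psi(\wt{f}(N(\alpha,V)))$ to $\psi(\wt{f}(N(\beta,V)))$ for some other $\beta\in\varrho_J^{-1}(\varrho_J(\alpha))$, so that $\psi(\wt{f}(N(\alpha,V)))$ fails to be a full path component of the chosen open set.

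However, your proposed resolution of that danger does not work, and this is a genuine gap. The coherence of the trees (Lemma \ref{stabilizationlemma}, Proposition \ref{rksquareprop}) and the shrinking-wedge description of neighborhoods only control the picture at each finite level; they give no handle on an arbitrary path $\ell:\ui\to\scrv\subseteq\wh{Z}_0$, whose projections $\wt{s}_m\circ\ell$ are merely coherent paths in the $\tZ_m$ and are otherwise unconstrained for $m$ beyond your chosen level $k$. The paper's key device, which is absent from your plan, is the limit homotopy inverse $\wh{g}_0:\wh{Z}_0\to\wh{Y}_0$: after choosing $V$ small enough that $\wh{g}_0(\scrv)\subseteq\scru$ (this uses the deliberate construction of $H_j$ as the constant homotopy near arc-endpoints), any path $\ell$ in $\scrv$ joining two distinct candidate components is pushed to the path $\wh{g}_0\circ\ell$ in $\scru\subseteq\wh{Y}_0$, where the path components are already known to be the sets $\phi_Y(N(\beta,V))$ because $\phi_Y:\tY\to\lpc(\wh{Y}_0)$ is a homeomorphism. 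That contradiction is what identifies $\psi(\wt{f}(N(\alpha,V)))$ as a path component of $\scrv$, hence open in $\lpc(\wh{Z}_0)$. The same gap infects your first case: Lemma \ref{zimagelemma} only says $\psi$ restricts to a homeomorphism of $\mcd_{\infty,j,\alpha}$ onto its image with the subspace topology from $\wh{Z}_0$; to conclude that images of open sets are open in $\lpc(\wh{Z}_0)$ you must first show that $\psi(\wt{f}(\mcu_{\infty,j,\alpha}))$ is itself a path component of an open subset of $\wh{Z}_0$, which again requires the $\wh{g}_0$ argument. Without importing the homotopy inverse into the openness proof, the argument does not close.
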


\begin{proof}
Just as we defined the open sets $\mcu_{\infty,j,\alpha}$ in $\tY$, define $\mcu_{k,j,\alpha}=\mcb_{k,j,\alpha}\backslash\{\alpha\beta\in\tY_{\leq k}\mid \beta\in\pi_1(Y_j)\}$ for all $1\leq j\leq k<\infty$ and $\alpha\in\ntkj$. Set $\mcv_{k,j,\alpha}=\wt{f}_{k}(\mcu_{k,j,\alpha})$ even in the case $k=\infty$ and note that $\mcv_{k,j,\alpha}$ is homeomorphic to $D_j$ without its arc-endpoints. Since $\mcu_{k,j,\alpha}$ is saturated with respect to $\wt{f}_{k}$, $\mcv_{k,j,\alpha}$ is open in $\tZ_{k}$. Fix $j\in\bbn$ and $\alpha\in\ntij$. We will show that $\psi(\mcv_{\infty,j,\alpha})$ is open in $\lpc(\wh{Z}_0)$. Consider the open sets
\begin{enumerate}
\item $\scru=\wh{Y}_0\cap \left(  \mcu_{j,j,\varrho_j(\alpha)} \times \prod_{m\neq j}\tY_{\leq m}\right)$ in $\wh{Y}_0$,
\item $\scrv=\wh{Z}_0\cap \left(  \mcv_{j,j,\varrho_j(\alpha)} \times \prod_{m\neq j}Z_m\right)$ in $\wh{Z}_0$.
\end{enumerate}
Because $\wt{f}_j(\mcu_{j,j,\varrho_j(\alpha)})\subset\mcv_{j,j,\varrho_j(\alpha)}$ and $\wt{g}_j(\mcv_{j,j,\varrho_j(\alpha)})\subset\mcu_{j,j,\varrho_j(\alpha)}$, we have $\wh{f}_0(\scru)\subseteq\scrv$ and $\wh{g}_0(\scrv)\subseteq \scru$. 

Note that $\phi_{Y}^{-1}(\scru)$ is the disjoint union of the path-connected open sets $\mcu_{\infty,j,\beta}$, $\beta\in \varrho_{j}^{-1}(\varrho_j(\alpha))$. Because $\phi_{Y}:\tY\to \lpc(\wh{Y}_0)$ is a homeomorphism, the sets $\psi(\mcu_{\infty,j,\beta})$, $\beta\in \varrho_{j}^{-1}(\varrho_j(\alpha))$ are the path components of $\scru$. 

Similarly, $\psi^{-1}(\scrv)$ is the disjoint union of the open sets $\mcv_{\infty,j,\beta}$ for all $\beta\in \varrho_{j}^{-1}(\varrho_j(\alpha))$. Thus $\scrv$ is the disjoint union of the path connected sets $\psi(\mcv_{\infty,j,\beta})$, $\beta\in \varrho_{j}^{-1}(\varrho_j(\alpha))$. If there was a path $\ell:\ui\to \scrv$ with $\ell(0)\in\psi(\mcv_{\infty,j,\beta_0})$ and $\ell(1)\in\psi(\mcv_{\infty,j,\beta_1})$ for $\beta_0\neq \beta_1$ in $\varrho_{j}^{-1}(\varrho_j(\alpha))$, then $\wh{g}_0\circ \ell:\ui\to \scru$ would be a path in $\scru$ from a point in $\phi_{Y}(\mcu_{\infty,j,\beta_0})$ to a point in $\phi_{Y}(\mcu_{\infty,j,\beta_1})$. However, this contradicts the previous paragraph. We conclude that the sets $\psi(\mcv_{\infty,j,\beta})$, $\beta\in \varrho_{j}^{-1}(\varrho_j(\alpha))$ are the path components of $\scrv$. In particular, $\psi(\mcv_{\infty,j,\alpha})$ is open in $\lpc(\wh{Z}_0)$.

Since $\mcv_{\infty,j,\alpha}$ is locally path-connected, the restriction $\psi:\mcv_{\infty,j,\alpha}\to \lpc(\wh{Z}_0)$ is continuous. Lemma \ref{zimagelemma} implies that $\psi$ maps $\mcv_{\infty,j,\alpha}$ homeomorphically onto its image in $\wh{Z}_0$. Therefore, if $V\subseteq \mcv_{\infty,j,\alpha}$ is open, then $\psi(V)$ is open in the subspace $\psi(\mcv_{\infty,j,\alpha})$ of $\wh{Z}_0$. Since $\lpc(\wh{Z}_0)$ has a finer topology than $\wh{Z}_0$, $\psi(V)$ is open in the subspace $\psi(\mcv_{\infty,j,\alpha})$ of $\lpc(\wh{Z}_0)$. Thus $\psi:\mcv_{\infty,j,\alpha}\to \lpc(\wh{Z}_0)$ is an open embedding. We conclude that the restriction of $\psi$ on $\tZ\backslash \wt{f}(p^{-1}(y_0))$ is an open embedding.

To complete the proof that $\psi$ is a homeomorphism, we fix $\alpha\in p^{-1}(y_0)$ and set $z=\wt{f}(\alpha)$. It is enough to show that $\psi$ maps basic neighborhoods of $z$ to open sets in $\lpc(\wh{Z}_0)$. A basic neighborhood of $\alpha$ has the form $N(\alpha,V)$ where $V$ is an open neigborhood of $y_0$ in $Y$. In particular, we may assume there is a $J$ such that $\bigcup_{j>J}Y_j\subseteq V$ and $V\cap Y_j$ is an open arc in $Y_j\backslash X_j$ whenever $1\leq j\leq J$. Note that if $N(\alpha,V)$ meets $\mct_{\infty,j,\alpha}$, then $j>J$ and it follows that $\mct_{\infty,j,\alpha}\subseteq N(\alpha,V)$. Hence, $N(\alpha,V)$ is saturated with respect to $\wt{f}$ and $\wt{f}(N(\alpha,V))$ is a basic open neighborhood of $z$ in $\tZ$. We check that $\psi(\wt{f}(N(\alpha,V)))$ is open in $\lpc(\wh{Z}_0)$.

Note that $V_J=V\cap Y_{\leq J}$ is an open neighborhood of $y_0$ in $Y_{\leq J}$ consisting of a wedge of open arcs. Thus $N(\varrho_J(\alpha),V_J)$ is an open neighborhood of $\varrho_J(\alpha)$ in $\tY_{\leq J}$. In fact, $p_{\leq J}$ maps $N(\varrho_J(\alpha),V_J)$ homeomorphically onto $V_J$ and $N(\varrho_J(\alpha),V_J)$ does not meet any of the trees $\mct_{J,j,\gamma}$ in $\tY_{\leq J}$. Thus $\wt{f}_J$ maps $N(\varrho_J(\alpha),V_J)$ homeomorphically onto the open subset $\wt{f}_J(N(\varrho_J(\alpha),V_J))$ of $\tZ_J$. Recall that we originally constructed $H_j$ to be the constant homotopy on some neighborhood of each arc-endpoint of $\tY_j$. Thus, we may choose the size of the arcs in $V\cap Y_{\leq J}$ to be small enough so that $\wt{g}_J$ maps $\wt{f}_J(N(\varrho_J(\alpha),V_J))$ homeomorphically onto $N(\varrho_J(\alpha),V_J)$. Consider the open sets
\begin{enumerate}
\item $\scru=\wh{Y}_0\cap \left( N(\varrho_J(\alpha),V_J) \times \prod_{k\neq J}\tY_{\leq k}\right)$ in $\wh{Y}_0$,
\item $\scrv=\wh{Z}_0\cap \left(  \wt{f}_J( N(\varrho_J(\alpha),V_J)) \times \prod_{k\neq j}\tZ_k\right)$ in $\wh{Z}_0$.
\end{enumerate}
By our choice of $V$, we have $\wh{f}_0(\scru)\subseteq \scrv$ and $\wh{g}_0(\scrv)\subseteq \scru$. Now $\phi_{Y}^{-1}(\scru)=\varrho_{J}^{-1}(N(\wt{R}_J(\alpha),V_J))$ is a disjoint union of open sets of the form $N(\beta,V)$, $\beta\in \varrho_{J}^{-1}(\varrho_J(\alpha))$. Such sets are path connected. Therefore, the sets $\psi(N(\beta,V))$, $\beta\in \varrho_{J}^{-1}(\varrho_J(\alpha))$ are the path components of $\scru$.

It follows that $\psi^{-1}(\scrv)=\sigma_{J}^{-1}(\wt{f}(N(\varrho_J(\alpha),V_J)))$ is the disjoint union of path connected sets of the form $\wt{f}(N(\beta,V))$, $\beta\in \varrho_{J}^{-1}(\varrho_J(\alpha))$. Then $\scrv$ is the disjoint union of the path-connected sets $\psi(\wt{f}(N(\beta,V)))$, $\beta\in \varrho_{J}^{-1}(\varrho_J(\alpha))$. 

Suppose that there exists a path $\ell:\ui\to \scrv$ with $\ell(0)\in \psi(\wt{f}(N(\beta_0,V)))$ and $\ell(1)\in \psi(\wt{f}(N(\beta_1,V)))$ for $\beta_0\neq \beta_1$ in $\varrho_{J}^{-1}(\varrho_J(\alpha))$. Then $\wh{g}_0\circ \ell:\ui\to \scru$ is a path from a point in $N(\beta_0,V)$ to a point in $N(\beta_1,V)$. However, this is a contradiction of the fact that $N(\beta_0,V)$ and $N(\beta_1,V)$ are distinct path components of $\scru$. We conclude that the sets $\psi(\wt{f}(N(\beta,V)))$, $\beta\in \varrho_{J}^{-1}(\varrho_J(\alpha))$ are the path components of $\scrv$. In particular, $\psi(\wt{f}(N(\alpha,V)))$ is open in $\lpc(\wh{Z}_0)$.
\end{proof}

\begin{corollary}\label{continuitycor}
Suppose $W$ is locally path connected and $h:W\to Z$ is a function. Then the following are equivalent:
\begin{enumerate}
\item $h:W\to Z$ is continuous,
\item $\psi\circ h:W\to \wh{Z}_0$ is continuous,
\item $\sigma_k\circ h:W\to Z_k$ is continuous for all $k\in\bbn$.
\end{enumerate}
\end{corollary}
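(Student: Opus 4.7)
The plan is to chain together three facts: the homeomorphism $\psi:\tZ\to\lpc(\wh{Z}_0)$ from Theorem \ref{lpctheorem}, the universal property of the coreflection $\lpc$, and the universal property of the inverse limit $\wh{Z}=\varprojlim_{k}Z_k$. The only subtlety is keeping track of the distinction between the topologies on $\wh{Z}_0$ and $\lpc(\wh{Z}_0)$, which differ in general but agree for the purpose of testing continuity of maps out of locally path connected domains.

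For $(1)\Rightarrow(2)$, suppose $h:W\to\tZ$ is continuous. Since $\psi:\tZ\to\lpc(\wh{Z}_0)$ is a homeomorphism and the identity $\lpc(\wh{Z}_0)\to\wh{Z}_0$ is continuous, the composition $\psi\circ h:W\to\wh{Z}_0$ is continuous. Conversely, for $(2)\Rightarrow(1)$, assume $\psi\circ h:W\to\wh{Z}_0$ is continuous. Since $W$ is locally path connected, the universal property of the coreflection $\lpc$ (recalled in the discussion preceding Proposition \ref{lpcheprop}) implies that $\psi\circ h:W\to\lpc(\wh{Z}_0)$ is continuous. Composing with the homeomorphism $\psi^{-1}:\lpc(\wh{Z}_0)\to\tZ$, we conclude that $h$ is continuous.

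For the equivalence $(2)\Leftrightarrow(3)$, recall that $\wh{Z}=\varprojlim_{k}(Z_k,\wt{s}_{k+1,k})$ in $\mathbf{Top}$ and $\wh{Z}_0$ carries the subspace topology. The universal property of the inverse limit says that a function $f:W\to\wh{Z}$ is continuous if and only if $\wt{s}_k\circ f:W\to Z_k$ is continuous for every $k\in\bbn$; when the image of $f$ lies in the subspace $\wh{Z}_0$, the same holds for continuity into $\wh{Z}_0$. Applying this to $f=\psi\circ h$ and using $\sigma_k=\wt{s}_k\circ\psi$ yields $\wt{s}_k\circ(\psi\circ h)=\sigma_k\circ h$, so $(2)$ holds if and only if $\sigma_k\circ h:W\to Z_k$ is continuous for each $k$, which is exactly $(3)$.

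The main obstacle, such as there is one, is simply to note that one cannot directly apply the inverse limit universal property to $\tZ$ itself, because $\tZ\cong \lpc(\wh{Z}_0)$ has a strictly finer topology than $\wh{Z}_0$ in general; the reason the equivalence nonetheless holds is precisely that the hypothesis ``$W$ is locally path connected'' activates the adjunction between $\lpc$ and the inclusion $\mathbf{Lpc}\hookrightarrow\mathbf{Top}$, making the two topologies indistinguishable from the perspective of continuous functions out of $W$. No further estimates are required.
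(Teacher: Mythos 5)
Your proof is correct and is exactly the intended argument: the paper states this corollary without proof as an immediate consequence of Theorem \ref{lpctheorem} (the homeomorphism $\psi:\tZ\to\lpc(\wh{Z}_0)$), the adjunction characterizing $\lpc$, and the initial-topology property of the inverse limit $\wh{Z}=\varprojlim_k Z_k$ together with $\sigma_k=\wt{s}_k\circ\psi$. No gaps.
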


We will also need the following characterization of $\Lambda_{\infty,j,\alpha}$ and $\lambda_{\infty,j,\alpha}$.

\begin{proposition}\label{lambdaeventualprop}
Fix $j\in\bbn$, $\alpha\in\ntij$, and suppose that $\varrho_{k}(\alpha)=\alpha_{k} '\gamma_{k}$ for $\alpha_{k}'\in \ntkj$ and $\gamma_{k}\in\pi_1(Y_j)$. Then $\Lambda_{\infty,j,\alpha}=\Lambda_{k,j,\alpha_{k}'}\circ \varrho_k$ and $\lambda_{\infty,j,\alpha}=\lambda_{k,j,\alpha_{k}'}\circ \sigma_k$ for all but finitely many $k$.
\end{proposition}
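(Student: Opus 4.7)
The plan is to reduce everything to the stabilization result in Lemma \ref{stabilizationlemma}. By that lemma, if we write $\Lambda_{k,j,\alpha_{k}'}(\mct_{k,j,\alpha_{k}'})=\beta_{k}T_j$ with $\beta_{k}\in\pi_1(Y_j)$, then there exists $K\geq j$ such that $\beta_{k}=\beta_{\alpha}$ for every $k\geq K$ and, as shown inside that proof, $\gamma_{k}=1$ for all $k\geq K$. In particular $\varrho_{k}(\alpha)=\alpha_{k}'$ and $\Delta_{\gamma_{k}}=id_{\tY_j}$ for $k\geq K$. I will verify both equalities for every $k\geq K$.

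For the first equality, recall the commuting square used in the proof of Theorem \ref{littleimagetheorem}:
\[
\xymatrix{
\tY_j \ar[r]^-{\Delta_{\gamma_{k}}} & \tY_j\\
\mcb_{\infty,j,\alpha}\ar[u]^-{\Lambda_{\infty,j,\alpha}}\ar[r]_-{\varrho_{k}} & \mcb_{k,j,\alpha_{k}'}\ar[u]_-{\Lambda_{k,j,\alpha_{k}'}}
}
\]
Specializing to $k\geq K$, the top map becomes the identity, so $\Lambda_{k,j,\alpha_{k}'}\circ\varrho_{k}=\Lambda_{\infty,j,\alpha}$.

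For the second equality, combine Proposition \ref{littlelambdaprop2}, Proposition \ref{projectiondiagramprop}, and Proposition \ref{littlelambdaprop}, each of which supplies a commuting square relating the relevant quotient maps to the corresponding $f_{j,\beta}$'s and $\Lambda$'s. Writing $k\geq K$ so $\beta_{k}=\beta_{\alpha}$, we compute
\[
\lambda_{k,j,\alpha_{k}'}\circ\sigma_{k}\circ\wt{f}_{\infty,j,\alpha}
=\lambda_{k,j,\alpha_{k}'}\circ\wt{f}_{k,j,\alpha_{k}'}\circ\varrho_{k}
=f_{j,\beta_{\alpha}}\circ\Lambda_{k,j,\alpha_{k}'}\circ\varrho_{k}
=f_{j,\beta_{\alpha}}\circ\Lambda_{\infty,j,\alpha}
=\lambda_{\infty,j,\alpha}\circ\wt{f}_{\infty,j,\alpha},
\]
using the first equality in the penultimate step. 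Since the quotient map $\wt{f}_{\infty,j,\alpha}\colon\mcb_{\infty,j,\alpha}\to\mcd_{\infty,j,\alpha}$ is surjective, cancelling it on the right yields $\lambda_{k,j,\alpha_{k}'}\circ\sigma_{k}=\lambda_{\infty,j,\alpha}$ for all $k\geq K$.

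The main (only) obstacle is bookkeeping: one has to keep track of exactly which of the many commuting squares from the previous subsections is being invoked, and verify that the index shifts $\Delta_{\gamma_{k}}$ and $\delta_{\gamma_{k}}$ collapse to identities once stabilization kicks in. Once the stabilization index $K$ is in hand, everything is a direct diagram chase.
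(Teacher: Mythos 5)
Your proposal is correct and follows essentially the same route as the paper: stabilization via Lemma \ref{stabilizationlemma} to force $\gamma_k=1$ and $\beta_k=\beta_\alpha$ for large $k$, then the commuting squares from Theorem \ref{littleimagetheorem}, Proposition \ref{projectiondiagramprop}, Proposition \ref{littlelambdaprop}, and Proposition \ref{littlelambdaprop2}, concluding by right-cancelling the surjective quotient map. The paper merely packages the same chase as a single cube (C6) whose front face is deduced from the other five.
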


\begin{proof}
We focus on the case $k\geq j$ where we may write $\Lambda_{k,j,\alpha_{k}'}(\mct_{k,j,\alpha_{k}'})=\beta_k T_j$. By Lemma \ref{stabilizationlemma}, we have $\beta_k=\beta_{\alpha}$ and $\gamma_k=1$ for all but finitely many $k$. Fix a sufficiently large $k$ and consider the following cube.
\begin{equation}\label{C6}
\begin{tikzcd}[row sep=2.5em]
\mcb_{\infty,j,\alpha}\arrow[rr,"{\varrho_{k}}"] \arrow[dr,swap,"{\wt{f}_{\infty,j,\alpha}}"] \arrow[dd,swap,"{\Lambda_{\infty,j,\alpha}}"] &&
  \mcb_{k,j,\alpha_{k}'} \arrow[dd,swap,"{\Lambda_{k,j,\alpha_{k}'}}" near start] \arrow[dr,"\tf_{k,j,\alpha_{k}'}"] \\
& \mcd_{\infty,j,\alpha} \arrow[rr,swap,crossing over,"\sigma_k" near start] &&
  \mcd_{k,j,\alpha_{k}'} \arrow[dd,"{\lambda_{k,j,\alpha_{k}'}}"] \\
\tY_j \arrow[rr,"{\Delta_{\gamma_k}}" near end] \arrow[dr,swap,"{f_{j,\beta_{\alpha}}}"] && \tY_j \arrow[dr,swap,"{f_{j,\beta_k}}"] \\
& D_{j,\beta_{\alpha}} \arrow[rr,swap,"\delta_{\gamma_k}"] \arrow[uu,<-,crossing over,"{\lambda_{\infty,j,\alpha}}" near end]&& D_{j,\beta_k}
\end{tikzcd}\tag{C6}
\end{equation}
Since $\Delta_{\gamma_k}=id$ and $\delta_{\gamma_k}=id$, the bottom face commutes. Commutativity of the top face is proved in Proposition \ref{projectiondiagramprop}. The left face is given in Proposition \ref{littlelambdaprop2}. The right face is given in Proposition \ref{littlelambdaprop}. Commutativity of the back face was verified in the proof of Theorem \ref{littleimagetheorem}. Since the diagonal maps are surjective, the front face commutes. Because $\Delta_{\gamma_k}=id$ and $\delta_{\gamma_k}=id$, the front and back faces collapse into the desired triangles.
\end{proof}

\subsection{A homotopy inverse $\wt{g}$ for $\wt{f}$}

In the next theorem, we define a homotopy inverse for $\wt{f}$.

\begin{theorem}\label{hitheorem}
The quotient map $\tf:(\tY,\ty_0)\to (\tZ,\tz_0)$ has a based homotopy inverse $\tg:(\tZ,\tz_0)\to (\tY,\ty_0)$. In particular, there are homotopies $\wt{H}$ from $id_{\tY}$ to $\tg\circ \tf$ and $\wt{G}$ from $id_{\tZ}$ to $\tf\circ \tg$ that make the following square commute.
\[\xymatrix{
\tY\times \ui \ar[r]^-{\wt{H}} \ar[d]_-{\tf\times id} & \tY \ar[d]^-{\wt{f}}\\
\tZ\times \ui \ar[r]_-{\wt{G}} & \tZ
}\]
\end{theorem}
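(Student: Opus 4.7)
The plan is to transport the homotopy equivalence $\wh{f}_0:\wh{Y}_0\to\wh{Z}_0$ and the accompanying homotopies from Corollary \ref{zeromapscorollary} across the canonical maps $\phi_Y:\tY\to\wh{Y}_0$ and $\psi:\tZ\to\wh{Z}_0$. The key point is that although $\phi_Y$ and $\psi$ are merely continuous bijections, they become homeomorphisms after applying the locally path-connected coreflection (Remark \ref{ilimremark} and Theorem \ref{lpctheorem}), and $\lpc$ commutes with finite products — in particular with $(-)\times I$ — so the $\lpc$-machinery handles continuity of the transported homotopies automatically.

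First, I define $\tg:(\tZ,\tz_0)\to(\tY,\ty_0)$ by $\tg=\phi_Y^{-1}\circ\wh{g}_0\circ\psi$. Since $\tZ$ is locally path connected and the analogue of Corollary \ref{continuitycor1} applies to $\phi_Y$ (because $\phi_Y:\tY\to\lpc(\wh{Y}_0)$ is a homeomorphism), continuity of $\tg$ reduces to continuity of $\phi_Y\circ\tg=\wh{g}_0\circ\psi$ as a map into $\wh{Y}_0$, which is immediate since $\psi$ and $\wh{g}_0$ are continuous. The basepoint condition $\tg(\tz_0)=\ty_0$ holds because all three maps are based. From Theorem \ref{psitheorem} we have $\psi\circ\tf=\wh{f}_0\circ\phi_Y$, so inverting $\psi$ gives $\tf=\psi^{-1}\circ\wh{f}_0\circ\phi_Y$; this identity will be used repeatedly below.

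Next, I define the homotopies by
\[\wt{H}=\phi_Y^{-1}\circ\wh{H}_0\circ(\phi_Y\times id_I),\qquad \wt{G}=\psi^{-1}\circ\wh{G}_0\circ(\psi\times id_I).\]
Because $\tY\times I$ and $\tZ\times I$ are locally path connected and $\lpc$ commutes with products with $I$, the same $\lpc$-characterization reduces continuity to that of the middle compositions into $\wh{Y}_0$ and $\wh{Z}_0$, which is Corollary \ref{zeromapscorollary}. The endpoint conditions $\wt{H}(\cdot,0)=id_{\tY}$, $\wt{H}(\cdot,1)=\tg\circ\tf$, $\wt{G}(\cdot,0)=id_{\tZ}$, and $\wt{G}(\cdot,1)=\tf\circ\tg$ follow by setting $t=0,1$ in the definitions, invoking the corresponding endpoint properties of $\wh{H}_0$ and $\wh{G}_0$, and using the two identities $\tg=\phi_Y^{-1}\circ\wh{g}_0\circ\psi$ and $\tf=\psi^{-1}\circ\wh{f}_0\circ\phi_Y$ to match the compositions. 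Both homotopies are based because $\wh{H}_0$ and $\wh{G}_0$ are based and the coreflection identities preserve basepoints.

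Finally, the commutative square $\tf\circ\wt{H}=\wt{G}\circ(\tf\times id_I)$ is a direct diagram chase: expanding both sides using the definitions of $\tf$, $\wt{H}$, and $\wt{G}$ and cancelling the outer $\phi_Y^{\pm 1}$'s and $\psi^{\pm 1}$'s, the required identity reduces to $\wh{f}_0\circ\wh{H}_0=\wh{G}_0\circ(\wh{f}_0\times id_I)$, which is the square in Corollary \ref{zeromapscorollary}. The only step that requires genuine care is continuity, and this is precisely what the $\lpc$-coreflection was installed in Section 2 to handle; everything else is a formal consequence of the analogous statements at the level of $\wh{Y}_0$ and $\wh{Z}_0$.
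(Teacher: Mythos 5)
Your proposal is correct and follows essentially the same route as the paper's own proof: define $\wt{g}=\phi_Y^{-1}\circ\wh{g}_0\circ\psi$ and transport $\wh{H}_0$, $\wh{G}_0$ across the coreflection homeomorphisms, with continuity handled by the $\lpc$-characterizations (Corollaries \ref{continuitycor1} and \ref{continuitycor}) and the commuting square reduced to Corollary \ref{zeromapscorollary}. No gaps.
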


\begin{proof}
Recall that $\wh{f}_0:\wh{Y}_0\to \wh{Z}_0$ and $\wh{g}_0:\wh{Z}_0\to \wh{Y}_0$ are homotopy inverses. Also, $\wh{H}:\wh{Y}_0\times \ui\to\wh{Y}_0$ is a based homotopy from $id_{\wh{Y}_0}$ to $\wh{g}_0\circ \wh{f}_0$ and $\wh{G}:\wh{Z}_0\times \ui\to\wh{Z}_0$ is a based homotopy from $id_{\wh{Z}_0}$ to $\wh{f}_0\circ  \wh{g}_0$. Set $\wt{g}=\phi^{-1}\circ\wh{g}_0\circ \psi:\tZ\to \tY$ so that the left square commutes.
\[\xymatrix{
\tY \ar[r]^-{\phi}_-{\cong} & \lpc(\wh{Y}_0) \ar[r]^-{id} & \wh{Y}_0 \\
\tZ \ar[u]^-{\wt{g}} \ar[r]_-{\psi}^-{\cong} & \lpc(\wh{Z}_0) \ar[u]_-{\lpc(\wh{g}_0)} \ar[r]^-{id} & \wh{Z}_0 \ar[u]_-{\wh{g}_0}
}\]

Since $\phi_{Y}\circ \wt{g}=\wh{g}_0\circ \psi:\tZ\to \wh{Y}_0$ is continuous and $\tZ$ is locally path connected, $\wt{g}$ is continuous by Corollary \ref{continuitycor1}.

Since $\lpc$ preserves products, we have $\lpc(\wh{Y}_0\times \ui)=\lpc(\wh{Y}_0)\times \ui$. Define $\wt{H}:\wt{Y}\times\ui\to\wt{Y}$ by $\wt{H}=\phi_{Y}^{-1}\circ \wh{H}_0\circ (\phi_{Y}\times id)$. The same argument used for $\wt{g}$ gives the continuity of $\wt{H}$. Define $\wt{G}:\tZ\times\ui\to \tZ$ by $\wt{G}=\psi^{-1}\circ\wh{G}_0\circ (\psi\times id)$. Since $\psi\circ \wt{G}=\wh{G}_0\circ (\psi\times id)$ is continuous, $\wt{G}$ is continuous by Corollary \ref{continuitycor}. Based on the definitions given and the established results for $\wh{H}_0$ and $\wh{G}_0$, a straightforward check shows that $\wt{H}$ is a based homotopy from $id_{\tY}$ to $\wt{g}\circ \wt{f}$ and $\wt{G}$ is a based homotopy from $id_{\tZ}$ to $\wt{f}\circ \wt{g}$.

Since $\wh{f}_0\circ \wh{H}_0=\wh{G}_0\circ (\wh{f}_0\times id)$ (Corollary \ref{zeromapscorollary}), a direct verification from the formulas gives $\wt{G}\circ (\wt{f}\times id)=\wt{f}\circ \wt{H}$.
\end{proof}

\begin{remark}
Recall that $\wh{g}_0$ is the restriction of $\wh{g}=(\wt{g}_k)$. Note that $\wt{g}$ is defined precisely so that the left square in the following diagram commutes. In particular, $\psi(z)=(\sigma_k(z))$. Commutativity of other parts of the diagram have already been established as well.
\[\xymatrix{
\tY  \ar@/^1pc/[rr]^-{\varrho_k} \ar[r]_-{\phi_{Y}} & \wh{Y}_0 \ar[r]_-{\wt{r}_k} & \wt{Y}_{\leq k} \\
\tZ  \ar[u]^-{\wt{g}} \ar@/_1pc/[rr]_-{\sigma_k} \ar[r]^-{\psi} & \wh{Z}_0 \ar[u]^-{\wh{g}_0}  \ar[r]^-{\wt{s}_k} & \tZ_k \ar[u]_-{\wt{g}_k}
}\]It follows that $\varrho_k\circ \wt{g}=\wt{g}_k\circ \sigma_k$ for all $k\in\bbn$.
\end{remark}

We will have need to characterize the behavior of $\wt{g}$, $\wt{H}$, and $\wt{G}$ on some relevant subspaces. We begin with $\wt{H}$.

\begin{lemma}\label{hlemma}
For all $j\in\bbn$ and $\alpha\in\ntij$, we have $\wt{H}(\mcb_{\infty,j,\alpha}\times\ui)\subseteq \mcb_{\infty,j,\alpha}$. Moreover, if $\wt{H}_{\infty,j,\alpha}:\mcb_{\infty,j,\alpha}\times\ui\to \mcb_{\infty,j,\alpha}$ is the corresponding restriction of $\wt{H}$, then the following diagram commutes
\[\xymatrix{
\mcb_{\infty,j,\alpha}\times\ui \ar[d]_-{\Lambda_{\infty,j,\alpha}\times id} \ar[r]^-{\wt{H}_{\infty,j,\alpha}} &\mcb_{\infty,j,\alpha} \ar[d]^-{\Lambda_{\infty,j,\alpha}} \\
\tY_j\times \ui \ar[r]_-{H_{j,\beta_{\alpha}}} & \tY_j
}\]
\end{lemma}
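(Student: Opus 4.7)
The strategy is to characterize $\wt{H}$ through its projections under the maps $\varrho_k$ and to exploit the stabilization of trees established in Lemma~\ref{stabilizationlemma} together with Proposition~\ref{lambdaeventualprop}. Fix $j$, $\alpha\in\ntij$, and write $\varrho_k(\alpha)=\alpha_k'\gamma_k$ with $\alpha_k'\in\ntkj$ and $\gamma_k\in\pi_1(Y_j)$. From the definition $\wt{H}=\phi_Y^{-1}\circ\wh{H}_0\circ(\phi_Y\times id)$ and the limit presentation $\wh{H}_0=\varprojlim_k\wt{H}_k$, one has the identity $\varrho_k\circ\wt{H}=\wt{H}_k\circ(\varrho_k\times id)$ for every $k\in\bbn$.

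First I would verify the containment $\wt{H}(\mcb_{\infty,j,\alpha}\times\ui)\subseteq\mcb_{\infty,j,\alpha}$. By Theorem~\ref{littleimagetheorem}, it is enough to check that for $(b,t)\in\mcb_{\infty,j,\alpha}\times\ui$, the point $\phi_Y(\wt{H}(b,t))$ lies in $\prod_{k\geq j}\mcb_{k,j,\alpha_k'}\times\prod_{k<j}\{\alpha_k'\}$. For $k\geq j$ this follows from the fact that $\wt{H}_{k,j,\alpha_k'}$ has image in $\mcb_{k,j,\alpha_k'}$, which is part of the piecewise construction of $\wt{H}_k$ in the proof of Theorem~\ref{homotopytheorem}. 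For $k<j$, the projection $\varrho_k(b)=\alpha_k'$ already lies in $p_{\leq k}^{-1}(y_0)$; since the model homotopies $H_{j,\beta}$ are constant on a neighborhood of every arc-endpoint, $\wt{H}_k$ fixes each arc-endpoint, so $\wt{H}_k(\alpha_k',t)=\alpha_k'$ as required.

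Next I would address the commutativity of the diagram. By Lemma~\ref{stabilizationlemma}, for all sufficiently large $k$ one has $\gamma_k=1$ (so $\alpha_k'=\varrho_k(\alpha)$) and $\beta_k=\beta_\alpha$. For such $k$, Proposition~\ref{lambdaeventualprop} gives $\Lambda_{\infty,j,\alpha}=\Lambda_{k,j,\alpha_k'}\circ\varrho_k$. Applying this to $\wt{H}_{\infty,j,\alpha}(b,t)$ and using the identity $\varrho_k\circ\wt{H}=\wt{H}_k\circ(\varrho_k\times id)$, we compute
\[
\Lambda_{\infty,j,\alpha}\bigl(\wt{H}(b,t)\bigr)=\Lambda_{k,j,\alpha_k'}\bigl(\wt{H}_{k,j,\alpha_k'}(\varrho_k(b),t)\bigr).
\]
By the very definition of $\wt{H}_{k,j,\alpha_k'}$ in the proof of Theorem~\ref{homotopytheorem}, the right-hand side equals $H_{j,\beta_k}(\Lambda_{k,j,\alpha_k'}(\varrho_k(b)),t)$, and since $\beta_k=\beta_\alpha$ and $\Lambda_{k,j,\alpha_k'}\circ\varrho_k=\Lambda_{\infty,j,\alpha}$, this is precisely $H_{j,\beta_\alpha}(\Lambda_{\infty,j,\alpha}(b),t)$.

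The main obstacle is bookkeeping rather than any genuinely new idea: one must keep clearly in mind the distinction between $\varrho_k(\alpha)=\alpha_k$ and its non-$Y_j$-terminal representative $\alpha_k'$, and recognize that the apparent mismatch (encoded by the deck translation $\Delta_{\gamma_k}$) eventually disappears by stabilization. Once $k$ is taken large enough that $\gamma_k=1$ and $\beta_k=\beta_\alpha$, the cube analogous to \eqref{C2} degenerates so that $\varrho_k$ conjugates $\wt{H}_{\infty,j,\alpha}$ to $\wt{H}_{k,j,\alpha_k'}$, and the model property of $H_{j,\beta_\alpha}$ transfers directly to the desired commutativity.
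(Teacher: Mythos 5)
Your proof is correct and takes essentially the same approach as the paper: the containment is verified coordinatewise through $\phi_Y$ via Theorem \ref{littleimagetheorem} and the finite-stage properties of $\wt{H}_k$, and the commutativity is obtained by passing to $k$ large enough that Lemma \ref{stabilizationlemma} and Proposition \ref{lambdaeventualprop} apply. You have simply unrolled the paper's Cube \ref{C7} into an equational computation, and in doing so you make explicit the identity $\varrho_k\circ\wt{H}=\wt{H}_k\circ(\varrho_k\times id)$ (the top face of that cube), which the paper uses tacitly.
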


\begin{proof}
Write $\varrho_{k}(\alpha)=\alpha_{k} '\gamma_{k}$ for $\alpha_{k}'\in \ntkj$, and $\gamma_{k}\in\pi_1(Y_j)$. For brevity, write $\mathbf{A}=\wh{Y}_0\cap \left(\prod_{k\geq j}\mcb_{k,j,\alpha_{k}'}\times \prod_{k=1}^{j-1}\{\alpha_{k}'\}\right)$. We have already established the following.
\begin{enumerate}
\item $\phi_{Y}$ maps $\mcb_{\infty,j,\alpha}$ homeomorphically onto $\mathbf{A}$,
\item $\wh{H}_0$ is given by $\wt{H}_k$ in each coordinate,
\item $\wt{H}_k(\{\alpha_{k}'\}\times \ui)=\{\alpha_{k}'\}$ and $\wt{H}_k(\mcb_{k,j,\alpha_{k}'}\times \ui)\subseteq \mcb_{k,j,\alpha_{k}'}$ for all $k$.
\end{enumerate}
Therefore, if $((a_k),t)\in \mathbf{A}\times \ui$, then (2) and (3) give $\wh{H}_0((a_k),t)=(\wt{H}_k((a_k,t)))\in\mathbf{A}$. Thus $\wh{H}_0(\mathbf{A}\times \ui)\subseteq \mathbf{A}$. This gives
\begin{eqnarray*}
\wt{H}(\mcb_{\infty,j,\alpha}\times \ui) &=& \phi_{Y}^{-1}\circ \wh{H}_0\circ (\phi_{Y}\times id)(\mcb_{\infty,j,\alpha}\times \ui)\\
&=& \phi_{Y}^{-1}\circ \wh{H}_0(\mathbf{A}\times\ui)\\
&\subseteq & \phi_{Y}^{-1}(\mathbf{A})\\
&=& \mcb_{\infty,j,\alpha}
\end{eqnarray*}
To verify commutativity of the square, we let $\Lambda_{k,j,\alpha_{k}'}(\mct_{k,j,\alpha_{k}'})=\beta_k T_j$ when $k\geq j$ so that $\beta_{\alpha}$ is the eventual value of the sequence $\{\beta_k\}_{k\geq j}$. Consider the following cube.
\begin{equation}\label{C7}
\begin{tikzcd}[row sep=2.5em]
\mcb_{\infty,j,\alpha}\times I \arrow[rr,"{\wt{H}_{\infty,j,\alpha}}"] \arrow[dr,swap,"{\varrho_k\times id}"] \arrow[dd,swap,"{\Lambda_{\infty,j,\alpha}\times id}"] &&
  \mcb_{\infty,j,\alpha} \arrow[dd,swap,"{\Lambda_{\infty,j,\alpha}}" near start] \arrow[dr,"\varrho_k"] \\
& \mcb_{k,j,\alpha_{k}'}\times\ui \arrow[rr,swap,crossing over,"\wt{H}_{k,j,\alpha_{k}'}" near start] &&
  \mcb_{k,j,\alpha_{k}'} \arrow[dd,"{\Lambda_{k,j,\alpha_{k}'}\times id}"] \\
\tY_j\times I \arrow[rr,"{H_{j,\beta_{\alpha}}}" near end] \arrow[dr,swap,"{\Delta_{\gamma_k}\times id}"] && \tY_j \arrow[dr,swap,"{\Delta_{\gamma_k}}"] \\
& \tY_j\times I \arrow[rr,swap,"H_{j,\beta_{k}}"] \arrow[uu,<-,crossing over,"\Lambda_{k,j,\alpha_{k}'}" near end]&& \tY_j
\end{tikzcd}\tag{C7}
\end{equation}
Fix $k$ sufficiently large so that $\beta_k=\beta_{\alpha}$ and $\gamma_{k}=1$. For such $k$, $H_{j,\beta_{\alpha}}=H_{j,\beta_k}$ and $\Delta_{\gamma_k}=id$, which makes the bottom face commute. Proposition \ref{lambdaeventualprop} gives the commutativity of the left and right faces commute. The front face commutes for all $k\geq j$ by the construction of $\wt{H}_{k}$. All of the diagonal maps are homeomorphisms. Thus the back face commutes.
\end{proof}

We characterize the behavior of $\wt{g}$ and $\wt{G}$ in a similar fashion.

\begin{lemma}
For all $j\in\bbn$ and $\alpha\in\ntij$, we have $\wt{g}(\mcd_{\infty,j,\alpha})\subseteq \mcb_{\infty,j,\alpha}$ and $\wt{G}(\mcd_{\infty,j,\alpha}\times\ui)\subseteq \mcd_{\infty,j,\alpha}$.
\end{lemma}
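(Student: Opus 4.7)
The plan is to follow the template set by Lemma \ref{hlemma} essentially verbatim, using the formulas $\wt{g}=\phi_{Y}^{-1}\circ \wh{g}_0\circ \psi$ and $\wt{G}=\psi^{-1}\circ \wh{G}_0\circ (\psi\times id)$ together with the coordinate-wise descriptions $\wh{g}_0=(\wt{g}_k)$ and $\wh{G}_0=(\wt{G}_k)$. Write $\varrho_k(\alpha)=\alpha_k'\gamma_k$ with $\alpha_k'\in\ntkj$ and $\gamma_k\in\pi_1(Y_j)$ for $k\geq j$, and set
\[\mathbf{A}=\wh{Y}_0\cap\Bigl(\prod_{k\geq j}\mcb_{k,j,\alpha_k'}\times\prod_{k=1}^{j-1}\{\alpha_k'\}\Bigr),\qquad \mathbf{D}=\wh{Z}_0\cap\Bigl(\prod_{k\geq j}\mcd_{k,j,\alpha_k'}\times\prod_{k=1}^{j-1}\{\wt{f}_k(\alpha_k')\}\Bigr).\]
By Theorem \ref{littleimagetheorem} and Lemma \ref{zimagelemma}, $\phi_{Y}$ restricts to a homeomorphism $\mcb_{\infty,j,\alpha}\to\mathbf{A}$ and $\psi$ restricts to a homeomorphism $\mcd_{\infty,j,\alpha}\to\mathbf{D}$.

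For the first inclusion, it suffices to show $\wh{g}_0(\mathbf{D})\subseteq \mathbf{A}$. By construction of the maps $\wt{g}_k$, we have $\wt{g}_k(\mcd_{k,j,\alpha_k'})\subseteq \mcb_{k,j,\alpha_k'}$ for $k\geq j$, and the first-paragraph argument of the well-definedness proof of $\wt{g}_k$ (combined with the fact that on the arc-endpoint $\wt{f}_k(\alpha_k')$ we have $\wt{g}_k(\wt{f}_k(\alpha_k'))=\alpha_k'$) shows $\wt{g}_k(\wt{f}_k(\alpha_k'))=\alpha_k'$ for $1\leq k<j$. Applying $\wh{g}_0$ coordinate-wise then gives $\wh{g}_0(\mathbf{D})\subseteq\mathbf{A}$, so
\[\wt{g}(\mcd_{\infty,j,\alpha})=\phi_{Y}^{-1}(\wh{g}_0(\psi(\mcd_{\infty,j,\alpha})))=\phi_{Y}^{-1}(\wh{g}_0(\mathbf{D}))\subseteq \phi_{Y}^{-1}(\mathbf{A})=\mcb_{\infty,j,\alpha}.\]
The second inclusion is analogous: since each $\wt{G}_k$ sends $\mcd_{k,j,\alpha_k'}\times I$ into $\mcd_{k,j,\alpha_k'}$ and fixes the basepoint $\wt{f}_k(\alpha_k')\in p_{\leq k}^{-1}(y_0)$ throughout the homotopy (by Theorem \ref{homotopytheorem} and the fact that $\wt{G}_k=\wt{f}_k\circ\wt{H}_k\circ\wt{g}_k$ up to homotopy, or equivalently, by the construction of $G_{j,\beta}$ being constant on arc-endpoints), applying $\wh{G}_0$ coordinate-wise yields $\wh{G}_0(\mathbf{D}\times I)\subseteq \mathbf{D}$. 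Therefore
\[\wt{G}(\mcd_{\infty,j,\alpha}\times I)=\psi^{-1}(\wh{G}_0(\mathbf{D}\times I))\subseteq \psi^{-1}(\mathbf{D})=\mcd_{\infty,j,\alpha}.\]

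After these inclusions are established, one could further record the corresponding commutative squares analogous to the one in Lemma \ref{hlemma}: the restrictions $\wt{g}_{\infty,j,\alpha}:\mcd_{\infty,j,\alpha}\to\mcb_{\infty,j,\alpha}$ and $\wt{G}_{\infty,j,\alpha}:\mcd_{\infty,j,\alpha}\times I\to\mcd_{\infty,j,\alpha}$ correspond under $\Lambda_{\infty,j,\alpha}$ and $\lambda_{\infty,j,\alpha}$ to $g_{j,\beta_{\alpha}}$ and $G_{j,\beta_{\alpha}}$ respectively. This is proved by the same cube argument as Lemma \ref{hlemma}, replacing $H_{j,\beta_k}$ with $g_{j,\beta_k}$ (resp.\ $G_{j,\beta_k}$) and $\Delta_{\gamma_k}$ with $\delta_{\gamma_k}$ where appropriate, using Proposition \ref{lambdaeventualprop} for eventual stabilization ($\gamma_k=1$ and $\beta_k=\beta_\alpha$) to force the bottom face to commute for large $k$, and invoking the coherence square $\wt{g}_k\circ \wt{s}_{k+1,k}=\wt{r}_{k+1,k}\circ \wt{g}_{k+1}$ (and its analogue for $\wt{G}_k$) together with the surjectivity of the diagonal maps.

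The only potential obstacle is verifying that $\wh{G}_0$ really does preserve the fixed coordinates $\wt{f}_k(\alpha_k')$ for $1\leq k<j$; this reduces, via the construction in Section \ref{sectiontranslatesoftj}, to the fact that $G_{j,\beta}$ is the constant homotopy on the set of arc-endpoints of $D_{j,\beta}$, which was explicitly recorded as property (1) of the $\beta$-translated homotopies. Once this is in hand, the remainder is a direct diagram-chase that mirrors Lemma \ref{hlemma}.
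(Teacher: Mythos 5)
Your proof is correct and takes essentially the same approach as the paper: identify $\mcd_{\infty,j,\alpha}$ and $\mcb_{\infty,j,\alpha}$ with sub-inverse-limits via $\psi$ and $\phi_Y$, then verify the coordinate-wise inclusions $\wt{g}_k(\mcd_{k,j,\alpha_k'})\subseteq\mcb_{k,j,\alpha_k'}$ (resp. $\wt{G}_k(\mcd_{k,j,\alpha_k'}\times I)\subseteq\mcd_{k,j,\alpha_k'}$) for $k\geq j$ and the arc-endpoint fixing for $k<j$. The extra commentary on the commutative squares duplicates the content of the subsequent Proposition \ref{gprop} and is not needed for this lemma.
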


\begin{proof}
Since $\psi$ is bijective, we may show $\phi_{Y}\circ\wt{g}(\mcd_{\infty,j,\alpha})\subseteq \phi_{Y}(\mcb_{\infty,j,\alpha})$. Write $\varrho_{k}(\alpha)=\alpha_{k} '\gamma_{k}$ for $\alpha_{k}'\in \ntkj$, and $\gamma_{k}\in\pi_1(Y_j)$. For brevity, write $\mathbf{A}=\wh{Y}_0\cap \left(\prod_{k\geq j}\mcb_{k,j,\alpha_{k}'}\times \prod_{k=1}^{j-1}\{\alpha_{k}'\}\right)$ and $\mathbf{B}=\wh{Z}_0\cap \left(\prod_{k\geq j}\mcd_{k,j,\alpha_{k}'}\times \prod_{k=1}^{j-1}\{\wt{f}_k(\alpha_{k}')\}\right)$. We have already established the following:
\begin{enumerate}
\item $\phi_{Y}$ is a bijection and $\phi_{Y}(\mcb_{\infty,j,\alpha})=\mathbf{A}$
\item $\psi$ is a bijection and $\psi(\mcd_{\infty,j,\alpha})=\mathbf{B}$
\item $\wt{g}_k(\wt{f}_k(\alpha_{k}'))=\alpha_{k}'$ for all $k$ and $\wt{g}_k(\mcd_{k,j,\alpha_{k}'})=\mcb_{k,j,\alpha_{k}'}$ for all $k\geq j$.
\end{enumerate}
Note that (3) gives $\wh{g}_0(\mathbf{B})\subseteq\mathbf{A}$. Thus
\[
\wt{g}(\mcd_{\infty,j,\alpha}) = \phi_{Y}^{-1}\circ \wh{g}_0\circ \psi(\mcd_{\infty,j,\alpha})
= \phi_{Y}^{-1}\circ \wh{g}_0\left(\mathbf{B}\right) \subseteq  \phi_{Y}^{-1}\left(\mathbf{A}\right)
= \mcb_{\infty,j,\alpha}
\]
Similarly, the fact that $\wt{G}_k(\{\wt{f}_k(\alpha_{k}')\}\times\ui)=\alpha_{k}'$ for all $k$ and $\wt{G}_k(\mcd_{k,j,\alpha_{k}'}\times\ui)=\mcd_{k,j,\alpha_{k}'}$ for all $k\geq j$ gives $\wh{G}_0(\mathbf{B}\times\ui)\subseteq \mathbf{B}$. Thus $
\wt{G}(\mcd_{\infty,j,\alpha}\times\ui) =\psi^{-1}\circ \wh{G}_0\circ (\psi\times id)(\mcd_{\infty,j,\alpha}\times\ui)
= \psi^{-1}\circ \wh{G}_0\left(\mathbf{B}\times \ui\right)\subseteq  \psi^{-1}\left(\mathbf{B}\right)
= \mcd_{\infty,j,\alpha}$.
\end{proof}

For each $j\in\bbn$ and $\alpha\in\ntij$, let $\wt{g}_{\infty,j,\alpha}:\mcd_{\infty,j,\alpha}\to \mcb_{\infty,j,\alpha}$ and $\wt{G}_{\infty,j,\alpha}:\mcd_{\infty,j,\alpha}\times \ui\to \mcd_{\infty,j,\alpha}$ be the corresponding restrictions of $\wt{g}$ and $\wt{G}$.

\begin{proposition}\label{gprop}
For every $j\in\bbn$ and $\alpha\in\ntij$, the following squares commute.
\[\xymatrix{
\mcb_{\infty,j,\alpha} \ar[r]^-{\Lambda_{\infty,j,\alpha}} & \tY_j & \mcd_{\infty,j,\alpha}\times\ui \ar[d]_-{\wt{G}_{\infty,j,\alpha}} \ar[rr]^-{\lambda_{\infty,j,\alpha}\times id} && D_{j,\beta_{\alpha}}\times\ui \ar[d]^-{G_{j,\beta_{\alpha}}} \\
\mcd_{\infty,j,\alpha}  \ar[u]^-{g_{\infty,j,\alpha}} \ar[r]_-{\lambda_{\infty,j,\alpha}} & D_{j,\beta_{\alpha}} \ar[u]_-{\wt{g}_{j,\beta_{\alpha}}}  & \mcd_{\infty,j,\alpha}  \ar[rr]_-{\lambda_{\infty,j,\alpha}} && D_{j,\beta_{\alpha}}
}\]
\end{proposition}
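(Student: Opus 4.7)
The plan is to mirror the strategy of Lemma \ref{hlemma}: assemble a cube whose bottom face collapses to the desired triangle once we pass to indices $k$ large enough for the stabilization of Lemma \ref{stabilizationlemma} to take effect. Fix $j\in\bbn$ and $\alpha\in\ntij$ and, for each $k\geq j$, write $\varrho_k(\alpha)=\alpha_k'\gamma_k$ with $\alpha_k'\in\ntkj$ and $\gamma_k\in\pi_1(Y_j)$, and write $\Lambda_{k,j,\alpha_k'}(\mct_{k,j,\alpha_k'})=\beta_k T_j$. By Lemma \ref{stabilizationlemma}, choose $K\geq j$ so that $\gamma_k=1$ and $\beta_k=\beta_\alpha$ for all $k\geq K$.

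The next step is to record two coherence identities. From $\wt{g}=\phi_Y^{-1}\circ \wh{g}_0\circ\psi$ and $\wt{G}=\psi^{-1}\circ \wh{G}_0\circ(\psi\times id)$, together with the fact that $\wh{g}_0=\varprojlim_k \wt{g}_k$ and $\wh{G}_0=\varprojlim_k \wt{G}_k$ (so that postcomposition with $\wt{r}_k$ or $\wt{s}_k$ picks out the $k$-th coordinate), we obtain $\varrho_k\circ \wt{g}=\wt{g}_k\circ\sigma_k$ and $\sigma_k\circ\wt{G}=\wt{G}_k\circ(\sigma_k\times id)$ for every $k\in\bbn$. Restricted to $\mcd_{\infty,j,\alpha}$ and $\mcd_{\infty,j,\alpha}\times I$, these identities land in $\mcb_{k,j,\alpha_k'}$ and $\mcd_{k,j,\alpha_k'}$ respectively, so they restrict to $\varrho_k\circ\wt{g}_{\infty,j,\alpha}=\wt{g}_{k,j,\alpha_k'}\circ\sigma_k$ and $\sigma_k\circ\wt{G}_{\infty,j,\alpha}=\wt{G}_{k,j,\alpha_k'}\circ(\sigma_k\times id)$.

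For the left square, fix any $k\geq K$ and chase a cube whose top face is the desired equation $\Lambda_{\infty,j,\alpha}\circ \wt{g}_{\infty,j,\alpha}=g_{j,\beta_\alpha}\circ\lambda_{\infty,j,\alpha}$, whose side faces are obtained via $\varrho_k$ and $\sigma_k$, whose left and right faces commute by Proposition \ref{lambdaeventualprop}, whose back face is Lemma \ref{gdeflemma} applied to $\wt{g}_{k,j,\alpha_k'}$, whose front face is a case of the right square in Remark \ref{deltagammaremark}, and whose bottom face is the $(\Delta_{\gamma_k},\delta_{\gamma_k})$-square of that same remark. Because $\gamma_k=1$ and $\beta_k=\beta_\alpha$ for $k\geq K$, the front and bottom faces collapse to identities; since all side maps $\Lambda$ and $\lambda$ are homeomorphisms and the remaining five faces commute, the top face commutes. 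The right square is proved by the identical cube with $\wt{g}$ replaced by $\wt{G}$, the map $g_{j,\beta_\alpha}$ replaced by $G_{j,\beta_\alpha}$, and Lemma \ref{gdeflemma} replaced by the definition of $\wt{G}_{k,j,\alpha_k'}$ from the proof of Theorem \ref{homotopytheorem}; the rest of the argument goes through verbatim.

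The main obstacle is nothing more than the bookkeeping in the cube: every face either occurs as a definition, has been checked in an earlier cube chase (most directly the ones in Lemma \ref{hlemma} and Lemma \ref{gdeflemma}), or collapses to an identity thanks to the eventual triviality of $\gamma_k$. There is no genuinely new geometric content beyond what the stabilization lemma already packages, so the proof is a short, purely diagrammatic consequence of the infrastructure assembled in Sections \ref{sectiontranslatesoftj}--\ref{sectionstructureofyk} and the characterization of $\Lambda_{\infty,j,\alpha}$ and $\lambda_{\infty,j,\alpha}$ in Proposition \ref{lambdaeventualprop}.
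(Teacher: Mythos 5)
Your proof is correct. For the left square your argument is essentially the paper's: after invoking Lemma \ref{stabilizationlemma} to get $\gamma_k=1$ and $\beta_k=\beta_{\alpha}$ for large $k$, the chain
\[
\Lambda_{\infty,j,\alpha}\circ \wt{g}_{\infty,j,\alpha}
=\Lambda_{k,j,\alpha_k'}\circ\varrho_k\circ \wt{g}_{\infty,j,\alpha}
=\Lambda_{k,j,\alpha_k'}\circ \wt{g}_{k,j,\alpha_k'}\circ\sigma_k
=g_{j,\beta_k}\circ\lambda_{k,j,\alpha_k'}\circ\sigma_k
=g_{j,\beta_{\alpha}}\circ\lambda_{\infty,j,\alpha}
\]
is exactly the prism the paper chases (Proposition \ref{lambdaeventualprop} for the two triangles, the coherence $\varrho_k\circ\wt{g}=\wt{g}_k\circ\sigma_k$ for one face, Lemma \ref{gdeflemma} for the other); your labeling of the cube faces is a little scrambled, but all the needed ingredients are named and the logic is sound. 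For the right square, however, you take a genuinely different route from the paper. The paper proves it via Cube \ref{C8}, whose vertical maps are the surjective quotients $\wt{f}_{\infty,j,\alpha}$ and $f_{j,\beta_{\alpha}}$: it pushes the already-established $\wt{H}$-square of Lemma \ref{hlemma} down to the $\wt{G}$-square using $\wt{f}\circ\wt{H}=\wt{G}\circ(\wt{f}\times id)$ and the front face of Cube \ref{C1}. You instead run the same finite-stage template as for the left square, using the coherence $\sigma_k\circ\wt{G}=\wt{G}_k\circ(\sigma_k\times id)$ (which does follow from $\wt{G}=\psi^{-1}\circ\wh{G}_0\circ(\psi\times id)$ and $\wh{G}_0=\varprojlim_k\wt{G}_k$), the defining square of $\wt{G}_{k,j,\alpha_k'}$ from the proof of Theorem \ref{homotopytheorem}, and Proposition \ref{lambdaeventualprop}. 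Both arguments are valid; yours has the advantage of treating the two squares uniformly and of not needing Lemma \ref{hlemma} or the surjectivity of the quotient maps, while the paper's avoids re-invoking the inverse-limit coherence for $\wt{G}$ by recycling the $\wt{H}$-computation it has already done.
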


\begin{proof}
Write $\varrho_{k}(\alpha)=\alpha_{k} '\gamma_{k}$ for $\alpha_{k}'\in \ntkj$, and $\gamma_{k}\in\pi_1(Y_j)$ and write $\Lambda_{k,j,\alpha_{k}'}(\mct_{k,j,\alpha_{k}'})=\beta_k T_j$. Find $k$ sufficiently large so that $\beta_{\alpha}=\beta_{k}$ and (using Lemma \ref{lambdaeventualprop}) such that $\Lambda_{\infty,j,\alpha}=\Lambda_{k,j,\alpha_{k}'}\circ \varrho_k$ and
$\lambda_{\infty,j,\alpha}=\lambda_{k,j,\alpha_{k}'}\circ \sigma_k$. We then have that the top and bottom triangles of the following prism commute.
\begin{center}
\begin{tikzcd}[row sep=2.5em]
\mcb_{\infty,j,\alpha} \arrow[rr,"{\varrho_{k}}"] \arrow[dr,swap,"{\Lambda_{\infty,j,\alpha}}"]  &&
  \mcb_{k,j,\alpha_{k}'}  \arrow[dl,"{\Lambda_{k,j,\alpha_{k}'}}"]  \\
& \tY_j  \\
\mcd_{\infty,j,\alpha} \arrow[uu,"{g_{\infty,j,\alpha}}"]  \arrow[rr,"{\sigma_{k}}", near start] \arrow[dr,swap,"{\lambda_{\infty,j,\alpha}}"]  &&
  \mcd_{k,j,\alpha_{k}'} \arrow[uu,swap,"{g_{k,j,\alpha_{k}'}}"] \arrow[dl,"{\lambda_{k,j,\alpha_{k}'}}"]  \\
& D_{j,\beta_{\alpha}} \arrow[uu,crossing over,near end,swap,"{g_{j,\beta_{\alpha}}}"]  
\end{tikzcd}
\end{center}
Restricting $\wt{g}_k\circ \sigma_k=\varrho_k\circ \wt{g}$ gives the commutativity of the back face. Since  $\beta_{\alpha}=\beta_{k}$, Lemma \ref{gdeflemma} gives the commutativity of the right face. Therefore, the left face commutes.

The second square in the proposition appears as the bottom face in the following cube.
\begin{equation}\label{C8}
\begin{tikzcd}[row sep=2.7em]
\mcb_{\infty,j,\alpha }\times \ui \arrow[rr,"{\wt{H}_{\infty,j,\alpha}}"] \arrow[dr,"{\Lambda_{\infty,j,\alpha}\times id}"] \arrow[dd,swap,"{\wt{f}_{\infty,j,\alpha}\times id}"] &&
  \mcb_{\infty,j,\alpha} \arrow[dd,swap,"{\wt{f}_{\infty,j,\alpha}}" near start] \arrow[dr,"\Lambda_{\infty,j,\alpha}"] \\
& \tY_j\times \ui \arrow[rr,swap,crossing over,"H_{j,\beta_{\alpha}}" near start] &&
  \tY_j \arrow[dd,"{f_{j,\beta_{\alpha}}}"] \\
\mcd_{\infty,j,\alpha }\times \ui \arrow[rr,swap,"{\wt{G}_{\infty,j,\alpha}}" near end] \arrow[dr,swap,"{\lambda_{\infty,j,\alpha}\times id}"] && \mcd_{\infty,j,\alpha }  \arrow[dr,"{\lambda_{\infty,j,\alpha}}"] \\
& D_{j,\beta_{\alpha}}\times\ui \arrow[rr,swap,"G_{j,\beta_{\alpha}}"] \arrow[uu,<-,crossing over,"f_{j,\beta_{\alpha} }\times id" near end]&& D_{j,\beta_{\alpha}}
\end{tikzcd}\tag{C8}
\end{equation}
From the proof of Theorem \ref{hitheorem}, we have $\wt{f}\circ\wt{H}=\wt{g}\circ (\wt{f}\times id)$, which gives the commutativity of the back face. The front face is a case of the bottom face of Cube \ref{C1} in Section \ref{sectiontranslatesoftj}. The right and left faces follow from Proposition \ref{littlelambdaprop2}. Commutativity of the top face was proved in Lemma \ref{hlemma}. Since the vertical maps are surjective, the bottom face commutes.
\end{proof}

\subsection{The topological structure of $\tZ$}

In this section, we give a more detailed topological description of the space $Z$. Recall from Section \ref{sectiontranslatesoftj} the definition of the maximal tree $\bft_j$ in $\tY_j$ and its translates $\beta \bft_j$, $\beta\in\pi_1(Y_j)$.

\begin{definition}
Fix $j\in\bbn$, $k\in\bbn\cup\{\infty\}$ with $j\leq k$ and $\alpha\in\ntkj$. If $\Lambda_{k,j,\alpha}(\mct_{k,j,\alpha})=\beta T_j$, we define
\begin{enumerate}
\item $\bft_{k,j,\alpha}=\Lambda_{k,j,\alpha}^{-1}(\beta\bft_j)$,
\item and set $\bft_{k}=\bigcup_{1\leq j\leq k}\bigcup_{\alpha\in\ntkj}\bft_{k,j,\alpha}$.
\end{enumerate} 
as subspaces of $\tY_{\leq k}$ ($\tY$ if $k=\infty$).
\end{definition}

Each $\bft_{k,j,\alpha}$ consists of the maximal tree $\mct_{k,j,\alpha}$ of $\mca_{k,j,\alpha}$ and $\overline{\mcb_{k,j,\alpha}\backslash\mca_{k,j,\alpha}}$, which is the disjoint union of the arcs $\bfe_{k,j,\alpha,\gamma}$, $\gamma\in\pi_1(Y_j)$. Therefore, whenever $k<\infty$, $\bft_{k}$ is a maximal tree in $\tY_{\leq k}$. In the case $k=\infty$, it is worth noting that $p^{-1}(y_0)\subseteq \bft_{\infty}$. Indeed, if $\alpha\in p^{-1}(y_0)$, find a $j$ such that the reduced word $w_{\alpha}$ does not end in a letter from $\pi_1(Y_j)$. Then $\alpha$ is an arc-endpoint of $\mcb_{\infty,j,\alpha}$ and therefore a free endpoint of $\bft_{\infty,j,\alpha}$. Although, $\bft_{\infty}$ is not a tree in the usual sense it will serve as a kind of analogue of a ``maximal tree" in $\tY$. 

\begin{lemma}\label{treeinverselimit}
For all $k\in\bbn$, $\varrho_{k}(\bft_{\infty})=\bft_{k}$ and $\wt{r}_{k+1,k}(\bft_{k+1})=\bft_k$.
\end{lemma}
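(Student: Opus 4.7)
The plan is to split both equalities along the union $\bft_{k} = \bigcup_{j}\bigcup_{\alpha}\bft_{k,j,\alpha}$ (and the analogous union at $k = \infty$), transferring each piece to $\tY_j$ via the homeomorphism $\Lambda_{k,j,\alpha}$. A preliminary observation I will record is that under these identifications the full ``tree-with-arcs'' $\bft_{k,j,\alpha}$ corresponds to the translate $\beta \bft_j$, where $\beta$ is the element appearing in $\Lambda_{k,j,\alpha}(\mct_{k,j,\alpha}) = \beta T_j$; indeed, since $\gamma \mapsto \beta \gamma$ is a bijection of $\pi_1(Y_j)$, we have $\beta \bft_j = \beta T_j \cup \bigcup_{\gamma}\bfe_{j,\beta\gamma} = \beta T_j \cup \bigcup_{\gamma}\bfe_{j,\gamma} = \Lambda_{k,j,\alpha}(\bft_{k,j,\alpha})$. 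A second preliminary observation is that $p_{\leq k}^{-1}(y_0) = \pi_1(Y_{\leq k})$ is already contained in $\bft_k$: each $\delta \in \pi_1(Y_{\leq k})$ lies in some $\mathbf{nt}_{k, j'}$ and, viewed through $\Lambda_{k, j', \delta}$, becomes the arc-endpoint $\wt{y}_{j'}$, hence an element of $\bft_{k,j',\delta} \subseteq \bft_k$.

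For the first equality $\varrho_k(\bft_\infty) = \bft_k$, I will fix $j \in \bbn$, $\alpha \in \ntij$, and write $\varrho_k(\alpha) = \alpha_k' \gamma_k$ with $\alpha_k' \in \ntkj$ and $\gamma_k \in \pi_1(Y_j)$. When $j > k$, the space $\mcb_{\infty,j,\alpha}$ is path-connected and has image under $\varrho_k$ entirely inside the discrete set $p_{\leq k}^{-1}(y_0)$ (because $R_k$ collapses $Y_j$ for $j > k$); hence $\varrho_k(\bft_{\infty,j,\alpha})$ is the single point $\varrho_k(\alpha) \in \pi_1(Y_{\leq k}) \subseteq \bft_k$. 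When $j \leq k$, I will combine Proposition \ref{rksquareprop} (which gives $\varrho_k(\mct_{\infty,j,\alpha}) = \mct_{k,j,\alpha_k'}$) with the commuting square $\Lambda_{k,j,\alpha_k'} \circ \varrho_k = \Delta_{\gamma_k} \circ \Lambda_{\infty,j,\alpha}$ (from the proof of Theorem \ref{littleimagetheorem}), extract the identity of translates $\Delta_{\gamma_k}(\beta_\alpha T_j) = \beta_k T_j$, and then apply the first preliminary observation to conclude $\varrho_k(\bft_{\infty,j,\alpha}) = \Lambda_{k,j,\alpha_k'}^{-1}(\Delta_{\gamma_k}(\beta_\alpha \bft_j)) = \bft_{k,j,\alpha_k'}$. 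For the reverse inclusion, each $\beta \in \ntkj$ with $j \leq k$ reads back as an element of $\ntij$ via the inclusion $\pi_1(Y_{\leq k}) \hookrightarrow \pi_1(Y)$, and $\varrho_k(\bft_{\infty,j,\beta}) = \bft_{k,j,\beta}$ by the case just handled.

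The argument for $\wt{r}_{k+1,k}(\bft_{k+1}) = \bft_k$ will be entirely parallel: I will use the relation $\Lambda_{k,j,\alpha'} \circ \wt{r}_{k+1,k} = \Delta_{\gamma} \circ \Lambda_{k+1,j,\alpha}$ from Section \ref{sectionstructureofyk} (with $(r_{k+1,k})_\#(\alpha) = \alpha'\gamma$) together with Remark \ref{stabilizationremark}, which gives $\wt{r}_{k+1,k}(\mct_{k+1,j,\alpha}) = \mct_{k,j,\alpha'}$ and pinpoints how the translates $\gamma^{-1}\beta T_j$ and $\beta T_j$ are related. The $j = k+1$ case collapses $\bft_{k+1,k+1,\alpha}$ to the point $\wt{r}_{k+1,k}(\alpha) \in \pi_1(Y_{\leq k}) \subseteq \bft_k$. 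Surjectivity onto each $\bft_{k,j,\alpha'}$ with $j \leq k$ comes from viewing $\alpha' \in \ntkj$ as an element of $\mathbf{nt}_{k+1,j}$ via $\pi_1(Y_{\leq k}) \hookrightarrow \pi_1(Y_{\leq k+1})$, which gives $\wt{r}_{k+1,k}(\bft_{k+1,j,\alpha'}) = \bft_{k,j,\alpha'}$.

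I do not anticipate a major obstacle: the structural work done earlier (especially the inductive definition of the $\mct_{k,j,\alpha}$'s, Remark \ref{stabilizationremark}, and Proposition \ref{rksquareprop}) already encodes the required compatibility of trees with bonding maps. The two substantive moves are (i) lifting these compatibilities from $\mct$ to $\bft$, which is automatic once one notes that $\bft_j$ contains the entire discrete fiber of arc-endpoints so that $\beta \bft_j$ is ``independent of $\beta$ away from'' the central tree $\beta T_j$, and (ii) extracting the identity $\gamma_k \beta_\alpha = \beta_k$ as a consequence of Proposition \ref{rksquareprop} rather than attempting a direct recursion from Lemma \ref{stabilizationlemma}.
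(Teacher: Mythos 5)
Your proof is correct and follows essentially the same route as the paper: decompose $\bft_{\infty}$ into the pieces $\bft_{\infty,j,\alpha}$, use the fact that $\varrho_k$ carries $\mcb_{\infty,j,\alpha}$ homeomorphically onto $\mcb_{k,j,\alpha_k'}$ together with Proposition \ref{rksquareprop} to identify the image of each piece, and then deduce surjectivity and the statement for $\wt{r}_{k+1,k}$. If anything you are more careful than the paper, which does not separately treat the case $j>k$ (where $\mcb_{\infty,j,\alpha}$ collapses to the point $\varrho_k(\alpha)\in p_{\leq k}^{-1}(y_0)\subseteq\bft_k$) and which obtains the second equality more economically by writing $\wt{r}_{k+1,k}(\bft_{k+1})=\wt{r}_{k+1,k}(\varrho_{k+1}(\bft_{\infty}))=\varrho_k(\bft_{\infty})$ rather than rerunning the argument.
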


\begin{proof}
Fix a subspace $\bft_{\infty,j,\alpha}$ of $\bft_{\infty}$. Write $\varrho_{k}(\alpha)=\alpha_{k}'\gamma_{k}$ for $\alpha_{k}'\in\ntkj$ and $\gamma_{k}\in\pi_1(Y_j)$. As previously established, $\varrho_{k}$ maps $\mcb_{\infty,j,\alpha}$ homeomorphically onto $\mcb_{k,j,\alpha_{k}'}$. In particular $\varrho_k$ maps the arcs in $\overline{\mcb_{\infty,j,\alpha}\backslash \mca_{\infty,j,\alpha}}$ to the arcs $\overline{\mcb_{k,j,\alpha_k'}\backslash \mca_{k,j,\alpha_k'}}$ and $\mct_{\infty,j,\alpha}$ to $\mct_{k,j,\alpha_k'}$. Thus $\varrho_{k}(\bft_{\infty,j,\alpha})\subseteq \bft_{k}$. The inclusion $\varrho_{k}(\bft_{\infty})=\bft_{k}$ follows for all $k$. It follows immediately that $\wt{r}_{k+1,k}(\bft_{k+1})=\wt{r}_{k+1,k}(\varrho_{k+1}(\bft_{\infty}))=\varrho_{k}(\bft_{\infty})=\bft_k$.
\end{proof}

\begin{lemma}\label{tlemma}
$\bft_{\infty}$ is a uniquely arcwise connected and locally path-connected subspace of $\tY$.
\end{lemma}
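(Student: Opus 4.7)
The approach treats $\bft_\infty$ as a ``tree of trees'' built from its individual pieces $\bft_{\infty,j,\alpha}$, using the inverse-limit identity $\varrho_k(\bft_\infty)=\bft_k$ from Lemma~\ref{treeinverselimit}. First I would verify that each $\bft_{\infty,j,\alpha}$ is a closed, uniquely arcwise connected, and locally path-connected subspace of $\tY$: this is inherited from the homeomorphism $\Lambda_{\infty,j,\alpha}$ onto the maximal subtree $\beta_\alpha\bft_j$ of the CW-complex $\tY_j$, which is a $1$-dimensional CW-tree. I would then check that for $(j,\alpha)\neq(j',\alpha')$ the intersection $\bft_{\infty,j,\alpha}\cap\bft_{\infty,j',\alpha'}$ has at most one point, necessarily lying in $p^{-1}(y_0)$: when $j=j'$ this follows from Proposition~\ref{disjointunion}; when $j\neq j'$, points in the interior of a whisker $\bfe_{\infty,j,\alpha,\gamma}$ or of $\mca_{\infty,j,\alpha}$ belong to a unique $\mcb_{\infty,j,\alpha}$, so only arc-endpoints can be shared, and the uniqueness of reduced-word representations of elements of $\pi_1(Y)$ rules out two distinct shared endpoints. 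Globally the pieces are arranged in a tree-like pattern mirroring the reduced-word structure of $\pi_1(Y)$, and the finite projections $\bft_k$ are genuine maximal trees inside the CW-complexes $\tY_{\leq k}$.

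Path-connectedness of $\bft_\infty$ then follows by constructing a path from any $a\in\bft_{\infty,j,\alpha}$ to $\ty_0$: first travel within $\bft_{\infty,j,\alpha}$ to the arc-endpoint $\alpha\in\pi_1(Y)$; then follow the reduced word of $\alpha$ through a concatenation of whisker-arcs and tree-arcs down to $\ty_0$, each non-trivial letter $\ell_i\in\pi_1(Y_{j_i})$ contributing a whisker into $\mca_{\infty,j_i,\alpha_i}$, a unique tree-arc in $\mct_{\infty,j_i,\alpha_i}$ between the two relevant $0$-cells, and a whisker back out. Continuity of this concatenation for infinite or densely-ordered reduced words is ensured by the shrinking wedge topology on $Y$. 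For local path-connectedness at a point $a\notin p^{-1}(y_0)$, the property is inherited from the unique piece $\bft_{\infty,j,\alpha}$ containing $a$. At a branch point $\delta\in p^{-1}(y_0)$, I would use basic neighborhoods $N(\delta,V)$ where $V$ is chosen so that $V\cap Y_j$ is a short half-open arc in $Y_j\setminus X_j$ for $j\leq J$ and $\bigcup_{j>J}Y_j\subseteq V$; then $N(\delta,V)\cap\bft_\infty$ decomposes as finitely many short whisker-segments terminating at $\delta$ (for $j\leq J$) together with entire attached trees $\bft_{\infty,j,\beta}$ (for $j>J$), and these pieces jointly deformation retract onto $\delta$.

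For unique arcwise connectedness, I would argue that any two arcs $\Gamma_1,\Gamma_2\subseteq\bft_\infty$ from $a$ to $b$ have the same image. Using the tree-like arrangement of pieces, the sequence of subspaces $\bft_{\infty,j_i,\alpha_i}$ through which any arc from $a$ to $b$ must pass is forced by the combinatorial dual-tree path in $\bft_\infty$; within each piece, the arc is uniquely determined since the piece is itself a tree and the entry and exit arc-endpoints are forced. To make this rigorous when the arcs pass through infinitely many pieces accumulating at points of $p^{-1}(y_0)$, I would use the injectivity of $\phi_Y=(\varrho_k)$ and the stabilization of Lemma~\ref{stabilizationlemma} to compare $\Gamma_1,\Gamma_2$ via their projections $\varrho_k(\Gamma_i)\subseteq\bft_k$, extracting from the combinatorial structure of each finite-stage tree $\bft_k$ the shared piecewise decomposition of $\Gamma_1$ and $\Gamma_2$. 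The main obstacle is precisely this final step: the projections $\varrho_k(\Gamma_i)$ are continuous images of arcs into a tree, which may ``fold'' at each finite stage even though the original arcs $\Gamma_i$ are embedded, so the argument must carefully extract the combinatorial piecewise pattern from the limit data rather than from any single finite $k$.
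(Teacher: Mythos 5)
Your plan handles path-connectedness and local path-connectedness in essentially the same spirit as the paper (the paper obtains path-connectedness by taking an arbitrary arc in $\tY$ between the two points and replacing each excursion into a set $W_{\infty,j,\alpha}=\mca_{\infty,j,\alpha}\backslash(\overline{\mcb_{\infty,j,\alpha}\backslash\mca_{\infty,j,\alpha}})$ by either a constant or the unique arc in $\mct_{\infty,j,\alpha}$, verifying continuity through the projections $\varrho_k$ via Corollary \ref{continuitycor1}; your direct concatenation along a reduced word would need that same projection criterion to handle densely ordered words, and your local analysis at points of $p^{-1}(y_0)$ matches the paper's use of neighborhoods $N(\alpha,U)$ with $U\cap Y_j$ a short arc for $j\leq J$). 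The genuine gap is exactly where you flag it: unique arcwise connectedness. Your proposed reduction to ``the combinatorial dual-tree path of pieces is forced'' does not survive the passage to infinitely many pieces accumulating on $p^{-1}(y_0)$, and you correctly observe that the finite-stage projections $\varrho_k\circ\Gamma_i$ may fold, so no single $k$ certifies anything. As written, the proposal does not supply the missing extraction argument, so this step is incomplete.

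The paper closes this gap with a much shorter move that avoids all combinatorics of itineraries: by Lemma \ref{treeinverselimit} one has $\varrho_k(\bft_{\infty})\subseteq\bft_k$ for all $k$, where each $\bft_k$ is a genuine maximal tree in the CW-complex $\tY_{\leq k}$, so $\phi_{Y}$ restricts to a continuous \emph{injection} of $\bft_{\infty}$ into the inverse limit $\varprojlim_k\bft_k\subseteq\wh{Y}$. Path components of inverse limits of trees are uniquely arcwise connected, and a path-connected Hausdorff space that injects continuously into a uniquely arcwise connected space is itself uniquely arcwise connected: two arcs in $\bft_\infty$ with the same endpoints map to two arcs in the limit with the same endpoints, hence have equal images there, hence (by injectivity) equal images upstairs. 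This works at the level of point-set images of the arcs rather than their parameterizations, which is precisely why the folding of the individual projections $\varrho_k\circ\Gamma_i$ is irrelevant. If you want to salvage your approach, this embedding into $\varprojlim_k\bft_k$ is the cleanest way to do it; otherwise you would have to prove directly that a simple closed curve in $\bft_\infty$ cannot exist, which again comes down to the same inverse-limit fact.
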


\begin{proof}
Consider distinct points $x,y\in \bft_{\infty}$. Find an arc $\ell:\ui\to \tY$ from $x$ to $y$. For each $j\in\bbn$ and $\alpha\in\ntij$, the set $W_{\infty,j,\alpha}=\mca_{\infty,j,\alpha}\backslash (\overline{\mcb_{k,j,\alpha}\backslash\mca_{k,j,\alpha}})$ is open in $\mcu_{\infty,j,\alpha}$. Since $\mcu_{\infty,j,\alpha}$ is open in $\tY$, $W_{\infty,j,\alpha}$ is open in $\tY$. We will define another path $\eta$ that is path-homotopic to $\ell$. If $\ell$ already has image in $\bft_{\infty}$, we take $\eta=\ell$. Otherwise, fix $j\in\bbn$ and $\alpha\in\ntij$ and let $(a,b)$ be a component of $\ell^{-1}(W_{\infty,j,\alpha})$. If $\ell|_{[a,b]}:\ui\to \mca_{\infty,j,\alpha}$ is a loop, then we define $\eta|_{[a,b]}$ to be the constant path at $\ell(a)=\ell(b)$. If $\ell$ is not a loop, then $\ell(a)$ and $\ell(b)$ are endpoints in the tree $\mct_{\infty,j,\alpha}$. Choose a path $\eta|_{[a,b]}:\ui\to \mct_{\infty,j,\alpha}$ from $\ell(a)$ to $\ell(b)$ that parameterizes the unique arc between these points. By construction, we have $\im(\ell)\subseteq \bft_{\infty}$, $\ell(0)=x$, and $\ell(1)=y$ so it suffices to verify continuity. Note that we obtain the projection $\varrho_{k}\circ \eta:\ui\to \wt{Y}_{\leq k}$ from $\varrho_{k}\circ \ell:\ui\to \wt{Y}_{\leq k}$ by either making subloops of $\varrho_{k}\circ \ell$ constant and possibly modifying finitely many other subpaths. Hence, $\varrho_{k}\circ\eta:\ui\to \wt{Y}_{\leq k}$ is continuous for each $k$. By Corollary \ref{continuitycor1}, $\eta:\ui\to \wt{Y}$ is continuous.

For later in the proof we go ahead and construct a homotopy $H:I^2\to \wt{Y}$ from $\ell$ to $\eta$. In particular, if $(a,b)$ is a component of $\ell^{-1}(W_{\infty,j,\alpha})$, we define $H$ so that $H([a,b]\times I)\subseteq \mca_{\infty,j,\alpha}$. The only part of the construction of $H$ that cannot be done arbitrarily is the following: let $U$ be a contractible neighborhood of $\ell(a)$ in $\mca_{\infty,j,\alpha}$ with contraction $c:U\times \ui\to U$. If $\ell|_{[a,b]}$ is a loop in $U$ based at $\ell(a)$, we define $H(s,t)=c(\ell(s),t)$. The argument for the continuity of $H$ is the same as it was for $\eta$.

Lemma \ref{treeinverselimit} gives $\wt{r}_{k+1,k}(\bft_{k+1})\subseteq \bft_k$ for all $k\in\bbn$. Therefore, we may form the sub-inverse limit $\varprojlim_{k}\bft_k$ of trees, which is a subspace of $\wh{Y}$. It is well-known that every path component of an inverse limit of trees is uniquely arcwise connected. Since $\varrho_k(\bft_{\infty})\subseteq \bft_k$ for all $k$, we have $\phi_{Y}(\bft_{\infty})\subseteq \varprojlim_{k}\bft_k$. Since $\phi_{Y}$ continuously injects $\bft_{\infty}$ into a uniquely arcwise connected space, we conclude that $\bft_{\infty}$ is uniquely arcwise connected.

Finally, we check that $\bft_{\infty}$ is locally path connected. Local path connectivity is clear at all points in $\bft_{\infty}\backslash p^{-1}(y_0)$. Let $\alpha\in p^{-1}(y_0)$ and consider a basic neighborhood $N(\alpha,U)\cap \bft_{\infty}$ of $\alpha$ in the subspace topology where $U$ is a neighborhood of $y_0$ in $Y$. We may assume that there is a $J\in\bbn$ such that $Y_j\subseteq U$ for all $j> J$ and $Y_j\cap U$ is an open arc in $Y_j\backslash X_j$ when $j\leq J$. We will show that $N(\alpha,U)\cap \bft_{\infty}$ is path connected.

Choose $\alpha\beta\in N(\alpha,U)\cap\bft_{\infty}$ for $\beta\in\wt{U}$ ($U$ has basepoint $y_0$). Note that there exists $j_0\in \bbn$, $\gamma\in \pi_1(U,y_0)\cap \nt_{\infty,j_0}$, and possibly trivial $\delta\in \tY_{j_0}\cap p_{j_0}^{-1}(U\cap Y_{j_0})$ such that $\beta=\gamma\delta$. We use $\delta$ to denote a path $(\ui,0)\to (U\cap Y_{j_0},y_0)$ representing the homotopy class $\delta$ and similarly, we use $\gamma$ to denote a reduced loop representing $\gamma$. In the case that $j_0\leq J$, we choose $\gamma$ to be a parameterization of the arc from $y_0$ to $\delta(1)\in U\cap Y_{j_0}$. Additionally, note that $w_{\gamma}$ only has letters from $\pi_1(Y_j)$, $j>J$. Let $\ell_1:\ui\to \tY$ be the standard lift of $\gamma$ starting at $\alpha$ and let $\ell_2:\ui\to \tY$ be the standard lift of $\delta$ starting at $\alpha\gamma$. Then $\ell_1$ is a path in $N(\alpha,U)$ from $\alpha$ to $\alpha\gamma$ and $\ell_2$ is a path in $N(\alpha,U)$ from $\alpha\gamma$ to $\alpha\beta$. Moreover, the image of $\ell_1$ only meets $p^{-1}(Y_j)$ for $j>J$. Using the construction in the second paragraph, find paths $\eta_i:\ui\to \bft_{\infty}$, $i\in\{1,2\}$ such that $\eta_i$ is path-homotopic to $\ell_i$ in $\tY$. In particular, let $H_i:\ui\times\ui\to \tY$, $i\in\{1,2\}$ be the path-homotopy from $\ell_i$ to $\eta_i$ constructed in the third paragraph. 

Because we chose $\gamma$ to be reduced, we have $\im(p\circ H_1)\subseteq U$. Unique path-lifting ensures that $H_1$ has image in $N(\alpha,U)$ and thus $\eta_1$ has image in $N(\alpha,U)\cap\bft_{\infty}$. If $j_0\leq J$, then $\delta$ has image in $N(\alpha,U)\cap \bft_{\infty}$. In this case, $\eta_2=\ell_2$ and $\wt{H}_2$ is the constant homotopy, which makes it clear that $\eta_2$ has image in $N(\alpha,U)\cap \bft_{\infty}$. If $j_0>J$, then $\im(p\circ H_2)\subseteq U$ and thus $H_2$ has image in $N(\alpha,U)$. Thus $\eta_2$ also has image in $N(\alpha,U)\cap \bft_{\infty}$. Since $\eta_1\cdot\eta_2$ is a path from $\alpha$ to $\alpha\beta$ in $N(\alpha,U)\cap \bft_{\infty}$, we conclude that $N(\alpha,U)\cap \bft_{\infty}$ is path connected.
\end{proof}

\begin{definition}
A \textit{topological $\bbr$-tree} is a topological space, which is metrizable, uniquely arc-wise connected and locally arcwise-connected.
\end{definition}

It is known that every topological $\bbr$-tree may be equipped with an $\bbr$-tree metric \cite{MORtree} and  that $\bbr$-trees are contractible \cite{morganrtree}

\begin{corollary}\label{tiscontractible}
If $X_j$ is locally finite for every $j\in\bbn$, then $\bft_{\infty}$ is contractible.
\end{corollary}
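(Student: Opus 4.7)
The plan is to verify that $\bft_{\infty}$ meets all three defining conditions of a topological $\bbr$-tree and then invoke the cited contractibility result for $\bbr$-trees.

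First, I would establish metrizability. By Remark \ref{metrizableremark}, the hypothesis that each $X_j$ is locally finite ensures that $\tY$ is metrizable. Since $\bft_{\infty}$ is a subspace of $\tY$, it inherits a compatible metric. Second, I would appeal directly to Lemma \ref{tlemma}, which already states that $\bft_{\infty}$ is uniquely arcwise connected and locally path-connected.

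The one small gap to bridge is the transition from ``locally path-connected'' to ``locally arcwise-connected,'' since this is what the definition of a topological $\bbr$-tree requires. The key observation is that $\bft_{\infty}$, as a subspace of the Hausdorff space $\tY$, is itself Hausdorff; and in a Hausdorff space, any path between two distinct points $x,y$ contains an arc from $x$ to $y$ in its image (this is a standard reduction using the Hausdorff reparameterization theorem). Thus given $x\in \bft_{\infty}$ and a neighborhood $U$, I would take a path-connected neighborhood $V\subseteq U$ of $x$ (which exists by local path-connectedness) and argue that for each $y\in V$, the path in $V$ from $x$ to $y$ contains an arc in $V$ from $x$ to $y$. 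This shows $V$ is arcwise connected, giving local arcwise-connectedness.

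Once all three conditions are verified, $\bft_{\infty}$ is a topological $\bbr$-tree in the sense of the definition preceding the corollary, and hence contractible by the cited result \cite{morganrtree}. No step here is a serious obstacle, since the heavy lifting — the structural description of $\bft_{\infty}$ and the metrizability of $\tY$ under local finiteness — has already been carried out in Lemma \ref{tlemma} and Remark \ref{metrizableremark}; the only content to add is the elementary Hausdorff arc-reduction argument needed to upgrade local path-connectedness to local arcwise-connectedness.
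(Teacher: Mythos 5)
Your proof is correct and follows essentially the same route as the paper, which likewise combines the metrizability of $\tY$ from Remark \ref{metrizableremark} with Lemma \ref{tlemma} and then cites the contractibility of topological $\bbr$-trees. The only difference is that you explicitly supply the (correct and standard) arc-reduction argument upgrading local path-connectedness to local arcwise-connectedness, a step the paper passes over silently.
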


\begin{proof}
As noted in Remark \ref{metrizableremark}, if each $X_j$ is locally finite, then $\tY$ is metrizable. Thus $\bft_{\infty}$ is metrizable. By Lemma \ref{tlemma}, $\bft_{\infty}$ meets the other conditions of being a topological $\bbr$-tree.
\end{proof}

\begin{definition}
Fix $k\in\bbn\cup\{\infty\}$, $j\in\bbn$ with $j\leq k$, and $\alpha\in\ntkj$. Identify $\wt{f}=\wt{f}_{\infty}$ in the case $k=\infty$. Define
\begin{enumerate}
\item $\bfee_{k,j,\alpha}=\wt{f}_k(\bft_{k,j,\infty})$ whenever $j\in\bbn$ and $1\leq j\leq k$,
\item and $\bfee_{k}=\wt{f}_{k}(\bft_k)$
\end{enumerate} 
as subspaces of $Z_k$ ($Z$ when $k=\infty$).
\end{definition}

\begin{remark}\label{preimageremark}
For each $k\in\bbn\cup\{\infty\}$ we have $\wt{f}_{k}^{-1}(\bfee_{k})=\bft_{k}$. Indeed, if $x\in \tY_{\leq k}\backslash \bft_{k}$, then $x\in\mca_{k,j,\alpha}\backslash\mct_{k,j,\alpha}$ for some $j\in\bbn$ and $\alpha\in\ntkj$. This would give $\wt{f}(x)\in \mcc_{k,j,\alpha}\backslash \{c_{k,j,\alpha}\}$, where $\mcc_{k,j,\alpha}\backslash \{c_{k,j,\alpha}\}$ is clearly disjoint from $\bfee_{k}$. The same reasoning gives $\tf^{-1}(\bfee_{\infty})=\bft_{\infty}$.

When $k<\infty$, $\bfee_k$ is a tree such that $\tZ_k$ consists of $\bfee_k$ with the space $\mcc_{k,j,\alpha}$ attached at $c_{k,j,\alpha}\in\bfee_k$.
\end{remark}

\begin{lemma}\label{structureofzlemma}
$\bfee_{\infty}$ is a uniquely arcwise connected and locally path-connected subspace of $\tZ$ such that
\[\tZ=\bfee_{\infty}\cup \bigcup_{j\in \bbn}\bigcup_{\alpha\in\ntij}\mcc_{\infty,j,\alpha}.\]
Moreover, for each $j\in\bbn$ and $\alpha\in\ntij$, we have
\[\tZ=(\mcc_{\infty,j,\alpha},c_{\infty,j,\alpha})\vee (\overline{\tZ\backslash \mcc_{\infty,j,\alpha}},c_{\infty,j,\alpha}).\]
\end{lemma}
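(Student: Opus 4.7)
The plan is to handle the three assertions in turn, leaning on the quotient structure $\wt{f}\colon \tY\to \tZ$, the local model provided by the homeomorphisms $\lambda_{\infty,j,\alpha}$ of Proposition \ref{littlelambdaprop2}, and an inverse limit argument parallel to the one used for $\bft_\infty$ in Lemma \ref{tlemma}.

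For the first assertion, I would first derive local path-connectedness of $\bfee_\infty$. By Remark \ref{preimageremark}, $\wt{f}^{-1}(\bfee_\infty)=\bft_\infty$, so $\bft_\infty$ is saturated with respect to the quotient map $\wt{f}$ and the restriction $\wt{f}|_{\bft_\infty}\colon \bft_\infty\to \bfee_\infty$ is itself a quotient map. Since $\bft_\infty$ is locally path-connected (Lemma \ref{tlemma}) and quotient maps preserve local path-connectedness, $\bfee_\infty$ inherits this property. For unique arcwise connectedness, each $\bfee_k=\wt{f}_k(\bft_k)$ is obtained from the tree $\bft_k$ by collapsing its disjoint subtrees $\mct_{k,j,\alpha}$ to points, hence is itself a tree; combining Lemma \ref{treeinverselimit} with the coherence of $\wt{s}_{k+1,k}$ and $\wt{r}_{k+1,k}$ (Lemma \ref{smaplemma}) yields $\wt{s}_{k+1,k}(\bfee_{k+1})\subseteq \bfee_k$, so the $\bfee_k$ form a sub-inverse system of $(\tZ_k,\wt{s}_{k+1,k})$. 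Since $\sigma_k(\bfee_\infty)\subseteq \bfee_k$ for each $k$, $\psi|_{\bfee_\infty}$ continuously injects $\bfee_\infty$ into $\varprojlim_k\bfee_k$. Path components of inverse limits of trees are uniquely arcwise connected, and $\bfee_\infty$ is path-connected as the continuous image of the path-connected space $\bft_\infty$, so it lies inside one such component; this forces unique arcwise connectedness of $\bfee_\infty$.

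For the second assertion, Proposition \ref{disjointunion} gives $\tY=\bigcup_{j\in\bbn}\bigcup_{\alpha\in\ntij}\mcb_{\infty,j,\alpha}$, whence $\tZ=\wt{f}(\tY)=\bigcup_{j,\alpha}\mcd_{\infty,j,\alpha}$. By Proposition \ref{littlelambdaprop2}, $\lambda_{\infty,j,\alpha}$ identifies $\mcd_{\infty,j,\alpha}$ with $D_{j,\beta_\alpha}=(C_{j,\beta_\alpha},c_{j,\beta_\alpha})\vee (E_{j,\beta_\alpha},c_{j,\beta_\alpha})$ and sends $\mcc_{\infty,j,\alpha}$ to $C_{j,\beta_\alpha}$ and $\bfee_{\infty,j,\alpha}$ to $E_{j,\beta_\alpha}$. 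Thus $\mcd_{\infty,j,\alpha}=\mcc_{\infty,j,\alpha}\cup \bfee_{\infty,j,\alpha}$, and taking the union over $(j,\alpha)$ gives $\tZ=\bfee_\infty\cup \bigcup_{j,\alpha}\mcc_{\infty,j,\alpha}$.

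For the third assertion, the key observation is that the attaching points $\wt{x}_{\infty,j,\alpha,\gamma}$ of the arcs $\bfe_{\infty,j,\alpha,\gamma}$ to $\mca_{\infty,j,\alpha}$ are $0$-cells of the CW-complex $\mca_{\infty,j,\alpha}$ and therefore lie in its maximal $1$-skeletal tree $\mct_{\infty,j,\alpha}$. Two consequences follow. First, $\mca_{\infty,j,\alpha}\setminus \mct_{\infty,j,\alpha}$ is open in $\tY$: it is contained in $\mcu_{\infty,j,\alpha}$ (open by Proposition \ref{uprop}) and is carried by the homeomorphism $\Lambda_{\infty,j,\alpha}$ onto $\tX_j\setminus \beta_\alpha T_j$, which avoids the attaching vertices in $\tX_j$ and hence is open in $\tY_j\setminus \pi_1(Y_j)$. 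Second, $\mca_{\infty,j,\alpha}$ is closed in $\tY$ and saturated with respect to $\wt{f}$, since distinct subspaces $\mca_{\infty,j',\alpha'}$ are pairwise disjoint (the sets $X_j$ are pairwise disjoint in $Y$ away from $y_0$). Passing to the quotient, $\mcc_{\infty,j,\alpha}$ is closed in $\tZ$ and $\mcc_{\infty,j,\alpha}\setminus \{c_{\infty,j,\alpha}\}$ is open in $\tZ$, forcing $\mcc_{\infty,j,\alpha}\cap \overline{\tZ\setminus \mcc_{\infty,j,\alpha}}=\{c_{\infty,j,\alpha}\}$. Since both pieces are closed, intersect precisely at $c_{\infty,j,\alpha}$, and cover $\tZ$, the characterization of the wedge topology yields the desired decomposition.

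The main obstacle is the unique arcwise connectedness in Part 1: the inverse limit argument requires verifying that path components of $\varprojlim_k \bfee_k$ are uniquely arcwise connected in the possibly non-metrizable CW setting. This is a standard property for inverse limits of trees and can be modeled on the analogous step in the proof of Lemma \ref{tlemma}, but it is the step most in need of care; all remaining verifications are essentially diagram chases or direct consequences of the structure already established for $\tY$.
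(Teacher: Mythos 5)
Your proof is correct, and for the wedge decomposition and local path-connectedness it follows essentially the same route as the paper: both rest on showing that $\mcc_{\infty,j,\alpha}$ is closed and $\mcc_{\infty,j,\alpha}\setminus\{c_{\infty,j,\alpha}\}$ is open in $\tZ$ (your verification via $\Lambda_{\infty,j,\alpha}$ and the attachment vertices lying in $\mct_{\infty,j,\alpha}$ is the same computation the paper performs with the set $W_{\infty,j,\alpha}$ from Lemma \ref{tlemma}), and on transferring local path-connectedness from $\bft_{\infty}$ through the restricted quotient map $\wt{f}|_{\bft_{\infty}}$. The one place you genuinely diverge is unique arcwise connectedness. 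You re-run the inverse-limit argument of Lemma \ref{tlemma} one level down: the $\bfee_k$ are trees forming a sub-inverse system of $(\tZ_k,\wt{s}_{k+1,k})$, and $\psi$ continuously injects $\bfee_{\infty}$ into $\varprojlim_{k}\bfee_k$, whose path components are uniquely arcwise connected. The paper instead stays in $\tZ$ and argues that every fiber of $\wt{f}|_{\bft_{\infty}}$ is a point or a closed subtree, so every point of $\bfee_{\infty}$ separates it into two open pieces, and unique arcwise connectedness is inherited from $\bft_{\infty}$. Your route buys uniformity with the proof of Lemma \ref{tlemma} and avoids the somewhat terse "every point separates" step; the paper's is shorter and does not require revisiting $\wh{Z}_0$.

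Two small points of care. First, the restriction of a quotient map to a saturated subset is a quotient map only when that subset is open or closed, so before treating $\wt{f}|_{\bft_{\infty}}$ as a quotient map you should record that $\bfee_{\infty}$ (equivalently $\bft_{\infty}=\wt{f}^{-1}(\bfee_{\infty})$) is closed. This follows from your own third part, since $\bfee_{\infty}$ is the complement in $\tZ$ of the union of the open sets $\mcc_{\infty,j,\alpha}\setminus\{c_{\infty,j,\alpha}\}$; the fix is simply to establish those openness claims first, which is the order the paper uses. Second, for the wedge decomposition you should also note that $c_{\infty,j,\alpha}\in\overline{\tZ\setminus\mcc_{\infty,j,\alpha}}$ (clear, since $\bfee_{\infty,j,\alpha}\setminus\{c_{\infty,j,\alpha}\}$ accumulates at $c_{\infty,j,\alpha}$); with both pieces closed, covering $\tZ$, and meeting only at $c_{\infty,j,\alpha}$, your identification of the wedge topology is complete.
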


\begin{proof}
For the moment, fix $j\in\bbn$ and $\alpha\in\ntij$. Since $\mca_{\infty,j,\alpha}=\wt{f}^{-1}(\mcc_{\infty,j,\alpha})$ is closed in $\tY$, $\mcc_{\infty,j,\alpha}$ is closed in $\tZ$. Additionally, recall from the start of the proof of Lemma \ref{tlemma} that $W_{\infty,j,\alpha}=\mca_{\infty,j,\alpha}\backslash (\overline{\mcb_{k,j,\alpha}\backslash\mca_{k,j,\alpha}})$ is open in $\tY$. Since $W_{\infty,j,\alpha}=\wt{f}^{-1}(\mcc_{\infty,j,\alpha}\backslash\{c_{\infty,j,\alpha}\})$, the set $\mcc_{\infty,j,\alpha}\backslash\{c_{\infty,j,\alpha}\}$ is open in $\tZ$. This is enough to give the wedge-sum factorization \[\tZ=(\mcc_{\infty,j,\alpha},c_{\infty,j,\alpha})\vee (\overline{\tZ\backslash \mcc_{\infty,j,\alpha}},c_{\infty,j,\alpha}).\]

Since the above paragraph holds for all pairs $(j,\alpha)$ and\[\bfee_{\infty}=\tZ\backslash\left(\bigcup_{j\in\bbn}\bigcup_{\alpha\in\ntij}\mcc_{\infty,j,\alpha}\backslash\{c_{\infty,j,\alpha}\}\right),\]
it follows that $\bfee_{\infty}$ is closed in $\tZ$. Recall from Remark \ref{preimageremark} that $\wt{f}^{-1}(\bfee_{\infty})=\bft_{\infty}$ and so the restricted map $\wt{f}|_{\bft_{\infty}}:\bft_{\infty}\to\bfee_{\infty}$ is a quotient map. Since $\bft_{\infty}$ is path connected and locally path connected by Lemma \ref{tlemma}, so is its quotient space $\bfee_{\infty}$. Additionally, Lemma \ref{tlemma} gives that $\bft_{\infty}$ is uniquely arcwise connected. Every fiber of $\wt{f}|_{\bft_{\infty}}:\bft_{\infty}\to\bfee_{\infty}$ is either a point or a closed sub-tree of the form $\mct_{\infty,j,\alpha}$. It follows easily that every point in $\bfee_{\infty}$ separates $\bfee_{\infty}$ into two disjoint, open components. Therefore, $\bfee_{\infty}$ is uniquely arcwise connected.
\end{proof}

The second statement of Lemma \ref{structureofzlemma} implies that for each $j\in\bbn$ and $\alpha\in\ntij$, there is a retraction $\mu_{j,\alpha}:\tZ\to \mcc_{\infty,j,\alpha}$, that collapses $\tZ\backslash \mcc_{\infty,j,\alpha}$ to $c_{\infty,j,\alpha}$. Additionally, there is a retraction $\mu:\tZ\to \bfee_{\infty}$, which collapses each $\mcc_{\infty,j,\alpha}$ to $c_{\infty,j,\alpha}$.

\begin{corollary}
The spaces $\bfee_{\infty}$ and $\mcc_{\infty,j,\alpha}$, $j\in\bbn$, $\alpha\in\ntij$ are retracts of $\tZ$.
\end{corollary}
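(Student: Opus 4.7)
The plan is to treat the two retractions in turn, leveraging the wedge decomposition from Lemma \ref{structureofzlemma} for the first and the inverse limit characterization of continuity for the second.

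For $\mu_{j,\alpha}:\tZ\to \mcc_{\infty,j,\alpha}$, the idea is to use directly the second statement of Lemma \ref{structureofzlemma}, which expresses $\tZ$ as the wedge $(\mcc_{\infty,j,\alpha},c_{\infty,j,\alpha})\vee (\overline{\tZ\setminus\mcc_{\infty,j,\alpha}},c_{\infty,j,\alpha})$ of two closed subspaces meeting only at the point $c_{\infty,j,\alpha}$. Define $\mu_{j,\alpha}$ to be the identity on $\mcc_{\infty,j,\alpha}$ and the constant map at $c_{\infty,j,\alpha}$ on $\overline{\tZ\setminus\mcc_{\infty,j,\alpha}}$. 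Since the two definitions agree at the wedgepoint and each restriction is continuous, the pasting lemma for closed covers gives continuity of $\mu_{j,\alpha}$, which is clearly a retraction.

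For $\mu:\tZ\to\bfee_{\infty}$, define $\mu$ set-theoretically as the identity on $\bfee_{\infty}$ and as the constant $c_{\infty,j,\alpha}$ on each $\mcc_{\infty,j,\alpha}$. This is well-defined by Lemma \ref{structureofzlemma}. To verify continuity, I would use Corollary \ref{continuitycor}: since $\tZ$ is locally path connected, it suffices to show that $\sigma_k\circ \mu:\tZ\to \tZ_k$ is continuous for every $k\in\bbn$. For each such $k$, observe that $\tZ_k$ is a CW complex which decomposes analogously to Lemma \ref{structureofzlemma} as $\bfee_k$ wedged with the finitely-per-compact-set subcomplexes $\mcc_{k,j,\alpha}$ attached at the points $c_{k,j,\alpha}$. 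The same argument as above (now iterated over the CW structure) yields a continuous retraction $\mu_k:\tZ_k\to \bfee_k$ that collapses each $\mcc_{k,j,\alpha}$ to $c_{k,j,\alpha}$.

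Next, I would verify the identity $\sigma_k\circ \mu=\mu_k\circ \sigma_k$ on $\tZ$ by cases. For $z\in\bfee_{\infty}=\wt{f}(\bft_{\infty})$, Lemma \ref{treeinverselimit} combined with $\sigma_k\circ\wt{f}=\wt{f}_k\circ\varrho_k$ gives $\sigma_k(z)\in\bfee_k$, so $\mu_k(\sigma_k(z))=\sigma_k(z)=\sigma_k(\mu(z))$. For $z\in\mcc_{\infty,j,\alpha}$ with $k\geq j$, Proposition \ref{projectiondiagramprop} tells us that $\sigma_k$ maps $\mcd_{\infty,j,\alpha}$ (and hence $\mcc_{\infty,j,\alpha}$) homeomorphically onto its counterpart, sending $c_{\infty,j,\alpha}$ to $c_{k,j,\alpha_{k}'}$, so both sides equal $c_{k,j,\alpha_k'}$. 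For $k<j$, the whole set $\mcb_{\infty,j,\alpha}$ projects under $\varrho_k$ to the single point $\varrho_k(\alpha)\in p_{\leq k}^{-1}(y_0)\subseteq \bft_k$, and so both sides equal $\sigma_k(c_{\infty,j,\alpha})$. Since the right side $\mu_k\circ \sigma_k$ is a composition of continuous maps, $\sigma_k\circ\mu$ is continuous, and Corollary \ref{continuitycor} gives continuity of $\mu$.

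The main obstacle is the continuity of $\mu$, since unlike the $\mu_{j,\alpha}$ it collapses uncountably many subspaces simultaneously and $\tZ$ need not be a CW complex. The inverse limit tool supplied by Corollary \ref{continuitycor} neatly circumvents this by reducing the problem to the finite-level retractions $\mu_k$, which are standard CW retractions.
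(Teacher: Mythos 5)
Your proof is correct and follows essentially the same route as the paper: the retraction onto $\mcc_{\infty,j,\alpha}$ is immediate from the wedge decomposition in Lemma \ref{structureofzlemma}, and the retraction onto $\bfee_{\infty}$ is verified by intertwining with finite-level retractions and invoking Corollary \ref{continuitycor}, which is precisely the continuity technique the paper uses for the analogous map $L$ in Lemma \ref{zcontractiblelemma}. The paper itself leaves the continuity of $\mu$ implicit, so your case-by-case check of $\sigma_k\circ\mu=\mu_k\circ\sigma_k$ is a welcome (and correct) filling-in of that detail.
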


\begin{corollary}\label{dendritecor}
If $D$ is a Peano continuum in $\bft_{\infty}$ or $\bfee_{\infty}$, then $D$ is a dendrite.
\end{corollary}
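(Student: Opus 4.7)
The plan is to invoke the definition of dendrite directly, since by hypothesis $D$ is already a Peano continuum, and all that needs to be verified is that $D$ is uniquely arcwise connected.

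First, I would note that any Peano continuum is arcwise connected: between any two distinct points $x,y\in D$ there is at least one arc $A\subseteq D$ from $x$ to $y$. This is a standard consequence of the Hahn-Mazurkiewicz theorem combined with path-to-arc reduction in metric continua, and it ensures that the question reduces entirely to uniqueness.

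Second, for uniqueness, suppose $A_1,A_2\subseteq D$ are two arcs in $D$ from $x$ to $y$. Composing with the inclusion $D\hookrightarrow \bft_{\infty}$ (respectively $D\hookrightarrow \bfee_{\infty}$) realizes each $A_i$ as an arc in the ambient space. By Lemma~\ref{tlemma}, $\bft_{\infty}$ is uniquely arcwise connected, and by Lemma~\ref{structureofzlemma}, so is $\bfee_{\infty}$. Hence $A_1=A_2$ as subsets of the ambient space, and therefore as subsets of $D$. This shows $D$ is uniquely arcwise connected, so by the definition of dendrite given in the Preliminaries, $D$ is a dendrite.

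This proof is essentially a one-line consequence of the unique arcwise connectedness established in Lemmas~\ref{tlemma} and~\ref{structureofzlemma}, and I do not anticipate any genuine obstacle. The only minor subtlety is ensuring the definitional notions line up: namely, that ``uniquely arcwise connected'' refers to the unique set-theoretic arc joining two points (so that uniqueness passes unchanged from the ambient space to any subset), and that the Peano continuum hypothesis on $D$ already supplies the existence of at least one such arc in $D$.
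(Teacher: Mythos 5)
Your proof is correct and matches the paper's (implicit) argument: the corollary is stated without proof precisely because it is immediate from the definition of a dendrite as a uniquely arcwise connected Peano continuum, together with the unique arcwise connectedness of $\bft_{\infty}$ and $\bfee_{\infty}$ established in Lemmas \ref{tlemma} and \ref{structureofzlemma}. Your observation that uniqueness of arcs passes to subspaces while existence is supplied by the Peano continuum hypothesis is exactly the right accounting.
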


%\begin{proof}
%We prove the statement for $\bft_{\infty}$. The same proof works for $\bfee_{\infty}$. Since $D$ is closed and path connected and $\bft_{\infty}$ is uniquely arcwise connected and locally path-connected, $D$ is a retract of $\bft_{\infty}$. Therefore, $D$ is locally path connected. Finally, any compact subset of the limit of a sequence of CW-complexes is metrizable. Therefore, $D$ is metrizable. Thus $D$ is a dendrite.
%\end{proof}

\begin{theorem}
The restricted maps $\wt{f}|_{\bft_{\infty}}:\bft_{\infty}\to\bfee_{\infty}$ and $\wt{g}|_{\bft_{\infty}}:\bfee_{\infty}\to\bft_{\infty}$ are based homotopy inverses.
\end{theorem}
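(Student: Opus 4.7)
The plan is to reduce the statement to the local structure established in Section \ref{sectiontranslatesoftj} via the book-keeping homeomorphisms $\Lambda_{\infty,j,\alpha}$ and $\lambda_{\infty,j,\alpha}$, and then assemble the pieces across $\bft_{\infty}$ and $\bfee_{\infty}$ using the fact that distinct subcomplexes $\mcb_{\infty,j,\alpha}$ (resp.\ $\mcd_{\infty,j,\alpha}$) meet in at most a single point.

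First I would verify that the restrictions make sense, that is, $\wt{f}(\bft_{\infty})\subseteq \bfee_{\infty}$ (which is the definition of $\bfee_{\infty}$) and $\wt{g}(\bfee_{\infty})\subseteq \bft_{\infty}$. For the latter, fix $j\in\bbn$ and $\alpha\in\ntij$ with $\Lambda_{\infty,j,\alpha}(\mct_{\infty,j,\alpha})=\beta_{\alpha}T_j$. By the left square of Proposition~\ref{gprop}, the restriction $\wt{g}_{\infty,j,\alpha}$ is conjugate to $g_{j,\beta_{\alpha}}$ via the homeomorphisms $\Lambda_{\infty,j,\alpha}$ and $\lambda_{\infty,j,\alpha}$. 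Since Section~\ref{sectiontranslatesoftj} guarantees $g_{j,\beta_{\alpha}}(E_{j,\beta_{\alpha}})\subseteq \beta_{\alpha}\bft_j$, conjugation gives $\wt{g}_{\infty,j,\alpha}(\bfee_{\infty,j,\alpha})\subseteq \bft_{\infty,j,\alpha}$. Since $\bfee_{\infty}=\bigcup_{j,\alpha}\bfee_{\infty,j,\alpha}$ and $\bft_{\infty}=\bigcup_{j,\alpha}\bft_{\infty,j,\alpha}$, taking unions gives $\wt{g}(\bfee_{\infty})\subseteq \bft_{\infty}$.

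Next I would show that $\wt{H}$ and $\wt{G}$ restrict to based homotopies on $\bft_{\infty}\times I$ and $\bfee_{\infty}\times I$ respectively. By Lemma~\ref{hlemma}, $\wt{H}_{\infty,j,\alpha}$ is conjugate to $H_{j,\beta_{\alpha}}$ via $\Lambda_{\infty,j,\alpha}$, and from Section~\ref{sectiontranslatesoftj} property~(5) of the translated homotopies we have $H_{j,\beta_{\alpha}}(\beta_{\alpha}\bft_j\times I)\subseteq \beta_{\alpha}\bft_j$, which transports to $\wt{H}(\bft_{\infty,j,\alpha}\times I)\subseteq \bft_{\infty,j,\alpha}$. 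The analogous argument using the right square of Proposition~\ref{gprop} together with $G_{j,\beta_{\alpha}}(E_{j,\beta_{\alpha}}\times I)\subseteq E_{j,\beta_{\alpha}}$ yields $\wt{G}(\bfee_{\infty,j,\alpha}\times I)\subseteq \bfee_{\infty,j,\alpha}$. Taking unions over $(j,\alpha)$ gives the desired invariance; well-definedness at the arc-endpoints where two subcomplexes meet is automatic because the homotopies are the constant homotopy on a uniform neighborhood of every arc-endpoint (property~(1) in Section~\ref{sectiontranslatesoftj}), and in particular at $\ty_0$.

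Finally, setting $\wt{H}'=\wt{H}|_{\bft_{\infty}\times I}$ and $\wt{G}'=\wt{G}|_{\bfee_{\infty}\times I}$ and using that $\wt{H}$ is a based homotopy from $id_{\tY}$ to $\wt{g}\circ\wt{f}$ and $\wt{G}$ is a based homotopy from $id_{\tZ}$ to $\wt{f}\circ\wt{g}$, the restrictions are the required based homotopies witnessing that $\wt{f}|_{\bft_{\infty}}$ and $\wt{g}|_{\bft_{\infty}}$ (read appropriately on $\bfee_{\infty}$) are based homotopy inverses.

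The main obstacle I anticipate is the verification that the local invariances $\wt{H}(\bft_{\infty,j,\alpha}\times I)\subseteq \bft_{\infty,j,\alpha}$ and $\wt{g}(\bfee_{\infty,j,\alpha})\subseteq \bft_{\infty,j,\alpha}$ patch compatibly across the dense, uncountable collection of wedge-points in $p^{-1}(y_0)$ at which the subcomplexes $\mcb_{\infty,j,\alpha}$ are glued. This is precisely the reason the careful ``uniform neighborhood of every arc-endpoint'' condition was built into $h_1$ (and hence into $H_j$, $H_{j,\beta}$, $G_j$, $G_{j,\beta}$) back in Section~\ref{sectionfixinghe}: it ensures the local conjugations agree at the attaching points so the unions are single-valued, continuous maps, and so the restrictions to $\bft_{\infty}$ and $\bfee_{\infty}$ are well-defined without any further work.
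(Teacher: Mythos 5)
Your proposal is correct and follows essentially the same route as the paper: verify that the globally defined maps $\wt{g}$, $\wt{H}$, $\wt{G}$ preserve $\bft_{\infty}$ and $\bfee_{\infty}$ by reducing, via the conjugation squares in Proposition \ref{gprop} and Lemma \ref{hlemma}, to the inclusions $g_{j,\beta_{\alpha}}(E_{j,\beta_{\alpha}})\subseteq \beta_{\alpha}\bft_j$, $H_{j,\beta_{\alpha}}(\beta_{\alpha}\bft_j\times I)\subseteq \beta_{\alpha}\bft_j$, and $G_{j,\beta_{\alpha}}(E_{j,\beta_{\alpha}}\times I)\subseteq E_{j,\beta_{\alpha}}$ established in Section \ref{sectiontranslatesoftj}. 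The only cosmetic difference is that you spend effort on re-verifying well-definedness at arc-endpoints, which is unnecessary here since $\wt{g}$, $\wt{H}$, and $\wt{G}$ are already continuous global maps and one only needs the subspace invariance.
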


\begin{proof}
Since $\bft_{\infty}$ and $\bfee_{\infty}$ contain the basepoints of $\tY$ and $\tZ$ respectively, the corresponding restrictions of $\wt{H}$ and $\wt{G}$ will verify the desired homotopy equivalence if we show that $\wt{g}(\bfee_{\infty})\subseteq \bft_{\infty}$, $\wt{H}(\bft_{\infty}\times\ui)\subseteq \bft_{\infty}$, and $\wt{G}(\bfee_{\infty}\times\ui)\subseteq \bfee_{\infty}$. Fix $j\in\bbn$ and $\alpha\in\ntij$. Recall that each of $\wt{g}$, $\wt{H}$, and $\wt{G}$ are determined by their restrictions $\wt{g}_{\infty,j,\alpha}$, $\wt{H}_{\infty,j,\alpha}$, and $\wt{G}_{\infty,j,\alpha}$. Therefore, it suffices to check that $\wt{g}_{\infty,j,\alpha}(\bfee_{\infty,j,\alpha})\subseteq \bft_{\infty,j,\alpha}$, $\wt{H}_{\infty,j,\alpha}(\bft_{\infty,j,\alpha}\times\ui)\subseteq \bft_{\infty,j,\alpha}$, and $\wt{G}_{\infty,j,\alpha}(\bfee_{\infty,j,\alpha}\times\ui)\subseteq \bfee_{\infty,j,\alpha}$. In all cases, we use the fact that $\Lambda_{\infty,j,\alpha}(\bft_{\infty,j,\alpha})=\beta_{\alpha}\bft_{j}$ and $\lambda_{\infty,j,\alpha}(\bfee_{\infty,j,\alpha})=E_{j,\beta_{\alpha}}$. 

From the left square in Proposition \ref{gprop}, we have $g_{\infty,j,\alpha}=\Lambda_{\infty,j,\alpha}^{-1}\circ g_{j,\beta_{\alpha}}\circ \lambda_{\infty,j,\alpha}$. Recall from Section \ref{sectiontranslatesoftj} that we have $g_{j,\beta_{\alpha}}(E_{j,\beta_{\alpha}})\subseteq \beta_{\alpha} \bft_{j}$. If follows that
\begin{eqnarray*}
\wt{g}_{\infty,j,\alpha}(\bfee_{\infty,j,\alpha}) &=& \Lambda_{\infty,j,\alpha}^{-1}\circ g_{j,\beta_{\alpha}}\circ \lambda_{\infty,j,\alpha}(\bfee_{\infty,j,\alpha})\\
&=& \Lambda_{\infty,j,\alpha}^{-1}\circ g_{j,\beta_{\alpha}}(E_{j,\beta_{\alpha}})\\
&\subseteq & \Lambda_{\infty,j,\alpha}^{-1}(\beta_{\alpha} \bft_{j})\\
&=& \bft_{\infty,j,\alpha}
\end{eqnarray*}
The same argument using the square in Lemma \ref{hlemma} for $\wt{H}$ and the right square in Proposition \ref{gprop} for $\wt{G}$ gives the other inclusions.
\end{proof}

Combining the previous theorem with Corollary \ref{tiscontractible}, we have the following. 

\begin{corollary}\label{eiscontractible}
If $X_j$ is locally finite for every $j$, then $\bfee_{\infty}$ is contractible.
\end{corollary}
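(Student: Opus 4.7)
The plan is to deduce Corollary \ref{eiscontractible} directly from the two preceding results: Corollary \ref{tiscontractible}, which gives that $\bft_\infty$ is contractible under the local finiteness hypothesis, and the theorem immediately before it, which provides based homotopy inverses $\bar{f}=\wt{f}|_{\bft_\infty}\colon \bft_\infty\to\bfee_\infty$ and $\bar{g}=\wt{g}|_{\bfee_\infty}\colon \bfee_\infty\to\bft_\infty$. Since contractibility is a based-homotopy invariant, the conclusion follows as soon as we exhibit the two ingredients and assemble them.

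In detail, I would first invoke Corollary \ref{tiscontractible} to obtain a based contraction $K\colon \bft_\infty\times I\to \bft_\infty$ with $K(\cdot,0)=id_{\bft_\infty}$ and $K(\cdot,1)\equiv \wt{y}_0$. Next, I would use the restricted homotopies coming from $\wt{H}$ and $\wt{G}$ (which preserve $\bft_\infty$ and $\bfee_\infty$ respectively, as verified in the preceding theorem) to obtain a based homotopy $\bar{G}\colon \bfee_\infty\times I\to\bfee_\infty$ from $id_{\bfee_\infty}$ to $\bar{f}\circ\bar{g}$. The desired contraction of $\bfee_\infty$ is then the concatenation: on $[0,1/2]$ run $\bar{G}$ to pass from $id_{\bfee_\infty}$ to $\bar{f}\circ\bar{g}$, and on $[1/2,1]$ run $\bar{f}\circ K(\bar{g}(-),\cdot)$ to contract $\bar{g}(\bfee_\infty)\subseteq\bft_\infty$ down to $\wt{y}_0$ and push the result back into $\bfee_\infty$ via $\bar{f}$, landing at $\bar{f}(\wt{y}_0)=\wt{z}_0$. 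The result is a based contraction of $\bfee_\infty$.

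There is no substantive obstacle here — the corollary is a purely formal consequence of the machinery already built. The only subtlety worth checking is that the contraction of $\bft_\infty$ in Corollary \ref{tiscontractible} can be taken to be based at $\wt{y}_0$; this is automatic because $\bft_\infty$ is path connected, so any contraction can be modified to a based one by conjugating with a path to the basepoint. Everything else is bookkeeping with the already-constructed maps $\bar{f}$, $\bar{g}$, and $\bar{G}$.
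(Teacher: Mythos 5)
Your proof is correct and takes the same route as the paper: the paper simply observes that $\bft_\infty$ is contractible (Corollary~\ref{tiscontractible}) and that $\bft_\infty\simeq\bfee_\infty$ via the based homotopy equivalence from the preceding theorem, so $\bfee_\infty$ is contractible; you just write out the standard concatenation explicitly. Your closing remark about upgrading to a based contraction is unnecessary (the corollary only asserts contractibility, and the homotopy equivalence is already based), and as a general claim "conjugating with a path" does not by itself convert an unbased contraction to a based one without some nondegeneracy of the basepoint, but this does not affect the argument.
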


Even when $X_j$ is not locally finite, every Peano continuum in $\bfee_{\infty}$ is a dendrite and therefore contractible. Nevertheless, the author anticipates that $\bfee_{\infty}$ is contractible even when $X_j$ is not locally finite. However, this requires a characterization of contractible, uniquely arcwise connected, and locally arcwise connected spaces, which apparently does not exist in the current literature.

\subsection{Shrinking adjunction spaces in $Z$}

\begin{definition}\label{sadef}
Let $S$ be $\{1,2,\dots M\}$ or $\bbn$. Let $D$ be a path-connected space, $\{d_i\}_{i\in S}$ be a sequence (of not necessarily distinct points) in $D$ and $\{(A_i,a_i)\}_{i\in S}$ be a corresponding sequence of based, connected CW-complexes. The \textit{shrinking adjunction space with core $D$ and attachment spaces $\{(A_i,a_i)\}_{i\in S}$} is the space $\mathbf{X}=X\sqcup \coprod_{i\in S}A_i/\mathord{\sim}$ where $d_i\sim a_i$ for each $i\in S$. We give $\mathbf{X}$ the following topology: a set $U\subseteq \mathbf{X}$ is open if and only if
\begin{enumerate}
\item $D\cap U$ is open in $D$,
\item $A_i\cap U$ is open in $A_i$ for all $i\in S$,
\item whenever $S=\bbn$ and $i_1<i_2<i_3<\cdots$ such that $\{d_{i_m}\}_{m\in \bbn}$ converges to a point $d\in D\cap U$, we have $A_{i_m}\subseteq U$ for all but finitely many $m\in\bbn$. 
\end{enumerate}
\end{definition}

Note that the case in Definition \ref{sadef} where $S$ is finite is simply to allow for an ordinary finite adjunction space to be considered as a degenerate case of a shrinking adjunction space. Note that $D$ and all attachment spaces $A_i$ are retracts of $\mathbf{X}$ as defined above. Let $u_i:\mathbf{X}\to A_i$, $i\in \bbn$ be the retraction that collapses $D$ and $A_j$ for $j\neq i$ to $a_i$. The following is one of the main results of \cite{Braznsequential}.

\begin{theorem}\label{sequentialpapermainresults}
Let $n\geq 2$ and $\mathbf{X}$ be a shrinking adjunction space as described in Definition \ref{sadef} where $D$ is a Peano continuum with basepoint $d_0$. Then canonical homomorphism $\Upsilon_{\mathbf{X}}:\pi_n(\mathbf{X},d_0)\to \prod_{i\in \bbn}\pi_n(A_i,a_i)$, $\Upsilon_{\mathbf{X}}([\ell])=([u_i\circ \ell])$ is a split epimorphism. Moreover, if $D$ is a dendrite and $A_i$ is an $(n-1)$-connected CW-complex for all $i\in \bbn$, then $\Upsilon_{\mathbf{X}}$ is an isomorphism. 
\end{theorem}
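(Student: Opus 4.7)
The plan is to prove the two assertions separately, building a section of $\Upsilon_{\mathbf{X}}$ first and then verifying injectivity under the extra hypotheses. First note that $\Upsilon_{\mathbf{X}}$ is well-defined as a homomorphism because each $u_i:\mathbf{X}\to A_i$ is a continuous based retraction. To show $\Upsilon_{\mathbf{X}}$ is split epic, I will construct a group homomorphism $\Sigma:\prod_{i\in\bbn}\pi_n(A_i,a_i)\to \pi_n(\mathbf{X},d_0)$ such that $\Upsilon_{\mathbf{X}}\circ\Sigma=id$. Given representatives $f_i:(I^n,\partial I^n)\to(A_i,a_i)$ and a basepoint-preserving path $\gamma_i:I\to D$ from $d_0$ to $d_i$ (which exists since $D$ is path connected Peano, so even locally path connected), form the map $g_i:(I^n,\partial I^n)\to (\mathbf{X},d_0)$ that traverses $\gamma_i$ on a thin collar of $\partial I^n$, runs $f_i$ on the remaining cube, then returns along $\gamma_i^-$. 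Choose a standard sequence of disjoint closed sub-cubes $C_i\subset I^n$ shrinking to a compact subset $K\subset I^n$ in such a way that the diameters of the $C_i$ tend to $0$, and define $F=\Sigma([f_1],[f_2],\dots)$ to be the map that agrees with a scaled copy of $g_i$ on $C_i$ and sends $I^n\setminus\bigcup_i C_i$ to $d_0\in D$.

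Continuity of $F$ is the delicate point and depends on the topology of $\mathbf{X}$ given in Definition \ref{sadef}. If $U$ is open in $\mathbf{X}$ and $x\in F^{-1}(U)$, three cases arise: $x$ lies in the interior of some $C_i$ (use continuity of $g_i$); $x$ lies in the interior of $I^n\setminus\bigcup_i C_i$ (the constant value $d_0$ is in $U$ and this region is open); or $x$ lies in $K$. In the last case, $F(x)=d_0\in D\cap U$, so any sequence $x_{k}\to x$ with $x_k\in C_{i_k}$ satisfies $d_{i_k}=F(\partial C_{i_k})\to d_0$, so by condition (3) of Definition \ref{sadef}, $A_{i_k}\subseteq U$ eventually, hence a neighborhood of $x$ lies in $F^{-1}(U)$. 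The composition $u_i\circ F$ collapses all $C_{i'}$ with $i'\neq i$ to $a_i$ and agrees (up to the auxiliary path $\gamma_i$) with $f_i$ on $C_i$, so $[u_i\circ F]=[f_i]$ as desired.

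For the injectivity statement, assume $D$ is a dendrite and each $A_i$ is $(n-1)$-connected, and let $\ell:(I^n,\partial I^n)\to (\mathbf{X},d_0)$ satisfy $[u_i\circ\ell]=0$ for every $i$. The strategy is to perform a controlled decomposition of $I^n$. Using the quasi-wedge structure, approximate $\ell$ by a homotopic map $\ell'$ for which $\ell'^{-1}(A_i\setminus\{a_i\})$ decomposes into finitely many open $n$-cells for each $i$ (achievable because the image of the compact set $I^n$ can only meet finitely many $A_i$ with large ``radius", while those with small radius can be absorbed into $D$). On each such cell $V$, the restriction $\ell'|_{\ov{V}}$ represents a class in $\pi_n(A_i,a_i)$ whose sum over the cells for a fixed $i$ is $[u_i\circ\ell]=0$; using $(n-1)$-connectedness of $A_i$ and the homotopy extension property, I replace $\ell'|_{\ov{V}}$ rel $\partial V$ by a map into $A_i$ that is null-homotopic cell-by-cell, yielding after a further homotopy a map whose image in each $A_i$ has been absorbed into the wedge point $a_i=d_i\in D$. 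The remaining map factors through the retract $D$, which is contractible because every dendrite is contractible, so $\ell$ is null-homotopic.

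The main obstacle is the simultaneous control of countably many homotopies so that the combined deformation is continuous on $\mathbf{X}$: the local nullhomotopies in different $A_i$ must fit together compatibly across accumulation points of $\{d_i\}$ in $D$. This is where the full strength of Definition \ref{sadef}(3) and the fact that $D$ is a \emph{dendrite} (so that Peano subcontinua are uniquely arcwise connected) are indispensable; the continuity criterion analogous to Corollary \ref{continuitycor} reduces the verification to checking convergence of diameters, and the uniform control inherited from the original map $\ell$ supplies exactly this. The cell-by-cell nullhomotopy step requires $(n-1)$-connectedness of $A_i$ precisely to kill the relative $\pi_n$ obstruction on each decomposition cell, which is why the isomorphism statement genuinely needs the connectivity hypothesis on attachment spaces.
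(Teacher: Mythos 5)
This statement is not proved in the paper at all: it is quoted verbatim as a result of the companion paper \cite{Braznsequential} (``The following is one of the main results of \cite{Braznsequential}''), so there is no internal proof to compare against. Judged on its own terms, your proposal has a genuine gap in the construction of the section. You send $I^n\setminus\bigcup_i C_i$ to the single point $d_0$ and put the conjugate $\gamma_i\cdot f_i\cdot\gamma_i^-$ on $C_i$, so $F(\partial C_{i_k})=d_0$, not $d_{i_k}$; the identity ``$d_{i_k}=F(\partial C_{i_k})\to d_0$'' on which your continuity argument rests is false. The attaching points $d_i$ form an arbitrary sequence in $D$ and need not converge to $d_0$, so Definition \ref{sadef}(3) cannot be invoked; and even where $A_{i_k}\subseteq U$, the image $F(C_{i_k})$ also contains $\im(\gamma_{i_k})$, which need not lie in $U$. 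Concretely, take $D=[0,1]$, $d_0=0$, $d_i=1$ for all $i$, $A_i=S^n$: then $U=[0,1/2)$ is open in $\mathbf{X}$ (condition (3) is vacuous since $d_i\to 1\notin U$), but every $F(C_i)$ meets $[1/2,1]$, so $F^{-1}(U)$ contains the accumulation set $K$ without containing a neighborhood of it. The repair is to make the ``background'' map on $I^n\setminus\bigcup_i\operatorname{int}(C_i)$ a map \emph{onto} $D$ (via Hahn--Mazurkiewicz, using that $D$ is a Peano continuum) chosen so that $\partial C_i$ is carried to $d_i$; then a sequence of cubes accumulating at $x$ has $d_{i_k}\to F(x)\in D\cap U$ and condition (3) does exactly what you want, while $u_i\circ F$ is still a representative of $[f_i]$.

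The injectivity sketch also has a soft spot: after decomposing $\ell'^{-1}(A_i\setminus\{a_i\})$ into cells, you propose to replace $\ell'$ on each cell by a map ``that is null-homotopic cell-by-cell.'' But the hypothesis only gives that the \emph{sum} of the classes carried by the cells over a fixed $i$ is $[u_i\circ\ell]=0$; the individual classes can be nonzero, so they cannot be killed rel boundary one at a time. One must first merge the cells mapping into the same $A_i$ (moving material through $D$, which is where unique arcwise connectivity of the dendrite is used) before any cancellation is possible; the $(n-1)$-connectedness of $A_i$ is what lets you push the map on the $(n-1)$-skeleton of a subdivision down to the attaching points so that each $n$-cell genuinely carries a $\pi_n(A_i)$ class. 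As written, the cell-by-cell step would prove the false statement that $\Upsilon_{\mathbf{X}}$ is injective whenever each summand class vanishes ``locally,'' so this step needs to be reorganized, not merely elaborated.
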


In the next lemma we identify the relevant shrinking adjunction spaces within $Z$.

\begin{lemma}\label{salemma}
Let $D\subseteq \bfee_{\infty}$ be a dendrite and $\{(j_i,\alpha_i)\}_{i\in\bbn}$ be a sequence of distinct pairs where $j_i\in\bbn$, $\alpha_i\in \nt_{\infty,j_i}$, and $c_{\infty,j_i,\alpha_i}\in D$. Then $\{j_i\}_{i\in\bbn}\to \infty$ and $\mcp=D\cup \bigcup_{i\in\bbn}\mcc_{\infty,j_i,\alpha_i}$ is a shrinking adjunction space with core $D$ and attachment spaces $\mcc_{\infty,j_i,\alpha_i}$.
\end{lemma}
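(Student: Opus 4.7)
My plan is to handle the two claims separately. For the claim that $j_i \to \infty$, I argue by contradiction. If $\{j_i\}$ does not go to infinity, I pass to a subsequence with $j_i = j$ constant, yielding infinitely many distinct $\alpha_n \in \nt_{\infty, j}$ with $c_{\infty, j, \alpha_n} \in D$. Since the dendrite $D$ is compact metric, a further subsequence satisfies $c_{\infty, j, \alpha_n} \to c \in D$. Applying the homotopy inverse $\wt{g}$ and using Proposition \ref{gprop}, I compute $\wt{g}(c_{\infty, j, \alpha_n}) = \alpha_n \beta_{\alpha_n} g_j(c_j) \in \mca_{\infty, j, \alpha_n}$, whose image under $p$ is the fixed point $x^* := p_j(g_j(c_j)) \in X_j$, independent of $n$ (it depends only on the contraction used to build $g_j$). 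Since $y_0 \notin X_j$, we have $x^* \neq y_0$, and continuity of $p \circ \wt{g}$ forces $p(\wt{g}(c)) = x^*$.

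To reach the contradiction, I show $\wt{g}(c) \in p^{-1}(y_0)$. If not, then using the wedge decomposition $Y \setminus \{y_0\} = \bigsqcup_{j'}(Y_{j'} \setminus \{y_0\})$ together with Proposition \ref{uprop}, the point $\wt{g}(c)$ lies in a unique open set $\mcu_{\infty, j^*, \alpha^*}$, and the tail of $\wt{g}(c_{\infty, j, \alpha_n})$ must eventually lie there. But Proposition \ref{disjointunion} shows $\mcb_{\infty, j, \alpha_n} \cap \mcu_{\infty, j^*, \alpha^*}$ is empty for all but at most one $n$ (the at-most-one case being $j = j^*$ and $\alpha_n = \alpha^*$; if $j \neq j^*$ the intersection is always empty), contradicting convergence. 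Hence $p(\wt{g}(c)) = y_0 \neq x^*$, the desired contradiction.

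For the second claim, I need the subspace topology on $\mcp \subseteq \tZ$ to coincide with the shrinking adjunction topology of Definition \ref{sadef}. Conditions (1) and (2) for subspace-open sets are immediate. For condition (3), given $V$ open in $\tZ$ and $c_{\infty, j_{i_m}, \alpha_{i_m}} \to c \in V$ with distinct indices, part (1) forces $j_{i_m} \to \infty$, so the images $p(\mca_{\infty, j_{i_m}, \alpha_{i_m}}) = X_{j_{i_m}}$ shrink into any prescribed neighborhood of $y_0$ in $Y$. Combining this with the whisker topology of $\tY$ and the quotient map $\wt{f}: \tY \to \tZ$, the saturated preimage $\wt{f}^{-1}(V)$ eventually swallows $\mca_{\infty, j_{i_m}, \alpha_{i_m}}$, giving $\mcc_{\infty, j_{i_m}, \alpha_{i_m}} \subseteq V$. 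Conversely, given $U \subseteq \mcp$ satisfying (1)--(3), I would construct an open $V \subseteq \tZ$ with $V \cap \mcp = U$ by using the wedge-summand structure in Lemma \ref{structureofzlemma} to extend $U$ at interior points of the attachment spaces, and using condition (3) to build compatible $\tZ$-neighborhoods at the wedge points and at points of $D$ that pick up only the permitted attachment spaces.

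The hard step will be the precise shrinking argument in condition (3). Basic neighborhoods of $c \in \bfee_{\infty}$ in $\tZ$ are pushforwards under $\wt{f}$ of saturated open neighborhoods of the fiber $\wt{f}^{-1}(c)$ in $\tY$, and this fiber is either a single point in $\bft_\infty$ or a whole tree $\mct_{\infty, j', \alpha'}$. Making precise the claim that $\mca_{\infty, j_{i_m}, \alpha_{i_m}}$ eventually fits inside such a saturated set requires the careful interplay between the whisker topology of $\tY$ at arc-endpoints above $y_0$ and the shrinking wedge topology of $Y$ at the wedgepoint; once $j_{i_m}$ is large, both structures conspire to force $\mcb_{\infty, j_{i_m}, \alpha_{i_m}}$ to collapse toward $\wt{f}^{-1}(c)$ in a uniform way matching the combinatorial content of condition (3).
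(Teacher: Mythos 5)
Your argument for $\{j_i\}\to\infty$ is correct and is essentially the paper's: pass to a constant-$j$ subsequence, use compactness of the dendrite to extract a convergent subsequence of attachment points, push forward by $\wt{g}$ and then by $p$, and derive a contradiction from the fact that $X_j$ is closed in $Y$ and misses $y_0$. Your repackaging via the observation that $p(\wt{g}(c_{\infty,j,\alpha_n}))=p_j(g_j(c_j))$ is a single fixed point $x^*\in X_j$ is a clean (and valid) way to finish; the paper instead notes directly that the limit must lie in $\wt{f}(p^{-1}(y_0))$ because $\tZ\backslash\wt{f}(p^{-1}(y_0))$ is a disjoint union of the open sets $\wt{f}(\mcu_{\infty,j,\alpha})$ and the pairs are distinct, and then contradicts closedness of $X_J$. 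Either route works.

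The gap is in the second half. The sentence ``the saturated preimage $\wt{f}^{-1}(V)$ eventually swallows $\mca_{\infty,j_{i_m},\alpha_{i_m}}$'' \emph{is} condition (3); everything before it (that $X_{j_{i_m}}$ shrinks into any neighborhood of $y_0$ in $Y$) only controls the image under $p$, and $p^{-1}(V)$ decomposes into a huge disjoint family of basic sets $N(\gamma,V)$, almost all of which miss $\wt{f}^{-1}(U)$. What is actually needed — and what you explicitly defer as ``the hard step'' — is that the base points $\alpha_{i_m}$ themselves converge to $\beta=\wt{g}(d)\in p^{-1}(y_0)$ in the whisker topology. The paper closes this as follows: write $\wt{g}(d_{i_m})=\alpha_{i_m}\delta_m$ with $\delta_m\in\tY_{j_{i_m}}$, so $\alpha_{i_m}\delta_m\to\beta$; choose a basic $N(\beta,V)\subseteq\wt{f}^{-1}(U)$ with $Y_j\subseteq V$ for all $j\geq J$; for large $m$ one has $j_{i_m}\geq J$ and $\alpha_{i_m}\delta_m\in N(\beta,V)$, and since $\delta_m$ is represented by a path in $Y_{j_{i_m}}\subseteq V$ this forces $\alpha_{i_m}\in N(\beta,V)$, hence $N(\alpha_{i_m},V)=N(\beta,V)$; finally every point $\alpha_{i_m}\gamma$ with $\gamma\in\tY_{j_{i_m}}$ lies in $N(\alpha_{i_m},V)\subseteq\wt{f}^{-1}(U)$, so $\mcc_{\infty,j_{i_m},\alpha_{i_m}}\subseteq U$. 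Without some version of this word/whisker argument the proof is not complete. (On your converse direction: you are right that identifying the subspace topology with the adjunction topology formally requires both inclusions, but note the paper itself only verifies that subspace-open sets satisfy (1)–(3); your sketch of the reverse inclusion is too vague to count as a proof, so if you intend to include it you must actually construct the extending open set.)
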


\begin{proof}
To simplify our notation, we write $A_i$ for $\mcc_{\infty,j_i,\alpha_i}$. 

First, we prove that $\{j_i\}_{i\in\bbn}\to \infty$. If $\{j_i\}_{i\in\bbn}\nrightarrow \infty$, then there exists $J\in\bbn$ and $i_1<i_2<i_3<\cdots$ such that $j_{i_m}=J$ and that $\alpha_{i_m}\neq \alpha_{i_{m'}}$ whenever $m\neq m'$. Since $\{c_{\infty,J,\alpha_{i_m}}\}_{m\in\bbn}$ is a sequence in the compact set $D$, we may replace $\{i_m\}$ by a subsequence so that $\{c_{\infty,J,\alpha_{i_m}}\}_{m\in\bbn}\to z$ for some $z\in D$. Since all pairs $(J,\alpha_{i_m})$ are distinct and $Z\backslash \wt{f}(p^{-1}(y_0))$ is the disjoint union of the open sets $\wt{f}(\mcu_{\infty,j,\alpha})$, we must have $z=\tf(\alpha)$ for some $\alpha\in p^{-1}(y_0)$. Since $\tg(\mcc_{\infty,J,\alpha_{i_m}})\subseteq \mca_{\infty,J,\alpha_{i_m}}$, we have $a_m=\tg(c_{\infty,J,\alpha_{i_m}})\in \mca_{\infty,J,\alpha_{i_m}}$ where $\{a_m\}_{m\in\bbn}\to \tg(z)=\alpha$. Thus $\{p(a_m)\}_{m\in\bbn}\to y_0$ in $Y$ where $p(a_m)\in X_J$ for all $m$. This is a contradiction since $X_J$ is closed in $Y$ and $y_0\notin X_J$.

With $\{j_i\}_{i\in\bbn}\to \infty$ established, note that the subspace $\mcp$ of $\tZ$ has the underlying set of the desired shrinking adjunction space. Since $D$ and all $\mcc_{\infty,j,\alpha}$ are retracts of $Z$, Conditions (1) and (2) in Definition \ref{sadef} hold; it suffices to check Condition (3). Recall that the attachment points in $D$ are $d_{i_m}=c_{\infty,j_{i_m},\alpha_{i_m}}$. Suppose $U$ is an open set in $\tZ$ and that $i_1<i_2<i_3<\cdots $ is such that $\{d_{i_m}\}_{m\in\bbn}\to d$ for $d\in U$. Suppose, to obtain a contradiction, that there exist $m_1<m_2<m_3<\cdots$ such that $A_{i_{m_{r}}}$ is not contained in $U$. By replacing $\{d_{i_m}\}$ with the corresponding subsequence, we may assume that $A_{i_m}\nsubseteq U$ for all $m$. Then there exists $a_m\in A_{i_m}\backslash U$ for all $m\in\bbn$. Since $\{d_{i_m}\}_{m\in\bbn}\to d$, $d_{i_m}\in U$ for all but finitely many $m$. Thus we may assume $a_m\neq d_{i_m}$, i.e. $a_m\in  A_{i_m}\backslash\{d_{i_m}\}$. 

Using our characterization of the structure of $Z$ again, we must have $d\in D\cap \wt{f}(p^{-1}(x_0))$. Since $\wt{f}$ is bijective on $p^{-1}(x_0)$ (with inverse $\wt{g}$), there is a unique $\beta=\wt{g}(d)\in p^{-1}(x_0)\subseteq \tY$ with $\wt{f}(\beta)=d$. Choose $\beta_m\in \mca_{\infty,j_{i_m},\alpha_{i_m}}$ with $\wt{f}(\beta_m)=a_m$. For each $m$, we have $\beta_m=\alpha_{i_m}\gamma_{m}\in\mca_{\infty,j_{i_m},\alpha_{i_m}}$ for $\gamma_{m}\in\tY_{j_{i_m}}$. 

Since $\{d_{i_m}\}_{m\in\bbn}=\{c_{\infty,j_{i_m},\alpha_{i_m}}\}_{m\in\bbn}\to d$ in $Z$, we have $\{\wt{g}(d_{i_m})\}\to \beta$ in $\tY$. The definition of $\wt{g}$ ensures that $\wt{g}(d_{i_m})\in \mcb_{\infty,j_{i_m},\alpha_{i_m}}$. Thus $\wt{g}(d_{i_m})=\alpha_{i_m}\delta_{m}$ for $\delta_{m}\in\tY_{j_{i_m}}$. In summary, $\{\alpha_{i_m}\delta_{m}\}_{m\in\bbn}\to \beta $ in $\tY$ for $\delta_{m}\in\tY_{j_{i_m}}$.

Since $\wt{f}(\beta)=d\in U$, $\wt{f}^{-1}(U)$ is an open neighborhood of $\beta$ in $\tY$. Find a basic open neighborhood $N(\beta,V)\subseteq \wt{f}^{-1}(U)$ where $V$ is an open neighborhood of $y_0$ in $Y$. We may assume that there is a $J$ such that $Y_j\subseteq V$ for all $j\geq J$. Since $\{j_{i_m}\}_{m\in\bbn}\to\infty$, we may find $M$ such that $j_{i_m}\geq J$ for all $m\geq M$. Additionally, since $\{\alpha_{i_m}\delta_{m}\}_{m\in\bbn}\to \beta$, we may choose $M$ large enough so that $\alpha_{i_m}\delta_{m}\in N(\beta,V)$ for all $m\geq M$. Fix $m\geq M$. A path representing $\delta_m$ has image in $Y_{j_{i_m}}\subseteq V$ and thus $\alpha_{i_m}\in N(\beta,V)$. It follows that $N(\beta,V)=N(\alpha_{i_m},V)$. We have $\gamma_{i_m}\in \tY_{j_{i_m}}$ and $j_{i_m}\geq J$. Thus $\beta_m=\alpha_{i_m}\gamma_{i_m}\in N(\alpha_{i_m},V)=N(\beta,V)\subseteq \wt{f}^{-1}(U)$. This gives $a_m=\wt{f}(\beta_m)\in U$; a contradiction.
\end{proof}

We refer to a subspace of $Z$ of the form $\mcp=D\cup \bigcup_{i}\mcc_{\infty,j_i,\alpha_i}$ as described in Lemma \ref{salemma} as a \textit{$D$-subcomplex of $Z$.} Note that every $D$-subcomplex of $Z$ is a retract of $Z$. Indeed, since $\bfee_{\infty}$ is uniquely arcwise connected and locally arcwise connected, there is a canonical retraction $\bfee_{\infty}\to D$ and this can easily be extended to a retract of $Z$ using the second statement of Lemma \ref{structureofzlemma}.

\begin{corollary}\label{pcinZcorollary}
Every Peano continuum in $Z$ containing $z_0$ is a subset of a $D$-subcomplex of $Z$ for some dendrite $D\subseteq \bfee_{\infty}$.
\end{corollary}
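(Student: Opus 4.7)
The plan is to exploit the explicit retraction $\mu:\tZ\to \bfee_{\infty}$ (which collapses each wedge summand $\mcc_{\infty,j,\alpha}$ to its attachment point $c_{\infty,j,\alpha}$) in order to produce the candidate dendrite and then to extract the relevant countable collection of wedge summands meeting $K$. Given a Peano continuum $K\subseteq \tZ$ with $z_0\in K$, I would first set $D=\mu(K)$. Since $\mu$ is continuous and $K$ is a Peano continuum, the Hahn-Mazurkiewicz Theorem applied to a surjection $\ui\to K$ composed with $\mu$ shows that $D$ is a Peano continuum contained in $\bfee_{\infty}$. Corollary \ref{dendritecor} then immediately upgrades $D$ to a dendrite, and $z_0=\mu(z_0)\in D$.

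Next I would identify the wedge summands that $K$ actually enters. By Lemma \ref{structureofzlemma}, the sets $\mcc_{\infty,j,\alpha}\setminus\{c_{\infty,j,\alpha}\}$, ranging over $j\in\bbn$ and $\alpha\in\ntij$, form a pairwise disjoint family of open subsets of $\tZ$. Consequently their intersections with $K$ form a pairwise disjoint family of open subsets of $K$. Since $K$ is a Peano continuum, hence separable, at most countably many of these intersections are nonempty. Enumerate the resulting pairs as $\{(j_i,\alpha_i)\}_{i\in S}$ where $S\subseteq \bbn$. For each such pair, $K\cap \mcc_{\infty,j_i,\alpha_i}$ is nonempty, so applying $\mu$ gives $c_{\infty,j_i,\alpha_i}=\mu(K\cap\mcc_{\infty,j_i,\alpha_i})\subseteq \mu(K)=D$, which confirms that the attachment points lie in the dendrite $D$.

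With $D$ and the sequence $\{(j_i,\alpha_i)\}_{i\in S}$ in hand, Lemma \ref{salemma} applies and furnishes the shrinking adjunction space $\mcp=D\cup \bigcup_{i\in S}\mcc_{\infty,j_i,\alpha_i}$ as a $D$-subcomplex of $\tZ$. It remains to verify that $K\subseteq \mcp$. If $x\in K\cap \bfee_{\infty}$, then $\mu(x)=x$, so $x\in D\subseteq \mcp$. Otherwise, by the decomposition in Lemma \ref{structureofzlemma}, $x\in \mcc_{\infty,j,\alpha}\setminus\{c_{\infty,j,\alpha}\}$ for some $(j,\alpha)$; this pair belongs to our enumeration, so $x\in \mcc_{\infty,j_i,\alpha_i}\subseteq \mcp$.

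The only potential obstacle is the countability extraction in the second paragraph, which depends on the sets $\mcc_{\infty,j,\alpha}\setminus\{c_{\infty,j,\alpha}\}$ genuinely being pairwise disjoint and open in $\tZ$. Both facts are already established in the proof of Lemma \ref{structureofzlemma} (the latter via the characterization $\wt{f}^{-1}(\mcc_{\infty,j,\alpha}\setminus\{c_{\infty,j,\alpha}\})=W_{\infty,j,\alpha}$), so the argument goes through cleanly.
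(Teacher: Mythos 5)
Your proof is correct and follows essentially the same route as the paper: the countability of the relevant pairs via disjointness of the open sets $\mcc_{\infty,j,\alpha}\setminus\{c_{\infty,j,\alpha}\}$ plus separability of $K$, followed by an appeal to Lemma \ref{salemma}. The only (harmless) variation is that you take $D=\mu(K)$ rather than $D=K\cap\bfee_{\infty}$ --- these coincide since $z_0\in K$ and $K$ is path connected --- and your use of the Hahn--Mazurkiewicz Theorem together with Corollary \ref{dendritecor} actually makes the claim that $D$ is a dendrite a bit more transparent than the paper's bare assertion.
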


\begin{proof}
Let $P\subseteq \tZ$ be a Peano continuum containing $z_0$. Then $D=P\cap\bfee_{\infty}$ is a dendrite. Since the sets $\mcc_{\infty,j,\alpha}\backslash\{c_{\infty,j,\alpha}\}$ are open and disjoint in $\tZ$ (ranging over all pairs $(j,\alpha)$), $P$ can meet $\mcc_{\infty,j,\alpha}\backslash\{c_{\infty,j,\alpha}\}$ for at most countably many pairs $(j,\alpha)$. Otherwise, the separability of $P$ would be violated. Let $(j_i,\alpha_i)$ be an enumeration of the pairs $(j,\alpha)$ for which $P$ meets $\mcc_{\infty,j,\alpha}\backslash\{c_{\infty,j,\alpha}\}$. If the sequence $\{(j_i,\alpha_i)\}_{i}$ is finite, we define $P$ to be the finite union $\mcp=D\cup \bigcup_{i}\mcc_{\infty,j,\alpha}$ (this is trivially a shrinking adjunction space) and it is clear that $P\subseteq \mcp$. We now assume that $\{(j_i,\alpha_i)\}$ is indexed by $\bbn$. Since $P$ is path connected, we must have $c_{\infty,j_i,\alpha_i}\in D$ for all $i\in\bbn$. By Lemma \ref{salemma}, $\mcp=D\cup \bigcup_{i\in\bbn}\mcc_{\infty,j_i,\alpha_i}$ is a $D$-subcomplex of $Z$. Clearly, $P\subseteq \mcp$.
\end{proof}

Let $\mathfrak{DS}$ be the set of $D$-subcomplexes $\mcp$ in $Z$ such that $z_0\in\mcp$. Subset inclusion defines a partial ordering of $\mathfrak{DS}$. Whenever $\mcp_1 \subseteq \mcp _2$ in $\mathfrak{DS}$, $\mcp_1$ is a retract of $\mcp_2$ and so we have a canonical injective homomorphism $\varphi_{\mcp_1,\mcp_2}:\pi_n(\mcp_1,z_0)\to \pi_n(\mcp_2,z_0)$. These maps form a directed system of injective homomorphism for which we have the direct limit $\varinjlim_{\mcp\in \mathfrak{DS}}\pi_n(\mcp,z_0)$. Moreover, since each $\mcp\in\mathfrak{DS}$ is a retract of $Z$, the homomorphism $\varphi_{\mcp}:\pi_n(\mcp,z_0)\to \pi_n(\tZ,z_0)$ induced by inclusion $\mcp\to Z$ is injective. 

\begin{theorem}\label{directlimit}
For all $n\geq 2$, inclusion maps $\mcp\to Z$, $\mcp\in\mathfrak{DS}$ induce a canonical isomorphism $\varphi:\varinjlim_{\mcp\in \mathfrak{DS}}\pi_n(\mcp,z_0)\to \pi_n(\tZ,z_0)$.
\end{theorem}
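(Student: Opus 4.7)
The plan is to verify that $\mathfrak{DS}$ is directed so the colimit is a well-defined group, then establish injectivity and surjectivity of $\varphi$ separately. The two key ingredients are that every $\mcp\in\mathfrak{DS}$ is a retract of $\tZ$, and Corollary \ref{pcinZcorollary}, which says that every Peano continuum in $\tZ$ containing $z_0$ lies inside some $\mcp\in\mathfrak{DS}$.

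For directedness, given $\mcp_i=D_i\cup\bigcup_{\ell\in S_i}\mcc_{\infty,j_\ell^i,\alpha_\ell^i}\in\mathfrak{DS}$ for $i=1,2$, I would set $D_3=D_1\cup D_2\subseteq\bfee_\infty$. Since $z_0\in D_1\cap D_2$, $D_3$ is a Peano continuum, hence a dendrite by Corollary \ref{dendritecor}. Combining the attachment pairs into one countable list whose attachment points all lie in $D_3$, Lemma \ref{salemma} produces a $D$-subcomplex $\mcp_3\in\mathfrak{DS}$ containing $\mcp_1\cup\mcp_2$. With directedness in hand, injectivity follows from two applications of the retract structure: whenever $\mcp_1\subseteq\mcp_2$ in $\mathfrak{DS}$, the retraction $\tZ\to\mcp_1$ restricts to a retraction $\mcp_2\to\mcp_1$, so every transition map $\pi_n(\mcp_1,z_0)\to\pi_n(\mcp_2,z_0)$ and every structure map $\varphi_\mcp:\pi_n(\mcp,z_0)\to\pi_n(\tZ,z_0)$ is injective; the standard diagram-chase argument that a colimit of injections into a common target is injective then gives $\varphi$ injective.

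For surjectivity, represent $[\ell]\in\pi_n(\tZ,z_0)$ by a map $\ell:(I^n,\partial I^n)\to (\tZ,z_0)$. The image $\ell(I^n)$ is a Peano continuum containing $z_0$, so Corollary \ref{pcinZcorollary} furnishes some $\mcp\in\mathfrak{DS}$ with $\ell(I^n)\subseteq \mcp$. Because $\mcp$ carries the subspace topology from $\tZ$, the corestriction $\ell':(I^n,\partial I^n)\to(\mcp,z_0)$ is continuous and satisfies $\varphi_\mcp([\ell'])=[\ell]$, exhibiting $[\ell]$ in the image of $\varphi$.

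The main obstacle I anticipate is the directedness step — specifically confirming that $\mcp_3$ constructed above really is an element of $\mathfrak{DS}$, i.e.\ that its subspace topology from $\tZ$ agrees with the shrinking adjunction topology of Definition \ref{sadef} on the combined data $(D_3,\{(j_\ell,\alpha_\ell)\})$. This is precisely what a careful re-application of Lemma \ref{salemma} establishes, absorbing the genuine topological difficulty; after that point the remainder of the argument is categorically formal.
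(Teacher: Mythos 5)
Your proposal is correct and follows essentially the same route as the paper, which proves injectivity from the injectivity of each $\varphi_{\mcp}$ (coming from the retract structure) and surjectivity directly from Corollary \ref{pcinZcorollary}. The only difference is that you explicitly verify directedness of $\mathfrak{DS}$ via Lemma \ref{salemma}, a point the paper leaves implicit; that verification is sound and a worthwhile addition.
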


\begin{proof}
Injectivity of each $\varphi_{\mcp}$ ensures that $\varphi$ is injective. Surjectivity is a direct consequence of Corollary \ref{pcinZcorollary}.
\end{proof}

Unfortunately, the isomorphism in Theorem \ref{directlimit} is impractical for understanding $\pi_n(Z)$ in terms of the homotopy groups of the spaces $X_j$. We provide another approach in the next section.

%\begin{remark}
%Suppose $\mcp_1\subseteq \mcp_2$ in $\mathfrak{DS}$ with inclusion $i:\mcp_1\to \mcp_2$. Let $S_i$, $i\in\{1,2\}$ be the (necessarily countable) collection of pairs $(j,\alpha)$ such that $\mcc_{\infty,j,\alpha}\subseteq \mcp_i$. Let $u_{j,\alpha}:\mcp_i\to \mcc_{\infty,j,\alpha}$ be the canonical retraction whenever $(j,\alpha)\in S_i$. From Theorem \ref{sequentialpapermainresults}, we have split epimorphisms $\Upsilon_{\mcp_i}:\pi_n(\mcp_i,z_0)\to \prod_{(j,\alpha)\in S_i}\pi_n(\mcc_{\infty,j,\alpha},c_{\infty,j,\alpha})$, $i\in\{1,2\}$. We note here that these splittings are, in fact, coherent with the inclusion. In particular, if $\Sigma_i:\prod_{(j,\alpha)\in S_i}\pi_n(\mcc_{\infty,j,\alpha},c_{\infty,j,\alpha})\to \pi_n(\mcp_i,z_0)$ is the splitting homomorphism, then the following diagram commutes:
%\[\xymatrix{
%\prod_{(j,\alpha)\in S_1}\pi_n(\mcc_{\infty,j,\alpha},c_{\infty,j,\alpha}) \ar@/^2pc/[rr]^-{id} \ar[r]^-{\Sigma_1} \ar[d]_-{\iota_{\mcp_1,\mcp_2}} & \pi_n(\mcp_1,z_0) \ar[d]^{i_{\#}} \ar[r]^-{\Upsilon_{\mcp_1}} & \prod_{(j,\alpha)\in S_1}\pi_n(\mcc_{\infty,j,\alpha},c_{\infty,j,\alpha}) \ar[d]^-{\iota_{\mcp_1,\mcp_2}} \\
%\prod_{(j,\alpha)\in S_2}\pi_n(\mcc_{\infty,j,\alpha},c_{\infty,j,\alpha}) \ar@/_2pc/[rr]_-{id}  \ar[r]_-{\Sigma_2} & \pi_n(\mcp_2,z_0) \ar[r]_-{\Upsilon_{\mcp_2}}  & \prod_{(j,\alpha)\in S_1}\pi_n(\mcc_{\infty,j,\alpha},c_{\infty,j,\alpha})
%}\]
%\end{remark}

\section{Main Results}

\subsection{The homomorphism $\Psi$ and its image}

The previous section implies that every map $\ell:I^n\to Z$ will have image in some $D$-subcomplex of $Z$. Hence $\ell$ can meet countably many of the space $\mcc_{\infty,j,\alpha}\backslash\{c_{\infty,j,\alpha}\}$. Here, we show that an even stronger statement holds: we can deform $\ell$ so that for any fixed $j\in\bbn$, $\ell$ will only meet $\mcc_{\infty,j,\alpha}\backslash\{c_{\infty,j,\alpha}\}$ for finitely many pairs $(j,\alpha)$.

\begin{lemma}\label{directsumlemma}
Every map $\ell:(\ui^n,\partial I^n)\to (\tZ,z_0)$ is homotopic rel. $\partial I^n$ to a map $\ell '':(\ui^n,\partial I^n)\to (\tZ,z_0)$ such that for every $j\in\bbn$, the set of connected components of $(\ell '')^{-1}(\bigcup_{\alpha\in\ntij}\mcc_{\infty,j,\alpha}\backslash \{c_{\infty,j,\alpha}\})$ is finite.
\end{lemma}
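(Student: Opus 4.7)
The plan is to combine Corollary \ref{pcinZcorollary} with local contractibility of each $\mcc_{\infty,j,\alpha}$ at its unique $0$-cell $c_{\infty,j,\alpha}$, and then glue infinitely many local deformations together using the shrinking-wedge topology on $\tZ$.

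First I would invoke Corollary \ref{pcinZcorollary} to contain the Peano continuum $\ell(\ui^n)$ inside a $D$-subcomplex $\mcp=D\cup\bigcup_{i\in\bbn}\mcc_{\infty,j_i,\alpha_i}$. Lemma \ref{salemma} then supplies $j_i\to\infty$, so for each fixed $j\in\bbn$ only finitely many indices $i$ satisfy $j_i=j$. Consequently $\ell^{-1}(\bigcup_{\alpha\in\ntij}\mcc_{\infty,j,\alpha}\setminus\{c_{\infty,j,\alpha}\})$ decomposes as the finite disjoint union of the open sets $\ell^{-1}(\mcc_{\infty,j,\alpha_i}\setminus\{c_{\infty,j,\alpha_i}\})$ over those finitely many $i$. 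Thus it is enough, for each single pair $(j_i,\alpha_i)$, to construct a based homotopy rel $\partial\ui^n$ supported inside $\ell^{-1}(\mcc_{\infty,j_i,\alpha_i})$ whose terminal map has only finitely many components in the preimage of $\mcc_{\infty,j_i,\alpha_i}\setminus\{c_{\infty,j_i,\alpha_i}\}$.

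For this per-pair construction, write $C=\mcc_{\infty,j,\alpha}$ and $c=c_{\infty,j,\alpha}$. Since $C\cong C_j=\tX_j/T_j$ is a CW-complex with unique $0$-cell $c$, I can pick a contractible open neighborhood $U$ of $c$ in $C$ together with a strong deformation retraction $H_t\colon U\to U$ onto $c$ fixing $c$. The set $K=\ell^{-1}(C\setminus U)$ is compact, and it lies in the disjoint union $\bigsqcup_\beta V_\beta$ of the components of $\ell^{-1}(C\setminus\{c\})$; pairwise disjointness of the $V_\beta$ together with the compactness of $K$ forces $K\subseteq V_{\beta_1}\cup\dots\cup V_{\beta_N}$ for some finite collection of components. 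Every other $V_\beta$ then satisfies $\ell(V_\beta)\subseteq U\setminus\{c\}$, and the wedge decomposition of $\tZ$ at $c$ given in Lemma \ref{structureofzlemma} forces $\ell(\partial V_\beta)=\{c\}$. The rule ``apply $H_t\circ\ell$ on each such small $V_\beta$ and the identity elsewhere'' therefore defines a based homotopy rel $\partial\ui^n$ from $\ell$ to a map whose preimage of $C\setminus\{c\}$ is contained in $V_{\beta_1}\cup\dots\cup V_{\beta_N}$.

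Finally I would assemble the per-pair homotopies $H^{(i)}$ into a single $H\colon\ui^n\times\ui\to\tZ$ by using $H^{(i)}$ on $\ell^{-1}(\mcc_{\infty,j_i,\alpha_i})$ and the constant homotopy elsewhere; the preimages are pairwise disjoint open sets of $\ui^n$, so the piecewise rule is a well-defined function. The main obstacle will be verifying continuity of $H$ at a point $(x,t)$ where $\ell(x)\in D$ is an accumulation point of the basepoints $\{c_{\infty,j_i,\alpha_i}\}_i$. Here I would exploit condition (3) of Definition \ref{sadef} applied to $\mcp$: any neighborhood $W$ of $\ell(x)$ in $\tZ$ absorbs all but finitely many of the relevant summands $\mcc_{\infty,j_i,\alpha_i}$, and since each $H^{(i)}_t$ keeps $\mcc_{\infty,j_i,\alpha_i}$ inside $\mcc_{\infty,j_i,\alpha_i}$ at every time, the combined homotopy remains inside $W$ near $x$ for all but finitely many exceptional $i$'s, each of which is handled by the individual continuity of $H^{(i)}$. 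This shrinking-topology step is the critical technical point and is precisely where the careful construction of $\tZ$ as a shrinking adjunction space pays off.
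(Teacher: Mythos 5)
Your proposal is correct, and the heart of the deformation is the same as the paper's: fix a contractible neighborhood of $c_{\infty,j,\alpha}$ in $\mcc_{\infty,j,\alpha}$, collapse every component of $\ell^{-1}(\mcc_{\infty,j,\alpha}\setminus\{c_{\infty,j,\alpha}\})$ whose image lies in that neighborhood, and observe that the surviving components each meet a fixed compact set and hence are finite in number. Where you genuinely diverge is in the two supporting verifications. For the finiteness of the set of relevant $\alpha$ for each fixed $j$, you get it \emph{before} any homotopy from Corollary \ref{pcinZcorollary} together with $j_i\to\infty$ in Lemma \ref{salemma}; the paper instead precomposes with $\wt{g}$ and re-derives this a posteriori by a sequential compactness argument pushed down to $Y$ (using that $X_j$ is closed and misses $y_0$) -- essentially re-proving what Lemma \ref{salemma} already packages. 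Your route is cleaner and makes it transparent that one $\ell''$ works for every $j$ simultaneously, which the paper's ``fix $j$'' phrasing slightly obscures. For continuity, you argue directly in $\tZ$ via condition (3) of Definition \ref{sadef}, whereas the paper checks continuity of the projections $\sigma_k\circ L$ into the CW-complexes $\tZ_k$ and invokes Corollary \ref{continuitycor}; both are viable, and the latter device exists precisely to outsource this kind of point-set work. Two small points to tighten in your write-up: the sets $\ell^{-1}(\mcc_{\infty,j_i,\alpha_i})$ are closed rather than open and two of them can share points mapping to a point $c_{\infty,j_i,\alpha_i}$, so well-definedness should be justified by noting each $H^{(i)}$ is stationary on $\ell^{-1}(c_{\infty,j_i,\alpha_i})$; and in the absorption step you should take the neighborhood $O$ of $x$ with $\ell(\overline{O})\subseteq W$, so that the accumulation point produced by the contradiction argument actually lies in $W\cap D$ before condition (3) is applied.
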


\begin{proof}
Fix $j\in\bbn$ and let $\ell:(\ui^n,\partial \ui^n)\to (\tZ,z_0)$ be a map. Since $\wt{f}\circ \wt{g}\simeq id_{\tZ}$, we have $\ell '=\wt{f}\circ \wt{g}\circ\ell\simeq \ell$. Let $\wt{\kappa} =\wt{g}\circ \ell$ and $\kappa=p\circ \wt{\kappa}:(\ui^n,\partial \ui^n)\to (Y,y_0)$. 
\[\xymatrix{
& (\ui^n,\partial \ui^n) \ar[dl]_{\kappa} \ar[d]_-{\wt{\kappa}} \ar[dr]^-{\ell '}\\
Y & \tY \ar[l]^-{p} \ar[r]_-{\wt{f}} & \tZ
}\]
It suffices to verify the lemma for $\ell '$. We will use $\ell '$ to define a new map $\ell '':(\ui^n,\partial \ui^n)\to (\tZ,z_0)$. First, we define $\ell ''$ to agree with $\ell '$ on the closed set $(\ell ')^{-1}(\bfee_{\infty})$.

Let $U$ be a contractible neighborhood of $c_j$ in $C_j$ and let $U_{j,\alpha}=\lambda_{\infty,j,\alpha}^{-1}(U)$ be the corresponding neighborhood of $c_{\infty,j,\alpha}$ in $\mcc_{\infty,j,\alpha}$. Let $K_{j,\alpha}:U_{j,\alpha}\times \ui\to U_{j,\alpha}$ be a based contraction, i.e. a based homotopy from the identity of $U_{j,\alpha}$ to the constant map at $c_{\infty,j,\alpha}$. Fixing $\alpha\in\ntij$, consider the open set $V_{\alpha}=(\ell ')^{-1}(\mcc_{\infty,j,\alpha}\backslash c_{\infty,j,\alpha})$ in $(0,1)^n$. If $W$ is a connected component of $V_{\alpha}$, then $\ell '(\partial W)=c_{\infty,j,\alpha}$. Now, if $\ell '(W)\subseteq U_{j,\alpha}$, we define $\ell ''$ to be constant on $\ov{W}$, that is, $\ell ''(\ov{W})=c_{\infty,j,\alpha}$. On the other hand if $\ell '(W)\nsubseteq U_{j,\alpha}$, we define $\ell ''$ to agree with $\ell '$ on $W$. Define a homotopy $L:\ui^n\times \ui\to \tZ$ as follows: $L$ is the constant homotopy on $(\ell ')^{-1}(\bfee_{\infty})$ and on any component $W$ of $V_{\alpha}$ where $\ell '(W)\nsubseteq U_{j,\alpha}$. If $W$ is a component of $V_{\alpha}$ where $\ell '(W)\subseteq U_{j,\alpha}$, we define $L$ so that $L|_{W\times \ui}(w,t)=K_{j,\alpha}(\ell '(w),t)$ for $w\in W$, $t\in\ui$.

For each $k\in\bbn$, it is straightforward to check that the projection $\sigma_k\circ L:\ui^n\times \ui\to \tZ_k$ is continuous. By Corollary \ref{continuitycor}, $L:\ui^n\times \ui\to \wh{Z}$ is continuous. Thus $L$ is a homotopy from $\ell '$ to $\ell ''$. 

To check that $\ell ''$ has the desired property, suppose there is a $j\in\bbn$, distinct $\alpha_1,\alpha_2,\alpha_3,\dots \in \ntij$, and points $x_i\in \ui^n$ such that $\ell ''(x_i)\in \mcc_{\infty,j,\alpha_i}\backslash \{c_{\infty,j,\alpha_i}\}$. By our construction of $\ell ''$, we must have $\ell ''(x_i)=\ell '(x_i)=\wt{f}\circ \wt{\kappa}(x_i)\in \mcc_{\infty,j,\alpha_i}$. Thus $\wt{\kappa}(x_i)\in \mca_{\infty,j,\alpha_i}$ in $\wt{Y}$. Replacing $\{x_i\}$ with a subsequence, we may assume $\{x_i\}\to x$ in $\ui^n$. The subsets $\wt{f}(\mcu_{\infty,j,\alpha})$, $\alpha\in\ntij$ of $\tZ$ are all disjoint and open in $\tZ$ and the same is true of the subsets $\mcc_{\infty,j,\alpha}\backslash\{c_{\infty,j,\alpha}\}$. This observation with the fact that the $\alpha_i$ are all distinct, gives $\wt{f}\circ \wt{\kappa}(x)\in \wt{f}(p^{-1}(y_0))$. Since $\wt{\kappa}(x)\in p^{-1}(y_0)$, we have $\kappa(x)=y_0\in Y$. In summary, we have $\kappa(x_i)\to y_0$ in $Y$ where $x_i\in X_{j}$. However, this is a contradiction since $j$ is fixed, $X_j$ is closed in $Y$, and $y_0\notin X_j$. We conclude that $(\ell '')^{-1}(\mcc_{\infty,j,\alpha}\backslash \{c_{\infty,j,\alpha}\})$ is non-empty for finitely many $\alpha$ and has finitely many components when it is non-empty.
\end{proof}

Recall that $\mu_{j,\alpha}:Z\to \mcc_{\infty,j,\alpha}$ denotes the canonical retraction.

\begin{theorem}\label{psiconstructiontheorem}
Let $n\geq 2$. There is a canonical group homomorphism
\[\Psi:\pi_n(\tZ,z_0)\to \prod_{j\in\bbn}\bigoplus_{\alpha\in\ntij}\pi_{n}(\mcc_{\infty,j,\alpha})\]
given by $\Psi([\ell])=([\mu_{j,\alpha}\circ\ell])$. 
\end{theorem}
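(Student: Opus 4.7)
The plan is to verify three things in sequence: that each coordinate map $[\ell]\mapsto[\mu_{j,\alpha}\circ\ell]$ is well-defined and a homomorphism, that the resulting tuple lies in the direct sum (not merely the product) over $\alpha$ for each fixed $j$, and that these assemble into a homomorphism $\Psi$ into the product-of-direct-sums.

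First I would observe that for each pair $(j,\alpha)$ with $j\in\bbn$ and $\alpha\in\ntij$, the retraction $\mu_{j,\alpha}:Z\to \mcc_{\infty,j,\alpha}$ is a based map (sending $z_0$ to $c_{\infty,j,\alpha}$), so $\mu_{j,\alpha}$ induces a group homomorphism $(\mu_{j,\alpha})_{\#}:\pi_n(Z,z_0)\to \pi_n(\mcc_{\infty,j,\alpha},c_{\infty,j,\alpha})$. In particular, $[\ell]\mapsto [\mu_{j,\alpha}\circ\ell]$ is well-defined on homotopy classes rel.\ $\partial I^n$ and respects the group operation. This takes care of the coordinate-wise structure.

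The main obstacle is to show that the tuple $([\mu_{j,\alpha}\circ\ell])_{j,\alpha}$ actually lies in $\prod_{j}\bigoplus_{\alpha\in\ntij}\pi_n(\mcc_{\infty,j,\alpha})$: that is, for each fixed $j\in\bbn$, only finitely many of the coordinates $[\mu_{j,\alpha}\circ\ell]$, $\alpha\in\ntij$, are nontrivial. Here I would invoke Lemma \ref{directsumlemma}: given any representative $\ell$, there is a homotopic representative $\ell''$ (via a based homotopy rel.\ $\partial I^n$) such that for each $j$, the set of connected components of $(\ell'')^{-1}\bigl(\bigcup_{\alpha\in\ntij}\mcc_{\infty,j,\alpha}\setminus\{c_{\infty,j,\alpha}\}\bigr)$ is finite. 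Because the sets $\mcc_{\infty,j,\alpha}\setminus\{c_{\infty,j,\alpha}\}=\wt{f}(\mcu_{\infty,j,\alpha})$ are pairwise disjoint and open in $Z$ (as established in the topological analysis of $\tZ$), each such component maps into exactly one $\mcc_{\infty,j,\alpha}\setminus\{c_{\infty,j,\alpha}\}$, and hence only finitely many $\alpha\in\ntij$ satisfy $(\ell'')^{-1}(\mcc_{\infty,j,\alpha}\setminus\{c_{\infty,j,\alpha}\})\neq \emptyset$. For every other $\alpha$, one has $\ell''(I^n)\cap \mcc_{\infty,j,\alpha}\subseteq\{c_{\infty,j,\alpha}\}$, so $\mu_{j,\alpha}\circ \ell''$ is the constant loop at $c_{\infty,j,\alpha}$ and $[\mu_{j,\alpha}\circ\ell]=[\mu_{j,\alpha}\circ\ell'']=1$.

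With this finiteness established, the map $\Psi([\ell])=([\mu_{j,\alpha}\circ \ell])_{j,\alpha}$ is a well-defined function into $\prod_{j}\bigoplus_{\alpha\in\ntij}\pi_n(\mcc_{\infty,j,\alpha})$. Since it is a homomorphism in each coordinate, it is a homomorphism into the product, and the image lies in the sub-product-of-direct-sums by the previous paragraph. No further argument is needed, so this should be a short proof essentially reducing to Lemma \ref{directsumlemma} and the disjointness of the subspaces $\mcc_{\infty,j,\alpha}\setminus\{c_{\infty,j,\alpha}\}$.
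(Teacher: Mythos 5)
Your proposal is correct and follows the same route as the paper: the paper's proof likewise notes that well-definedness into the full product is immediate from the based retractions $\mu_{j,\alpha}$ and then cites Lemma \ref{directsumlemma} to place the image in the direct sums. Your extra explanation of how the finiteness of components plus the disjointness of the open sets $\mcc_{\infty,j,\alpha}\setminus\{c_{\infty,j,\alpha}\}$ forces all but finitely many coordinates to vanish is exactly the content the paper leaves implicit in that citation.
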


\begin{proof}
It is clear that $\Psi$ well-defined with codomain $\prod_{j\in\bbn}\prod_{\alpha\in\ntij}\pi_{n}(\mcc_{\infty,j,\alpha})$. Lemma \ref{directsumlemma} ensures that $\Psi$ has image in the subgroup $\prod_{j\in\bbn}\bigoplus_{\alpha\in\ntij}\pi_{n}(\mcc_{\infty,j,\alpha})$.
\end{proof}

Next we characterize the image of $\Psi$ using the topology on $\pi_1(Y)$.

\begin{theorem}\label{imagetheorem}
If $[\ell_{j,\alpha}]\in \pi_{n}(\mcc_{\infty,j,\alpha})$ for $j\in\bbn$ and $\alpha\in\ntij$, then $([\ell_{j,\alpha}])\in \prod_{j\in\bbn}\bigoplus_{\alpha\in\ntij}\pi_{n}(\mcc_{\infty,j,\alpha})$ is in the image of $\Psi$ if and only if the closure of $\bigcup_{j\in\bbn}\{\alpha\in\ntij\mid [\ell_{j,\alpha}]\neq 0\}$ in $\pi_1(Y)$ is compact.
\end{theorem}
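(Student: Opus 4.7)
The plan is to address the two implications separately. For the forward direction, given a representative $\ell$ with $\Psi([\ell])=([\ell_{j,\alpha}])$, I would first apply Lemma \ref{directsumlemma} to replace $\ell$ by a homotopic representative for which each $S_j=\{\alpha\in\ntij:[\ell_{j,\alpha}]\neq 0\}$ is finite, so that $S=\bigcup_{j\in\bbn}S_j$ is countable. The goal is then to exhibit a representative $\wt{\ell}\colon(I^n,\partial I^n)\to(\tY,\wt{y}_0)$ of the class $\wt{g}_{\#}([\ell])\in\pi_n(\tY)$ whose compact image contains $\alpha$ for every $\alpha\in S$. Starting with $\wt{g}\circ\ell$, which visits $\mca_{\infty,j,\alpha}$ whenever $\alpha\in S_j$, for each $\alpha\in S$ I would insert a small based homotopy near a chosen preimage $x_\alpha\in\ell^{-1}(\mcc_{\infty,j,\alpha}\setminus\{c_{\infty,j,\alpha}\})$, prepending a detour along the unique arc in $\bft_{\infty,j,\alpha}$ from the existing value out to the arc-endpoint $\alpha$ and back; such a detour does not change the homotopy class since $\bft_{\infty,j,\alpha}$ is a tree. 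The shrinking-wedge topology on $Y$ ensures $\mcb_{\infty,j,\alpha}\subseteq N(\alpha,U)$ whenever $Y_j\subseteq U$, so together with the finiteness of each $S_j$ these countably many modifications combine into a continuous $\wt{\ell}$. Since $\pi_1(Y)=p^{-1}(y_0)$ is closed in $\tY$ and $S\subseteq\wt{\ell}(I^n)$, the closure $\overline{S}$ is contained in the compact set $\wt{\ell}(I^n)\cap\pi_1(Y)$ and so is itself compact.

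For the reverse direction, assume $\overline{S}$ is compact in $\pi_1(Y)$; note that $S$ is countable since each $S_j$ is finite by the $\bigoplus$-hypothesis. I would construct a dendrite $D\subseteq\bfee_\infty$ containing every $c_{\infty,j,\alpha}$ with $\alpha\in\overline{S}$, and apply Theorem \ref{sequentialpapermainresults} to the resulting $D$-subcomplex. Inside $\bft_\infty$, which is uniquely arcwise connected and locally path connected (Lemma \ref{tlemma}), I would form the arc-convex hull $T$ of $\overline{S}\cup\{\wt{y}_0\}$, enlarged by the short connecting arcs in each $\bft_{\infty,j,\alpha}$ needed so that $\wt{f}(T)\supseteq\{c_{\infty,j,\alpha}\mid \alpha\in\overline{S}\}$. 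Compactness of $T$ follows from Remark \ref{metrizableremark}, which makes every compact subset of $\tY$ metrizable, together with the classical fact that the arc-convex hull of a compact set in a metric $\bbr$-tree is compact. Then $D=\wt{f}(T)$ is a compact, path-connected, locally path-connected subset of $\bfee_\infty$ and hence a dendrite by Corollary \ref{dendritecor}. By Lemma \ref{salemma}, the subspace $\mcp=D\cup\bigcup_{\alpha\in S}\mcc_{\infty,j,\alpha}$ is a $D$-subcomplex of $\tZ$, and Theorem \ref{sequentialpapermainresults} produces a based map $\ell\colon(I^n,\partial I^n)\to(\mcp,z_0)\subseteq(\tZ,z_0)$ with $[\mu_{j,\alpha}\circ\ell]=[\ell_{j,\alpha}]$ for every $(j,\alpha)$, so that $\Psi([\ell])=([\ell_{j,\alpha}])$.

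The main technical obstacles lie in the continuity and compactness assertions in the two constructions. In the forward direction, the global continuity of $\wt{\ell}$ after countably many excursion-insertions reduces to the fact that a basic neighborhood $N(\alpha,U)$ contains the entire subspace $\mcb_{\infty,j,\alpha}$ once $Y_j\subseteq U$, so detours indexed by pairs $(j,\alpha)$ with arbitrarily large $j$ are absorbed into arbitrarily small neighborhoods of their insertion points. In the reverse direction, verifying compactness of the enlarged arc-convex hull $T$ in the possibly non-metrizable space $\bft_\infty$ is the delicate step; this is handled by working inside a compact and hence metrizable subspace, where standard tree-theoretic arguments apply. Once these checks are in place, the remainder of the proof is a direct application of the structural decomposition of $\tZ$ developed in Section 5 together with the shrinking-adjunction-space transfer Theorem \ref{sequentialpapermainresults}.
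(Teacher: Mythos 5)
Your reverse direction is essentially the paper's argument: enlarge the support set so that its image under $\wt{f}$ contains the wedge points $c_{\infty,j,\alpha}$, take the union of arcs from $\ty_0$ in $\bft_{\infty}$, obtain a dendrite $D$ and a $D$-subcomplex, and quote the surjectivity half of Theorem \ref{sequentialpapermainresults}. However, your compactness argument for $T$ is circular as stated: Remark \ref{metrizableremark} tells you that compact subsets of $\tY$ are metrizable, so you cannot use it (together with a fact about arc-convex hulls in \emph{metric} $\bbr$-trees) to establish that $T$ is compact in the first place. The paper handles this by first proving directly that $\ov{B}$ is compact, where $B$ is $\ov{S}$ together with the attachment points $\alpha\tau_j$ (via a convergence argument of the same type as in Lemma \ref{salemma}), and then arguing that the union of arcs from $\ty_0$ to the compact set $\ov{B}$ is compact; some argument of this shape is needed in place of your appeal to metrizability.

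Your forward direction diverges from the paper and is both harder than necessary and not quite well-defined as written. The paper's point is that no surgery on the representative is required: $\wt{f}(\alpha)$ lies on the unique arc in $\bfee_{\infty}$ from $z_0$ to $c_{\infty,j,\alpha}$, and since $\tZ$ wedge-decomposes at $c_{\infty,j,\alpha}$ (Lemma \ref{structureofzlemma}) and $\bfee_{\infty}$ is uniquely arcwise connected, any based map $t$ with $[\mu_{j,\alpha}\circ t]\neq 0$ already contains $\wt{f}(\alpha)$ in its image; as $\wt{f}$ is injective off the collapsed trees, $\alpha$ already lies in the image of the lift to $\tY$, and compactness of $\ov{S}$ follows at once. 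Your detour construction, by contrast, (a) speaks of ``the unique arc in $\bft_{\infty,j,\alpha}$ from the existing value,'' but $\wt{g}\circ\ell(x_{\alpha})$ lies in $\mca_{\infty,j,\alpha}$ and in general not in $\bft_{\infty,j,\alpha}$; (b) for $n\geq 2$ must be carried out on a small $n$-ball about $x_{\alpha}$ rather than ``at'' a point, using simple connectivity of $\mcb_{\infty,j,\alpha}$ to see the class is unchanged; and (c) still owes a continuity verification at accumulation points of $\{x_{\alpha}\}$ that is as delicate as Lemma \ref{directsumlemma} itself. These can likely all be repaired, but the entire construction is avoidable, and the separation observation is the substantive content of this implication.
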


\begin{proof}
Suppose $t:(\ui^n,\partial \ui^n)\to (\tZ,\tz_0)$ and let $t_{j,\alpha}=\mu_{j,\alpha}\circ t$. Using the based homotopy inverses $\wt{f}$ and $\wt{g}$, we may assume that $t=\wt{g}\circ \wt{\kappa}$ for $\wt{\kappa}:(\ui^n,\partial \ui^n)\to (\tY,\ty_0)$. Note that $\wt{f}(\alpha)$ lies on the unique arc in $\mathbf{E}_{\infty}$ from $z_0$ to $c_{\infty,j,\alpha}$. Therefore, if $[t_{j,\alpha}]\neq 0$, we must have $\wt{f}(\alpha)\in \im(t)$. Since $\wt{f}$ is bijective on $p^{-1}(y_0)$ (with inverse $\wt{g}$) we must have $\alpha\in \im\left(\wt{\kappa}\right)$. Thus $\bigcup_{j\in\bbn}\{\alpha\in\ntij\mid [t_{j,\alpha}]\neq 1\}\subseteq \im\left(\wt{\kappa}\right)\cap p^{-1}(y_0)=\im(\wt{\kappa})\cap \pi_1(Y)$. Since $\im(\wt{\kappa})$ is compact and $\pi_1(Y)$ is closed in $\tY$, $\im\left(\wt{\kappa}\right)\cap \pi_1(Y)$ is compact as a subspace of $\pi_1(Y)$. It follows that the closure of $\bigcup_{j\in\bbn}\{\alpha\in\ntij\mid [t_{j,\alpha}]\neq 0\}$ in $\pi_1(Y)$ is contained in $\im\left(\wt{\kappa}\right)\cap \pi_1(Y)$ and therefore compact.

For the converse, suppose $(a_{j,\alpha})$ is an element of the codomain such that the closure of $A=\bigcup_{j\in\bbn}\{\alpha\in\ntij\mid a_{j,\alpha}\neq 0\}$ in $\pi_1(Y)$ is compact. If $A$ is finite, then standard methods may be used to construct a map $t:(\ui^n,\partial \ui^n)\to (\tZ,\tz_0)$ such that $\Psi([t])=(a_{j,\alpha})$. From now on, we assume that $A$ is infinite with compact closure. Let $B=A\cup \bigcup_{j\in\bbn}\{\alpha\tau_j\in\tY\mid a_{j,\alpha}\neq 1\}$. We have extended $A$ to $B$ so that $c_{\infty,j,\alpha}\in\wt{f}(B)$ whenever $a_{j,\alpha}\neq 0$.

First, we claim that $\ov{B}$ is compact using a the following fact: for a sequence $\{\alpha_i\}_{i\in\bbn}$ in $\ov{A}$ with $\alpha_i\in\nt_{\infty,j_i}$, we have $\{\alpha_i\}_{i\in\bbn}\to \alpha$ in $\tY$ (and thus $\ov{A}$) if and only if $\{\alpha_i\tau_{j_i}\}_{i\in\bbn}\to \alpha$. The proof of this argument follows the same reasoning used in the proof of Lemma \ref{salemma} so we omit it. To prove compactness, suppose $\scru '$ is an open cover of $\ov{B}$ and let $\scru\subseteq \scru '$ be a finite subset that covers $\ov{A}$. It is enough to show that there are only finitely many points of the form $\alpha\tau_j$ not in $\bigcup\scru$. To obtain a contradiction, suppose $\alpha_{i}\tau_{j_i}\notin \bigcup\scru$ for an infinite sequence $\{(j_i,\alpha_i)\}_{i\in\bbn}$ of distinct pairs. However, since $\ov{A}$ is compact, $\{\alpha_i\}_{i\in\bbn}$ has a convergent subsequence $\{\alpha_{i_m}\}_{m\in\bbn}\to\alpha$ where $\alpha \in \ov{A}$. Thus $\alpha_{i_m}\tau_{j_{i_m}}\to\alpha$ in $\tY$. However, $\alpha\in \bigcup\scru$ and so $\alpha_{i_m}\tau_{j_{i_m}}\in \bigcup\scru$ for sufficiently large $m$; a contradiction.

With the compactness of $\ov{B}$ established, let $\mathbf{D}$ be the union of all arcs in $\mathbf{T}_{\infty}$ from $\ty_0$ the points of $\ov{B}$. Clearly, $\mathbf{D}$ is uniquely arcwise connected. It is straightforward to show that $\mathbf{D}$ is compact using the compactness of $\ov{B}$. It follows that $\mathbf{D}$ and $D=\wt{f}(\mathbf{D})$ are dendrites. By our construction of $B$, we have $c_{\infty,j_i,\alpha_i}\in D$ for all $i\in\bbn$. Thus, by Lemma \ref{salemma}, $\mcp=D\cup \bigcup_{i\in\bbn}\mcc_{\alpha,j_i,\alpha_i}$ is a $D$-subcomplex of $Z$. 

Let $\iota:\mcp\to Z$ be the inclusion and $\Upsilon_{\mcp}:\pi_n(\mcp,z_0)\to \prod_{i\in\bbn}\pi_n(A_i)$ be the canonical homomorphism from Theorem \ref{sequentialpapermainresults} induced by the retractions $\mcp\to A_i$. Let $\Xi:\prod_{i}\pi_n(\mcc_{\infty,j_i,\alpha_i})\to \prod_{j\in\bbn}\bigoplus_{\alpha\in\ntij}\pi_{n}(\mcc_{\infty,j,\alpha})$ be the inclusion map induced by the projections $\prod_{i}\pi_n(\mcc_{\infty,j_i,\alpha_i})\to \pi_{n}(\mcc_{\infty,j_i,\alpha_i})$ (and trivial maps in the other coordinates). It is straightforward to check that the following diagram commutes.
\[\xymatrix{
\pi_n(\mcp,z_0) \ar[r]^-{\Upsilon_{\mcp}} \ar[d]_-{\iota_{\#}} & \prod_{i\in\bbn}\pi_n(\mcc_{\infty,j_i,\alpha_i}) \ar[d]^-{\Xi} \\
\pi_n(Z) \ar[r]_-{\Psi} & \prod_{j\in\bbn}\bigoplus_{\alpha\in\ntij}\pi_{n}(\mcc_{\infty,j,\alpha})
}\]
Since $\Upsilon_{\mcp}$ is surjective by Theorem \ref{sequentialpapermainresults}, we have $\Upsilon_{\mcp}([\ell])=(a_{j_i,\alpha_i})_{i}$ for some $[\ell]\in \pi_n(\mcp,z_0)$. Now $\Psi([\iota\circ\ell])=(a_{j,\alpha})_{j,\alpha}$ for $[\iota\circ\ell]\in \pi_n(Z)$. This completes the proof that $\Psi$ is surjective.
\end{proof}

\begin{problem}
According to Theorem \ref{sequentialpapermainresults}, the homomorphism $\Upsilon_{\mcp}$ always has a section. However, it is not clear to the author if these can be chosen in a coherent way. Does $\Psi:\pi_n(Z)\to \im(\Psi)$ always split?
\end{problem}

\subsection{The injectivity of $\Psi$ when each $\tX_j$ is $(n-1)$-connected}

In this section, we identify a case where $\Psi$ is injective. In particular, we fix $n\geq 2$ and suppose that $\pi_m(Y_j)=0$ for $2\leq m\leq n-1$ or, equivalently, that $\tY_j$ is $(n-1)$-connected. We will directly use the injectivity result in Theorem \ref{sequentialpapermainresults}.

\begin{theorem}\label{psiinjtheorem}
If $\tY_j$ is $(n-1)$-connected for all $j\in\bbn$, then the canonical homomorphism
\[\Psi:\pi_n(Z)\to \prod_{j\in\bbn}\bigoplus_{\alpha\in\ntij}\pi_{n}(\mcc_{\infty,j,\alpha})\]
is injective.
\end{theorem}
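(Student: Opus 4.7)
The plan is to reduce the statement to the already-proved injectivity part of Theorem \ref{sequentialpapermainresults} by representing any class in $\ker(\Psi)$ by a map into a $D$-subcomplex of $Z$. Given $[\ell]\in\pi_n(\tZ,z_0)$ with $\Psi([\ell])=0$, I would first invoke Corollary \ref{pcinZcorollary}: the image $\ell(\ui^n)$ is a Peano continuum in $\tZ$ containing $z_0$, so there is a $D$-subcomplex $\mcp=D\cup\bigcup_{i}\mcc_{\infty,j_i,\alpha_i}$ (with $D\subseteq \bfee_{\infty}$ a dendrite) such that $\ell$ factors through the inclusion $\iota:\mcp\hookrightarrow \tZ$ as $\ell=\iota\circ\ell_{\mcp}$ for some $\ell_{\mcp}:(\ui^n,\partial\ui^n)\to(\mcp,z_0)$.

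Next I would observe that the hypotheses of Theorem \ref{sequentialpapermainresults} apply to $\mcp$. Indeed, $D$ is a dendrite by Corollary \ref{dendritecor}, and each attachment space $\mcc_{\infty,j_i,\alpha_i}$ is canonically homeomorphic (via $\lambda_{\infty,j_i,\alpha_i}$) to $C_{j_i}$, which is homotopy equivalent to $\tX_{j_i}$ and hence to $\tY_{j_i}$. Since $\tY_j$ is assumed $(n-1)$-connected for every $j$, each $\mcc_{\infty,j_i,\alpha_i}$ is $(n-1)$-connected. Theorem \ref{sequentialpapermainresults} then gives that the canonical map
\[\Upsilon_{\mcp}:\pi_n(\mcp,z_0)\to \prod_{i}\pi_n(\mcc_{\infty,j_i,\alpha_i})\]
is an isomorphism.

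The next step is to compare $\Psi\circ\iota_{\#}$ with $\Upsilon_{\mcp}$. For each $i$, the retraction $u_i:\mcp\to \mcc_{\infty,j_i,\alpha_i}$ that collapses $D$ and the other attachment spaces to $c_{\infty,j_i,\alpha_i}$ is exactly the restriction of the global retraction $\mu_{j_i,\alpha_i}:\tZ\to \mcc_{\infty,j_i,\alpha_i}$ to $\mcp$, because $\mu_{j_i,\alpha_i}$ collapses all of $\tZ\setminus \mcc_{\infty,j_i,\alpha_i}$ to $c_{\infty,j_i,\alpha_i}$. Consequently, the $i$-th coordinate of $\Upsilon_{\mcp}([\ell_{\mcp}])$ equals $[\mu_{j_i,\alpha_i}\circ\ell]$, which is the $(j_i,\alpha_i)$-coordinate of $\Psi([\ell])$, and in particular it vanishes. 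Thus $\Upsilon_{\mcp}([\ell_{\mcp}])=0$, and injectivity of $\Upsilon_{\mcp}$ forces $[\ell_{\mcp}]=0$ in $\pi_n(\mcp,z_0)$.

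Finally, since $\mcp$ is a retract of $\tZ$ (the canonical retraction $\bfee_{\infty}\to D$ extends via the wedge-sum decompositions of Lemma \ref{structureofzlemma} to a retraction $\tZ\to\mcp$), the induced homomorphism $\iota_{\#}:\pi_n(\mcp,z_0)\to\pi_n(\tZ,z_0)$ is injective, so $[\ell]=\iota_{\#}([\ell_{\mcp}])=0$. The main potential obstacle is verifying that $u_i$ really agrees with the restriction of $\mu_{j_i,\alpha_i}$ in Step 3, so that $\Psi\circ\iota_{\#}$ factors through $\Upsilon_{\mcp}$ as claimed; once this bookkeeping is done, everything follows formally from the previously-established machinery.
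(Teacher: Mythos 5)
Your proposal is correct and follows essentially the same route as the paper: reduce to a $D$-subcomplex via Corollary \ref{pcinZcorollary}, apply the injectivity half of Theorem \ref{sequentialpapermainresults}, and conclude via the commutativity of $\Psi$ with $\Upsilon_{\mcp}$. (A minor remark: your final appeal to the injectivity of $\iota_{\#}$ is unnecessary — once $[\ell_{\mcp}]=0$ you immediately get $[\ell]=\iota_{\#}([\ell_{\mcp}])=0$ without it — but this is harmless.)
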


\begin{proof}
First, note that since $\tY_j$ is $(n-1)$-connected and $\mcc_{\infty,j,\alpha}\cong \wt{Y}_j/\beta_{\alpha}T_j\simeq \tY_j$, it follows that $\mcc_{\infty,j,\alpha}$ is $(n-1)$-connected.

Let $\ell:(\ui^n,\partial \ui^n)\to (Z,z_0)$ be a map such that $\Psi([\ell])=0$. Since $\im(\ell)$ is a Peano continuum in $Z$, by Corollary \ref{pcinZcorollary}, we may find a $D$-subcomplex $\mcp=D\cup\bigcup_{i\in\bbn}\mcc_{\infty,j_i,\alpha_i}$ containing $\im(\ell)$. In particular, the core is $D=\im(\ell)\cap \bfee_{\infty}$ and the attachment spaces are $\mcc_{\infty,j_i,\alpha_i}$ for a (possibly finite) non-repeating sequence of pairs $(j_i,\alpha_i)$. By assumption the projection $\ell_{j,\alpha}=\mu_{j,\alpha}\circ\ell:(\ui^n,\partial \ui^n)\to (\mcc_{\infty,j,\alpha},c_{\infty,j,\alpha})$ is null-homotopic in $\mcc_{\infty,j,\alpha}$ for all pairs $(j,\alpha)$. In particular, for each $i\in\bbn$, $\ell_{j_i,\alpha_i}$ is null-homotopic in $\mcc_{\infty,j_i,\alpha_i}$. 

Let $\Upsilon_{\mcp}:\pi_n(\mcp,z_0)\to \prod_{i\in\bbn}\pi_n(\mcc_{\infty,j_i,\alpha_i},c_{\infty,j_i,\alpha_i})$ be the canonical homomorphism induced by the retractions $\mcp\to \mcc_{\infty,j_i,\alpha_i}$. According to Theorem \ref{sequentialpapermainresults}, $\Upsilon_{\mcp}$ is an isomorphism. Let $\iota:\mcp\to Z$ be the inclusion map. As in the proof of Theorem \ref{imagetheorem}, let $\Xi$ be canonical the inclusion homomorphism so that the following square commutes.
\[\xymatrix{
\pi_n(\mcp) \ar[r]^-{\Upsilon_{\mcp}}_{\cong} \ar[d]_-{\iota_{\#}} & \prod_{i\in\bbn}\pi_n(\mcc_{\infty,j_i,\alpha_i}) \ar[d]^-{\Xi} \\
\pi_n(Z) \ar[r]_-{\Psi} & \prod_{j\in\bbn}\bigoplus_{\alpha\in\ntij}\pi_{n}(\mcc_{\infty,j,\alpha})
}\]
Viewing $\ell$ as a map $I^n\to P$ with $\iota\circ \ell=\ell$, we have $\Xi\circ \Upsilon_{P}([\ell])=\Psi([\ell])=0$ by assumption. Since $\Xi$ and $\Upsilon_{P}$ are injective, $[\ell]=0$ in $\pi_n(\mcp)$. Thus $[\ell]=0$ in $\pi_n(Z)$.
\end{proof}

\subsection{The homomorphism $\Theta$ and a proof of Theorem \ref{mainthm}}

Finally, we put everything together to prove the main result of this paper.

\begin{proof}[Proof of Theorem \ref{mainthm}]
For each $1\leq j\leq k$ and $\alpha\in\ntkj$, there is a canonical retraction $b_{k,j,\alpha}:\tY_{\leq k}\to \mcb_{k,j,\alpha}$. Fixing $j\in\bbn$, $\alpha\in\ntij$, and letting $\varrho_k(\alpha)=\alpha_{k}'\gamma_{k}$ for $\alpha_{k}'\in\ntkj$, the $\lpc$-coreflection of the inverse limit $\varprojlim_{k\geq j}b_{k,j,\alpha_{k}'}$ gives a canonical retraction $b_{\infty,j,\alpha}:\tY\to \mcb_{\infty,j,\alpha}$. In short, $b_{\infty,j,\alpha}$ maps points outside of $\mcb_{\infty,j,\alpha}$ to the ``nearest" arc-endpoint of $\mcb_{\infty,j,\alpha}$.

Recall that we are still identifying $\tX_j$ as a subspace of $\tY_j$ and $\mca_{\infty,j,\alpha}$ as a subspace of $\mcb_{\infty,j,\alpha}$. The quotient maps $\zeta_j:Y_j\to X_j$ collapsing the attached arc form the quotient map $\zeta:Y\to X$, which induces the homotopy equivalence $\wt{\zeta}:\tY\to \tX$. Since $\wt{\zeta}$ collapses the arcs of each $\mcb_{\infty,j,\alpha}$ and maps $\mca_{\infty,j,\alpha}$ homeomorphically onto its image, we will also write $\mca_{\infty,j,\alpha}$ to denote the subspace $\wt{\zeta}(\mcb_{\infty,j,\alpha})$ of $\tX$. Let $\wt{\zeta}_{\infty,j,\alpha}:\mcb_{\infty,j,\alpha}\to \mca_{\infty,j,\alpha}$ be the corresponding restriction of $\wt{\zeta}$. There is a canonical homeomorphism $\Gamma_{\infty,j,\alpha}:\mca_{\infty,j,\alpha}\to \tX_j$ such that the following square commutes.
\[\xymatrix{
\mcb_{\infty,j,\alpha} \ar[r]^-{\Lambda_{\infty,j,\alpha}} \ar[d]_{\wt{\zeta}_{\infty,j,\alpha}} & \tY_j \ar[d]^{\wt{\zeta}_j} \\
\mca_{\infty,j,\alpha} \ar[r]_-{\Gamma_{\infty,j,\alpha}} & \tX_j 
}\]
In the same way, we constructed the maps $b_{\infty,j,\alpha}$, we may construct canonical maps $a_{\infty,j,\alpha}:\tX\to \mca_{\infty,j,\alpha}$ such that the following square commutes. 
\[\xymatrix{
\tY \ar[r]^-{b_{\infty,j,\alpha}} \ar[d]_{\wt{\zeta}} & \mcb_{\infty,j,\alpha} \ar[d]^{\wt{\zeta}_{\infty,j,\alpha}} \\
\tX \ar[r]_-{a_{\infty,j,\alpha}} & \mca_{\infty,j,\alpha}
}\]
Note that both of the above squares are diagrams of based maps, Although, we identify them in our notation, the basepoint of $\mca_{\infty,j,\alpha}$ is $\alpha\tau_j$ if viewed as a subspace of $\tY$ and $\wt{\zeta}(\alpha)$ if viewed as a subspace of $\tX$. Putting it all together, we consider the following diagram where all products are indexed over pairs $(j,\alpha)$ with $j\in\bbn$ and $\alpha\in\ntij$. For example, $\prod \pi_n(Y_j)$ denotes $\prod_{j\in\bbn}\prod_{\alpha\in\ntij}\pi_n(Y_j)$.
\[\xymatrix{
& \pi_n(Z) \ar[r]^-{\Psi} & \prod \pi_n(\mcc_{\infty,j,\alpha})  & \prod \pi_n(\mcd_{\infty,j,\alpha}) \ar[l]_-{R} \\
\pi_n(Y) \ar[d]_-{\zeta_{\#}} & \pi_n(\tY) \ar[d]^-{\wt{\zeta}_{\#}} \ar[u]_-{\wt{f}_{\#}} \ar[l]_-{p_{\#}} \ar[rr]^-{b} && \prod \pi_n(\mcb_{\infty,j,\alpha}) \ar[u]_-{\prod (f_{\infty,j,\alpha})_{\#}} \ar[r]^-{\Lambda} \ar[d]_{\prod (\wt{\zeta}_{\infty,j,\alpha})_{\#}} & \prod \pi_n(Y_j) \ar[d]^-{\prod \zeta_{j\#}} \\
\pi_n(X) \ar@/_2.5pc/[rrrr]_-{\Theta} & \pi_n(\tX) \ar[l]_-{q_{\#}} \ar[rr]^-{a} && \prod \pi_n(\mca_{\infty,j,\alpha}) \ar[r]^-{\Gamma} & \prod \pi_n(X_j)
}\]
In the above diagram,
\begin{itemize}
\item $R$ is the product of the isomorphisms induced by the retractions $\mcd_{\infty,j,\alpha}\to\mcc_{\infty,j,\alpha}$ that collapse the attached arcs to $c_{\infty,j,\alpha}$.
\item $b$ is induced by the maps $(b_{\infty,j,\alpha})_{\#}:\pi_n(\tY)\to \pi_n(\mcb_{\infty,j,\alpha})$, 
\item  $a$ is induced by the maps $(a_{\infty,j,\alpha})_{\#}:\pi_n(\tX)\to \pi_n(\mca_{\infty,j,\alpha})$, 
\item $\Lambda=\prod (p_j\circ\Lambda_{\infty,j,\alpha})_{\#}$,
\item $\Gamma=\prod (q_j\circ\Gamma_{\infty,j,\alpha})_{\#}$,
\item and $\Theta=\Gamma\circ a\circ q_{\#}^{-1}$.
\end{itemize}
The bottom left square commutes by the definition of $\wt{\zeta}$. Commutativity of the other two bottom squares follows from the two squares given earlier in the proof. Because $\Psi$ is induced by the retractions $Z\to \mcc_{\infty,j,\alpha}$, a direct check shows the top square commutes. Although the upper square depends on the choice of the trees $T_j$, the lower squares do not. Previous results ensure that all maps except for $\Psi$, $a$, and $b$ are isomorphisms.

Recall from Theorem \ref{psiconstructiontheorem} that $\Psi$ has image in $\prod_{j\in\bbn}\bigoplus_{\alpha\in\ntij}\pi_n(\mcc_{\infty,j,\alpha})$. It follows from the diagram that $\Theta$ has image in $\prod_{j\in\bbn}\bigoplus_{\alpha\in\ntij}\pi_n(X_j)$. Finally, recall that there are canonical bijections $\ntij\to\pi_1(Y)/\pi_1(Y_j) \to\pi_1(X)/\pi_1(X_j)$ (the first is the restriction of the projection $\pi_1(Y)\to \pi_1(Y)/\pi_1(Y_j)$ and the second is induced by $\zeta$). Thus we may canonically identify the indexing sets $\ntij$ and $\pi_1(X)/\pi_1(X_j)$. This gives the desired homomorphism $\Theta$ as described in the statement of Theorem \ref{mainthm}.

Lastly, when $\tX_j$ is $(n-1)$-connected for all $j\in\bbn$ $\tY_j$ is $(n-1)$-connected for all $j\in\bbn$. The homomorphism $\Psi$ is injective by Theorem \ref{psiinjtheorem}. It follows from the diagram that $b$ and $a$ are injective. We conclude that $\Theta$ is injective.  
\end{proof}

\begin{remark}[The image of $\Theta$]\label{thetaimageremark}
Combining Theorem \ref{imagetheorem} with the diagram from the proof of Theorem \ref{mainthm}, a direct proof gives the following characterization of $Im(\Theta)$: Letting $H_j=\pi_1(X_j)$, we denote elements of $\pi_1(X)/H_j$ by $\beta H_j$. An element $(\ell_{j,\beta H_j})\in \prod_{j}\bigoplus_{\pi_1(X)/H_j}\pi_n(X_j)$ lies in $Im(\Theta)$ if and only if the closure of $\bigcup_{j\in\bbn}\{\beta\in \pi_1(X)\mid \ell_{j,\beta H_j}\neq 0\}$ in $\pi_1(X)$ (with the whisker topology) is compact.
\end{remark}

With Theorem \ref{mainthm} established, we identifying an alternative description of $\pi_n(X)$ using the $n$-shape homotopy group $\check{\pi}_n(X)=\varprojlim_{k}\pi_n(X_{\leq k})$.

\begin{corollary}
Suppose $\tX_j$ is $(n-1)$ connected for all $j\in\bbn$. Then the canonical homomorphism $\Phi:\pi_n(X)\to \check{\pi}_n(X)$, $\Phi([\ell])=([r_k\circ \ell])$ to the $n$-th shape homotopy group is injective.
\end{corollary}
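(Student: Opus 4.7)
The plan is to derive the corollary directly from Theorem \ref{mainthm} by comparing $\Theta$ with its finite-wedge analogues. Under the standing hypothesis that each $\tX_j$ is $(n-1)$-connected, the classical universal-cover analysis of a finite one-point union (as recalled in the introduction) supplies, for every $k\in\bbn$, an isomorphism
\[
\Theta_k:\pi_n(X_{\leq k})\xrightarrow{\cong}\bigoplus_{j=1}^{k}\bigoplus_{\pi_1(X_{\leq k})/\pi_1(X_j)}\pi_n(X_j).
\]
I would first establish a commutative square
\[
\xymatrix{
\pi_n(X)\ar[r]^-{\Theta}\ar[d]_-{(R_k)_{\#}} & \ds\prod_{j\in\bbn}\bigoplus_{\pi_1(X)/\pi_1(X_j)}\pi_n(X_j)\ar[d]^-{\tau_k}\\
\pi_n(X_{\leq k})\ar[r]^-{\Theta_k} & \ds\bigoplus_{j=1}^{k}\bigoplus_{\pi_1(X_{\leq k})/\pi_1(X_j)}\pi_n(X_j)
}
\]
in which $\tau_k$ discards the factors with $j>k$ and, for each $j\leq k$, combines those entries $c_{j,\alpha}$ whose indices $\alpha\in\ntij$ project to the same coset $\alpha'\pi_1(X_j)$ of $\pi_1(X_{\leq k})$, each contribution twisted by the element $\gamma\in\pi_1(X_j)$ arising from the reduced-word decomposition $(R_k)_{\#}(\alpha)=\alpha'\gamma$ (with $\alpha'\in\ntkj$). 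Commutativity is a matter of chasing the construction of $\Theta$ in the proof of Theorem \ref{mainthm}: the lifted retraction $\wt{R}_k:\tX\to\tX_{\leq k}$ maps each subspace $\mca_{\infty,j,\alpha}$ (for $j\leq k$) homeomorphically onto $\mca_{k,j,\alpha'}$, and under the identifications $\Gamma_{\infty,j,\alpha}$ and $\Gamma_{k,j,\alpha'}$ with $\tX_j$, this restriction is the deck transformation $\Delta_{\gamma}$---exactly the analogue for $\tX$ of the description of $\varrho_k$ worked out for $\tY$ in Section \ref{sectionstructureofyk}.

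Granting the diagram, suppose $\Phi([\ell])=0$, i.e., $(R_k)_{\#}([\ell])=0$ for every $k\in\bbn$. By Theorem \ref{mainthm} it suffices to prove $\Theta([\ell])=0$. Write $\Theta([\ell])=(c_{j,\alpha})$ and, for each $j$, set $F_j=\{\alpha\in\ntij\mid c_{j,\alpha}\neq 0\}$, which is finite by the definition of $\Theta$. Fix $\alpha\in F_j$. For every $\beta\in F_j\setminus\{\alpha\}$, the element $\alpha^{-1}\beta\in\pi_1(X)$ does not lie in $\pi_1(X_j)$, so by Theorem \ref{injectivetheorem} and a direct inspection of reduced words there is some $k_{\beta}\geq j$ with $(R_{k_{\beta}})_{\#}(\alpha^{-1}\beta)\notin\pi_1(X_j)$. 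Choosing $k$ larger than all such $k_{\beta}$, no $\beta\in F_j\setminus\{\alpha\}$ represents the same coset of $\pi_1(X_j)$ in $\pi_1(X_{\leq k})$ as $\alpha$ does. Consequently the $(j,\alpha')$-coordinate of $\tau_k(\Theta([\ell]))$ equals $\gamma\cdot c_{j,\alpha}$ for a single $\gamma\in\pi_1(X_j)$. From the commuting square and $(R_k)_{\#}([\ell])=0$ this coordinate vanishes, and since the $\pi_1(X_j)$-action on $\pi_n(X_j)$ is by group isomorphisms, $c_{j,\alpha}=0$, contradicting $\alpha\in F_j$.

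The main obstacle I expect is the verification of the commuting square---specifically, identifying the precise form of the coset-merging map $\tau_k$. This requires comparing the retractions $a_{\infty,j,\alpha}:\tX\to\mca_{\infty,j,\alpha}$ used in constructing $\Theta$ with their finite counterparts $a_{k,j,\alpha'}:\tX_{\leq k}\to\mca_{k,j,\alpha'}$, and checking that $\wt{R}_k\circ a_{\infty,j,\alpha}$ coincides with $a_{k,j,\alpha'}\circ\wt{R}_k$ up to the deck transformation $\Delta_{\gamma}$. Once this compatibility is in place, the remainder of the argument is purely group-theoretic and uses only the shape-injectivity of $\pi_1$ together with the fact that deck transformations act by isomorphisms on $\pi_n(X_j)$.
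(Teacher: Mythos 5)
Your overall strategy---reducing injectivity of $\Phi$ to injectivity of $\Theta$ via a comparison square with the finite-stage isomorphisms $\Theta_k$---is reasonable, and the purely algebraic half of your argument is correct. In particular, your use of Theorem \ref{injectivetheorem} to show that distinct cosets $\alpha\pi_1(X_j)\neq\beta\pi_1(X_j)$ eventually project to distinct cosets of $\pi_1(X_j)$ in $\pi_1(X_{\leq k})$ is right: if $(R_k)_{\#}(\alpha^{-1}\beta)\in\pi_1(X_j)$ for all $k$, these values stabilize to a single $h\in\pi_1(X_j)$ and shape injectivity forces $\alpha^{-1}\beta=h$. The gap is in the commutative square, and it is not the compatibility you propose to check. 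The identity ``$\wt{R}_k\circ a_{\infty,j,\alpha}$ coincides with $a_{k,j,\alpha'}\circ\wt{R}_k$ up to $\Delta_{\gamma}$'' is false: $a_{\infty,j,\alpha}$ collapses every copy $\mca_{\infty,j,\beta}$ with $\beta\neq\alpha$ to a point of the fiber, whereas $a_{k,j,\alpha'}\circ\wt{R}_k$ restricts to a (twisted) homeomorphism onto $\mca_{k,j,\alpha'}$ on \emph{every} $\mca_{\infty,j,\beta}$ whose index projects into the coset $\alpha'\pi_1(X_j)$---and there are infinitely many such $\beta$. Consequently the $(j,\alpha')$-coordinate of $\Theta_k((R_k)_{\#}[\ell])$ is the class of a map $S^n\to\mca_{k,j,\alpha'}$ assembled from infinitely many pieces of $\wt{\ell}$, and the assertion that this class equals the locally finite sum $\sum_{\alpha\mapsto\alpha'}\gamma_{\alpha}\cdot c_{j,\alpha}$ is an infinite-concatenation identity, precisely the kind of statement this paper's machinery exists to justify. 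To make it rigorous you would first have to deform $\ell$ as in Lemma \ref{directsumlemma} so that, for the fixed $j$, only finitely many pieces are nonconstant, and then run a finite summation argument; as written, your square carries the entire content of the corollary and is unproved.

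For comparison, the paper's proof never forms this square: starting from $0\neq[\ell]$ it uses the injectivity of $\Psi$ (Theorem \ref{psiinjtheorem}) to produce a pair $(j,\alpha)$ with $0\neq[\mu_{j,\alpha}\circ\ell\,']$, and then uses that $\sigma_j$ carries $\mcc_{\infty,j,\alpha}$ homeomorphically onto $\mcc_{j,j,\alpha'}$, a retract of $Z_j$, to conclude $[r_j\circ\ell]\neq 0$. If you wish to salvage your route, prove only the weak form of the sum formula you actually need: after applying Lemma \ref{directsumlemma} and choosing $k$ large enough (via your coset-separation step) that the finitely many indices carrying nonzero coordinates land in distinct cosets, the $(j,\alpha')$-coordinate reduces to a single twisted term $\gamma\cdot c_{j,\alpha}$ plus constant pieces, and the conclusion follows as you describe.
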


\begin{proof}
The homotopy equivalence $\zeta:Y\to X$ induces an isomorphism on $\pi_n$ and $\check{\pi}_n$. Since $\Phi$ is natural, it suffices to prove the result for $Y$.

Suppose $0\neq [\ell]\in \pi_n(Y)$. Let $\wt{\ell}:S^n\to \wt{Y}$ be the lift of $\ell$ and $\ell '=\wt{f}\circ\wt{\ell}$. Since $\tf$ and $p$ induce isomorphisms on $\pi_n$, we have $0\neq [\ell ']\in \pi_n(Z)$. By Theorem \ref{injectivetheorem}, there exists $j\in\bbn$ and $\alpha\in\ntij$ such that $0\neq [\mu_{j,\alpha}\circ \ell ']\in \pi_n(\mcc_{\infty,j,\alpha})$. Now $\sigma_j$ maps $\mcc_{\infty,j,\alpha}$ homeomorphically onto $\mcc_{j,j,\alpha '}$ for some $\alpha '\in\nt_{j,j}$ and so $0\neq [\sigma_j\circ \ell ']\in\pi_n(\mcc_{\infty,j,\alpha '})$. Since $\mcc_{\infty,j,\alpha '}$ is a retract of $Z_j$, we have $0\neq [\sigma_j\circ \ell ']\in \pi_n(Z_j)$. 
 \[\xymatrix{
 Y & \wt{Y} \ar[l]_-{p} \ar[d]^-{\wt{f}}  \ar[r]^-{\varrho_j} & \wt{Y}_{\leq j} \ar[d]^-{\wt{f}_j} \\
(I^n,\partial I^n) \ar[u]^-{\ell} \ar[ur]^-{\wt{\ell}} \ar[r]_-{\ell '} & Z \ar[r]_-{\sigma_j} & Z_j}\]
Thus $0\neq [\sigma_j\circ \ell ']=[\sigma_j\circ \wt{f}\circ \wt{\ell}]=[\wt{f}_j\circ \varrho_j\circ \wt{\ell}]$. Since $\wt{f}_j$ is a homotopy equivalence, $0\neq [\varrho_j\circ \wt{\ell}]$ in $\pi_n(\wt{Y}_{\leq j})$. Since $p_{\leq j}:\wt{Y}_{\leq j}\to Y_{\leq j}$ induces an isomorphism on $\pi_n$, we have $0\neq [p_{\leq j}\circ\varrho_j\circ \wt{\ell}]=[r_j\circ p\circ \wt{\ell}]=[r_j\circ\ell]$ in $\pi_n(Y_j)$. We conclude that there exists $j\in\bbn$ such that $[r_j\circ\ell]\neq 0$. Therefore, $\Phi([\ell])\neq 0$.
\end{proof}

\begin{remark}
Recall from the introduction that standard homotopy theory gives $\pi_n(X_{\leq k})\cong \bigoplus_{1\leq j\leq k}\bigoplus_{\pi_1(X_{\leq k})/\pi_1(X_j)}\pi_n(X_j)$ when each $\tX_j$ is $(n-1)$-connected. Under this hypothesis, the previous corollary provides a canonical injection of $\pi_n(X)$ into \[\varprojlim_{k\in\bbn} \left(\bigoplus_{1\leq j\leq k}\bigoplus_{\pi_1(X_{\leq k})/\pi_1(X_j)}\pi_n(X_j)\right).\]We point out that this inverse limit does not simply give the product over $k$ because the bonding maps are not product-projections. Rather, the bonding maps correspond to the induced homomorphisms $(\wt{q}_{k+1,k})_{\#}:\pi_n(\wt{X}_{\leq k+1})\to \pi_n(\tX_{\leq k})$.
\end{remark}

\subsection{The aspherical case}\label{sectionaspherical}

A path-connected space $X$ is \textit{aspherical} if $\pi_n(X)=0$ for all $n\geq 2$. If $X$ admits a generalized universal covering $\tX$, then $\tX$ has trivial homotopy groups. If $X$ is an aspherical CW-complex, then $\tX$ is contractible.

\begin{lemma}\label{zcontractiblelemma}
If $Y_j$ is aspherical for every $j\in\bbn$, then $\bfee_{\infty}$ is a deformation retract of $\tZ$. In particular, $\mu:\tZ\to\bfee_{\infty}$ is homotopic to $id_{\tZ}$.
\end{lemma}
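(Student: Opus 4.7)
The plan is to exhibit a based homotopy $H:\tZ\times\ui\to\tZ$ from $id_{\tZ}$ to $\mu$, fixing $\bfee_{\infty}$ pointwise, by assembling coherent contractions at each finite level $\tZ_k$ and then transferring the inverse limit through the homeomorphism $\psi:\tZ\to\lpc(\wh{Z}_0)$ of Theorem \ref{lpctheorem}. Since $Y_j$ is aspherical, its universal cover $\tY_j$ is a CW-complex with trivial higher homotopy groups, hence contractible; collapsing the contractible subcomplex $T_j$ of $\tX_j$ yields the contractible based CW-complex $C_j$. I choose a based contraction $\kappa_j:C_j\times\ui\to C_j$ from $id_{C_j}$ to the constant map at $c_j$, and for each $\beta\in\pi_1(Y_j)$ define the translated contraction $\kappa_{j,\beta}=\delta_\beta\circ\kappa_j\circ(\delta_\beta^{-1}\times id):C_{j,\beta}\times\ui\to C_{j,\beta}$. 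The group-action identity $\delta_\gamma\circ\delta_\beta=\delta_{\gamma\beta}$ of Remark \ref{deltagammaremark} then yields the key equivariance $\delta_\gamma\circ\kappa_{j,\gamma^{-1}\beta}=\kappa_{j,\beta}\circ(\delta_\gamma\times id)$ for every $\gamma\in\pi_1(Y_j)$.

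For each $k\in\bbn$, I define $H_k:\tZ_k\times\ui\to\tZ_k$ to be the projection onto the first factor on $\bfee_k\times\ui$ and, on each $\mcc_{k,j,\alpha}\times\ui$ (where $\Lambda_{k,j,\alpha}(\mct_{k,j,\alpha})=\beta T_j$), the pullback $\lambda_{k,j,\alpha}^{-1}\circ\kappa_{j,\beta}\circ(\lambda_{k,j,\alpha}\times id)$. These pieces agree at each wedgepoint $c_{k,j,\alpha}$ because $\kappa_{j,\beta}$ is based, and $H_k$ is continuous by the CW weak topology on $\tZ_k$ with respect to the subcomplexes $\bfee_k$ and $\{\mcc_{k,j,\alpha}\}$. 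At $t=1$, $H_k$ becomes the retraction $\mu_k:\tZ_k\to\bfee_k$ collapsing each $\mcc_{k,j,\alpha}$ to $c_{k,j,\alpha}$. The coherence $\wt{s}_{k+1,k}\circ H_{k+1}=H_k\circ(\wt{s}_{k+1,k}\times id)$ is verified by cases on the pieces of $\tZ_{k+1}$: on $\bfee_{k+1}$ both sides equal $\wt{s}_{k+1,k}$ composed with projection by Lemma \ref{treeinverselimit}; for $j=k+1$ the subspace $\mcc_{k+1,k+1,\alpha}$ is collapsed by $\wt{s}_{k+1,k}$ to a single point of $\bfee_k$ on which $H_k$ is constant; and for $1\leq j\leq k$ the bottom face of Cube \ref{C2} identifies $\wt{s}_{k+1,k}|_{\mcd_{k+1,j,\alpha}}$ with $\delta_{\gamma_k}:D_{j,\gamma_k^{-1}\beta}\to D_{j,\beta}$, so the required identity reduces via $\lambda_{k,j,\alpha'}$ and $\lambda_{k+1,j,\alpha}$ to the equivariance of $\kappa_{j,\beta}$ established above.

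With coherence in hand, I form the inverse limit $\wh{H}=\varprojlim_k H_k:\wh{Z}\times\ui\to\wh{Z}$ and restrict to the path component of the basepoint to obtain a based homotopy $\wh{H}_0:\wh{Z}_0\times\ui\to\wh{Z}_0$ from $id_{\wh{Z}_0}$ to $\wh{\mu}_0=\varprojlim_k\mu_k$, then set $H=\psi^{-1}\circ\wh{H}_0\circ(\psi\times id)$. Since $\tZ\times\ui$ is locally path-connected and $\psi\circ H=\wh{H}_0\circ(\psi\times id)$ is continuous, Corollary \ref{continuitycor} yields continuity of $H$. Using the characterization of $\sigma_k$ on $\mcc_{\infty,j,\alpha}$ (a homeomorphism onto $\mcc_{k,j,\alpha_k'}$ for $k\geq j$, and a collapse to a point of $\bfee_k$ for $k<j$), a direct check shows $\psi\circ\mu=\wh{\mu}_0\circ\psi$, whence $H(-,0)=id_{\tZ}$, $H(-,1)=\mu$, and $H|_{\bfee_{\infty}\times\ui}$ is projection, making $\bfee_{\infty}$ a strong deformation retract. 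The main obstacle is the twisted coherence when $1\leq j\leq k$: one must match the conjugation defining $\kappa_{j,\beta}$ with the bonding-map twist by $\delta_{\gamma_k}$, the algebraic identity $\delta_{\beta}^{-1}\circ\delta_{\gamma_k}=\delta_{\gamma_k^{-1}\beta}^{-1}$ being exactly what makes the two sides agree.
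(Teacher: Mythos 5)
Your proposal is correct and follows essentially the same route as the paper: contract each $C_{j,\beta}$ via a based contraction, assemble these into level-wise homotopies on $\tZ_k$ that fix $\bfee_k$, check coherence with $\wt{s}_{k+1,k}$, pass to the inverse limit, and transfer to $\tZ$ through $\psi$ using Corollary \ref{continuitycor} for continuity. Your explicit definition of $\kappa_{j,\beta}$ as the $\delta_\beta$-translate of a single contraction $\kappa_j$, together with the equivariance identity $\delta_\gamma\circ\kappa_{j,\gamma^{-1}\beta}=\kappa_{j,\beta}\circ(\delta_\gamma\times id)$, is exactly the compatibility needed for the coherence step and makes explicit a choice the paper's proof leaves implicit.
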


\begin{proof}
Suppose $Y_j$ is aspherical for every $j\in\bbn$. Then each universal covering space $\tY_j$ is contractible. For all $j\in\bbn$ and $\alpha\in\ntij$, we have \[\mcc_{\infty,j,\alpha}=\mcb_{\infty,j,\alpha}/\mct_{\infty,j,\alpha}\cong \tY_j/\beta_{\alpha}T_j\simeq \tY_j,\]Thus $\mcc_{\infty,j,\alpha}$ is contractible. For each $\beta\in\pi_1(Y_j)$, fix a based contraction $K_{j,\beta}:C_{j,\beta}\times\ui\to C_{j,\beta}$, i.e. a based homotopy from $id_{C_{j,\beta}}$ to the constant map at $c_{j,\beta}$. Define contractions $L_{k}:Z_k\times \ui\to Z_k$ so that $L_k(z,t)=z$ for $(z,t)\in\bfee_k\times\ui$ and so that the restriction of $L_k$ to the subcomplex $\mcc_{k,j,\alpha}\times \ui$ is a map $\mcc_{k,j,\alpha}\times \ui\to \mcc_{k,j,\alpha}$ making the following square commute when $\Lambda_{k,j,\alpha}(\mct_{k,j,\alpha})=\beta T_j$.
\[\xymatrix{
\mcc_{k,j,\alpha}\times \ui \ar[r]^-{L_k} \ar[d]_-{\lambda_{k,j,\alpha}\times id} & \mcc_{k,j,\alpha} \ar[d]^-{\lambda_{k,j,\alpha}} \\
C_{j,\beta} \times\ui \ar[r]_-{K_{j,\beta}} & C_{j,\beta}
}\]
The maps $L_k$ are clearly well-defined and since $\tZ_k\times\ui$ is a CW-complex, $L_k$ is continuous. The same inductive argument used to construct the maps $\wt{g}_{k,j,\alpha}$ can be used to show that $L_{k}\circ (\wt{s}_{k+1,k}\times id)=\wt{s}_{k+1,k}\circ L_{k+1}$. Let $\wh{L}_0:\wh{Z}_0\times \ui\to \wh{Z}_0$ be the restriction of the inverse limit map $\wh{L}=\varprojlim_{k}L_k:\wh{Z}\times\ui\to \wh{Z}$.

Define $L:Z\times\ui\to Z$ as follows: $L(z,t)=z$ for $(z,t)\in \bfee_{\infty}\times\ui$. For each $j\in\bbn$ and $\alpha\in\ntij$, we define the restriction of $L$ to the subspace $\mcc_{\infty,j,\alpha}\times \ui$ to be the based contraction $\mcc_{\infty,j,\alpha}\times \ui\to \mcc_{\infty,j,\alpha}$ which makes the following square commute.
\[\xymatrix{
\mcc_{\infty,j,\alpha}\times \ui \ar[r]^-{L} \ar[d]_-{\lambda_{\infty,j,\alpha}\times id} & \mcc_{\infty,j,\alpha} \ar[d]^-{\lambda_{\infty,j,\alpha}} \\
C_{j,\beta_{\alpha}} \times\ui \ar[r]_-{K_{j,\beta_{\alpha}}} & C_{j,\beta_{\alpha}}
}\]
Clearly $L$ is well-defined, $L(z,0)=z$ and $L(z,1)=\mu(z)$. We have constructed $L$ so that $\sigma_k\circ L=L_k\circ (\sigma_k\times id)$ for all $k\in\bbn$ (details required to verify this are identical to previous arguments, e.g. the construction of $\wt{g}_k$). Since $Z\times\ui$ is locally path connected and the projections $\sigma_k\circ L$ are continuous, $L$ is continuous by Corollary \ref{continuitycor}.
\end{proof}

\begin{proof}[Proof of Theorem \ref{asphericalcorollary}]
Suppose each $X_j$ is aspherical. Then each $Y_j$ is aspherical. We have the following sequence of maps.
\[\xymatrix{
\tX  & \tY \ar[l]_-{\wt{\zeta}} \ar[r]^-{\wt{f}} & Z \ar[r]^-{\mu} & \bfee_{\infty} \ar[r]^-{\wt{g}|_{\bfee_{\infty}}} & \bft_{\infty}
}\]
All of these maps except for $\mu$ are always homotopy equivalences. Since each $Y_j$ is aspherical, Lemma \ref{zcontractiblelemma} implies that $\mu$ is a homotopy equivalence. Since every uniquely arcwise connected Hausdorff space is aspherical, $\bft_{\infty}$ is aspherical. Thus $\tX$ is aspherical. Moreover, if each $X_j$ is locally finite, Corollary \ref{tiscontractible} implies that $\bft_{\infty}$ is contractible. Hence, in this case, the sequence gives that $\tX$ is contractible.
\end{proof}

\section*{Acknowledgment}

The author would like to acknowledge the research support provided by Double Stuf Oreos, without which this project would not have been possible.

\end{document}